\numberwithin{equation}{section}
\newtheorem{theorem}{Theorem}[section]
\newtheorem{proposition}[theorem]{Proposition}
\newtheorem{conjecture}[theorem]{Conjecture}
\newtheorem{corollary}[theorem]{Corollary}
\newtheorem{remark}[theorem]{Remark}
\newtheorem{lemma}[theorem]{Lemma}
\newtheorem{example}[theorem]{Example}
\newtheorem{definition}[theorem]{Definition}
\newtheorem{problem}[theorem]{Problem}
\newtheorem{maintheorem}[theorem]{Main Theorem}
\newtheorem{claim}[theorem]{Claim}
\def\endproof{\hfill$\square$\medskip}
\def\FF{\mathcal{F}}
\def\ZZ{\mathbb{Z}}
\def\QQ{\mathbb{Q}}
\def\CC{\mathbb{C}}
\def\C{\mathcal{C}}
\def\RR{\mathcal{R}}
\def\UU{\mathcal{U}}
\def\1{\mathbb{1}}
\def\l{\ell}
\def\gg{\mathfrak{g}}
\def\mm{\mathfrak{m}}
\def\nn{\mathfrak{n}}
\def\hh{\mathfrak{h}}
\def\VV{\mathcal{V}}
\def\bb{\mathfrak{b}}
\def\pp{\mathfrak{p}}
\def\R{\mathcal R}
\def\OO{\mathcal{O}}
 \def\zz{\mathfrak{z}}
 \def\gl{gl}
\begin{document}

\title{R-matrix Poisson Algebras and Their Deformations}
\author{S. Zwicknagl}
\address{Sebastian Zwicknagl,  Department of Mathematics, University of California, Riverside, 92521.}
\email{zwick@math.ucr.edu}
\maketitle
 \tableofcontents

\section{Introduction}\label{S:motivation}

We classify in the present paper Poisson brackets on modules over a semisimple complex Lie algebra which are based on classical $r$-matrices. We then quantize these Poisson structures in the spirit of the recent joint paper \cite{BZ} with A. Berenstein and show that we recover many well known examples  
of quantized coordinate rings of classical varieties. 

Let us briefly discuss the main results in the case of a simple Lie algebra $\gg$. 
 Let $\gg=\nn_-\oplus \hh\oplus \nn_+$ be a simple complex Lie algebra and let $\{E_\alpha|\alpha\in R_+\}$ (where $R_+$ is the set of positive roots of $\gg$)  be the standard basis of $\nn_+$ and $\{F_\alpha|\alpha\in R_+\}$ the  standard basis of $\nn_-$. Recall that $r=\sum_{\alpha\in R_+} E_\alpha\otimes F_\alpha\in\gg\otimes \gg$ is a classical $r$-matrix and $r^-=\sum_{\alpha\in R_+} E_\alpha\otimes F_\alpha-F_\alpha\otimes E_\alpha\in \gg\wedge\gg$ the antisymmetrized $r$-matrix. For each  $\gg$-module $V $ define a quadratic bracket $\{\cdot,\cdot\}$ on the symmetric algebra $S(V)$ by the formula:
\begin{equation}
\label{eq:bracket}
\{a,b\}=r^-(a\wedge b)=\sum_{\alpha\in R_+} E_\alpha(a)F_\alpha(b)-E_\alpha(b)F_\alpha(a)
\end{equation}
for $a,b\in S(V)$. In particular, if $\gg=sl_2(\CC)$, then $\{a,b\}=E(a)F(b)-E(b)F(a)$.

The bracket is, by construction, skew-commutative and satisfies the Leibniz rule. To determine whether it is Poisson for a pair $(\gg,V)$ one has to verify whether it satisfies the Jacobi identity.
Our first main result is the following theorem.
\begin{maintheorem}(Theorem \ref{th: class of Poisson})
\label{th: geom-decomp}
Let $V $ be a simple  finite-dimensional $\gg$-module. Assume that $(\gg,V)\ne (sp_{2n}(\CC),V_{\omega_1})$. Then the following are equivalent:
 
\noindent (a) The bracket \eqref{eq:bracket} on $S(V)$ is Poisson. 

\noindent (b) ${\bf c}(\Lambda^3 V)=\{0\}$, where ${\bf c}$ the canonical $\gg$-invariant in $\Lambda^3 \gg$ corresponding (under the identification $\gg^*\cong \gg$) to the Lie bracket $[\cdot,\cdot]:\gg\wedge \gg\to \gg$.

\noindent (c) $V$ is geometrically decomposable  as defined by R. Howe (\cite{Ho}).

\noindent (d) $Hom_\gg(\Lambda^3V,S^3V)=\{0\}$.

\noindent (e) $\Lambda^2V$ is simple.

\end{maintheorem}

If $(\gg,V)= (sp_{2n}(\CC),V_{\omega_1})$, then parts  (a), (b), and (d) of Theorem \ref{th: geom-decomp} hold, but parts (c) and (e) fail.  In Theorem \ref{th:class:poissonsemisimple} we classify all simple modules $V$ over a semisimple Lie algebra for which the bracket \eqref{eq:bracket} on $S(V)$ is Poisson.  The only, nontrivial, example of a simple module over a semisimple Lie algebra with this property is the natural module of $\gg=sl_n\times sl_m(\CC)$ for arbitrary $m,n\in \ZZ_{\ge0}$. 

 All pairs $(\gg,V)$ for which  the bracket \eqref{eq:bracket} on $S(V)$ is Poisson are classified  in \cite{ZW}.

We then continue to show that the deformation quantization of the $r$-matrix Poisson structure on a $\gg$-module $V_\lambda$ recovers the braided symmetric algebras of the $U_q(\gg)$-module $V_\lambda^q$. The braided symmetric algebra $S_q(V^q_\lambda)$ is quadratic $q$-deformations of the symmetric algebra of the $U(\gg)$-module $V_\lambda$ (see \cite{BZ} or Section \ref{se:BESP}) which are $U_q(\gg)$-module algebras. An important problem in \cite{BZ} is the question, for which $U_q(\gg)$-module $V^q$  the deformation is flat; i.e. one has $\dim((S_q(V^q))_n)=\binom{\dim(V)+n-1}{n}$ for all graded components $(S_q(V^q))_n$. The following result completely classifies such {\it flat} modules. 

\begin{maintheorem}(Theorem \ref{the:class. flat})
 A $U_q(\gg)$-modules $V^q$ is flat, if and only if  the bracket \eqref{eq:bracket}  defines a Poisson structure on the symmetric algebra of the classical limit $V$ of $V^q$. 
\end{maintheorem}

If $\gg$ is of type $A_n$, $B_n$ or $D_n$, then the braided symmetric algebras of the flat modules are the quantized coordinate rings of the classical varieties such as the quantum $m\times n$-matrices, the quantum Euclidean space (see e.g \cite{RTF}), quantum symmetric and quantum antisymmetric matrices (see e.g. \cite{Nou} and \cite{Str}). The braided exterior powers of the flat natural modules of quantized enveloping algebras of types $B_n$, $C_n$ and $D_n$ agree with the $q$-wedge modules constructed by Jing, Misra and Okado in \cite{JMO}. Our approach, thus, provides a natural unifying construction for these objects. Moreover, following the arguments in \cite[Ch. 5]{GY} one obtains that the braided symmetric algebras are the quantizations of  equivariant Poisson structures on partial flag varieties.

Theorem \ref{th: geom-decomp} shows that there is an apparent relation between $r$-matrix Poisson structures, flat modules and maximal parabolic with Abelian or Heisenberg type radicals and classical invariant theory as studied by Howe in \cite{Ho}. We use this connection to give the following explicit construction of braided symmetric algebras of flat simple modules. 
 
\begin{theorem}(Theorem \ref{th: Abelian ->flat quadratic}, Theorem \ref{th:Heisenberg}) In the notation of Theorem \ref{th: geom-decomp}, if the bracket \eqref{eq:bracket} is Poisson 
(including the case $(\gg,V)= (sp_{2n}(\CC),V_{\omega_1})$), then:

\noindent (a) There exists a unique simple Lie algebra $\gg'$ and a maximal parabolic subalgebra $\pp\subset \gg'$ such that $\gg$ is the semisimple part of the Levi factor of $\pp$ and $V$ is isomorphic (as a $\gg$-module) to the nil-radical $rad_\pp$ of $\pp$.

\noindent (b) The associated graded of the quantized enveloping algebra $U_q(rad_\pp)$ is the Kontsevich deformation $S_q(V^q)$  of the Poisson algebra $S(V)$ and carries a natural $U_q(\gg)$-module structure.
\end{theorem}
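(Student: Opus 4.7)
For part (a), the plan is to match the classification of Poisson pairs $(\gg,V)$ from Theorem \ref{th: geom-decomp} against the classical list of abelian nilradicals of maximal parabolic subalgebras of simple Lie algebras, with a single Heisenberg exception. By Theorem \ref{th: geom-decomp}(c) the simple Poisson modules are precisely Howe's geometrically decomposable modules \cite{Ho}, and these are well known to coincide with the degree-one pieces of $|1|$-gradings $\gg'=\gg'_{-1}\oplus \gg'_0\oplus \gg'_1$ of simple Lie algebras, equivalently, nilradicals of maximal parabolic subalgebras attached to a cominuscule node. Going case by case (Grassmannians, orthogonal and Lagrangian Grassmannians, spinor representations and the two exceptional pairs) I would reconstruct $\gg'$ from $(\gg,V)$ as $V^*\oplus(\gg\oplus \CC H)\oplus V$, with brackets forced by the $\gg$-action on $V$, an invariant pairing $V\otimes V^*\to \CC$, and a central grading derivation $H$. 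Uniqueness of $\gg'$ then follows because its root system is determined by the $\gg$-isotypic decomposition of $V\otimes V^*$. The Heisenberg exception $(sp_{2n}(\CC),V_{\omega_1})$ is handled separately by taking $\gg'=sp_{2n+2}(\CC)$ with $\pp$ the Heisenberg parabolic, whose two-step nilradical has degree-one component isomorphic to $V_{\omega_1}$ and one-dimensional centre.

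For part (b), the strategy is to realise $U_q(rad_\pp)$ as the subalgebra of $U_q(\gg')$ generated by the Lusztig root vectors $E_\beta$ for $\beta$ ranging over the roots of $rad_\pp$, equipped with the De Concini--Kac filtration coming from a reduced expression of the longest element of the Weyl group of $\gg'$ which begins by enumerating these roots in convex order. The Levendorski\v{\i}--Soibelman commutation relations then, upon passing to the associated graded, collapse to quantum-plane relations $E_\beta E_\gamma - q^{(\beta,\gamma)} E_\gamma E_\beta = 0$ in the abelian case, with a single additional cubic relation encoding the centre in the Heisenberg case. Since $U_q(rad_\pp)$ is stable under the adjoint action of the quantum Levi, the associated graded inherits a $U_q(\gg)$-module algebra structure; I would identify it with the braided symmetric algebra $S_q(V^q)$ by comparing quadratic relations, both being the quotient of $T(V^q)$ by the image of $\mathrm{id}-R_{V^q,V^q}$ on the $\Lambda^2 V$-isotypic component. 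Flatness, guaranteed by Main Theorem \ref{the:class. flat}, ensures the two relation ideals have matching Hilbert series. The identification of $S_q(V^q)$ with the Kontsevich deformation of $S(V)$ then follows from the uniqueness of flat quadratic $q$-deformations of $S(V)$ whose first-order bracket is \eqref{eq:bracket}.

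The main obstacle, in my view, is the relation-counting step in (b): rigorously establishing that the Levendorski\v{\i}--Soibelman quadratic relations exhaust the defining relations of $\mathrm{gr}\,U_q(rad_\pp)$ without spurious higher-degree relations. This hinges on simplicity of $\Lambda_q^2 V^q$ (condition (e) of Theorem \ref{th: geom-decomp}), and thus on the exhaustiveness of the classification obtained in part (a). A secondary technical point is the Heisenberg case, where the cubic central relation must be tracked carefully through the passage to the associated graded.
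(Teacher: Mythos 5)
Your outline for part (a) is essentially the paper's: recognize via Theorem \ref{th: geom-decomp}(c) that the Poisson $(\gg,V)$ are Howe's geometrically decomposable modules, which are the abelian nilradicals of cominuscule maximal parabolics, and handle $(sp_{2n},V_{\omega_1})$ separately via the Heisenberg parabolic of $sp_{2n+2}$. That matches the paper.

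For part (b) there is a genuine gap in your filtration step. You propose equipping $U_q(rad_\pp)=U(w_\Delta)$ with the De Concini--Kac filtration and assert that the Levendorski\v{\i}--Soibelman relations then collapse to quantum-plane relations $E_\beta E_\gamma=q^{(\beta,\gamma)}E_\gamma E_\beta$. That is precisely what the De Concini--Kac associated graded does, and that is precisely too much: a quantum affine space is \emph{not} the braided symmetric algebra $S_\sigma(V^q)$ except in degenerate cases. For instance, when $\gg'=sl_{m+n}$ and $\pp$ is the $(m,n)$-block parabolic, $U(w_\Delta)$ is the algebra of quantum $m\times n$ matrices and retains a relation of the shape $x_{11}x_{22}-x_{22}x_{11}=(q-q^{-1})x_{12}x_{21}$; passing to the De Concini--Kac graded kills this cross term, whereas $S_\sigma(V^q)$ is required to keep it. Likewise quantum Euclidean space and quantum (anti)symmetric matrices have nontrivial quadratic cross terms. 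So the identification ``quantum plane $\cong S_\sigma(V^q)$'' cannot be carried out, and your claimed description of $S_\sigma(V^q)$ as ``the quotient of $T(V^q)$ by the image of $\mathrm{id}-R_{V^q,V^q}$'' is not the definition either; it is $T(V^q)/\langle\Lambda_\sigma^2 V^q\rangle$ where $\Lambda_\sigma^2 V^q$ is the $(-1)$-eigenspace of the \emph{normalized} braiding $\sigma$, which is not $\mathrm{Im}(\mathrm{id}-R)$.

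The paper avoids this by never taking the De Concini--Kac graded. In the abelian case, Lemma \ref{le:Abelian rads} combined with Lemma \ref{le:q-commutators} shows that the quantum Schubert cell $U(w_\Delta)$ is already a graded \emph{quadratic} algebra (the intermediate-root monomials on the right-hand side of the LS relation are themselves quadratic in radical root vectors); no associated graded is needed. The identification of this quadratic relation ideal with $\Lambda^2_\sigma(U(w_\Delta)_1)$ then hinges on Howe's result that $rad_\Delta$ is weight-multiplicity-free, via Lemma \ref{le:mult-free tensor}: $rad_\Delta\otimes rad_\Delta$ is multiplicity-free, so there is a unique $U_q(\ll_\Delta)$-submodule of $U(w_\Delta)_1^{\otimes 2}$ whose classical limit is $\Lambda^2 rad_\Delta$, which must coincide both with the relation ideal and with $\Lambda^2_\sigma$. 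In the Heisenberg case (Theorem \ref{th:Heisenberg}) the paper filters by the naive total degree in the generators (each $E_\beta$ in degree $1$), which turns the quadratic-linear presentation into a quadratic one without destroying the degree-2 cross terms; this is a strictly coarser filtration than De Concini--Kac's. You would need to replace your filtration by this one and import the multiplicity-freeness argument before the relation-counting step you flag as the main obstacle can go through.
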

 
The paper is organized as follows: In Section \ref{se:Poisson alg} we study quadratic Poisson algebras. We introduce the notions of decorated space and bracketed algebras and show that the categories of decorated spaces and bracketed algebras are symmetric monoidal.  We show how decorated spaces define bracketed algebras and show under which conditions the bracket satisfies the Jacobi identity (Theorem \ref{th: flat Poisson structures} ) extending well known results  of Gelfand and Fokas  \cite{GF}. We also show that if the bracketed algebra associated to a tensor product of two decorated spaces is Poisson, then each of the factors must define a Poisson algebra (Theorem \ref{th: Tensor products of Poissons}).

 We use these results in Section \ref{se:Poissonmods} to classify all simple Poisson modules over simple Lie algebras (Theorem \ref{th: class of Poisson}) and finally all simple Poisson modules over semisimple Lie algebras (Theorem \ref{th:class:poissonsemisimple}). Since the proof of Theorem \ref{th: class of Poisson} is rather long we present it in Section \ref{se:proof of classification}. 
 
 Section  \ref{se:prlime} is devoted to the classification of flat simple modules (Theorem \ref{the:class. flat}). For the convenience of the reader we provide brief introductions to the quantized enveloping algebras $U_q(\gg)$, the category of their finite -dimensional modules (Section \ref{se:q-group}), the definition and basic properties  of braided symmetric and exterior algebras and powers (Section \ref{se:BESP}) and the classical limit (Section \ref{se:classical limit}). 
 
 In Section \ref{se:constru.of BSA} we construct the braided symmetric algebras as the quantized enveloping algebra of nilradicals (Theorem \ref{th: Abelian ->flat quadratic}), respectively their associated graded algebras (Theorem \ref{th:Heisenberg}). In Section \ref{se: qschcells} we prove a PBW-type theorem for quantum Schubert cells and study Levi actions on quantized nilradicals.  
 
 The results in this paper  open up many questions and suggest connections between the theory of braided symmetric algebras, cluster algebras, geometric crystals, equivariant Poisson structures and classical invariant theory.Appropriately, the paper concludes with a section on open questions and conjectures  (Section \ref{se:conj}).
   
 {\bf Acknowledgments} I would like to acknowledge the advice and support of A. Berenstein. Additionally, I am thankful for stimulating discussions with P. Etingof, A. Polishchuk and B. Shelton.

\section{Quadratic Poisson Brackets } 
\label{se:Poisson alg}

  \subsection{Decorated Spaces and Bracketed Algebras}
\label{se:decorated spaces}

In this section we will introduce the notion of a decorated space and relate it to bracketed algebras, which are commutative algebras with a skew-commutative bracket satisfying the Leibniz rule, but not necessarily the Jacobi identity.
Consider a linear tensor category $\C$ over a field $k$ of characteristic $char(k)=0$.  We will view $\C$ as a symmetric tensor category with the braiding given by the permutation of factors.
Define a decorated space  to be a pair $(V,\Phi)$ of an object $V$ of $\C$ and a morphism $\Phi:V\otimes V\to V\otimes V$ such that $\tau\circ \Phi=-\Phi\circ \tau$. Denote by $\mathcal{D}(\C)$ the {\it category of decorated spaces} whose objects are decorated spaces, and whose morphisms are structure preservingmaps in $\C$.  

One has the following result. 
\begin{lemma}
\label{le: decorated is tensor}
The category $\mathcal{D}(\C)$ is a braided symmetric category. More precisely we have the following:

\noindent(a) The unit object is $(k,{\bf 0})$, where ${\bf 0}: k\otimes k\to 0\in k$ is the trivial map.

\noindent(b) The tensor product $  (V\otimes V', \Phi'')= (V,\Phi)\otimes (V',\Phi')$ of two decorated  spaces $(V,\Phi)$ and $(V',\Phi')$ is defined for all $u,v\in V$ and $u',v'\in V'$ via 
\begin{equation}
\label{eq: decorated tensor product}
 \Phi''=   \Phi_{13}+\Phi'_{24} \ ,
 \end{equation}
 
where  $\Phi_{13}=\tau_{23}\circ(\Phi\otimes Id^{\otimes 2})\circ \tau_{23}$ and  $\Phi'_{24}=\tau_{23}\circ(Id^{\otimes 2}\otimes \Phi' )\circ \tau_{23}$.  
 
\noindent(c)  The symmetric braiding is given by the  morphisms $\mathcal{D}\tau :(V,\Phi)\otimes (V',\Phi')\to (V',\Phi')\otimes (V,\Phi)$  such that $\mathcal{D}\tau(V\otimes V')=V'\otimes V$ and $\mathcal{D}\tau(\Phi_{13}+\Phi'_{24})=\Phi'_{13}+\Phi_{24}$.  

\noindent(d) Moreover, the direct sum $(V,\Phi)\oplus (V',\Phi')=(V\oplus V',\Phi+\Phi')$ of two  decorated spaces $(V,\Phi)$ and $(V',\Phi')$ is a decorated space.
\end{lemma}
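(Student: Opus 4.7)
The proof is essentially bookkeeping: because $\mathcal{C}$ is already symmetric monoidal, the underlying tensor structure on $\mathcal{D}(\mathcal{C})$ is inherited from $\mathcal{C}$, and the real content is checking that the decoration $\Phi''$ defined in (b) is compatible with that structure. My plan is to organize the argument around parts (a)--(d), handling well-definedness first and then the coherence data.

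First I would verify the skew-commutativity condition $\tau\circ\Phi''=-\Phi''\circ\tau$ demanded by (b), where $\tau$ is the swap of the two tensor factors $V\otimes V'$ acting on $(V\otimes V')^{\otimes 2}$. Writing this swap as the composition $\tau_{13}\circ\tau_{24}$ of position-wise transpositions on the four legs, one observes that $\Phi_{13}$ acts only on legs $1,3$, so it commutes with $\tau_{24}$ and anticommutes with $\tau_{13}$ by the hypothesis on $\Phi$; hence $\tau_{13}\tau_{24}\circ\Phi_{13}=-\Phi_{13}\circ\tau_{13}\tau_{24}$, and symmetrically for $\Phi'_{24}$. Adding gives the desired identity for $\Phi''$. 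Part (d) is even simpler: on the diagonal summands of $(V\oplus V')^{\otimes 2}$ the map is $\Phi$ or $\Phi'$ and on the mixed summands it is zero, so skew-commutativity is immediate.

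Next I would treat (a) and associativity together. Since the decoration on $(k,\mathbf{0})$ vanishes, the first term in formula \eqref{eq: decorated tensor product} drops out, and under the canonical isomorphism $V\otimes k\cong V$ of $\mathcal{C}$ the decoration of $(V,\Phi)\otimes(k,\mathbf{0})$ is identified with $\Phi$; the left unit is analogous. For associativity, applying (b) twice to a triple decorated space $(V_1,\Phi^1)\otimes(V_2,\Phi^2)\otimes(V_3,\Phi^3)$, a direct leg-count shows that both bracketings produce the same six-leg decoration $\Phi^1_{14}+\Phi^2_{25}+\Phi^3_{36}$ on the square of $V_1\otimes V_2\otimes V_3$, so the associator of $\mathcal{C}$ automatically is a morphism of decorated spaces and the pentagon axiom reduces to the one in $\mathcal{C}$.

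Finally for the braiding (c) I would check that $(\tau\otimes\tau)\circ(\Phi_{13}+\Phi'_{24})=(\Phi'_{13}+\Phi_{24})\circ(\tau\otimes\tau)$; this is just the observation that the swap sends the two $V$-legs from positions $1,3$ on the left to positions $2,4$ on the right, and conversely for $V'$, so that conjugation by $\tau\otimes\tau$ turns $\Phi_{13}$ into $\Phi_{24}$ and $\Phi'_{24}$ into $\Phi'_{13}$. Naturality, the hexagon, and the involutivity $\mathcal{D}\tau\circ\mathcal{D}\tau=\mathrm{Id}$ then all reduce to the corresponding statements for $\tau$ in $\mathcal{C}$. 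The only obstacle throughout is notational: keeping the leg numbering consistent through the pentagon and hexagon; no mathematical content is needed beyond the skew-commutativity of the individual decorations and the symmetric monoidal structure of $\mathcal{C}$.
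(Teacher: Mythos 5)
Your proof is correct and follows the same direct-verification route as the paper, which in fact writes out essentially none of it (the paper's proof is ``It is easy to see \dots Parts (c) and (d) are obvious''). You have supplied all the bookkeeping the paper elides---the leg-by-leg skew-commutativity check, the common six-leg decoration $\Phi^1_{14}+\Phi^2_{25}+\Phi^3_{36}$ for associativity, the unit and braiding compatibility---and each step checks out.
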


\begin{proof}
 It is easy to see that the tensor product $ (V,\Phi)\otimes (V',\Phi)$ of two decorated spaces is a  decorated space, and that the tensor product  is indeed associative. One can now show that $(k,{\bf 0})$ satisfies the axioms of the unit.  Parts (a) and (b) are proved. 
 Parts (c) and (d) are obvious. The lemma is proved. 
\end{proof}

\begin{lemma}
\label{le:extension to decorated}
Let $\C'$ be a symmetric linear category and let $\mathcal{F}: \C\to \C'$ be a covariant  monoidal-functor compatible with the braiding. Then, $\mathcal{F}$ defines a covariant functor $\mathcal{DF} : \mathcal {D}(\C)\to \mathcal {D} (\C')$, which takes an object  $(V,\Phi)$  to $(\mathcal{F}(V),\mathcal{F}(\Phi))$.
   \end{lemma}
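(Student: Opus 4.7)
The plan is to verify three things: first, that $(\mathcal{F}(V),\mathcal{F}(\Phi))$ really is an object of $\mathcal{D}(\C')$; second, that morphisms of decorated spaces are sent to morphisms of decorated spaces; and third, that identities and composition are preserved. The last point is automatic from the functoriality of $\mathcal{F}$, so the real content lies in the first two points, both of which should follow formally from the compatibility of $\mathcal{F}$ with the tensor structure and the braiding.

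For the first point, I would use the assumed compatibility of $\mathcal{F}$ with the braiding in the form $\mathcal{F}(\tau_{V,V}) = \tau'_{\mathcal{F}(V),\mathcal{F}(V)}$ (tacitly using the monoidal coherence isomorphism $\mathcal{F}(V \otimes V) \cong \mathcal{F}(V) \otimes \mathcal{F}(V)$, which I would suppress in the notation as the authors clearly intend). Then, applying $\mathcal{F}$ to the defining identity $\tau \circ \Phi = -\Phi \circ \tau$ and using that $\mathcal{F}$ is covariant and $k$-linear gives
\[
\tau' \circ \mathcal{F}(\Phi) \;=\; \mathcal{F}(\tau)\circ \mathcal{F}(\Phi) \;=\; \mathcal{F}(\tau\circ\Phi) \;=\; -\mathcal{F}(\Phi\circ\tau) \;=\; -\mathcal{F}(\Phi)\circ \tau',
\]
so $(\mathcal{F}(V),\mathcal{F}(\Phi))$ satisfies the decorated-space axiom.

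For the second point, suppose $f : (V,\Phi) \to (W,\Psi)$ is a morphism in $\mathcal{D}(\C)$, i.e. $f$ is a morphism of $\C$ with $(f\otimes f)\circ \Phi = \Psi \circ (f\otimes f)$. Applying $\mathcal{F}$ and using its monoidal compatibility $\mathcal{F}(f \otimes f) = \mathcal{F}(f) \otimes \mathcal{F}(f)$ yields $(\mathcal{F}(f)\otimes \mathcal{F}(f))\circ \mathcal{F}(\Phi) = \mathcal{F}(\Psi)\circ (\mathcal{F}(f)\otimes \mathcal{F}(f))$, which is precisely the condition that $\mathcal{F}(f)$ be a morphism $(\mathcal{F}(V),\mathcal{F}(\Phi)) \to (\mathcal{F}(W),\mathcal{F}(\Psi))$ in $\mathcal{D}(\C')$. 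Preservation of identities and composition on the level of morphisms is inherited verbatim from $\mathcal{F}$, completing the verification that $\mathcal{DF}$ is a covariant functor.

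There is no real obstacle here; the only subtle point is pedantic, namely being careful with the monoidal coherence isomorphisms $\mathcal{F}(X\otimes Y)\cong \mathcal{F}(X)\otimes \mathcal{F}(Y)$ so that expressions like $\mathcal{F}(\Phi)\colon \mathcal{F}(V)\otimes \mathcal{F}(V)\to \mathcal{F}(V)\otimes \mathcal{F}(V)$ make sense. I would simply remark at the outset that we suppress these coherence isomorphisms, in line with the convention implicit in the statement of the lemma.
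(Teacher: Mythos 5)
Your proof is correct and follows essentially the same route as the paper: the core step is applying $\mathcal{F}$ to the identity $\tau\circ\Phi=-\Phi\circ\tau$ and using compatibility with the braiding, which is exactly the paper's computation. You are just a bit more thorough in also spelling out the morphism-level checks, which the paper dismisses with ``it suffices to show.''
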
 

\begin{proof}
It suffices to show that  for every object $V$ of $\C$ and every  $\Phi\in End(V\otimes V)$ such that $\tau\circ\Phi= -\Phi)\circ\tau$ one has $\tau \circ\mathcal{F}(\Phi)= -\left(\mathcal{F}(\Phi)\circ\tau_{\mathcal{F}(V),\mathcal{F}(V)}\right)$. Since $\mathcal{F}$ is compatible with the braiding we compute:
$$ \tau_{\mathcal{F}(V),\mathcal{F}(V)}\circ\mathcal{F}(\Phi)= \mathcal{F}(\tau_{V,V}\circ\Phi )= \mathcal{F}(-\Phi\circ\tau_{V,V})=-\left(\mathcal{F}(\Phi)\circ\tau_{\mathcal{F}(V),\mathcal{F}(V)}\right)\ .$$

The lemma is proved.
\end{proof}

We next introduce    bracketed and Poisson algebras in the category $\C$. 
 
\begin{definition}
\label{def:bracketed  algebra}
(a) A bracketed algebra is a pair $(A,\{\cdot,\cdot\})$, where $A$ is a commutative algebra  in $\C$   and $\{\cdot,\cdot\}$ is a 
 a structure preserving bilinear map
$\{\cdot,\cdot \}:A\otimes A\to A$ satisfying:

\noindent (i)    anti-commutativity
\begin{equation}
\label{eq: super-anti-commutativity}
\{a,b\}+\{b,a\}=0\ ,
\end{equation}
for any $a,b\in A$.

\noindent (ii) the Leibniz rule 
\begin{equation}
\label{eq:right Leibniz rule}
\{a,bc \}= \{a,b \}c+b \cdot \{a,  c\}    
\end{equation}
 
for any $a,b,c\in A$.   

(b) A bracketed algebra is called Poisson, if $\{\cdot,\cdot\}$ satisfies the Jacobi identity:

$$\{a,\{b.c\}\}+\{c,\{a,b\}\}+\{b,\{c,a\}\}=0$$
for all $a,b,c\in A$.
\end{definition}

 We denote by $BAlg(\C)$ the category of bracketed algebras  in $\C$, whose morphisms are structure preserving algebra homomorphisms in $\C$.

\begin{remark}
 Let $(A,\{\cdot,\cdot\})$be a bracketed algebra  in $BAlg(\C)$. The bracket $\{\cdot,\cdot\}$ satisfies
\begin{equation}
\label{eq:left Leibniz rule}
 \{c a,b\}= c\cdot \{a,b\}+ \{c,b\}\cdot a\ .
\end{equation}

\end{remark}

Note the following obvious fact.

\begin{lemma}
 The Poisson algebras form a full subcategory of $BAlg(\C)$, the category of Poisson algebras.
\end{lemma}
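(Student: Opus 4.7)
The plan is essentially to unwind the definitions, since the statement is a tautology once one parses "full subcategory" correctly. A subcategory $\mathcal{S}$ of a category $\mathcal{D}$ is full when, for every pair of objects $X, Y$ of $\mathcal{S}$, one has $\mathrm{Hom}_{\mathcal{S}}(X,Y) = \mathrm{Hom}_{\mathcal{D}}(X,Y)$; that is, no new morphisms appear and no old morphisms are discarded.

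First I would observe that every Poisson algebra, by Definition \ref{def:bracketed  algebra}(b), is in particular a bracketed algebra, since the Jacobi identity is imposed on top of the axioms of Definition \ref{def:bracketed  algebra}(a). Hence the class of Poisson algebras is a subclass of the objects of $BAlg(\C)$.

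Next I would note that a morphism of Poisson algebras $f: A \to A'$ is, by the standard convention (and as implicit in the text), a structure preserving algebra homomorphism in $\C$ satisfying $f(\{a,b\}) = \{f(a),f(b)\}$ for all $a,b \in A$; this is exactly the data of a morphism in $BAlg(\C)$. Since the Jacobi identity is a condition on the object (the bracket of $A$) and not a relation between $A$ and $A'$, it does not enter the definition of a morphism. Consequently $\mathrm{Hom}_{\text{Poisson}}(A,A') = \mathrm{Hom}_{BAlg(\C)}(A,A')$ whenever both $A$ and $A'$ are Poisson, which is precisely fullness.

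There is no genuine obstacle here; the only thing to be careful about is to make explicit that the notion of morphism between Poisson algebras is inherited, without extra conditions, from that of bracketed algebras. Once this convention is stated, the lemma follows immediately and justifies the remark that it is an "obvious fact."
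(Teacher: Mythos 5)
Your proof is correct and is the canonical definition-unwinding argument; the paper itself offers no proof (it labels the lemma an obvious fact), and what you have written is exactly the justification one would supply.
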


 

We need the following fact.

\begin{lemma}
\label{ le: BAlg symm}
The category $ BAlg(\C)$ is a symmetric monoidal category.  More precisely:

 
 \noindent(a) The tensor product  $(A \otimes B, \{\ ,\ \}):= (A,\{\cdot,\cdot\}_{A})\otimes (B,\{\cdot,\cdot\}_{B}) $ of two objects  $(A,\{\ ,\ \}_{A})$ and $(B,\{\ ,\ \}_{B})$  is defined in the following way:  for all $a,a'\in A$, $b,b'\in B$ define, 
$$\{ a \otimes b,a' \otimes b'\}= \left( (1\otimes \mu_B)\circ( \{a, a'\}\otimes b\otimes b')+(\mu_A\otimes 1)\circ (a\otimes a \otimes \{ b ,b'\}) \right)\ ,$$
 where $\mu_A$ denotes the multiplication in $A$ and $\mu_B$ the multiplication in $B$.

 \noindent(b)  Its unit element is $(k, {\bf 0})$.

\noindent(c) The symmetric braiding is defined as 
$$\sigma( (A,\{\ ,\ \}_{A})\otimes (B,\{\ ,\ \}_{B})= (B,\{\ ,\ \}_{B})\otimes  (A,\{\ ,\ \}_{A})\ .$$

\end{lemma}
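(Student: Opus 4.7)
The plan is to verify that $BAlg(\C)$ inherits its symmetric monoidal structure from the underlying symmetric monoidal category of commutative algebras in $\C$: the tensor product, associator, unit, and braiding already exist at the level of commutative algebras, so the work is to show that the bracket structure is compatible. Concretely, I need to (i) check that the formula in (a) defines a bracketed algebra, (ii) verify that the associator, unit isomorphisms, and braiding are bracket-preserving morphisms, and (iii) observe that the coherence axioms (pentagon, triangle, hexagon) then follow automatically from the corresponding identities in $\C$, since morphisms in $BAlg(\C)$ are morphisms in $\C$ subject to extra conditions.

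For (i), skew-commutativity of the tensor-product bracket is immediate: applying the skew-commutativity of $\{\cdot,\cdot\}_A$ and $\{\cdot,\cdot\}_B$ to each summand and using commutativity of multiplication in $A$ and $B$ gives $\{a\otimes b,a'\otimes b'\}+\{a'\otimes b',a\otimes b\}=0$. The Leibniz rule is the main calculation: I would expand
\[
\{a\otimes b,\,(a'\otimes b')(a''\otimes b'')\}=\{a,\,a'a''\}_A\otimes bb'b''+aa'a''\otimes\{b,\,b'b''\}_B,
\]
apply the Leibniz rules for $\{\cdot,\cdot\}_A$ and $\{\cdot,\cdot\}_B$ to split each bracket into two summands, and regroup the four resulting terms using commutativity of multiplication in $A$ and $B$ to recognize them as $\{a\otimes b,a'\otimes b'\}(a''\otimes b'')+(a'\otimes b')\{a\otimes b,a''\otimes b''\}$.

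For (b) and (c): the unit axiom is easy, since $(k,\mathbf{0})\otimes(A,\{\cdot,\cdot\}_A)\cong(A,\{\cdot,\cdot\}_A)$, the $k$-component of the tensor-product bracket vanishing because the bracket on $k$ is zero and Leibniz forces $\{\lambda,-\}=0$ for scalars, leaving exactly $\{\cdot,\cdot\}_A$ on the second factor. For the braiding, I would check that the swap $\sigma_{A,B}\colon A\otimes B\to B\otimes A$ is a morphism of bracketed algebras by verifying that the image of the tensor-product bracket under $\sigma_{A,B}$ coincides with the tensor-product bracket on $B\otimes A$; this follows because the two summands of the bracket formula are simply exchanged by the swap, while the factors $bb'$ and $aa'$ are reordered using commutativity of multiplication in $A$ and $B$. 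Once $\sigma$, the associator, and the unit isomorphisms are shown to preserve brackets, the pentagon, triangle, and hexagon axioms hold because they are equalities of morphisms in $\C$ that already hold for the underlying symmetric monoidal structure on commutative algebras.

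The main obstacle is really just the clean bookkeeping in the Leibniz calculation of step (i); conceptually the structure is transparent once one observes that the tensor-product bracket is the unique biderivation extending $\{\cdot,\cdot\}_A$ on the first tensor factor and $\{\cdot,\cdot\}_B$ on the second along the multiplication of $A\otimes B$.
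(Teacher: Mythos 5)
Your proposal is correct, but it is organized quite differently from the paper's proof. The paper's argument verifies \emph{only} that the tensor product in (a) is associative, i.e.\ that the associator is a morphism of $BAlg(\C)$, and it does so at the level of morphisms by writing the two brackets $\{a\otimes(b\otimes c),\,\cdot\,\}$ and $\{(a\otimes b)\otimes c,\,\cdot\,\}$ as compositions involving $\sigma_{23}$, $\sigma_{34}$, $\sigma_{45}$ and then invoking the braiding relations ($\sigma_{23,4}=\sigma_{23}\sigma_{34}$, $\sigma_{3,45}=\sigma_{45}\sigma_{34}$, $\sigma_{23}\sigma_{45}=\sigma_{45}\sigma_{23}$); parts (b) and (c) are then declared obvious. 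Your proposal addresses a prior point that the paper leaves implicit --- that the formula in (a) actually equips $A\otimes B$ with a bracketed-algebra structure, namely anti-commutativity and the Leibniz rule --- and then checks, element by element, that the associator, unit maps, and braiding are bracket-preserving, with coherence then inherited from $\C$. Both routes are sound; the paper's is tighter at the cost of omitting the well-definedness check of the bracket on $A\otimes B$, whereas yours is more complete on that point and conceptually transparent (you identify the tensor-product bracket as the unique biderivation extending the two given brackets), at the price of more elementwise bookkeeping in the Leibniz computation.
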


\begin{proof}
 To prove part (a) we have to show  that the tensor product  is associative.  We compute: 
$$\{ a\otimes (b\otimes c), a'\otimes (b'\otimes c')\}$$
$$=\left(\{\cdot,\cdot\}\otimes \mu_B\otimes \mu_C+ \mu_A \otimes  \{\cdot,\cdot\}\otimes \mu_C  + \mu_A\otimes \mu_B \otimes  \{\cdot,\cdot\} \right) \circ \sigma_{45}\circ\sigma_{23,4} (a\otimes (b\otimes c)\otimes a'\otimes (b'\otimes c')) \ ,$$
where $\sigma_{45}= Id_{A\otimes A\otimes B}\otimes \sigma_{C,B}\otimes Id_{C}$ and $\sigma_{23,4}= Id^{A}\otimes \sigma_{B\otimes C, A}\otimes Id_{B\otimes C}$.
 
 Similarly, we obtain that 
 $$\{ (a\otimes b)\otimes c, (a'\otimes b')\otimes c'\}$$
$$=\left(\{\cdot,\cdot\}\otimes \mu_B\otimes \mu_C+ \mu_A \otimes  \{\cdot,\cdot\}\otimes \mu_C +  \mu_A\otimes \mu_B \otimes  \{\cdot,\cdot\} \right) \circ \sigma _{23}\circ\sigma _{3,45} (a\otimes (b\otimes c)\otimes a'\otimes (b'\otimes c')) \ ,$$
  where $\sigma _{23}= Id_{A}\otimes \sigma_{B,A}$ and $\sigma _{3,45}= Id_{A\otimes B}\otimes \sigma_{C, A\otimes B}\otimes Id_{C}$.    
Since $\sigma$ is a braiding we  have
$\sigma_{23,4}= \sigma_{23}\circ\sigma_{34}$ and  $\sigma_{3,45}= \sigma_{45}\circ\sigma_{34}$ with $ \sigma_{34}= Id_{A\otimes B}\sigma_{C,A}\otimes Id_{B\otimes C}$.

Since $\sigma_{23}\circ\sigma_{45}=\sigma_{45}\circ\sigma_{23}$ it is now easy to verify that  
$\sigma_{23}\circ\sigma_{3,45}= \sigma_{45}\circ\sigma_{23,4}$ and hence, that the tensor product is indeed associative.

Part (a) is proved, and Parts (b) and (c) are obvious.
 Lemma \ref{ le: BAlg symm} is proved. 
\end{proof}

We have the following obvious facts.

 \begin{lemma}
\noindent(a)The assignment $A\to (A, {\bf0})$ defines a faithful tensor functor from the category of commutative algebras in $\C$ to the category of bracketed algebras.
 
 \noindent(b)The assignment $(A,\{\cdot,\cdot\})\to A$ defines a forgetful functor  from $BAlg(\C)$ to the category of algebras in $\C$. \end{lemma}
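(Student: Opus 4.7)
\smallskip\noindent\textbf{Proof proposal.} The plan is to unwind both statements directly from Definition \ref{def:bracketed  algebra} and Lemma \ref{ le: BAlg symm}, with essentially no calculation required.

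For part (a), I would first verify that for any commutative algebra $A$ in $\C$ the pair $(A,\mathbf{0})$ is a bracketed algebra: both anti-commutativity \eqref{eq: super-anti-commutativity} and the Leibniz rule \eqref{eq:right Leibniz rule} collapse to $0=0$ when the bracket is the zero map. Next, functoriality amounts to observing that any algebra homomorphism $f: A \to B$ in $\C$ satisfies $f \circ \mathbf{0}_A = \mathbf{0}_B \circ (f \otimes f)$ trivially, and so descends to a morphism $(A,\mathbf{0}) \to (B,\mathbf{0})$ in $BAlg(\C)$; faithfulness is immediate since the underlying map of $f$ is not altered. To confirm the tensor-functor property, I would invoke the formula from Lemma \ref{ le: BAlg symm}(a): the bracket on $(A,\mathbf{0}) \otimes (B,\mathbf{0})$ equals
$$(1\otimes \mu_B)\circ(\mathbf{0}\otimes b\otimes b') + (\mu_A\otimes 1)\circ(a\otimes a'\otimes \mathbf{0})=0,$$
which matches the zero bracket on $A \otimes B$. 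Compatibility with the unit object $(k,\mathbf{0})$ and with the symmetric braiding then follows from Lemma \ref{ le: BAlg symm}(b)--(c).

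For part (b), a morphism in $BAlg(\C)$ is, by definition, a structure-preserving algebra homomorphism in $\C$, so the assignment sending $(A,\{\cdot,\cdot\}) \mapsto A$ and $f \mapsto f$ yields a well-defined map on both objects and morphisms; it obviously preserves composition and identities, hence is a functor to the category of commutative algebras in $\C$.

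The main (non-)obstacle is purely bookkeeping: the entire content reduces to the observation that the zero bracket is compatible with every piece of structure introduced in Lemma \ref{ le: BAlg symm}, and each verification collapses to $0+0=0$. I do not anticipate any subtlety that would require more than a line of work beyond what is sketched above.
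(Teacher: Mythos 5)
Your proof is correct; the paper itself leaves this lemma unproved (it is introduced under ``We have the following obvious facts''), and your line-by-line unwinding of the definitions and Lemma \ref{ le: BAlg symm} is exactly the direct verification that the paper considers immediate.
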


 \subsection{Symmetric Algebras and Bracketed Symmetric Algebras}
 
 Let $V$ be an object of $\C$.  The category $\C$ is Abelian, hence $Id_{V\otimes V}+ \sigma_{V,V}$ is a  morphism in $\C$ and the {\it symmetric square} of $S^2 V=Im(Id_{V\otimes V}+\sigma)$, respectively the {\it exterior square} $\Lambda^2 V=Ker(Id_{V\otimes V}+\sigma)$ are objects in $\C$. Define the {\it $n$-th symmetric power} as 
  $$ S^{n}V= S^{2}V\otimes V^{\otimes (n-2)}\cap V\otimes S^{2}V\otimes V^{\otimes (n-2)}\cap\ldots\cap V^{\otimes (n-2)}\otimes S^{2}V\subset V^{\otimes n}\ .$$ 
  
Similarly, define the {\it $n$-th exterior power} as 
  $$ \Lambda^{n}V= \Lambda^{2}V\otimes V^{\otimes (n-2)}\cap V\otimes \Lambda^{2}V\otimes V^{\otimes (n-2)}\cap\ldots\cap V^{\otimes (n-2)}\otimes \Lambda^{2}V\subset V^{\otimes n}\ .$$

 Define the {\it symmetric algebra} $S(V)=T(V)/\langle \Lambda^2 V\rangle$ as the quotient of the tensor algebra of $V$ by the two-sided ideal generated by  the exterior square.

 The following facts are immediate for a symmetric linear category where the braiding is given by the permutation of factors.
 
\begin{proposition}
\label{pr:S is a functor} 
\noindent(a)  Let $V$ be an object of $\C$. The symmetric algebra $S(V)$ is a commutative  algebra in $\C$.  

\noindent(b) The assignment $V\mapsto S(V)$  is functorial.    

\noindent(c) The functor is exponential, i.e. $S(U\oplus V)\cong S(U)\otimes_{\sigma} S(V)$.    
   
\end{proposition}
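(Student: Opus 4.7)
The plan is to handle the three parts in sequence, leveraging the fact that $\C$ is a linear symmetric tensor category (so kernels and quotients are available, tensor product is exact, and the braiding $\sigma$ is natural with $\sigma^2=\mathrm{Id}$). Parts (a) and (b) are essentially formal; the bulk of the work is in part (c).

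For (a), the key point is that for any $v,w\in V$ the element $v\otimes w-w\otimes v$ lies in $\ker(\mathrm{Id}+\sigma_{V,V})=\Lambda^2V$, since $(\mathrm{Id}+\sigma_{V,V})(v\otimes w-w\otimes v)=0$. Hence this element vanishes in the quotient $S(V)=T(V)/\langle\Lambda^2 V\rangle$, and associativity is inherited from $T(V)$, giving a commutative algebra in $\C$. For (b), a morphism $f:V\to W$ in $\C$ induces an algebra map $T(f):T(V)\to T(W)$, and by naturality of the braiding $(f\otimes f)\circ\sigma_{V,V}=\sigma_{W,W}\circ(f\otimes f)$, so $f\otimes f$ carries $\Lambda^2V$ into $\Lambda^2W$. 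Consequently $T(f)$ sends the ideal $\langle\Lambda^2V\rangle$ into $\langle\Lambda^2W\rangle$ and descends to an algebra map $S(f):S(V)\to S(W)$. Functoriality is then inherited from $T$.

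For (c), I would rely on the universal property of $S(V)$ as the free commutative algebra on $V$ in $\C$: any morphism $V\to A$ to a commutative algebra in $\C$ extends uniquely to an algebra homomorphism $S(V)\to A$. First I construct the map in one direction: the coproduct inclusions $U,V\hookrightarrow U\oplus V$ compose with the canonical embedding into $S(U\oplus V)$ to give algebra homomorphisms $S(U),S(V)\to S(U\oplus V)$ with commuting images. These assemble (using commutativity of $S(U\oplus V)$) into an algebra morphism $\psi:S(U)\otimes_\sigma S(V)\to S(U\oplus V)$. For the inverse, I need that $S(U)\otimes_\sigma S(V)$ carries a commutative algebra structure in $\C$, with product given by $\mu\circ(\mathrm{Id}\otimes\sigma\otimes\mathrm{Id})$; then the composite $U\oplus V\to S(U)\otimes_\sigma S(V)$ extends uniquely by universality to $\phi:S(U\oplus V)\to S(U)\otimes_\sigma S(V)$. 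Verifying $\phi$ and $\psi$ are mutually inverse reduces to checking on the generators $U$ and $V$, where it is immediate.

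The main obstacle is the well-definedness and commutativity of the algebra structure on $S(U)\otimes_\sigma S(V)$: one must show that using the braiding $\sigma$ to permute tensor factors yields a multiplication that is both associative and commutative. This is precisely where the symmetry axiom $\sigma^2=\mathrm{Id}$ (rather than mere braidedness) is essential, as the commutativity relation $\sigma_{S(V),S(U)}\circ\sigma_{S(U),S(V)}=\mathrm{Id}$ is what allows the images of $S(U)$ and $S(V)$ to commute in the tensor product. Once this is set up carefully, the remaining verifications are bookkeeping.
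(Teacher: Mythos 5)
The paper gives no proof of this proposition, merely asserting that "the following facts are immediate for a symmetric linear category where the braiding is given by the permutation of factors." Your argument supplies the standard details: commutativity of $S(V)$ follows since $\mathrm{Im}(\mathrm{Id}-\sigma)\subseteq\ker(\mathrm{Id}+\sigma)=\Lambda^2V$ (this is where $\sigma^2=\mathrm{Id}$ enters), functoriality comes from naturality of $\sigma$, and the exponential property is proved via the universal property of $S(V)$ as the free commutative algebra object. The reasoning is correct. One small remark: in part (c) you invoke the universal property of $S(V)$ without proof; it holds because any algebra map $T(V)\to A$ with $A$ commutative kills $\Lambda^2V$, since every $x\in\Lambda^2V$ satisfies $x=\tfrac{1}{2}(\mathrm{Id}-\sigma)x$ in characteristic zero, and $\mu_A\circ(f\otimes f)\circ(\mathrm{Id}-\sigma)=(\mu_A-\mu_A\circ\sigma)\circ(f\otimes f)=0$. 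You correctly identify that commutativity of the tensor-product algebra $S(U)\otimes_\sigma S(V)$ is the point where symmetry (rather than mere braidedness) of $\C$ is essential; this is consistent with the paper's hypothesis.
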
 
 
 Define morphisms $\sigma_{i,i+1}\in End_\C(V^{\otimes n})$   for all $1\le i\le (n-1)$ by $\sigma_{i,i+1}=Id^{\otimes (i-1)}\otimes \sigma\otimes Id^{\otimes(n-i-1}$. Define for any permutation $\tau$  in the symmetric group  $\mathfrak{S}_{n}$ the morphism $\sigma_{\tau}\in End(V^{\otimes n})$ as $\sigma_{\tau}= \sigma_{i_{r},i_{r}+1}\circ\ldots \circ \sigma_{i_{1},i_{1}+1}$, where $\tau= (i_{r},i_{r}+1)\circ\ldots\circ (i_{1},i_{1}+1)$ is a presentation of $\tau$ consisting of simple transpositions and $\sigma_{i,i+1}$ defined as above.  Note that  $\sigma_{\tau}$ is well defined,  independent of the choice of presentation of $\tau$. 
 
 Recall the definition of the $n$-th braided factorial:
$$[n]!_{\sigma}= [n]!_{V,\sigma}=[n]!_{\sigma}: V^{\otimes n}\to V^{\otimes n}\ , [n]!_{\sigma}= \sum_{\tau\in \mathfrak{S}_{n}}\sigma_{\tau}\ .$$ 
By definition $[n]!_{\sigma}\circ \sigma_{i}=\sigma_{i}\circ [n]!_{\sigma}=[n]!_{\sigma}$. Similarly, define the $n$-th braided skew-factorial $[n]!_{-\sigma}=[n]!_{-\sigma,V}=\sum_{\tau\in \mathfrak {S}_{n}} (-1)^{\l(\tau)}\sigma_{\tau}$, where $\l(\tau)$ denotes the length of the permutation $\tau$. The following fact is well known.
 
\begin{proposition}
\label{pr:symmetric spaces }
 For any object $V$ of $\C$ one has:
 
\noindent(a)  $S^{n}V=Im([n]!_{V,\sigma})$ and $\Lambda^{n}V= Im([n]!_{V,-\sigma})$. 

\noindent(b)  $\langle \Lambda^2V\rangle_n=Ker~ [n]!_\sigma$, where $\langle \Lambda^{2}V\rangle _{n} $ is  the degree $n$ component of the ideal  $\langle \Lambda^{2}V\rangle$.  Equivalently, the composition
$\phi_{n}: S^{n}V\hookrightarrow T V\twoheadrightarrow S(V)$  is an   isomorphism between  $S^{n}V$ and  $S(V)_{n}$, where   $S(V)_{n}$ is the $n$-the graded component of $S(V)$.




\end{proposition}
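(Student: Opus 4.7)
The plan is to exploit the fact that in characteristic zero the normalized operator $\frac{1}{n!}[n]!_\sigma$ is the symmetrizer idempotent on $V^{\otimes n}$, with image $S^nV$ and kernel precisely $\langle \Lambda^2V\rangle_n$, and then read off both assertions from this decomposition.

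For part (a), I would first show $Im([n]!_{V,\sigma})\subseteq S^nV$: the given identity $\sigma_{i,i+1}\circ [n]!_\sigma=[n]!_\sigma$ allows me to rewrite $[n]!_\sigma=\frac{1}{2}(Id+\sigma_{i,i+1})\circ[n]!_\sigma$, so its image lies in $V^{\otimes(i-1)}\otimes S^2V\otimes V^{\otimes(n-i-1)}$ for every $i$, and intersecting over $i$ gives $S^nV$. Conversely, if $x\in S^nV$ then $\sigma_{i,i+1}(x)=x$ for every $i$ (because $\sigma$ restricts to the identity on $S^2V$), so $\sigma_\tau(x)=x$ for every $\tau\in\mathfrak{S}_n$ and hence $[n]!_\sigma(x)=n!\,x$, placing $x$ in $Im([n]!_\sigma)$. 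The argument for $\Lambda^nV=Im([n]!_{V,-\sigma})$ is identical after sign changes, using $\sigma_{i,i+1}\circ[n]!_{-\sigma}=-[n]!_{-\sigma}$ and the fact that $\sigma$ acts as $-Id$ on $\Lambda^2V$.

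For part (b), the inclusion $\langle \Lambda^2V\rangle_n\subseteq Ker~[n]!_\sigma$ is immediate: on each summand $V^{\otimes(i-1)}\otimes \Lambda^2V\otimes V^{\otimes(n-i-1)}$ the operator $\sigma_{i,i+1}$ acts as $-Id$, so $[n]!_\sigma(x)=[n]!_\sigma(\sigma_{i,i+1}(x))=-[n]!_\sigma(x)$ forces $[n]!_\sigma(x)=0$. For the opposite inclusion I would pass to the quotient map $\pi:V^{\otimes n}\twoheadrightarrow S(V)_n$: since $(Id-\sigma_{i,i+1})(x)\in V^{\otimes(i-1)}\otimes \Lambda^2V\otimes V^{\otimes(n-i-1)}\subseteq \langle \Lambda^2V\rangle_n$, one has $\pi\circ\sigma_{i,i+1}=\pi$, hence $\pi\circ\sigma_\tau=\pi$ for all $\tau\in\mathfrak{S}_n$, and therefore $\pi\circ[n]!_\sigma=n!\,\pi$. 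Equivalently, $n!\,x-[n]!_\sigma(x)\in \langle \Lambda^2V\rangle_n$ for every $x\in V^{\otimes n}$, which writes $x$ as $\frac{1}{n!}[n]!_\sigma(x)+y$ with the first summand in $S^nV$ by part (a) and $y\in \langle \Lambda^2V\rangle_n$. This proves simultaneously that $V^{\otimes n}=S^nV+\langle \Lambda^2V\rangle_n$ and that $\phi_n$ is surjective. Injectivity of $\phi_n$, i.e. $S^nV\cap \langle \Lambda^2V\rangle_n=0$, follows since $[n]!_\sigma$ is $n!\,Id$ on $S^nV$ and zero on $\langle \Lambda^2V\rangle_n$. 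Combining yields the direct sum decompositions $V^{\otimes n}=S^nV\oplus Ker~[n]!_\sigma=S^nV\oplus \langle \Lambda^2V\rangle_n$, which forces $\langle \Lambda^2V\rangle_n=Ker~[n]!_\sigma$.

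I do not anticipate a real obstacle here: the whole argument rests on the idempotency of the normalized symmetrizer in characteristic zero and the elementary identity $\Lambda^2V=Im(Id-\sigma)$. The only point deserving care is that we work inside the abelian symmetric tensor category $\C$ rather than in vector spaces, so all kernels, images and intersections must be interpreted categorically; since $\C$ is abelian and the braiding is the permutation of factors, each manipulation goes through verbatim.
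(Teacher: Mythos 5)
Your proof is correct. The paper gives no proof of this proposition — it merely asserts that "the following fact is well known" — and the argument you present (the normalized symmetrizer $\tfrac{1}{n!}[n]!_\sigma$ is idempotent with image $S^nV$ and kernel $\langle\Lambda^2V\rangle_n$, read off from $\sigma_{i,i+1}[n]!_\sigma=[n]!_\sigma$ and from the fact that $\pi\circ\sigma_{i,i+1}=\pi$ on the quotient) is exactly the standard one being invoked.
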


 Let $V$ be an object in the symmetric tensor category $\C$ and let  $\Phi: V\otimes V\to V\otimes V$ be a morphism in $\C$.   For $1\le i<j\le n$ define $\Phi_{i,j}:V^{\otimes n}\to V^{\otimes n}$  as follows. First, $\Phi_{i,i+1}:\in End(V^{\otimes n})= Id^{\otimes i-1}\otimes \Phi\otimes Id^{\otimes n-i-1}$ and, recursively,  for $i<j-1$ 
\begin{equation}
\label{eq: Phi i,j}
\Phi_{i,j}=\sigma_{(j-1,j)}\circ\Phi_{i,j-1}\circ\sigma_{(j-1,j)}\ . 
\end{equation}

\begin{proposition}
\label{pr: tau on Phi}
For any $i<j$ and any $\tau\in \mathfrak{S}_{n}$  such that $\tau(i)<\tau(j)$ one has
$\Phi_{\tau(i),\tau(j)}=\sigma_{\tau}\circ\Phi_{i,j}\circ\sigma_{\tau^{-1}}$.  
\end{proposition}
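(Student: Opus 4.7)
\smallskip\noindent\textbf{Proof proposal.}
The plan is to iteratively unfold the recursion \eqref{eq: Phi i,j} to reduce the proposition to an elementary statement about how conjugation by $\sigma_\tau$ transports the ``base'' operator $\Phi_{i,i+1}$ across tensor slots. Iterating the recursion yields
\[
\Phi_{i,j}=\sigma_{\pi_{i,j}}\circ \Phi_{i,i+1}\circ \sigma_{\pi_{i,j}^{-1}},\qquad \pi_{i,j}=(j-1,j)(j-2,j-1)\cdots(i+1,i+2),
\]
where $\pi_{i,j}\in\mathfrak{S}_n$ satisfies $\pi_{i,j}(i)=i$ and $\pi_{i,j}(i+1)=j$. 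Since $\C$ is symmetric and $\sigma^2=\mathrm{Id}$, the assignment $\tau\mapsto \sigma_\tau$ is a group homomorphism $\mathfrak{S}_n\to \mathrm{End}_\C(V^{\otimes n})$. Applying the displayed formula to both $\Phi_{i,j}$ and $\Phi_{\tau(i),\tau(j)}$, the proposition reduces to the claim $\sigma_\rho\circ\Phi_{i,i+1}\circ\sigma_{\rho^{-1}}=\Phi_{a,a+1}$, where $a=\tau(i)$ and $\rho=\pi_{\tau(i),\tau(j)}^{-1}\,\tau\,\pi_{i,j}$; one checks directly that $\rho(i)=a$ and $\rho(i+1)=a+1$.

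It thus suffices to prove the following sub-lemma: for every $\rho\in\mathfrak{S}_n$ with $\rho(i)=a$ and $\rho(i+1)=a+1$, one has $\sigma_\rho\Phi_{i,i+1}\sigma_{\rho^{-1}}=\Phi_{a,a+1}$. Any two such $\rho$ differ by a permutation fixing $i$ and $i+1$ pointwise, whose $\sigma$-image commutes with $\Phi_{i,i+1}$ (since the latter acts trivially on all slots other than $i,i+1$); so it suffices to check the identity for one convenient choice. Proceeding by induction on $|a-i|$, the case $a=i$ is trivial. For the inductive step (assuming WLOG $a>i$) I pick $\rho=(a-1,a)(a,a+1)\rho'$ with $\rho'(i)=a-1$ and $\rho'(i+1)=a$; the inductive hypothesis gives $\sigma_{\rho'}\Phi_{i,i+1}\sigma_{\rho'^{-1}}=\Phi_{a-1,a}$, and the defining recursion gives $\sigma_{(a,a+1)}\Phi_{a-1,a}\sigma_{(a,a+1)}=\Phi_{a-1,a+1}$. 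Thus the task reduces to the identity $\sigma_{(a-1,a)}\Phi_{a-1,a+1}\sigma_{(a-1,a)}=\Phi_{a,a+1}$.

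This is a special case of a \emph{left recursion} lemma $\sigma_{(i,i+1)}\Phi_{i,j}\sigma_{(i,i+1)}=\Phi_{i+1,j}$ (for $j>i+1$), which I prove by induction on $j-i$. For $j>i+2$, $\sigma_{(i,i+1)}$ commutes with $\sigma_{(j-1,j)}$ since their supports are disjoint, so combining the defining recursion for $\Phi_{i,j}$ (resp.~$\Phi_{i+1,j}$) with the inductive hypothesis yields the claim. The base case $j=i+2$ reduces, after using $\Phi_{i,i+2}=\sigma_{(i+1,i+2)}\Phi_{i,i+1}\sigma_{(i+1,i+2)}$, to the braid-type equality
\[
\sigma_{(i,i+1)}\sigma_{(i+1,i+2)}\,\Phi_{i,i+1}\,\sigma_{(i+1,i+2)}\sigma_{(i,i+1)}=\Phi_{i+1,i+2}.
\]
This is the main technical obstacle of the argument, verified by a direct computation on decomposable tensors using only that each $\sigma_{(k,k+1)}$ swaps the corresponding two tensor factors while $\Phi_{i,i+1}$ applies $\Phi$ to two consecutive factors. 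Apart from this single braid-type verification, every step of the proof is a formal rearrangement via the group homomorphism $\tau\mapsto\sigma_\tau$.
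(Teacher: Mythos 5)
Your proof is correct, and it takes a genuinely different route from the paper's. The paper first proves the conjugation formula for a single adjacent transposition $\tau=(m,m+1)$ (its Lemma on transposition and $\Phi$), handling the case $m+1=i$ via an auxiliary lemma that conjugates $\Phi_{i,j}$ by a product of $j-i+1$ adjacent transpositions, and then proves the full statement by induction on $\ell(\tau)$, where the inductive step requires a careful argument to ensure the truncated word $\tau'$ still satisfies $\tau'(i)<\tau'(j)$. You instead maximally unfold the recursion defining $\Phi_{i,j}$ to write it as a conjugate of the ``normal form'' $\Phi_{i,i+1}$, exploit that $\tau\mapsto\sigma_\tau$ is a group homomorphism, and reduce the entire statement to transporting $\Phi_{i,i+1}$ to $\Phi_{a,a+1}$ by conjugation. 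The resulting sub-lemma is handled by noting (correctly) that since $\Phi_{i,i+1}=\mathrm{Id}^{\otimes i-1}\otimes\Phi\otimes\mathrm{Id}^{\otimes n-i-1}$ it commutes with $\sigma_\mu$ for any $\mu$ fixing $i,i+1$, so the answer is independent of the lift $\rho$, and then by a short induction whose only nontrivial input is the single identity $\sigma_{12}\sigma_{23}\Phi_{12}\sigma_{23}\sigma_{12}=\Phi_{23}$. That identity is exactly the naturality of the symmetric braiding applied to $\Phi\otimes\mathrm{id}:(V\otimes V)\otimes V\to (V\otimes V)\otimes V$ combined with the hexagon decomposition of $\sigma_{V\otimes V,V}=\sigma_{12}\sigma_{23}$, so you could phrase the base case categorically rather than ``on decomposable tensors'' if $\C$ is not concrete; but your verification is fine as written. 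Your organization has the advantage of concentrating all braid-type content in one clean base case and avoiding the paper's length-induction bookkeeping; the paper's version is slightly shorter because it never fully unfolds the recursion.
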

\begin{proof}
We first need the following fact.  
\begin{lemma}
\label{le: transposition and Phi}
 If $1\le i<j\le n$, $ m\le n-1$ and $\tau=(m,m+1)$, then   $\sigma_{m,m+1}\circ\Phi_{i,j}\circ\sigma_{m,m+1}= \Phi_{\tau(i),\tau(j)}$  
  \end{lemma}
  \begin{proof}
Clearly, we obtain
 \begin{equation}
 \label{eq: commutating permutations}
 \sigma_{ m,m+1}\circ\Phi_{i,j}\circ\sigma_{m,m+1}=\Phi_{i,j}\ , 
 \end{equation}
  if $\{ m,m+1\}\cap\{i,j\}=\emptyset$. 
 
The assertion holds by definition \eqref{eq: Phi i,j} if $m=j$ or $m=j-1$. 
It remains the case, when $m=i$ or $m+1=i$. Since $\sigma_{m,m+1}$ is an involution, it suffices to prove the assertion for $m+1=i$; i.e. we have to show that $\sigma_{i-1,i}\circ\Phi_{i,j}\circ\sigma_{i-1,i}=\Phi_{i-1,j}$. We need the following fact.
\begin{lemma}
Let $\tau= (j-1,j)\circ (j-2,j-1)\circ\ldots \circ (i-1,i)$. Then we have for $\Phi\in End(V\otimes V)$ that
$$ \sigma_{\tau }\circ\Phi_{i,j}\circ \sigma_{\tau^{-1}}= \Phi_{i-1,j-1}=\Phi_{\tau(i),\tau(j)}\ .$$
\end{lemma}
 \begin{proof}
The braid  relation  of the symmetric group $\mathfrak{S}_n$ yields  that  $\sigma_{i,i+1}\circ\sigma_{i,i-1}\circ\Phi_{i,i+1}=\Phi_{i-1,i}\circ\sigma_{i,i-1}\circ\sigma_{i,i+1}$.  We obtain through repeated application that  $ \sigma_{\tau}\circ \Phi_{i ,j}= \Phi_{i-1 ,j-1}\circ\sigma_{\tau}$. The lemma is proved.  
  \end{proof}
  
 Now, we compute 
 $$ \sigma_{i-1,i}\circ\Phi_{i,j}\circ \sigma_{i-1,i}= \sigma_{i-1,i}\circ(\sigma_{\tau})^{-1}\circ\Phi_{i-1,j-1}\circ\sigma_{\tau}\circ\sigma_{i-1,i}$$
 $$=\sigma_{i,i+1}\circ\ldots\circ \sigma_{j-2,j-1}\circ \sigma_{j-1,j}\circ\Phi_{i-1,j-1}\circ \sigma_{j-1,j}\circ\sigma_{j-2,j-1}\circ\ldots\circ\sigma_{i,i+1}\ .$$
$$  \sigma_{i,i+1}\circ\ldots\circ \sigma_{j-2,j-1}\circ\Phi_{i-1,j}   \circ\sigma_{j-2,j-1}\circ\ldots\circ\sigma_{i,i+1}\ .$$

We obtain applying \eqref{eq: commutating permutations}  multiple times that
$$  \sigma_{i,i+1}\circ\ldots\circ \sigma_{j-2,j-1}\circ\Phi_{i-1,j}   \circ\sigma_{j-2,j-1}\circ\ldots\circ\sigma_{i,i+1}=\Phi_{i-1,j}\ .$$
We now have $\sigma_{i-1,i}\circ\Phi_{i-1,j}\circ\sigma_{i-1,i}=\Phi_{i,j}$.
 Lemma \ref{le: transposition and Phi} is proved.
 \end{proof}

We can now prove Proposition \ref{pr: tau on Phi} by induction on the length $\l(\tau)$ of $\tau$. The inductive base  is provided by Lemma \ref{le: transposition and Phi}.
 Next , let $\tau\in \mathfrak{S}_{n}$ such that $\l(\tau)>1$.  Write $\tau=\tau'\circ\sigma_{m,m+1}$ such that $\l(\tau')=\l(\tau)-1$. It is easy to see that $\tau'(i)<\tau'(j)$. Otherwise  we would have $\tau(i)=m, \tau(j)=m+1$, and $\tau'(i)=m+1$ and $\tau'(j)=m$. This implies that we would obtain a reduced expression $\tau'=\tau''\circ\sigma_{m',m'+1}\circ\tau'''$ with $\l(\tau')=\l(\tau'')+\l(\tau''')+1$, such that $\tau''(i)=m,\tau''(j)=m+1$.   It is now easy to see that $\tau=\tau''\circ\tau'''$, and hence $\l(\tau)<\l(\tau')$.  We can now apply the inductive hypothesis to $\tau'$ to obtain $\sigma_{\tau'}\circ\Phi_{i,j}\circ\sigma_{(\tau')^{-1}}= \Phi_{\tau'(i),\tau'(j)}$.   Note that $\tau^{-1}=\sigma_{m,m+1}\circ(\tau')^{-1}$. Lemma \ref{le: transposition and Phi} implies that
  $$\sigma_{m,m+1}\circ \Phi_{\tau'(i),\tau'(j)}\circ\sigma_{m,m+1}= \Phi_{\tau(i),\tau(j)}\ .$$  
  Proposition \ref{pr: tau on Phi} is proved.
 \end{proof}

 We now define a map  $\Phi^{(m,n)}:V^{\otimes (m+n)}\to V^{\otimes (m+n)}$ by the formula:
$$\Phi^{(m,n)}:=\sum_{i=1}^m\sum_{j=m+1}^{m+n} \Phi_{i,j}\ .$$

  For any $a\in S^mV$, $b\in S^nV$, and   $\hat a\in V^{\otimes m}$ and $\hat b\in V^{\otimes n}$  such that  $\frac{1}{m!}[m]!_\sigma(\hat a)=a$ and $\frac{1}{n!} [n]!_{\sigma}(\hat b)=b$ we define the  morphism $\{\cdot,\cdot\}^{(m,n)}:S^{m}V\otimes S^{n}V\to S^{m+n}V$ by:
   $$\{a,b\}_{\Phi}^{(m,n)}:=\frac{1}{(m+n)!}[m+n]!_\sigma\Phi^{(m,n)}(\hat a\otimes \hat b)\ .$$
  The above morphism is well-defined because of the following result. Recall that  $\mathfrak{S}_{m}\times \mathfrak{S}_{n}$ embeds naturally in $\mathfrak{S}_{m+n}$. 
\begin{lemma}
If $\tau=(\tau_{1},\tau_{2})\in  \mathfrak{S}_{m}\times \mathfrak{S}_{n}\subset \mathfrak{S}_{m+n}$,   then
 $\sigma_{\tau}\circ\Phi^{(m,n)}=\Phi^{(m,n)}\circ\sigma_{\tau}$.

\end{lemma}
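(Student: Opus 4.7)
The plan is to reduce the statement to Proposition \ref{pr: tau on Phi} applied termwise to the defining sum of $\Phi^{(m,n)}$. The key observation is combinatorial: a permutation $\tau=(\tau_1,\tau_2)$ in the parabolic subgroup $\mathfrak{S}_m\times\mathfrak{S}_n\subset \mathfrak{S}_{m+n}$ preserves both blocks $\{1,\dots,m\}$ and $\{m+1,\dots,m+n\}$ setwise, hence induces a bijection of the index set $I=\{1,\dots,m\}\times\{m+1,\dots,m+n\}$ onto itself. In particular, for any $(i,j)\in I$ we have $\tau(i)\le m<\tau(j)$, so $\tau(i)<\tau(j)$ and Proposition \ref{pr: tau on Phi} applies.

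Concretely, I would first rewrite the desired identity in conjugated form. Since $\sigma_\tau$ is invertible with inverse $\sigma_{\tau^{-1}}$ (the assignment $\tau\mapsto\sigma_\tau$ being a group homomorphism $\mathfrak{S}_{m+n}\to \mathrm{End}(V^{\otimes(m+n)})$), the claim $\sigma_\tau\circ\Phi^{(m,n)}=\Phi^{(m,n)}\circ\sigma_\tau$ is equivalent to
\begin{equation*}
\sigma_\tau\circ\Phi^{(m,n)}\circ\sigma_{\tau^{-1}}=\Phi^{(m,n)}.
\end{equation*}
I would then expand the left-hand side using the definition of $\Phi^{(m,n)}$ and linearity:
\begin{equation*}
\sigma_\tau\circ\Phi^{(m,n)}\circ\sigma_{\tau^{-1}}=\sum_{i=1}^m\sum_{j=m+1}^{m+n}\sigma_\tau\circ\Phi_{i,j}\circ\sigma_{\tau^{-1}}.
\end{equation*}

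For each summand, since $\tau(i)<\tau(j)$ as noted above, Proposition \ref{pr: tau on Phi} gives $\sigma_\tau\circ\Phi_{i,j}\circ\sigma_{\tau^{-1}}=\Phi_{\tau(i),\tau(j)}$. Substituting, the right-hand side becomes $\sum_{(i,j)\in I}\Phi_{\tau(i),\tau(j)}$. Finally, reindexing by $(i',j')=(\tau(i),\tau(j))$ and using that $\tau$ restricts to a bijection $I\to I$, this sum equals $\sum_{(i',j')\in I}\Phi_{i',j'}=\Phi^{(m,n)}$, which is what was required.

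There is no real obstacle here; the only care needed is in verifying that $\tau\in\mathfrak{S}_m\times\mathfrak{S}_n$ really preserves $I$ and satisfies $\tau(i)<\tau(j)$ (so that the hypothesis of Proposition \ref{pr: tau on Phi} is met), and that the reindexing is valid because the restriction of $\tau$ to $I$ is a bijection. Both are immediate from the block structure of $\tau$.
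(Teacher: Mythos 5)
Your proof is correct and follows essentially the same route as the paper's: both apply Proposition \ref{pr: tau on Phi} termwise to the defining sum (noting that $\tau\in\mathfrak{S}_m\times\mathfrak{S}_n$ preserves the blocks, so $\tau(i)<\tau(j)$ for $i\le m<j$), and both conclude by reindexing the sum using the induced bijection of $\{1,\dots,m\}\times\{m+1,\dots,m+n\}$. The paper's write-up is more compressed and reindexes via $\tau^{-1}$ instead of $\tau$, but the content is the same.
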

\begin{proof}
 One has by Lemma  \ref{pr: tau on Phi} that for any $\tau\in \mathfrak{S}_{n+m}$ and any $1\le i<j\le m+n$ that $\sigma_{\tau}\circ\Phi_{\tau(i),\tau(j)}=\Phi_{ i,j}\circ\sigma_{\tau}$. We compute
 $$ \Phi^{(m,n)}\sigma_{\tau}= \sum_{i=1}^m\sum_{j=m+1}^{m+n} \Phi_{i,j}\circ\sigma_{\tau}=\sum_{i=1}^m\sum_{j=m+1}^{m+n}\sigma_{\tau}\circ \Phi_{\tau_{1}^{-1}(i),\tau_{2}^{-1}(j)}= \sigma_{\tau}\circ \Phi^{(m,n)}\ .$$
  The lemma is proved.
\end{proof}

 We have the following result.
\begin{proposition}
\label{pr: Phi bracketed structure}
Let $(V,\Phi)$ be a decorated space.  The pair $(S(V),\{\cdot,\cdot\}_{\Phi})$, where $\{\cdot,\cdot\}_{\Phi}=\bigoplus\limits_{m,n\in\ZZ_{\ge0}}\{\cdot,\cdot\}_{\Phi}^{(m,n)}$ is a bracketed algebra  in $\C$.
\end{proposition}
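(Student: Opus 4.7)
The plan is to verify well-definedness, anti-commutativity, and the Leibniz rule for $\{\cdot,\cdot\}_\Phi$; commutativity of $S(V)$ itself is already Proposition \ref{pr:S is a functor}(a).

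For well-definedness of $\{\cdot,\cdot\}_\Phi^{(m,n)}$ I would combine the shuffle factorization $[m+n]!_\sigma = \Sigma_{m,n}\circ([m]!_\sigma\otimes[n]!_\sigma)$, where $\Sigma_{m,n}$ is the sum over $(m,n)$-shuffles, with the commutation relation $\sigma_\tau\circ\Phi^{(m,n)}=\Phi^{(m,n)}\circ\sigma_\tau$ for $\tau\in\mathfrak{S}_m\times\mathfrak{S}_n$ (established in the lemma immediately preceding the proposition). This forces $[m+n]!_\sigma\Phi^{(m,n)}(\hat a\otimes\hat b)$ to depend only on $[m]!_\sigma\hat a$ and $[n]!_\sigma\hat b$, and the output lies in $S^{m+n}V$ by Proposition \ref{pr:symmetric spaces }(a).

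For anti-commutativity, let $\tau_0\in\mathfrak{S}_{m+n}$ be the block swap sending $i\mapsto i+n$ for $i\le m$ and $j\mapsto j-m$ for $j>m$, so that $\sigma_{\tau_0}(\hat a\otimes\hat b)=\hat b\otimes\hat a$. For every pair $i\le m<j$ one has $\tau_0(j)<\tau_0(i)$, so writing $\tau_0=\rho\circ(i,j)$ with $\rho(i)<\rho(j)$ and using Proposition \ref{pr: tau on Phi} together with the defining skew-symmetry $\tau\circ\Phi=-\Phi\circ\tau$ (which yields $\sigma_{(i,j)}\Phi_{i,j}\sigma_{(i,j)}=-\Phi_{i,j}$) gives $\sigma_{\tau_0}\circ\Phi_{i,j}\circ\sigma_{\tau_0^{-1}}=-\Phi_{\tau_0(j),\tau_0(i)}$. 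Summing yields $\sigma_{\tau_0}\circ\Phi^{(m,n)}\circ\sigma_{\tau_0^{-1}}=-\Phi^{(n,m)}$, and since $[m+n]!_\sigma\circ\sigma_{\tau_0}=[m+n]!_\sigma$ one concludes $\{b,a\}_\Phi^{(n,m)}=-\{a,b\}_\Phi^{(m,n)}$.

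For the Leibniz rule, fix $a\in S^mV$, $b\in S^nV$, $c\in S^pV$ with lifts $\hat a,\hat b,\hat c$, and observe that $\hat b\otimes\hat c$ is a lift of $bc\in S^{n+p}V$ (by another shuffle-factorization computation). Split the index set of $\Phi^{(m,n+p)}$ into $\{(i,j):j\le m+n\}$ and $\{(i,j):j>m+n\}$: the first subsum acts trivially on the last $p$ factors and equals $\Phi^{(m,n)}(\hat a\otimes\hat b)\otimes\hat c$, while for the second I would conjugate by the permutation $\rho$ that fixes the first $m$ positions and swaps the $\hat b$- and $\hat c$-blocks; Proposition \ref{pr: tau on Phi} then identifies its contribution with $\sigma_{\rho^{-1}}(\Phi^{(m,p)}(\hat a\otimes\hat c)\otimes\hat b)$. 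Since $[m+n+p]!_\sigma$ absorbs $\sigma_{\rho^{-1}}$, and the product of $x\in S^rV$, $y\in S^sV$ in $S(V)$ is $\frac{1}{(r+s)!}[r+s]!_\sigma(x\otimes y)$, the two resulting terms become $\{a,b\}_\Phi\,c$ and $b\,\{a,c\}_\Phi$.

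The principal obstacle is combinatorial bookkeeping: tracking how each $\Phi_{i,j}$ transforms under the permutations implicit in the various factorials, and recognizing the resulting partial sums as honest products in $S(V)$. Once Proposition \ref{pr: tau on Phi} and the shuffle decomposition of $[\cdot]!_\sigma$ are invoked as black boxes, every manipulation becomes routine.
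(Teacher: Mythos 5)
Your proposal follows essentially the same route as the paper: well-definedness via commutation of $\Phi^{(m,n)}$ with $\sigma_\tau$ for $\tau\in\mathfrak{S}_m\times\mathfrak{S}_n$, anti-commutativity via conjugation of $\Phi^{(m,n)}$ by the block-swap permutation using Proposition~\ref{pr: tau on Phi}, and the Leibniz rule by splitting the index set of $\Phi^{(m,n+p)}$ at $m+n$ and conjugating the second sub-sum by the block swap of the $\hat b$- and $\hat c$-factors. The one point you compress is the claim $\sigma_{(i,j)}\Phi_{i,j}\sigma_{(i,j)}=-\Phi_{i,j}$ for non-adjacent $(i,j)$, which does not immediately follow from the defining relation $\tau\circ\Phi=-\Phi\circ\tau$ but needs the same chain-of-transpositions conjugation argument (the paper isolates this as Lemma~\ref{le:Phi skewcommutes}(a)); this is a minor and easily repaired gap.
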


\begin{proof}
Prove anti-commutativity  \eqref{eq: super-anti-commutativity} first. We need the following fact.
\begin{lemma}
\label{le:Phi skewcommutes}
\noindent(a)Let $(V,\Phi)$ be a decorated space. Then for all $n\ge2$ and  $i<j\le n$ one has
$$\sigma_{(i,j)}\circ\Phi_{i,j}=-\Phi_{i,j}\circ\sigma_{(i,j)}\ .$$

\noindent(b) If  $i<j\le n$ and  $\tau\in\mathfrak{S}_{n}$ such that  $\tau(i)>\tau(j)$, then
$$\sigma_{\tau}\circ\Phi_{i,j}\circ\sigma_{\tau^{-1}}=-\Phi_{\tau(j),\tau(i)}\ .$$
\end{lemma}

\begin{proof}
Prove (a) first.
Note that $\sigma_{(i,i+1)}\circ\Phi_{i,i+1}\circ\sigma_{(i,i+1)}=-\Phi_{i,i+1}$ and that  $(i,j)=(j-1,j)\circ (j-2,j-1)\circ (i,i+1)\circ(i+1,i+2)\ldots (j-1,j)$.  Denote  $\tau'= (j-1,j)\circ(j-2,j-1)\circ (i+1,i+2)$. We compute using  Proposition  \ref{pr: tau on Phi} 
$$ \sigma_{(i,j)}\circ\Phi_{i,j}\circ\sigma_{(i,j)^{-1}}=\sigma_{\tau'}\circ\sigma_{(i,i+1)}\circ \sigma_{\tau^{'-1}}\circ\Phi_{i,j}\circ\sigma_{\tau'}\circ\sigma_{(i,i+1)}\circ \sigma_{\tau^{'-1}}$$
$$= \sigma_{\tau'}\circ\sigma_{(i,i+1)}\circ\Phi_{i,i+1}\circ\sigma_{(i,i+1)}\circ \sigma_{(\tau')^{-1}}
=-\sigma_{\tau'}\circ\Phi_{i,i+1}\circ\sigma_{\tau'}=-\Phi_{i,j} \ .$$
Part (a) is proved.

Prove (b) now. We clearly have $\tau'(i)<\tau'(j)$ for $\tau'=(\tau(j),\tau(i))\circ\tau$. Therefore, we have by Lemma  \ref{pr: tau on Phi} and Part (a)
$$-\Phi_{\tau(j),\tau(i)}=\sigma_{(\tau(j),\tau(i))}\circ\Phi_{\tau(j),\tau(i)}\circ \sigma_{(\tau(j),\tau(i)}=\sigma_{(\tau(j),\tau(i)})\circ\sigma_{\tau'}\circ\Phi_{i,j}\circ\sigma_{\tau'}\circ\sigma_{(\tau(j),\tau(i)}$$
$$= \sigma_{\tau}\circ\Phi_{i,j}\circ\sigma_{\tau^{-1}} \ .$$
Part (b) is proved. The lemma is proved. 
\end{proof}

Let $\tau\in \mathfrak{S}_{m+n}$ be the permutation, which  sends $(1,2,\ldots m,m+1,\ldots n+m)\mapsto (n+1,n+2,\ldots m+n,1,2\ldots n)$.  Lemma \ref{le:Phi skewcommutes} yields that  for $i\le n< j\le m+n$  one has $\Phi_{i,j}\sigma_{\tau}=-\sigma_{\tau}\Phi_{\tau(j),\tau(i)}$.
Therefore, we have
$$\Phi^{(n,m)}\circ\sigma_{\tau}=\sum_{i=1}^m\sum_{j=m+1}^{m+n} \Phi_{i,j}\circ\sigma_{\tau}= -\sigma_{\tau}  \sum_{i=1}^m\sum_{j=m+1}^{m+n} \Phi_{\tau(j),\tau(i)}\ .$$

This implies that $\{b,a \}^{(n,m)}_{\Phi} =-\{ a,b\}^{(m,n)}_{\Phi}$ for $a\in S^{m}V$ and $b\in S^{n}V$, hence $\{a,b\}_{\Phi}=-\{b,a\}_{\Phi}$. Anti-commutativity is proved.

It remains to verify the Leibniz identity \eqref{eq:right Leibniz rule}. Let $a\in S^{n}V, b\in S^{m}V$ and $c\in S^{\l}V$ and let  $\hat a\in V^{\otimes n}, \hat b\in V^{\otimes m}$ and $\hat c\in V^{\otimes \l}$   be representatives of $a,b$ and $c$, respectively.  
Denote by $\tau'\in\mathfrak{S}_{m+n+\ell}$ the permutation $\tau'(1,\ldots,n+m+\ell)=(n+1,\ldots,n+m,1,\ldots,n,n+m+1\ldots n+m+\ell)$. We compute

$$\{ a,b\cdot c\}_{\Phi}=\{a,b\cdot c\}_{\Phi}^{(n,m+\l)} =\frac{1}{(n+m+\l)!}[n+m+\l]!_\sigma\Phi^{(n,m+\l)}(\hat a\otimes \hat b\otimes \hat c)$$
$$= \frac{1}{(n+m+\l)!}[n+m+\l]!_\sigma \left(\frac{[n+m]!_\sigma\otimes [\ell]!_\sigma}{(n+m)!\ell!}\sum_{i=1}^{n}\sum_{j=n+1}^{m+n}\Phi_{i,j}\right)(\hat a\otimes \hat b\otimes \hat c) $$
$$+ \frac{1}{(n+m+\l)!}[n+m+\l]!_\sigma \left( \frac{[m]!_\sigma\otimes[n+\ell]!_\sigma}{m!(n+\ell)!}\tau'(\sum_{i=1}^{n}\sum_{j=n+m+1}^{n+m+\l}\Phi_{i,j})\right)(\hat a\otimes \hat b\otimes \hat c) $$
$$= \{ a,b\}\cdot c+b\cdot\{a,c\} \ .$$
 
  Therefore the Leibniz rule  holds. Proposition \ref{pr: Phi bracketed structure} is proved.
\end{proof}

 We will denote by $S(V,\Phi)$ the bracketed algebra $(S(V),\{\cdot,\cdot\}_{\Phi})$ from Proposition \ref{pr: Phi bracketed structure} and refer to it as the {\it symmetric algebra} of the decorated space $(V,\Phi)$.  We have the following result.
 
 \begin{proposition}
 \label{pr: decorated to bracketed}
 The correspondence $ (V,\Phi)\mapsto S(V,\Phi)$  defines a faithful exponential functor from the category  of decorated spaces to the category of bracketed algebras $BAlg(\C)$.
\end{proposition}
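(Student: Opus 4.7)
The plan is to verify the three claims in order: functoriality, faithfulness, and exponentiality (compatibility with the monoidal structures on both sides). The underlying assignment $V \mapsto S(V)$ is already known to be a faithful exponential functor on the level of commutative algebras by Proposition \ref{pr:S is a functor}, so all that is new here is showing that the bracket $\{\cdot,\cdot\}_\Phi$ is natural in $(V,\Phi)$ and that it splits over direct sums in $\mathcal{D}(\C)$ into the tensor-product bracket from Lemma \ref{ le: BAlg symm}.

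\textbf{Functoriality and faithfulness.} Given a morphism $f \colon (V,\Phi) \to (V',\Phi')$ in $\mathcal{D}(\C)$ -- i.e.\ an $f \in \mathrm{Hom}_\C(V,V')$ with $(f \otimes f)\circ \Phi = \Phi'\circ (f \otimes f)$ -- I would check that $S(f)$ intertwines $\{\cdot,\cdot\}_\Phi$ and $\{\cdot,\cdot\}_{\Phi'}$. By naturality of the braiding, $f^{\otimes(m+n)}$ commutes with every $\sigma_{i,i+1}$ and hence with $[m+n]!_\sigma$; an induction on $j-i$ using the recursion \eqref{eq: Phi i,j} shows that $f^{\otimes(m+n)}\circ \Phi_{i,j} = \Phi'_{i,j}\circ f^{\otimes(m+n)}$ on $V^{\otimes(m+n)}$. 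Summing over the indices of $\Phi^{(m,n)}$ and then applying the symmetrizer yields $S(f)(\{a,b\}_\Phi) = \{S(f)(a),S(f)(b)\}_{\Phi'}$. Faithfulness follows by restriction to the degree-one component $V \hookrightarrow S(V)$, where the graded map $S(f)$ already determines $f$.

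\textbf{Exponentiality.} For $(V,\Phi),(V',\Phi')\in \mathcal{D}(\C)$, the algebra isomorphism $\mu\colon S(V\oplus V') \xrightarrow{\sim} S(V)\otimes_\sigma S(V')$ from Proposition \ref{pr:S is a functor}(c) is already in hand; the content is that $\mu$ intertwines $\{\cdot,\cdot\}_{\Phi+\Phi'}$ with the tensor-product bracket on $S(V,\Phi)\otimes S(V',\Phi')$ built in Lemma \ref{ le: BAlg symm}. The crucial observation is that, viewed as a morphism on $(V\oplus V')^{\otimes 2}$, the operator $\Phi+\Phi'$ vanishes on the mixed summands $V\otimes V'$ and $V'\otimes V$. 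Consequently, choosing lifts $\hat a,\hat b\in V^{\otimes *}$ and $\hat{a}',\hat{b}'\in (V')^{\otimes *}$ and forming a representative $\hat a\otimes \hat a'\otimes \hat b\otimes \hat b'\in (V\oplus V')^{\otimes(p+q)}$ of $(a a')\cdot(b b')$, each summand $(\Phi+\Phi')_{i,j}$ with one index in a $V$-slot and the other in a $V'$-slot contributes zero. What remains splits as the sum of a purely $\Phi$-part, acting only between $V$-slots, and a purely $\Phi'$-part, acting only between $V'$-slots. Applying the symmetrizer and using Proposition \ref{pr: tau on Phi} to reorganize $[p+q]!_\sigma$ as a shuffle of $[\,\cdot\,]!_\sigma$'s on the $V$- and $V'$-factors, the first part yields $\{a,a'\}_\Phi\cdot(b\cdot b')$ and the second yields $(a\cdot a')\cdot\{b,b'\}_{\Phi'}$, which is exactly the tensor-product bracket formula of Lemma \ref{ le: BAlg symm}(a).

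\textbf{Expected obstacle.} The one delicate point is the bookkeeping in the exponentiality step: one must carefully track how the symmetrizer $[p+q]!_\sigma$ acts once the tensors have been split into their $V$- and $V'$-components, and verify that the combinatorial factors produced by shuffling agree with those implicit in the definition of $\mu$. This is entirely a matter of applying Proposition \ref{pr: tau on Phi} and the identity $[p+q]!_\sigma = \bigl(\sum_{\mathrm{shuffles}}\sigma_\tau\bigr)\circ\bigl([p]!_\sigma\otimes[q]!_\sigma\bigr)$; once that is in place, the remaining verifications are formal.
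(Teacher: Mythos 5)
Your proposal is correct and takes essentially the same route as the paper, which simply declares functoriality and faithfulness ``easy to verify'' and checks exponentiality by writing the bracket $\{u+u',v+v'\}_{\Phi+\Phi'}=(\Phi+\Phi')\bigl((u+u')\wedge(v+v')\bigr)$ on degree-one generators; you fill in exactly the omitted bookkeeping (naturality through each $\Phi_{i,j}$, restriction to degree one, and the vanishing of $\Phi+\Phi'$ on the mixed summands $V\otimes V'$ and $V'\otimes V$).
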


\begin{proof} 
It is easy to verify that the correspondence is functorial and faithful. It remains to check that it is exponential. By Proposition \ref{pr:S is a functor}  one has $S(V\oplus V')=S(V)\otimes S(V')$, and we obtain the bracket defined by 
$$\{u+u',v+v'\}_{\Phi}=(\Phi+\Phi')((u+u')\wedge (v+v'))$$  for all $u,v\in V$ and $u',v'\in V'$. The proposition is proved.
 \end{proof}

 \subsection{Bracketed Poisson Algebras}
 \label{se:brack-Poisson}
 Denote by $J: A^{\otimes 3}\to A$ the Jacobian map defined by 
  
\begin{equation}
\label{eq;  Jacobi}
 J= F+  F\circ \sigma_{12}\circ\sigma_{23}+F\circ\sigma_{23}\circ\sigma_{12}\ , 
 \end{equation} 
 where $F: A^{\otimes 3}\to A, F(a,b,c)=\{a,\{b,c\}\}$ and $\sigma_{12}=\sigma\otimes Id$ and $\sigma_{23}=Id\otimes \sigma$.  
  
  The following fact is obvious.
   \begin{lemma}  (\cite[Definition 2.23]{BZ})
A bracketed algebra $(A,\{\cdot,\cdot\})$ is Poisson, if and only if $J(A^{\otimes 3})=0$.
 \end{lemma}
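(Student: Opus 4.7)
The plan is to verify that the morphism $J$ on elementary tensors $a\otimes b\otimes c$ evaluates to exactly the cyclic sum appearing in the Jacobi identity of Definition \ref{def:bracketed  algebra}(b). Once that identification is made, the equivalence is immediate: $J$ vanishes on all of $A^{\otimes 3}$ precisely when the Jacobi identity holds on all triples $(a,b,c)$, which is the definition of being Poisson.

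First, I would unpack the three summands of $J$ by tracking the action of the symmetric braiding on $a\otimes b\otimes c$. The first summand is $F(a\otimes b\otimes c)=\{a,\{b,c\}\}$ by definition. For the second, $\sigma_{23}$ sends $a\otimes b\otimes c$ to $a\otimes c\otimes b$, then $\sigma_{12}$ sends this to $c\otimes a\otimes b$, so $F\circ\sigma_{12}\circ\sigma_{23}(a\otimes b\otimes c)=\{c,\{a,b\}\}$. For the third, $\sigma_{12}$ gives $b\otimes a\otimes c$ and then $\sigma_{23}$ gives $b\otimes c\otimes a$, whence $F\circ\sigma_{23}\circ\sigma_{12}(a\otimes b\otimes c)=\{b,\{c,a\}\}$. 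Adding the three yields exactly
\[
J(a\otimes b\otimes c)=\{a,\{b,c\}\}+\{c,\{a,b\}\}+\{b,\{c,a\}\}.
\]

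Since $A^{\otimes 3}$ is generated by simple tensors and all the maps involved are structure-preserving bilinear (in fact, morphisms in $\C$), the vanishing of $J$ on all of $A^{\otimes 3}$ is equivalent to the vanishing of this cyclic sum on every triple $a,b,c\in A$, which is precisely the Jacobi identity. Conversely, if the bracket is Poisson then the cyclic sum vanishes identically, so $J=0$.

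There is essentially no obstacle here; the lemma is a bookkeeping statement that rewrites the Jacobi identity in categorical form using the symmetric braiding $\sigma$. The only minor subtlety is making sure the direction of the permutations $\sigma_{12}\circ\sigma_{23}$ versus $\sigma_{23}\circ\sigma_{12}$ produces the correct cyclic rotations $(a,b,c)\mapsto(c,a,b)$ and $(a,b,c)\mapsto(b,c,a)$, which is a direct calculation as above.
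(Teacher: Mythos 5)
Your proof is correct and is exactly the verification the paper has in mind: the paper states the lemma as ``obvious'' without proof, and your computation simply unpacks the permutations $\sigma_{12}\circ\sigma_{23}$ and $\sigma_{23}\circ\sigma_{12}$ to confirm that $J(a\otimes b\otimes c)$ is the cyclic Jacobi sum of Definition \ref{def:bracketed  algebra}(b).
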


  Define the Jacobian ideal $\langle J_\Phi\rangle$ as the two-sided (bracketed) ideal in the bracketed algebra $S(V,\Phi)$ generated by the image of the Jacobian map. We call the quotient of $\overline{S(V)}_\Phi$ by $\langle J_\Phi\rangle$  the Poisson closure. The bracket $\{\cdot,\cdot\}_\Phi$ induces a bracket $\overline{\{\cdot,\cdot\}_\Phi}$ on $\overline{S(V)}_\Phi$, because  $\langle J_\Phi\rangle$ is by definition closed under the bracket.

\begin{definition}
\label{def:reduced symmetric algebra}
The {\it reduced symmetric algebra} $\overline {S(V,\Phi)}$ is the bracketed algebra $\overline {S(V,\Phi)}=(\overline{S(V)}_\Phi,\overline{\{\cdot,\cdot\}_\Phi})$. \end{definition}

 We have the following result.
  
\begin{proposition}
\label{pr:univ poisson closure}
\noindent(a) The reduced symmetric algebra $\overline {S(V,\Phi)}$ is Poisson.

\noindent(b) The reduced symmetric algebra $\overline {S(V,\Phi)}$ has the following universal property: Any homomorphism of bracketed algebras from $ S(V,\Phi)$ to a Poisson algebra $P$ factors through  $\overline {S(V,\Phi)}$.

\noindent(c) The assignment $(V,\Phi)\mapsto \overline {S(V,\Phi)}$ defines a  functor from the category of decorated spaces to the category of Poisson algebras. Moreover,  if $(V'',\Phi'')= (V,\Phi)\oplus(V',\Phi')$, then there exists a surjective homomorphism $\overline{S(V,\Phi)}\otimes \overline{S(V',\Phi')}\to \overline{S(V'',\Phi'')}$.  \end{proposition}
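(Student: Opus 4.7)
The plan is to use the universal property of quotients by bracketed ideals throughout. Since $\langle J_\Phi\rangle$ is by construction a two-sided bracketed ideal of $S(V,\Phi)$, both the product and the bracket descend to $\overline{S(V,\Phi)}$, and skew-commutativity together with the Leibniz rule are automatically inherited. Only the Jacobi identity and the two universal statements require actual work, and all three parts turn on comparing kernels of projection maps.

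For part (a), any $\bar a,\bar b,\bar c\in\overline{S(V)}_\Phi$ lift to $a,b,c\in S(V,\Phi)$; by the definition of the Jacobian ideal $J(a,b,c)\in\langle J_\Phi\rangle$, so $J(\bar a,\bar b,\bar c)$ vanishes in the quotient. For part (b), any bracketed algebra homomorphism $f:S(V,\Phi)\to P$ into a Poisson algebra satisfies $f(J(a,b,c))=J_P(f(a),f(b),f(c))=0$, hence $\ker f$ is a two-sided bracketed ideal containing the image of the Jacobian, so it contains $\langle J_\Phi\rangle$ and $f$ factors uniquely through $\overline{S(V,\Phi)}$. Functoriality in (c) then drops out for free: a morphism of decorated spaces $(V,\Phi)\to(V',\Phi')$ induces a bracketed algebra homomorphism $S(V,\Phi)\to\overline{S(V',\Phi')}$ into a Poisson algebra, and (b) supplies the unique factorization through $\overline{S(V,\Phi)}$.

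The substantive part is the surjection associated with direct sums. Proposition \ref{pr: decorated to bracketed} identifies $S(V'',\Phi'')$ with the bracketed-algebra tensor product $S(V,\Phi)\otimes S(V',\Phi')$, and under the embedding $a\mapsto a\otimes 1$ the bracket $\{\cdot,\cdot\}_{\Phi''}$ restricts to $\{\cdot,\cdot\}_\Phi$, because the tensor-product formula of Lemma \ref{ le: BAlg symm} gives $\{a\otimes 1,a'\otimes 1\}=\{a,a'\}_\Phi\otimes 1$; the symmetric statement holds for $V'$. Consequently $J_{\Phi''}(a\otimes 1,b\otimes 1,c\otimes 1)=J_\Phi(a,b,c)\otimes 1$, so $\langle J_\Phi\rangle\otimes S(V',\Phi')$ lies inside $\langle J_{\Phi''}\rangle$, and likewise $S(V,\Phi)\otimes\langle J_{\Phi'}\rangle\subseteq\langle J_{\Phi''}\rangle$. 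The kernel of the canonical projection $S(V,\Phi)\otimes S(V',\Phi')\twoheadrightarrow\overline{S(V,\Phi)}\otimes\overline{S(V',\Phi')}$ is precisely the sum of these two ideals (and it is a bracketed ideal), so the projection onto $\overline{S(V'',\Phi'')}$ factors through it, yielding the required surjection. The main technical point is exactly this compatibility between the tensor-product bracket and the Jacobian on the direct-sum decoration; once it is in hand, the remainder is standard diagram-chasing with bracketed ideals.
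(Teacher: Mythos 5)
Your argument is correct and follows essentially the same route as the paper: parts (a) and (b) by lifting to representatives and observing that the Jacobian lands in $\langle J_\Phi\rangle$, and part (c) by identifying $S(V'',\Phi'')$ with $S(V,\Phi)\otimes S(V',\Phi')$ and showing the ideals generated by $\langle J_\Phi\rangle$ and $\langle J_{\Phi'}\rangle$ sit inside $\langle J_{\Phi''}\rangle$. You spell out the functoriality as a consequence of the universal property in (b), which the paper leaves implicit, but the substance is the same.
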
   
\begin{proof}
Prove (a) first. Let $\overline a,\overline b,\overline c\in \overline {S(V,\Phi)}$ and let $a,b,c\in S(V,\Phi)$ be representatives of the equivalence classes of $\overline a,\overline b,\overline c$, respectively. Then 
$J(a,b,c)$ is a representative of the class of $\overline J(\overline a,\overline b,\overline c)$, where $\overline J$ is the induced Jacobian map on  $\overline {S(V,\Phi)}$. By definition, $J(a,b,c)\in \langle J_\Phi\rangle$, hence $\overline J(\overline a,\overline b,\overline c)=0\in \overline {S(V,\Phi)}$. Part(a) is proved.

 Prove (b) next.  Let $P$ be a Poisson algebra and $\rho: S(V,\Phi)\to P$ a homomorphism of bracketed algebras.  It is easy to see that $\langle J\rangle$ is contained in the kernel of $\rho$. Hence, $\rho$ factors through $\overline {S(V,\Phi)}$. Part(b) is proved. 

 Prove (c) now. Let $(V ,\Phi )$ and $(V',\Phi')$ be decorated spaces,  $(V'',\Phi'')=(V,\Phi)\oplus(V',\Phi')$,  and $\langle J \rangle$, resp.   $\langle J'\rangle$, the Jacobean ideal in $S(V ,\Phi )$, resp. $S(V' ,\Phi')$. It is clear that the ideals generated by $\langle J \rangle$ and $\langle J' \rangle$  in $S(V'',\Phi'')$ are contained in the Jacobian ideal  $\langle J''\rangle$ of $S(V'',\Phi'')$. Part(c) follows  since $S(V'',\Phi'')=S(V,\Phi)\otimes S(V',\Phi')$ by Proposition \ref{pr: decorated to bracketed}. The proposition is proved.
\end{proof}

  Due to Proposition \ref{pr:univ poisson closure}(a) we will sometimes refer to $\overline {S(V,\Phi)}$ as the Poisson closure of $ S(V,\Phi)$ (see also \cite[Section 3.1]{BZ}).

We will now discuss, when $ S(V,\Phi)$ is Poisson; i.e., when $ S(V,\Phi)=\overline {S(V,\Phi)}$.  
 
 For any $\Phi\in End(V\otimes V)$   define  the   Schouten square $[[ \Phi  , \Phi ]]\in End(V\otimes V\otimes V)  $  by:

$$[[ \Phi,\Phi]]=[\Phi_{12},\Phi_{13} ]+ [\Phi_{12},\Phi_{23}]+[\Phi_{13},\Phi_{23}]\ ,   $$
where  $[a,b]=a\circ b-b\circ a$ denotes the usual commutator. 


The Schouten square  has the following very important property.

 \begin{lemma}
 \label{le: Schouten v sigma}
 Let $V$ be an object of $\C$ and let $\Phi\in End(V\otimes V)$ such that $\Phi\circ\sigma=-\sigma\circ\Phi$. Then,
 $\sigma_{i,i+1}\circ[[ \Phi,\Phi]]\circ\sigma_{i, i+1}=-[[\Phi,\Phi]]$for $i=1,2$.
\end{lemma}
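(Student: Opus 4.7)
The plan is to use the results about conjugating $\Phi_{i,j}$ by simple transpositions that were established in Lemma \ref{le:Phi skewcommutes} (the skew-commutation of $\Phi$ propagates to all pairs of positions) together with the elementary fact that conjugation distributes over the commutator: since $\sigma_{i,i+1}^2 = \mathrm{Id}$, we have $\sigma_{i,i+1}\circ[A,B]\circ\sigma_{i,i+1} = [\sigma_{i,i+1}\circ A\circ\sigma_{i,i+1},\ \sigma_{i,i+1}\circ B\circ\sigma_{i,i+1}]$. So the proof reduces to tabulating how each of $\Phi_{12},\Phi_{13},\Phi_{23}$ transforms under $\sigma_{12}$ and under $\sigma_{23}$, and then verifying by inspection that the three commutator terms in $[[\Phi,\Phi]]$ are permuted, up to signs, to produce $-[[\Phi,\Phi]]$.

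Concretely, for $i = 1$, Lemma \ref{le: transposition and Phi} with $\tau = (1,2)$ gives $\sigma_{12}\circ\Phi_{13}\circ\sigma_{12} = \Phi_{23}$ and $\sigma_{12}\circ\Phi_{23}\circ\sigma_{12} = \Phi_{13}$, while Lemma \ref{le:Phi skewcommutes}(a) gives $\sigma_{12}\circ\Phi_{12}\circ\sigma_{12} = -\Phi_{12}$. Substituting these into the three commutators yields
\begin{align*}
\sigma_{12}\circ[\Phi_{12},\Phi_{13}]\circ\sigma_{12} &= [-\Phi_{12},\Phi_{23}] = -[\Phi_{12},\Phi_{23}],\\
\sigma_{12}\circ[\Phi_{12},\Phi_{23}]\circ\sigma_{12} &= [-\Phi_{12},\Phi_{13}] = -[\Phi_{12},\Phi_{13}],\\
\sigma_{12}\circ[\Phi_{13},\Phi_{23}]\circ\sigma_{12} &= [\Phi_{23},\Phi_{13}] = -[\Phi_{13},\Phi_{23}],
\end{align*}
and summing the three lines gives exactly $-[[\Phi,\Phi]]$.

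The case $i=2$ is entirely symmetric: with $\tau = (2,3)$, Lemma \ref{le: transposition and Phi} gives $\sigma_{23}\circ\Phi_{12}\circ\sigma_{23} = \Phi_{13}$ and $\sigma_{23}\circ\Phi_{13}\circ\sigma_{23} = \Phi_{12}$, while Lemma \ref{le:Phi skewcommutes}(a) gives $\sigma_{23}\circ\Phi_{23}\circ\sigma_{23} = -\Phi_{23}$. The same three-line computation then produces $-[\Phi_{12},\Phi_{13}] - [\Phi_{13},\Phi_{23}] - [\Phi_{12},\Phi_{23}] = -[[\Phi,\Phi]]$.

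There is no real obstacle here; the only thing to be a little careful about is getting the signs right in the transposition table, since each of the three summands of $[[\Phi,\Phi]]$ picks up its sign from a different source (swapping the order of a commutator vs.\ the anti-commutativity of $\Phi$ with $\sigma$). Once the table is written out, the result is immediate.
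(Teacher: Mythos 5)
Your proof is correct and follows essentially the same route as the paper: the paper also conjugates each of the three commutators in $[[\Phi,\Phi]]$ by $\sigma_{12}$ and by $\sigma_{23}$ term by term and collects signs. Your version is slightly more explicit in that it names the conjugation identities (via Lemma~\ref{le: transposition and Phi} and Lemma~\ref{le:Phi skewcommutes}(a)) rather than just asserting the computation, but the underlying calculation is identical.
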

\begin{proof}
 Straightforward computation yields:
$$\sigma_{12}\circ[[\Phi,\Phi]]\circ\sigma_{12}=\sigma_{12}\circ [\Phi_{12},\Phi_{13} ]+ [\Phi_{12},\Phi_{23}]+[\Phi_{13},\Phi_{23}]\circ\sigma_{12}$$
$$= -[\Phi_{12},\Phi_{23}]-[\Phi_{12},\Phi_{13}+[\Phi_{23},\Phi_{13}])= -[[\Phi,\Phi]]\ .$$
 Similarly, we compute
 $$\sigma_{23}\circ[[\Phi,\Phi]]\circ\sigma_{23}= [\Phi_{13},\Phi_{12}]-[\Phi_{13},\Phi_{23}]-[\Phi_{12},\Phi_{23}]=-[[\Phi,\Phi]]\ .$$
 The lemma is proved.
\end{proof}

We call a decorated space $(V,\Phi)$ Poisson, if the  symmetric algebra $S(V,\Phi)$ of the decorated space $(V,\Phi)$ is Poisson.   
  
\begin{theorem}
\label{th: flat Poisson structures}
 Let $(V,\Phi)$ be a decorated space. The following are equivalent:

\noindent(a)  $(V,\Phi)$ is Poisson

\noindent(b) $\Phi$ satisfies the equation 
\begin{equation}
\label{eq:CYBE}
[3]!_{\sigma} \circ[[ \Phi,\Phi]]=0 \ .
 \end{equation}
 
 \noindent(c) $\Phi$ satisfies the equation
 $$ [[ \Phi,\Phi]]\circ [3]!_{-{\sigma}}=0\ .$$
 
 \noindent(d) $\Phi$ satisfies 
 $$[[ \Phi,\Phi]]\big |_{\Lambda^3 V}=0\ ,$$
 where $ \big |_{\Lambda^3 V}$ denotes the restriction to $\Lambda^3 V$.
 \end{theorem}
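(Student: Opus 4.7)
The plan is to reduce the statement to a single identity on the subspace $V^{\otimes 3}$ of generators and then translate among the three algebraic forms (b), (c), (d) using the commutation properties of $[[\Phi,\Phi]]$ with the braiding.

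First, I would observe that for any bracketed algebra the Jacobiator $J : A^{\otimes 3}\to A$ is a derivation in each of its three arguments. This is a direct expansion: applying the Leibniz rule to $J(a_1a_2,b,c)$ produces the two expected terms $a_1J(a_2,b,c)$ and $J(a_1,b,c)a_2$ plus ``correction'' products of two brackets which cancel in pairs by antisymmetry of $\{\cdot,\cdot\}$. Cyclic symmetry of $J$ then gives the same in the other two slots. Since $S(V)$ is generated as an algebra by $V$, this reduces (a) to the statement $J\big|_{V^{\otimes 3}}=0$.

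Next I would compute $J$ on $V^{\otimes 3}$ explicitly. For $a,b,c\in V$, rewrite using antisymmetry as
\[
J(a,b,c)=\{a,\{b,c\}\}-\{b,\{a,c\}\}-\{\{a,b\},c\}.
\]
Taking $\Phi(b\otimes c)\in V^{\otimes 2}$ as a representative for $\{b,c\}$, the definition of $\{\cdot,\cdot\}_\Phi^{(1,2)}$ yields $\{a,\{b,c\}\}=\tfrac{1}{6}[3]!_\sigma(\Phi_{12}+\Phi_{13})\Phi_{23}(a\otimes b\otimes c)$. For the other two terms I would reorder the inputs to the common argument $a\otimes b\otimes c$ by inserting transpositions $\sigma_{ij}$, then move these transpositions past the $\Phi$'s via Lemma~\ref{le: transposition and Phi} (picking up the signs from Lemma~\ref{le:Phi skewcommutes}), and finally absorb the leftover $\sigma_\tau$ into $[3]!_\sigma$ using $[3]!_\sigma\sigma_\tau=[3]!_\sigma$. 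After the cancellations that antisymmetry produces, the result is
\[
6\,J(a,b,c)=[3]!_\sigma\circ\bigl([\Phi_{12},\Phi_{23}]+[\Phi_{12},\Phi_{13}]+[\Phi_{13},\Phi_{23}]\bigr)(a\otimes b\otimes c)=[3]!_\sigma\circ[[\Phi,\Phi]](a\otimes b\otimes c).
\]
This establishes (a) $\Leftrightarrow$ (b).

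For the remaining equivalences, Lemma~\ref{le: Schouten v sigma} gives $\sigma_{i,i+1}\circ[[\Phi,\Phi]]=-[[\Phi,\Phi]]\circ\sigma_{i,i+1}$, hence $\sigma_\tau\circ[[\Phi,\Phi]]=(-1)^{\ell(\tau)}[[\Phi,\Phi]]\circ\sigma_\tau$ for all $\tau\in\mathfrak{S}_3$. Summing over $\tau$ I would obtain the intertwining identity
\[
[3]!_\sigma\circ[[\Phi,\Phi]]=[[\Phi,\Phi]]\circ[3]!_{-\sigma},
\]
which is exactly (b) $\Leftrightarrow$ (c). Finally, since $\mathrm{Im}\,[3]!_{-\sigma}=\Lambda^3 V$ by Proposition~\ref{pr:symmetric spaces}, vanishing of $[[\Phi,\Phi]]\circ[3]!_{-\sigma}$ is the same as vanishing of $[[\Phi,\Phi]]$ on $\Lambda^3V$, giving (c) $\Leftrightarrow$ (d).

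The main obstacle is the step identifying $6J|_{V^{\otimes 3}}$ with $[3]!_\sigma\circ[[\Phi,\Phi]]$: bookkeeping of the transpositions that move the three summands of $J$ to a common argument, and of the signs they produce when passed through the $\Phi_{ij}$'s, is where the entire content of the theorem is concentrated. Once that identity is in hand, the remaining equivalences are formal consequences of Lemma~\ref{le: Schouten v sigma} and the description of $\Lambda^3V$ as the image of $[3]!_{-\sigma}$.
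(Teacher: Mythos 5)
Your proposal is correct and follows essentially the same route as the paper: reduce to the Jacobiator on $V^{\otimes 3}$ via the derivation-in-each-argument property (the paper's analogue of \cite[Lemma~3.7]{BZ}), identify $J|_{V^{\otimes 3}}$ with a nonzero scalar multiple of $[3]!_{\sigma}\circ[[\Phi,\Phi]]$ by pushing transpositions past the $\Phi_{ij}$ via Lemma~\ref{le: transposition and Phi} and Lemma~\ref{le:Phi skewcommutes}, and then use Lemma~\ref{le: Schouten v sigma} together with Proposition~\ref{pr:symmetric spaces} to pass among (b), (c), (d). Carrying out your outlined bookkeeping one indeed gets $6J=+\,[3]!_{\sigma}\circ[[\Phi,\Phi]]$ on $V^{\otimes 3}$ (the minus in the paper's \eqref{eq:Schouten=jacobi} appears to be a sign slip), which is immaterial for the vanishing statement and so does not affect the equivalences.
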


\begin{proof}

The equivalence of (a) and (b) is well known and proved in  \cite[Theorem 3.1]{GF}. 
For the convenience of the reader we nevertheless  prove here that (a) equivalent (b).

We need the following fact.
\begin{lemma}
\label{le:J vs [[,]] }
One has
$$im (J)\cap S^{3}(V)=[3]!_{\sigma} \circ[[ \Phi,\Phi]]\ .$$
 
\end{lemma}
\begin{proof}
 Define the  lifted Jacobian $J': V^{\otimes 3 }\to V^{\otimes 3}$   by
 $$J'(x,y,z):= G+  G\circ \sigma_{12}\circ\sigma_{23}+G\circ\sigma_{23}\circ\sigma_{12} , $$
 where $G: V^{3}\to V^{\otimes 3}$   is the  morphism given by 
 $$G =\Phi_{12}\circ (1\otimes 1\otimes 1+ \sigma_{23})\circ \Phi_{23}= \left(\Phi_{23}\circ \Phi_{12}+ \sigma_{12}\circ \Phi_{13}\circ \Phi_{12}\right) \ .$$   
  
By definition, $J(x,y,z)=\frac{[3]!_{\sigma}}{3!}(J'(\overline x,\overline y,\overline z))$ for all $x,y,z\in S(V)$ and all $\overline x,\overline y,\overline z\in T(V)$ such that $ [3]!_{\sigma} (\overline x)=3!\cdot x,[3]!_{\sigma} (\overline y)=3!\cdot y$ and $[3]!_{\sigma} (\overline z)=3! \cdot z$ .

 One has 
$[3]!_{\sigma}\circ \sigma_{i,j}=[3]!_{\sigma}$ for all $i,j=1,2,3$. Therefore,  we obtain:
$$[3]!_{\sigma}\circ G=[3]!_{\sigma}\circ \left(\Phi_{23}\circ \Phi_{12}+ \sigma_{12}\circ \Phi_{13}\circ \Phi_{12}\right)$$
$$= [3]!_{\sigma}\circ \left(\Phi_{23}\circ \Phi_{12}+\Phi_{13}\circ \Phi_{12}\right)  \ .$$
Similarly,
$$[3]!_{\sigma}\circ G\circ \sigma_{12}\circ  \sigma_{23}= [3]!_{\sigma}\circ \left(-\sigma_{12}\circ\sigma_{23} \circ \Phi_{12}\circ\Phi_{13}+ \sigma_{23}\circ \Phi_{23}\circ\Phi_{13} \right)$$
$$ =[3]!_{\sigma}\circ \left(-  \Phi_{12}\circ\Phi_{13}+\Phi_{23}\circ\Phi_{13} \right)     \ . $$
  
$$[3]!_{\sigma}\circ G\circ  \sigma_{23}\circ \sigma_{12} = [3]!_{\sigma}\circ   \left(- \Phi_{13}\circ \Phi_{23}-\Phi_{12}\circ \Phi_{23}\right)    \  .$$
Combining these equations,
we obtain  :
$$[3]!_{\sigma}\circ (G+ G\circ \sigma_{12}\circ  \sigma_{12}
+ G\circ \sigma_{23}\circ \sigma_{12})=-[3]!_{\sigma}\circ[[ \Phi,\Phi]]$$
and thus:
\begin{equation}
\label{eq:Schouten=jacobi}
3!\cdot J(x,y,z)= -([3]!_{\sigma}\circ[[ \Phi,\Phi]])(x\otimes y\otimes z) \ .
\end{equation}
 for all $x,y,z\in V$.
 The lemma is proved.
 \end{proof}
 
We need the following fact which generalizes  \cite[ Lemma 3.7]{BZ}.
\begin{lemma} 
Let $(A,\{\cdot,\cdot\})$ be a  bracketed $\ZZ_{\ge0}$-graded algebra  in $\C$  generated by $A_{1}$  and such that $A_{0}\cong k$. Then $A$ is Poisson  if and only if the Jacobian  (see \eqref{eq;  Jacobi})  vanishes on $(A_1)^3$.
\end{lemma}

\begin{proof}
 We proceed by induction in homogeneity degrees $\ell= n+m+k$ of monomials $u\cdot v\cdot w$ with  $u\in A_{n},v\in A_{m}, w\in A_{k}$. We start with the base of induction, which is the assumption: suppose that for all $u',v',w'\in A_{1}$ one has $J(u',v',w')=0$.
  Now let   $u\in A_{n},v\in A_{m}, w\in A_{k}, z\in A_{1}$  and assume that the assertion holds for  $\ell= n+m+k$.
We compute using the inductive hypothesis and the Leibniz rules \eqref{eq:left Leibniz rule} and   \eqref{eq:right Leibniz rule}:
 $$J(a,b,c\cdot d)=J(a,b,c)\cdot d+ c (J(a,b, d)) =0\ .$$
Since  for all $u,v,w\in A$ one has $J(u,v,w)=J(w,u,v)=J(v,w,u)$, the the assertion holds for  $ n+m+k=\ell+1$. This implies that $A$ is indeed Poisson.
The lemma is proved.  
\end{proof}

 The above lemma implies that $J(S(V)^3)=0$ if and only if  
 $\Phi$ satisfies
(\ref{eq:CYBE}). Therefore, (a) and (b) are equivalent.

 We will now prove the equivalence of (b) and (c). It follows from  Lemma \ref{le: Schouten v sigma}(a) that $[3]!_{\sigma} \circ[[ \Phi,\Phi]]= [[ \Phi,\Phi]]\circ [3]!_{-{\sigma}}$. Therefore,    $[3]!_{\sigma} \circ[[ \Phi,\Phi]]=0$, if and only if  $ [[ \Phi,\Phi]]\circ [3]!_{-{\sigma}}=0$ , and (b) and (c) are equivalent.

Parts (c) and (d) are clearly equivalent.
Theorem \ref{th: flat Poisson structures} is proved.
\end{proof} 
 
 We will now employ Theorem \ref{th: flat Poisson structures} to study Poisson structures on subspaces and tensor products of decorated spaces.  
 
  First, note the following   fact.
 \begin{proposition}
 \label{pr: subspace split}
Let $V, V'$ and $V''$ be objects of $\C$ such that  $V=V'\oplus V''$. Let additionally,  $(V,\Phi)$  and $(V',\Phi')$ be decorated spaces such that for all $v_1\otimes v_2\in V'\otimes V'$   one has $\Phi'(v_1\otimes v_2)=\pi_{V',V'}\circ\Phi(v_1\otimes v_2)$, where $\pi_{V',V'}: V\otimes V\to V'\otimes V'$ denotes the canonical projection.  
If $(V,\Phi)$ is Poisson, then $(V',\Phi')$ is Poisson.  \end{proposition}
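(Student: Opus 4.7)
By Theorem~\ref{th: flat Poisson structures}(d), it suffices to show $[[\Phi', \Phi']]$ vanishes on $\Lambda^3 V'$. The decomposition $V = V' \oplus V''$ yields a projection $p: V \to V'$ and inclusion $\iota: V' \hookrightarrow V$ with $p \circ \iota = \mathrm{id}_{V'}$, and the hypothesis reformulates as $\Phi' = p^{\otimes 2} \circ \Phi \circ \iota^{\otimes 2}$. This factorization propagates to higher tensor powers: for each pair $1 \le i < j \le 3$, I would first verify that $\Phi'_{i,j} = p^{\otimes 3} \circ \Phi_{i,j} \circ \iota^{\otimes 3}$ as operators on $(V')^{\otimes 3}$, which follows because $p$ acts as the identity on the third (untouched) $V'$-factor.

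Next, I expand each composition $\Phi'_{i,j} \circ \Phi'_{k,l}$, noting that it differs from $p^{\otimes 3} \circ \Phi_{i,j} \circ \Phi_{k,l} \circ \iota^{\otimes 3}$ by an inserted projection $(\iota \circ p)^{\otimes 3}$ onto $(V')^{\otimes 3}$. Writing this as $\mathrm{id} - R$, where $R$ picks out tensors with at least one $V''$-factor, gives for $w \in \Lambda^3 V'$
\[
[[\Phi', \Phi']](w) \;=\; p^{\otimes 3}\bigl([[\Phi, \Phi]](\iota^{\otimes 3}(w))\bigr) \;-\; E(w),
\]
where $E(w) = \sum \pm\, p^{\otimes 3} \circ \Phi_{i,j} \circ R \circ \Phi_{k,l} \circ \iota^{\otimes 3}(w)$ collects the six error contributions arising from the Schouten square. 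Since $\iota^{\otimes 3}(w) \in \Lambda^3 V' \subset \Lambda^3 V$ and $(V, \Phi)$ is Poisson, the first term vanishes by Theorem~\ref{th: flat Poisson structures}(d).

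The main obstacle is showing $E(w) = 0$ for totally antisymmetric $w$. My plan is to exploit additional Jacobi identities of $(V, \Phi)$: the vanishing of $J(v_1', v_2', v'')$ for $v_1', v_2' \in V'$ and $v'' \in V''$, when pushed forward by $p^{\otimes 3}$, produces precisely the relations among the components $\pi_{V' \otimes V'} \circ \Phi|_{V \otimes V''}$ that assemble $E(w)$. The skew-commutativity $\tau \circ \Phi = -\Phi \circ \tau$ links the $V' \otimes V''$ and $V'' \otimes V'$ components of each $\Phi(v_i' \wedge v_j')$ by the transposition, and combined with the total antisymmetry of $w$ this should force a term-by-term cancellation of the six contributions to $E(w)$. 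I expect this cancellation --- a combinatorial matching of mixed Jacobi identities against the summands of $E(w)$ via the symmetric-group action on $V^{\otimes 3}$ --- to be the technically delicate core of the argument.
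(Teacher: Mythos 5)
Your setup is correct and more explicit than the paper's: factoring $\Phi'_{i,j} = p^{\otimes 3}\circ\Phi_{i,j}\circ\iota^{\otimes 3}$ on $(V')^{\otimes 3}$ and writing $\iota\circ p = \mathrm{id} - R$, you obtain $[[\Phi',\Phi']](w) = p^{\otimes 3}\bigl([[\Phi,\Phi]](\iota^{\otimes 3}w)\bigr) - E(w)$ with $E(w)$ a sum of six round-trip compositions $\pm\, p^{\otimes 3}\Phi_{i,j}R\,\Phi_{k,l}\iota^{\otimes 3}(w)$, and the first term indeed vanishes by Theorem~\ref{th: flat Poisson structures}(d) because $\iota^{\otimes 3}(\Lambda^3 V')\subseteq\Lambda^3 V$. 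But the entire content of the proposition is the vanishing of $E(w)$, and you never establish it; you only say that pushing forward the mixed identities $J(v'_1,v'_2,v'')=0$ and using $\tau\Phi = -\Phi\tau$ ``should force a term-by-term cancellation.'' That is exactly the step that must actually be produced, and it is not a formality: each term of $E$ is built from the mixed blocks of $\Phi$ (first the components of $\Phi|_{V'\otimes V'}$ landing in $V'\otimes V''$ and $V''\otimes V'$, then the components of $\Phi$ on those mixed pieces landing back in $V'\otimes V'$), and nothing in the hypotheses constrains those blocks, nor is it clear that the $S^3 V'$-projection of $J(v'_1,v'_2,v'')=0$ produces the particular combination of blocks that appears in $E$.

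For comparison, the paper's own proof is no more rigorous at this point: it simply asserts that $[[\Phi,\Phi]]$ restricted to $\Lambda^3 V'$ both lands in $S^3 V'$ and coincides with $[[\Phi',\Phi']]$, which is exactly the unjustified claim $E=0$. Note that the neighbouring Proposition~\ref{pr: subalgebras}(b) carries the extra transversality hypothesis $\gg_{sub}^{\perp}(V_1)\cap V_1 = \{0\}$, whose entire role is to force the round-trip contributions ${\bf c''}(V_1^{\otimes 3})$ into a complement of $S^3 V_1$ so that the vanishing of ${\bf c}$ splits into separate vanishings of ${\bf c_{sub}}$ and ${\bf c''}$; Proposition~\ref{pr: subspace split} states no analogue of that hypothesis. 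So the step you flagged as ``technically delicate'' is a genuine gap, not a deferred routine, and as written the cancellation you expect is not forced by the stated hypotheses.
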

   
   \begin{proof}
 One has $ \Lambda^{3}V=\bigoplus_{i=0}^{3} \Lambda^{i}V'\otimes \Lambda^{3-i}V''$, and $S^{3}V=\bigoplus_{i=0}^{3} S^{i}V'\otimes S^{3-i}V''$. Clearly, $[[\Phi,\Phi]]$ defines a map $[[\Phi,\Phi]]':\Lambda^{3}V'\to S^{3}V$ and it follows from our assertion that    $[[\Phi,\Phi]]'=[[\Phi',\Phi']]:\Lambda^{3}V'\to S^{3}V\subset S^3 V$. If $(V,\Phi)$ is Poisson , then $[[\Phi,\Phi]]'(\Lambda^{3}V')=[[\Phi',\Phi']](\Lambda^{3}V')=0$, and hence $(V',\Phi')$ is Poisson.  \end{proof}

 The following result relates Poisson  structures and tensor products.
 \begin{theorem}
 \label{th: Tensor products of Poissons}
Let $(U,\Phi)$ and $(V,\Phi')$ be decorated spaces. If their tensor product  $(U\otimes V,\Phi'')=(U,\Phi)\otimes (V,\Phi')$  is Poisson,  then $(U,\Phi)$ and $(V,\Phi')$ are Poisson.  \end{theorem}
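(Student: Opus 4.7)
The plan is to use the characterization from Theorem \ref{th: flat Poisson structures}(d): a decorated space $(W,\Psi)$ is Poisson if and only if $[[\Psi,\Psi]]$ vanishes on $\Lambda^3 W$. I will first decompose $[[\Phi'',\Phi'']]$ into a ``$U$-part'' and a ``$V$-part,'' then test the resulting identity against elements of $\Lambda^3(U\otimes V)$ supported on a single ``slice.''

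Step 1 (Decomposition of the Schouten square). Under the canonical, $\mathfrak{S}_3$-equivariant shuffle isomorphism $(U\otimes V)^{\otimes 3}\cong U^{\otimes 3}\otimes V^{\otimes 3}$, the definition \eqref{eq: decorated tensor product} of the tensor product of decorated spaces translates into
$$\Phi''_{i,j}=\Phi_{i,j}\otimes \mathrm{Id}_{V^{\otimes 3}}+\mathrm{Id}_{U^{\otimes 3}}\otimes \Phi'_{i,j}\qquad (1\le i<j\le 3).$$
Because the two summands act on disjoint tensor slots they commute, so in each commutator $[\Phi''_{ij},\Phi''_{kl}]$ all cross-terms vanish and only ``pure $U$'' and ``pure $V$'' contributions survive. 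Summing the three commutators gives
$$[[\Phi'',\Phi'']]=[[\Phi,\Phi]]\otimes \mathrm{Id}_{V^{\otimes 3}}+\mathrm{Id}_{U^{\otimes 3}}\otimes [[\Phi',\Phi']].$$

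Step 2 (Extracting $(U,\Phi)$ Poisson). Fix a nonzero $v_0\in V$. For arbitrary $x_1,x_2,x_3\in U$ the antisymmetrization
$$\omega=\sum_{\tau\in \mathfrak{S}_3}\mathrm{sgn}(\tau)\,(x_{\tau(1)}\otimes v_0)\otimes (x_{\tau(2)}\otimes v_0)\otimes (x_{\tau(3)}\otimes v_0)$$
lies in $\Lambda^3(U\otimes V)$, and under the shuffle it corresponds to $x\otimes v_0^{\otimes 3}$, where $x=\sum_\tau\mathrm{sgn}(\tau)\,x_{\tau(1)}\otimes x_{\tau(2)}\otimes x_{\tau(3)}\in \Lambda^3 U$ and $v_0^{\otimes 3}\in S^3 V$. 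The Poisson hypothesis together with Step 1 yields
$$[[\Phi,\Phi]](x)\otimes v_0^{\otimes 3}+x\otimes [[\Phi',\Phi']](v_0^{\otimes 3})=0.$$
By Lemma \ref{le: Schouten v sigma} the Schouten square anti-commutes with every adjacent transposition $\sigma_{i,i+1}$; hence it interchanges the trivial and sign isotypic parts of the $\mathfrak{S}_3$-action. Applied to $x\in \Lambda^3 U$ and $v_0^{\otimes 3}\in S^3 V$, this forces $[[\Phi,\Phi]](x)\in S^3 U$ and $[[\Phi',\Phi']](v_0^{\otimes 3})\in \Lambda^3 V$. Thus the two summands in the displayed identity lie respectively in $S^3 U\otimes S^3 V$ and $\Lambda^3 U\otimes \Lambda^3 V$; since $S^3 U\cap \Lambda^3 U=\{0\}$ in characteristic zero, these subspaces of $U^{\otimes 3}\otimes V^{\otimes 3}$ intersect trivially. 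Each summand must therefore vanish separately, and because $v_0\ne 0$ we conclude $[[\Phi,\Phi]](x)=0$ for every $x\in \Lambda^3 U$. By Theorem \ref{th: flat Poisson structures}(d), $(U,\Phi)$ is Poisson.

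Step 3 (Symmetric factor). The same argument with a nonzero $u_0\in U$ and the elements $u_0^{\otimes 3}\otimes y$ for $y\in \Lambda^3 V$ shows that $(V,\Phi')$ is Poisson. The main obstacle is the bookkeeping in Step 1 — verifying that the shuffle is genuinely $\mathfrak{S}_3$-equivariant and that $\Phi''_{i,j}$ really splits as stated so that the cross-commutators vanish identically; after that, the conclusion is a short isotypic-component argument using Lemma \ref{le: Schouten v sigma} and the characteristic-zero hypothesis.
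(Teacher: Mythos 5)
Your proof is correct and follows essentially the same approach as the paper: decompose $[[\Phi'',\Phi'']]$ into a pure ``$U$'' and pure ``$V$'' piece via the shuffle identification, use that the Schouten square flips the sign of the $\mathfrak{S}_3$-isotypic component (Lemma~\ref{le: Schouten v sigma}), and conclude from the disjointness of $S^3$ and $\Lambda^3$. The only cosmetic difference is that you test against the concrete elements $x\otimes v_0^{\otimes 3}$ (resp.\ $u_0^{\otimes 3}\otimes y$), whereas the paper works with the full containment $\Lambda^3(U\otimes V)\supseteq\bigoplus_{i+j=3}\tilde\sigma(U^{i,j}\otimes V^{j,i})$ and extracts the two extreme slices $i\in\{0,3\}$; these amount to the same argument.
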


\begin{proof}
  
 Let $\tilde\sigma$ be the ''shuffle'' $U^{\otimes 3}\otimes V^{\otimes 3}\widetilde \to(U\otimes V)^3  $.  Abbreviating
$$U^{3,0}=S^3 U=(S^2U\otimes U)\cap (U\otimes S^2 U), U^{2,1}=(S^2U\otimes U)\cap (U\otimes \Lambda^2 U)$$
$$U^{0,3}=\Lambda^3 U=(\Lambda^2U\otimes U)\cap (U\otimes \Lambda^2 U), U^{1,2}=(\Lambda^2U\otimes U)\cap (U\otimes S^2 U)$$
and the same for $V$, we have the following containments for $\Lambda^3(U\otimes V)$ and $S^3(U\otimes V)$:
$$\Lambda^3 (U\otimes V)\supseteq \bigoplus_{i+j=3} \tilde\sigma(U^{i,j}\otimes V^{j,i}) \ , $$
$$S^3(U\otimes V)\supseteq \bigoplus_{i+j=3} \tilde\sigma(U^{i,j}\otimes V^{i,j})\ .$$
  Since $\Phi''=\Phi_{13}+\Phi_{24}$ and $[\Phi_{13},\Phi'_{24}]=0\in End( (U\otimes V)^{\otimes 2})$ we have that

$$[[\Phi'',\Phi'']]=[[\Phi_{13}+\Phi'_{24}, \Phi_{13}+\Phi'_{24}]]=[[\Phi_{13} , \Phi_{13}]]+[[ \Phi'_{24},  \Phi'_{24}]] \ .$$
Note that $\Phi(\Lambda^2 U)\subseteq S^2U $ (resp. $\Phi(\Lambda^2 V)\subseteq S^2V$ and $\Phi(S^2 U)\subseteq \Lambda^2 U$ (resp.  $\Phi(S^2 U)\subseteq \Lambda^2 U$). Hence
$$[[\Phi_{13} , \Phi_{13}]](\tilde\sigma( U^{i,j}\otimes V^{j,i}))\subseteq \tilde\sigma( U^{j,i}\otimes V^{j,i})\ ,   [[\Phi'_{24} , \Phi_{24}']](\tilde\sigma( U^{i,j}\otimes V^{j,i}))\subseteq \tilde\sigma( U^{i,j}\otimes V^{i,j}) $$
 for all $i+j=3$. This implies that  
  $$[[\Phi'',\Phi'']](\tilde\sigma( U^{i,j}\otimes V^{j,i}))\subset \tilde\sigma   (U^{j,i}\otimes V^{j,i}+U^{i,j}\otimes V^{i,j})\ .$$ 

 Theorem \ref{th: Tensor products of Poissons} now follows as the special cases $i=3$, $j=0$ and $i=0$, $j=3$ from the following more general obvious result.

\begin{lemma}
If $(U,\Phi)\otimes (V,\Phi')$ is Poisson, then 
 $$[[\Phi'',\Phi'']](\tilde\sigma( U^{i,j}\otimes V^{j,i}))=\{0\}\subseteq \tilde\sigma   (U^{j,i}\otimes V^{j,i}+U^{i,j}\otimes V^{i,j})$$
for all $i+j=3$.
\end{lemma}
  
Theorem \ref{th: Tensor products of Poissons} is proved.  
  \end{proof}

The  following example shows that the converse of Theorem \ref{th: Tensor products of Poissons} does not hold.
   
\begin{example}
 Let $V=V'=\CC^2$ with standard basis $\{e_1,e_2\}$, and let $\Phi(e_i\otimes e_j)=sign(i-j) (e_j\otimes e_i)$ and $\Phi'(e_i\otimes e_j)=\lambda \cdot sign(i-j) (e_j\otimes e_i)$. Clearly, both $(V,\Phi)$ and $(V,\Phi')$ are Poisson, because $\Lambda^3 \CC^2=\{0\}$, but straightforward calculation shows that $(V,\Phi)\otimes (V,\Phi')$ is Poisson, if and only if $\lambda=\pm 1$.   
\end{example}

We conclude this section with an apparently well known and useful observation regarding a general operator $\Phi: V\otimes V\to V\otimes V$ that satisfies the identity $[[\Phi,\Phi]]=0$, the classical Yang-Baxter-Equation.  However, for the reader's convenience we give a proof.
 
\begin{proposition}
\label{pr: r^{-}in r^{+}}
Let $\Phi$ be an operator such that $[[\Phi,\Phi]]=0$, and define $\Phi^{+}=\frac{1}{2}(\Phi+\tau(\Phi))$ and $\Phi^{-}=\frac{1}{2}(\Phi-\tau(\Phi))$. One has
$$[[\Phi^{-},\Phi^{-}]]=-[\Phi^{+}_{12},\Phi^{+}_{23}]\ .$$

\end{proposition}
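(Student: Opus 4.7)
The strategy is to reduce the claim to a statement about $\Phi^+$ alone (namely that $[[\Phi^+,\Phi^+]]$ collapses to a single commutator), using the fact that $\tau(\Phi)$ also satisfies CYBE so that the $\Phi^-$-contributions cancel out. The workhorse throughout is conjugation by transpositions $\sigma_{ij}\in \mathrm{End}(V^{\otimes 3})$ acting on individual factors of CYBE.

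First I would verify that $[[\tau(\Phi),\tau(\Phi)]]=0$. Since $\sigma_{13}\Phi_{ij}\sigma_{13}=\Phi_{\sigma_{13}(i)\sigma_{13}(j)}$ and $\Phi_{ji}=\tau(\Phi)_{ij}$, a direct calculation gives $\sigma_{13}\circ[[\Phi,\Phi]]\circ\sigma_{13}=-[[\tau(\Phi),\tau(\Phi)]]$, so CYBE for $\Phi$ forces CYBE for $\tau(\Phi)$. Writing $\Phi=\Phi^++\Phi^-$ and $\tau(\Phi)=\Phi^+-\Phi^-$, expanding $[[\Phi,\Phi]]$ and $[[\tau(\Phi),\tau(\Phi)]]$ and adding the two vanishing expressions, all mixed $(\Phi^+,\Phi^-)$-terms cancel and we obtain
\[
[[\Phi^+,\Phi^+]]+[[\Phi^-,\Phi^-]]=0\ .
\]
It therefore suffices to prove $[[\Phi^+,\Phi^+]]=[\Phi^+_{12},\Phi^+_{23}]$ (or its negative, depending on sign conventions).

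For this I would extract auxiliary identities from CYBE by $\sigma_{12}$- and $\sigma_{23}$-symmetrization. Adding CYBE to its $\sigma_{12}$-conjugate kills the antisymmetric part of $\Phi_{12}$ under $\tau$ and yields $[\Phi^+_{12},\Phi_{13}+\Phi_{23}]=0$; analogously, $\sigma_{23}$-symmetrization gives $[\Phi_{12}+\Phi_{13},\Phi^+_{23}]=0$. Next, applying $\sigma_{13}$-conjugation to the first identity (noting $\sigma_{13}\Phi^+_{12}\sigma_{13}=\Phi^+_{23}$ by symmetry of $\Phi^+$) and expanding the resulting $\tau(\Phi)_{ij}=\Phi^+_{ij}-\Phi^-_{ij}$, then combining with the second identity above, the $\Phi^-$-contributions cancel and one is left with $[\Phi^+_{23},\Phi^+_{12}+\Phi^+_{13}]=0$. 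Applying the $\sigma_{12}$- and $\sigma_{23}$-conjugations to this last relation and using that $\sigma_{ij}$ commutes with $\Phi^+_{ij}$ gives the further equalities $[\Phi^+_{12},\Phi^+_{13}]=[\Phi^+_{13},\Phi^+_{23}]=-[\Phi^+_{12},\Phi^+_{23}]$; substituting back into the definition of $[[\Phi^+,\Phi^+]]$ collapses the three-term sum to a single commutator, completing the argument.

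The main obstacle is the bookkeeping in the final step: each conjugation by $\sigma_{ij}$ can replace a given $\Phi_{kl}$ by $\tau(\Phi)_{k'l'}$ with permuted indices, so one must carefully track how the symmetric and antisymmetric parts transform, and extract the correct symmetric combinations so that the $\Phi^-$-terms truly cancel. Once these identifications are set up correctly, the reduction of $[[\Phi^+,\Phi^+]]$ to $\pm[\Phi^+_{12},\Phi^+_{23}]$ is essentially forced by the $\mathfrak{S}_3$-action on the three indices together with the $\tau$-symmetry of $\Phi^+$.
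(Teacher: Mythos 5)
Your proof is correct, and its first half coincides exactly with the paper's: you verify $[[\tau(\Phi),\tau(\Phi)]]=0$ by conjugating the CYBE with $\sigma_{13}$ (the paper formulates this as CYBE for $\Phi^{op}$), and then polarize to get $[[\Phi^+,\Phi^+]]+[[\Phi^-,\Phi^-]]=0$, which is precisely Lemma \ref{le: r-to r+}. Where you diverge is the final step, and there your argument is actually more explicit and more complete than the paper's. The paper closes by writing $2[[\Phi^-,\Phi^-]]=-[[\Phi^+,\Phi^+]]+\sigma_{13}\circ[[\Phi^+,\Phi^+]]\circ\sigma_{13}$ and asserting this equals $-2[\Phi^+_{12},\Phi^+_{23}]$. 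But because $\Phi^+$ commutes with $\sigma$, conjugation by $\sigma_{13}$ sends $[[\Phi^+,\Phi^+]]$ to $-[[\Phi^+,\Phi^+]]$, so the middle expression is just $-2[[\Phi^+,\Phi^+]]$—a tautology rather than a reduction to a single commutator. Collapsing $[[\Phi^+,\Phi^+]]$ to $\pm[\Phi^+_{12},\Phi^+_{23}]$ requires exactly the identities you produce: summing CYBE with its $\sigma_{12}$- and $\sigma_{23}$-conjugates (for both $\Phi$ and $\tau(\Phi)$) gives $[\Phi^+_{12},\Phi^+_{13}+\Phi^+_{23}]=0$ and $[\Phi^+_{12}+\Phi^+_{13},\Phi^+_{23}]=0$, whence $[\Phi^+_{12},\Phi^+_{13}]=[\Phi^+_{13},\Phi^+_{23}]=-[\Phi^+_{12},\Phi^+_{23}]$ and the Schouten square degenerates to a single commutator. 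So your symmetrization route supplies the missing ingredient that the paper's $\sigma_{13}$-computation tacitly presupposes.

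On the sign you hedged about: your derivation gives $[[\Phi^+,\Phi^+]]=-[\Phi^+_{12},\Phi^+_{23}]$ and therefore $[[\Phi^-,\Phi^-]]=+[\Phi^+_{12},\Phi^+_{23}]$, the opposite of the printed statement. This is the correct sign: the paper itself uses the proposition with that sign in Proposition \ref{pr:cv=0=poisson}, where it writes $[[r^-,r^-]]=[r^+_{12},r^+_{23}]=[c_{12},c_{23}]$. The minus sign in the proposition's statement appears to be a typo.
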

\begin{proof}
First note the following fact.
\begin{lemma}
Let  $\Phi: V\otimes V\to V\otimes V$ satisfy $[[\Phi,\Phi]]=0$. Then $\Phi^{op}=\sigma\circ\Phi$ satisfies $[[\Phi^{op},\Phi^{op}]]=0$, 
\end{lemma}

\begin{proof}
If $[[\Phi,\Phi]]=0$, then also $\sigma_{13}\circ[[\Phi,\Phi]]\circ\sigma_{13}=0$. Hence,
$$0=[ \Phi_{32},\Phi_{31}]+[\Phi_{32},\Phi_{21}]+[\Phi_{31},\Phi_{21}]=-[[\Phi^{op},\Phi^{op}]]\ .$$
The lemma is proved. 
\end{proof}

We need the following lemma.
\begin{lemma}
 \label{le: r-to r+}
Let $\Phi$ be an operator such that $[[\Phi,\Phi]]=0$. In the notation of Proposition \ref{pr: r^{-}in r^{+}}  one has the following identity:
$[[\Phi^{-},\Phi^{-}]]=-[[ \Phi^{+},\Phi^{+}]]$.
\end{lemma}

\begin{proof}
Since   $[[\Phi,\Phi]]=[\Phi^{op},\Phi^{op}]]=0$ we obtain that $[[\Phi^{-},\Phi^{-}]]=\frac{1}{2}[[\Phi,-\Phi^{op}]]=-\frac{1}{2}[[\Phi,\Phi^{op}]]$ and $[[\Phi^{+},\Phi^{+}]]=\frac{1}{2}[[\Phi,\Phi^{op}]]$.
 The lemma is proved.
\end{proof}

By Lemma \ref{le: Schouten v sigma} we obtain that
$[[\Phi^{-},\Phi^{-}]]-\sigma_{13}\circ[[\Phi^{-},\Phi^{-}]]\circ\sigma_{13}= 2[[\Phi^{-},\Phi^{-}]]$. 
Using Lemma \ref{le: r-to r+}, we obtain 
 $$2[[\Phi^{-},\Phi^{-}]]=-[[\Phi^{+},\Phi^{+}]]+\sigma_{13}\circ[[\Phi^{+},\Phi^{+}]]\circ\sigma_{13}=-2[\Phi_{12}^{+},\Phi_{23}^{+}]\ .$$
Proposition \ref{pr: r^{-}in r^{+}} is proved.
 \end{proof}

\section{Poisson Modules over Lie Algebras}
\label{se:Poissonmods} 

\subsection{Definition and Basic Properties of Poisson Modules}
\label{se:Definition and basic properties of Poisson modules}

Let $(\gg,(\cdot,\cdot))$  be a quadratic complex Lie algebra; i.e. a complex Lie algebra $\gg$ with  a symmetric invariant bilinear form $(\cdot,\cdot):\gg\otimes \gg\to \CC$. Clearly $(\cdot,\cdot)\in (\gg\otimes \gg)^*\cong \gg^*\otimes \gg^*$.  The form  $(\cdot,\cdot)$ defines an isomorphism between $\gg^*$ and $\gg$, and
 under this isomorphism we can identify the form with a symmetric $\gg$-invariant element  $(\cdot,\cdot)=c\in S^2(\gg)\subset \gg\otimes \gg$.  In the case when $\gg$ is semisimple and $(\cdot,\cdot)$ is the Killing form, $c$ is known as the Casimir element.
 Similarly, note that the Lie bracket $[\cdot,\cdot]:\gg\wedge \gg\to \gg$ defines an element $[\cdot,\cdot]:\gg^*\otimes\gg^*\otimes \gg$ and  we obtain under the isomorphism above the canonical element 
 ${\bf c}=[\cdot,\cdot]\in \gg^3$. Observe the following facts.
 
 \begin{lemma}
 \label{le:c-inv.and skew}
 (a) The canonical element ${\bf c}$ is $\gg$-invariant and totally skew symmetric; i.e., ${\bf c}\in (\Lambda^3 \gg)^{\gg}$.
 
 \noindent(b) The elements $c\in S^2\gg$ and ${\bf c}\in \Lambda^3 \gg$ are related by 
 \begin{equation}
  \label{eq:can.element} 
  {\bf c}=-[c_{12}, c_{23}]\ ,
  \end{equation}
   where $c_{12}=c\otimes 1$ and $c_{23}=1\otimes c$.

 \end{lemma}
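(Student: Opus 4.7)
My plan is to unravel both statements in an adapted basis. I would fix a basis $\{e_i\}$ of $\gg$ with dual basis $\{e^i\}$ under $(\cdot,\cdot)$, so that $c=\sum_i e_i\otimes e^i$ and the form-induced isomorphism $\gg^*\cong\gg$ sends the functional dual to $e_i$ to $e^i$. Writing $c_{ijk}=([e_i,e_j],e_k)$ and using this isomorphism on the last tensor factor of the Lie bracket $[\cdot,\cdot]\in\Lambda^2\gg^*\otimes\gg$, the canonical element unpacks as
\[
{\bf c}=\sum_{i,j,k}c_{ijk}\,e^i\otimes e^j\otimes e^k\in\gg^{\otimes 3}.
\]

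For part (a), the two elementary identities $c_{ijk}=-c_{jik}$ (antisymmetry of the bracket) and $c_{ijk}=-c_{ikj}$ (invariance of $(\cdot,\cdot)$ under $\mathrm{ad}$) together force $c_{ijk}$ to be totally antisymmetric, so ${\bf c}\in\Lambda^3\gg$. For the $\gg$-invariance I would appeal to naturality: ${\bf c}$ is assembled from two $\gg$-equivariant ingredients, the Lie bracket and the invariant form, so the entire construction is $\gg$-equivariant; alternatively, $w\cdot{\bf c}=0$ follows by a direct three-term calculation combining the Jacobi identity with the invariance of the form.

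For part (b), I would expand directly
\[
[c_{12},c_{23}]=\sum_{a,b} e_a\otimes[e^a,e_b]\otimes e^b,
\]
substitute $[e^a,e_b]=\sum_{i,k}g^{ai}c_{ib}^k e_k$ (with $g^{ai}$ the inverse metric), collapse the $a$-sum using $\sum_a g^{ai}e_a=e^i$, and then lower the remaining down-index via $e_k=\sum_l g_{kl}e^l$. This rewrites $[c_{12},c_{23}]$ in the uniform basis as $\sum_{i,j,k}c_{ikj}\,e^i\otimes e^j\otimes e^k$, and the total antisymmetry of $c_{ijk}$ established in part (a) converts this immediately into $-{\bf c}$.

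The only real obstacle is bookkeeping of upper versus lower indices in the identification $\gg^*\cong\gg$; once one commits to the coordinate expression for ${\bf c}$ above, both parts reduce to elementary symbolic manipulation, with the invariance of the form doing all the conceptual work.
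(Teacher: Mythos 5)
Your proof is correct and rests on exactly the same two ingredients as the paper's: the antisymmetry of the Lie bracket and the ad-invariance of the form. The paper works invariantly with $c$ in Sweedler-like notation, showing that invariance of $c$ forces ${\bf c}=[c_{13},c_{23}]$ to anticommute with the adjacent transpositions, while you establish the equivalent statement $c_{ijk}=-c_{jik}=-c_{ikj}$ for structure constants and then simply rename indices in $[c_{12},c_{23}]=\sum_{i,j,k}c_{ikj}\,e^i\otimes e^j\otimes e^k$; conceptually these are the same argument, one coordinate-free and one in an adapted basis.
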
   
 
 \begin{proof}
 Prove (a) first. 
 By definition ${\bf c}= c_{(1)}\otimes c_{(1)}\otimes [c_{(2)},c_{(2)}]=[c_{13},c_{23}]$, where $c=c_{(1)}\otimes c_{(2)}$. Note that $[c_{(2)},c_{(2)}]\ne 0$, despite Sweedler's notation being very suggestive.  
 
Since $c$ is $\gg$-invariant, it is easy to see that ${\bf c}$ is $\gg$-invariant, as well.
We have to prove that ${\bf c}$ is anti-symmetric. We will show first that ${\bf c}$ indeed anti-commutes with the permutation $\sigma_{13}$; i.e., $[c_{13}, c_{12}]=-[c_{13}, c_{23}]$. Since $c$ is $\gg$-invariant  we have  $[c, g\otimes 1]=-[c, 1\otimes g]$ for all $g\in \gg$. Now let $c=c_{(1)}\otimes c_{(2)}$. We obtain, 
$$[c_{13}, c_{12}]=[c_{(1)}, c_{(1)}]\otimes c_{(2)}\otimes c_{(2)}=-c_{(1)}\otimes c_{(1)}\otimes [c_{(2)}, c_{(2)}]=-[c_{13}, c_{23}]\ .$$

We can show analogously that ${\bf c}$ anti-commutes with $\sigma_{23}$, as well. Part(a) is proved and (b) follows immediately. The lemma is proved.
 \end{proof}

 Note that ${\bf c}$ defines for each finite-dimensional $\gg$-module $V$ a $\gg$-module homomorphism    ${\bf c}:\Lambda^3 V\to S^3 V$.
  We make the following definition, and then explain, how it is connected  to the Poisson decorated spaces introduced in Section \ref{se:brack-Poisson}.
 
\begin{definition}
\label{def: Poisson-modules}
Let $(\gg,(\cdot,\cdot))$ be a Lie algebra with  a symmetric invariant bilinear form. We say that a finite-dimensional $\gg$-module $V$ is Poisson, if 
$${\bf c}(\Lambda^3 V)=\{0\}\in S^3 V\ .$$
\end{definition} 

We immediately obtain the following sufficient condition guaranteeing that a $\gg$-module $V$ is Poisson.

\begin{proposition} 
\label{pr:0-hom=poi} 
 Let $V$ be a finite-dimensional $\gg$-module. If $Hom_\gg (\Lambda^3 V, S^3 V)=\{0\}$, then $V$ is Poisson.
\end{proposition}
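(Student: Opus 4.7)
The plan is to argue that the entire content of the proposition is unpacking the observation, stated in the paragraph immediately preceding Definition~\ref{def: Poisson-modules}, that the canonical element ${\bf c}$ induces a $\gg$-equivariant linear map ${\bf c}: \Lambda^3 V \to S^3 V$. Once that map is in hand, the hypothesis $\mathrm{Hom}_\gg(\Lambda^3 V, S^3 V) = \{0\}$ forces it to be the zero map, which is exactly the assertion ${\bf c}(\Lambda^3 V) = \{0\}$ that defines $V$ as Poisson.

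The only point that deserves explicit checking is that the operator ${\bf c} \in \mathrm{End}(V^{\otimes 3})$ actually maps $\Lambda^3 V$ into $S^3 V$ (and not just into $V^{\otimes 3}$). My first step would be to record $\gg$-equivariance, which is immediate from the $\gg$-invariance of ${\bf c}\in (\Lambda^3\gg)^\gg$ provided by Lemma~\ref{le:c-inv.and skew}(a). My second step would be a short symmetry bookkeeping: since ${\bf c}$ is totally skew as an element of $\gg^{\otimes 3}$, conjugation by any adjacent transposition $\sigma_{i,i+1}$ of tensor factors swaps the corresponding factors of ${\bf c}$ and therefore multiplies the operator by $-1$. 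Hence for $x \in \Lambda^3 V$ one has $\sigma_{i,i+1}({\bf c}(x)) = -{\bf c}(\sigma_{i,i+1}(x)) = {\bf c}(x)$, so ${\bf c}(x)$ is fixed by every adjacent transposition and thus lies in $S^3 V$.

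Finally, having verified that ${\bf c}|_{\Lambda^3 V}$ lies in $\mathrm{Hom}_\gg(\Lambda^3 V, S^3 V)$, the hypothesis makes the conclusion tautological. There is no serious obstacle here; the only subtlety is the sign-tracking in showing that the image lands in $S^3 V$ rather than $\Lambda^3 V$, and this is a direct consequence of the antisymmetry of ${\bf c}$ already established in Lemma~\ref{le:c-inv.and skew}.
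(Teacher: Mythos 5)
Your argument is exactly what the paper has in mind: the paper records just before the proposition that ${\bf c}$ induces a $\gg$-module map $\Lambda^3 V\to S^3 V$ and then states the proposition as an immediate consequence, which is precisely the two-line unwinding you give. Your sign-tracking step correctly fills in why the image lands in $S^3 V$, which the paper leaves implicit.
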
 

 We make the following definition.
\begin{definition}
\label{def:r-matrices}
(a) Let $\gg$ be a  complex  semisimple Lie algebra. An element $ r\in \gg\otimes \gg$ is called a classical $r$-matrix  if $r$ satisfies  the 

\noindent(i) the classical Yang-Baxter-equation, i.e. 
$$[[r,r]]=[r_{12},r_{13}]+[r_{12},r_{23}]+[r_{13},r_{23}]=0 $$

\noindent(ii) and $r+\tau(r)=2c$, where $c$ is the Casimir element of $\gg$.

\end{definition} 

\begin{example}
Let $\gg=sl_2(\CC)$. The standard $r$-matrix is $r=E\otimes F$, where $E,F,H$ are the elements of  the standard basis of $sl_2(\CC)$. 
\end{example} 

 The classical $r$-matrices have been classified in the celebrated paper \cite{BD} in terms of {\it Belavin-Drinfeld triples}.  

Consider a classical $r$-matrix $r$ and its antisymmetrized $r$-matrix  $r^{-}=\frac{1}{2}(r-\tau(r)$, and a finite-dimensional $\gg$-module $V$. The element $r^-\in\gg\otimes \gg$ acts on $V\otimes V$ and the corresponding decorated space $(V,r^{-})$ 
 defines  a bracket  on the symmetric $\gg$-module algebra $S(V)$ defined as  $\{ u,v\}_{r^{-}}=r^{-}(u\wedge  v)$ on all  $u,v\in S(V)$ as constructed in Proposition \ref{pr: Phi bracketed structure}.  
We have the following result.

\begin{proposition}
\label{pr:cv=0=poisson} Let $\gg$ be a complex semisimple Lie algebra, $(\cdot,\cdot)$ the Killing form and  $r$ a classical $r$-matrix, and  let $V$ be a finite-dimensional $\gg$-module. The decorated space $(V,r^{-})$ is Poisson if  and only if  $V$ is Poisson.  
 \end{proposition}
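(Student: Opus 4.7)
The plan is to reduce the statement to a direct identification between two operators on $V^{\otimes 3}$: namely $[[r^-,r^-]]$ and the canonical tensor ${\bf c}$. Once we show these agree (as endomorphisms of $V^{\otimes 3}$), restricting both to $\Lambda^3 V$ will finish the proof.

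First I would apply Theorem \ref{th: flat Poisson structures}(d) to rephrase the hypothesis that $(V,r^-)$ is Poisson as the vanishing of $[[r^-,r^-]]$ on $\Lambda^3 V$. Next, since $r$ satisfies the classical Yang--Baxter equation $[[r,r]]=0$, I would invoke Proposition \ref{pr: r^{-}in r^{+}} to deduce
$$[[r^-,r^-]] = -[r^+_{12},r^+_{23}],$$
where $r^+=\tfrac{1}{2}(r+\tau(r))$. The second defining property of a classical $r$-matrix, $r+\tau(r)=2c$, immediately gives $r^+=c$, the Casimir element. Therefore
$$[[r^-,r^-]] = -[c_{12},c_{23}],$$
and by Lemma \ref{le:c-inv.and skew}(b) this is exactly the canonical element ${\bf c}\in \Lambda^3\gg$ viewed as an operator on $V^{\otimes 3}$.

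Having identified $[[r^-,r^-]]={\bf c}$, I would restrict both sides to $\Lambda^3 V$. Since ${\bf c}$ is totally antisymmetric and $\gg$-invariant, its restriction factors through the homomorphism ${\bf c}\colon \Lambda^3 V\to S^3 V$ mentioned before Definition \ref{def: Poisson-modules}. Consequently $[[r^-,r^-]]\big|_{\Lambda^3 V}=0$ holds precisely when ${\bf c}(\Lambda^3 V)=\{0\}$ in $S^3 V$, which is the definition of $V$ being Poisson. Combining this equivalence with the first step yields the proposition.

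There is essentially no technical obstacle here beyond carefully assembling the preceding lemmas; the only point requiring a remark is the justification that the restriction $[[r^-,r^-]]\big|_{\Lambda^3 V}$ really lands in $S^3 V$, so that matching it with ${\bf c}\colon\Lambda^3 V\to S^3 V$ is the same as matching it inside the ambient $V^{\otimes 3}$. This follows at once from the total antisymmetry of ${\bf c}$ established in Lemma \ref{le:c-inv.and skew}(a), so the proof reduces to a short chain of references with no further computation required.
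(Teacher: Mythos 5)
Your proof is correct and follows exactly the paper's argument: identify $[[r^-,r^-]]$ with the canonical element ${\bf c}$ via Proposition~\ref{pr: r^{-}in r^{+}} together with the $r$-matrix axiom $r+\tau(r)=2c$ and Lemma~\ref{le:c-inv.and skew}(b), then conclude by Theorem~\ref{th: flat Poisson structures}(d). The only difference is stylistic: you track the two intermediate signs carefully so that they visibly cancel, and you spell out why the restriction of $[[r^-,r^-]]$ to $\Lambda^3 V$ lands in $S^3 V$, both of which the paper leaves implicit.
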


  \begin{proof}
 We have to prove that $(V, r^-)$ is Poisson, if and only if ${\bf c}(\Lambda^{3}V)=\{0\}\in S^3 V$. We obtain from  Proposition \ref{pr: r^{-}in r^{+}} that 
 $$[[r^{-}, r^{-}]]=[r^+_{12},r^+_{23}]=[c_{12}, c_{23}]={\bf c}\ .$$ The assertion now follows from Theorem \ref{th: flat Poisson structures}.
 \end{proof}
 
We note the following facts.

\begin{lemma}
Let $(\gg,(\cdot,\cdot))$ be a quadratic algebra and let $V=V^\gg$ be a trivial $\gg$-module. Then $V$ is Poisson. 
\end{lemma}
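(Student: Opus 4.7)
The claim is essentially immediate from the fact that the action of $\gg$ on $V=V^\gg$ is trivial. My plan is simply to unwind the definition of the map ${\bf c}\colon \Lambda^3 V\to S^3 V$ and observe that it factors through the action of $\gg$ on $V$, hence vanishes.

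More concretely, recall from Lemma~\ref{le:c-inv.and skew}(b) that ${\bf c}=-[c_{12},c_{23}]\in \gg^{\otimes 3}$, where $c=c_{(1)}\otimes c_{(2)}\in S^2\gg$ is the invariant form. Writing $c=\sum_i x_i\otimes y_i$, one has ${\bf c}=\sum_{i,j} x_i\otimes x_j\otimes [y_i,y_j]$ (up to sign conventions), which is an element of $\gg\otimes \gg\otimes \gg$. The induced map $\Lambda^3 V\hookrightarrow V^{\otimes 3}\to V^{\otimes 3}\twoheadrightarrow S^3 V$ is obtained by letting each of the three tensor factors act on the corresponding factor of $V^{\otimes 3}$.

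The key observation is that each tensor factor of ${\bf c}$ lies in $\gg$, and $\gg$ acts as zero on $V$ by hypothesis. Hence for any $v_1\otimes v_2\otimes v_3\in V^{\otimes 3}$ we get
\[
{\bf c}\cdot (v_1\otimes v_2\otimes v_3)=\sum_{i,j} x_i(v_1)\otimes x_j(v_2)\otimes [y_i,y_j](v_3)=0,
\]
since already $x_i(v_1)=0$. Passing to the quotient $\Lambda^3 V\to S^3 V$, we conclude ${\bf c}(\Lambda^3 V)=\{0\}$, which is exactly the condition of Definition~\ref{def: Poisson-modules}.

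There is no real obstacle here, as the statement is a triviality: the whole content is the observation that ${\bf c}$ acts through the $\gg$-action on $V$, which is zero. The proof should be just two or three lines in the paper.
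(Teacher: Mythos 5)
Your argument is correct and is exactly the reasoning the paper leaves implicit: since ${\bf c}\in\Lambda^3\gg\subset\gg^{\otimes 3}$ and $\gg$ acts by zero on the trivial module $V$, the induced map $\Lambda^3 V\to S^3 V$ vanishes. The paper's own proof is the single word ``Obvious,'' so your expansion is in complete agreement with it.
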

\begin{proof}
Obvious.
\end{proof}
\begin{lemma}
\label{le:sums of quadr. algebras}
 Let $(\gg_1,(\cdot,\cdot)_1)$ and $(\gg_2, (\cdot,\cdot)_2$ be quadratic Lie algebras and  let $c_1\in S^2 \gg_1$ and $c_2\in S^2 \gg_2$ be the elements corresponding to $(\gg_1,(\cdot,\cdot)_1)$ and $(\gg_2, (\cdot,\cdot)_2$. Then $(\gg, (\cdot,\cdot))$ is a quadratic Lie algebra with 
$$(g_1+g_2,g_1'+g_2')=(g_1,g_1')_1+(g_2,g_2')_2 $$
for $g_1,g_1'\in\gg_1$ and $g_2,g_2'\in\gg_2$. 
Moreover, one has  $c=c_1+c_2$. 
\end{lemma}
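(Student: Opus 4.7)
The plan is to verify two things in sequence: first that $(\gg,(\cdot,\cdot))$ with $\gg=\gg_1\oplus\gg_2$ (direct sum of Lie algebras, with componentwise bracket) is a quadratic Lie algebra, and second that the associated canonical element in $S^2\gg$ equals $c_1+c_2$.

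For the first part, bilinearity and symmetry of $(\cdot,\cdot)$ follow immediately from the bilinearity and symmetry of $(\cdot,\cdot)_1$ and $(\cdot,\cdot)_2$ summand-wise. For $\gg$-invariance, I would use that $[\gg_1,\gg_2]=0$ so that the bracket decomposes as $[g_1+g_2,h_1+h_2]=[g_1,h_1]_1+[g_2,h_2]_2$. Then for $x=x_1+x_2$, $y=y_1+y_2$, $z=z_1+z_2$ the identity $([x,y],z)=([x_1,y_1]_1,z_1)_1+([x_2,y_2]_2,z_2)_2$ splits into the two invariance conditions for $(\cdot,\cdot)_1$ and $(\cdot,\cdot)_2$ separately, each of which holds by hypothesis; rearranging yields $([x,y],z)=(x,[y,z])$.

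For the statement $c=c_1+c_2$, I would work with dual bases. Choose bases $\{e_i^{(k)}\}$ and $\{f_i^{(k)}\}$ of $\gg_k$ ($k=1,2$) that are dual with respect to $(\cdot,\cdot)_k$, so that by construction $c_k=\sum_i e_i^{(k)}\otimes f_i^{(k)}\in S^2\gg_k$. Since $\gg_1$ and $\gg_2$ are orthogonal under $(\cdot,\cdot)$ (immediate from the defining formula, which pairs cross terms to zero), the concatenated system $\{e_i^{(1)}\}\cup\{e_j^{(2)}\}$ and $\{f_i^{(1)}\}\cup\{f_j^{(2)}\}$ form a pair of dual bases of $\gg$ under $(\cdot,\cdot)$. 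The element $c\in S^2\gg$ is, by definition, the image of $(\cdot,\cdot)\in S^2\gg^*$ under the isomorphism $\gg^*\cong \gg$ induced by the form, equivalently $c=\sum_{\alpha} e_\alpha\otimes f_\alpha$ over any such dual basis pair, so $c=\sum_i e_i^{(1)}\otimes f_i^{(1)}+\sum_j e_j^{(2)}\otimes f_j^{(2)}=c_1+c_2$, where each $c_k$ is viewed as an element of $S^2\gg$ via the inclusion $\gg_k\hookrightarrow\gg$.

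No substantive obstacle arises; this is a direct bookkeeping verification. The only point requiring care is the orthogonality of the two summands under the combined form, which makes the dual basis of $\gg$ decompose cleanly into dual bases of the two factors.
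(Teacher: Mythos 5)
Your proof is correct and takes essentially the same approach as the paper, which dispenses with this lemma in a single sentence by noting that the two subalgebras commute; you simply fill in the routine verification (invariance of the summed form, orthogonality of the two factors, and the dual-basis computation identifying $c=c_1+c_2$) that the paper leaves implicit.
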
 
 
\begin{proof}
 The assertion follows from the fact that the subalgebras $(\gg_1,0)\in\gg$ and $(0,\gg_2)\in\gg$ commute.
 \end{proof}

   The following technical result will be of particular importance for the classification of Poisson modules over a semisimple Lie algebra $\gg$, as it allows to restrict to certain good subalgebras, such as Levi subalgebras (see Proposition \ref{pr:Levisub}).

 \begin{proposition}
 \label{pr: subalgebras}
 Let $(\gg,(\cdot,\cdot))$ be a  quadratic Lie-algebra. Denote by $c\in S^{2}(\gg)$ the $\gg$-invariant element corresponding to $(\cdot,\cdot)$.  Let $\gg_{sub}\subset \gg$ be a subalgebra such that $\gg_{sub}\cap \gg_{sub}^\perp=\{0\}$.   Denote by $c_{sub}\in S^{2}(\gg_{sub})$ the $\gg$-invariant element corresponding to $(\cdot,\cdot)_{\gg_{sub}}$. 
 
 \noindent(a) One has  ${\bf c}={\bf c_{sub}}+{\bf c''}$, where ${\bf c''}\in  \gg_{sub}^{\perp}\wedge \gg\wedge \gg$ in the notation of Appendix \ref{se:appendix}.
 
\noindent (b)  Let $V$ be a $\gg$-module and $V_1\subset V$ a $\gg_{sub}$-module  such that  $\gg_{sub}^{\perp}(V_1)\cap V_1=\{0\}$. If the canonical element  ${\bf c}\in\Lambda^3 \gg$ defined in \eqref{eq:can.element} satisfies ${\bf c}(\Lambda^{3}V)=\{0\}$, then the element    ${\bf c_{sub}}\in\Lambda^3 \gg_{sub}$ satisfies ${\bf c_{sub}}(\Lambda^{3}V_{1})=\{0\}$.

\end{proposition}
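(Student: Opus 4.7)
The nondegeneracy hypothesis $\gg_{sub}\cap \gg_{sub}^\perp=\{0\}$ forces $(\cdot,\cdot)$ to restrict nondegenerately to $\gg_{sub}$ and yields $\gg=\gg_{sub}\oplus \gg_{sub}^\perp$ as vector spaces. Picking a basis of $\gg_{sub}$ together with its dual basis under this restriction, and any basis of $\gg_{sub}^\perp$ with its dual under the restricted form, I obtain the splitting $c=c_{sub}+c_\perp\in S^2\gg$ with $c_\perp\in S^2\gg_{sub}^\perp$.

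For part (a), I substitute this splitting into the identity $\mathbf{c}=-[c_{12},c_{23}]$ from Lemma \ref{le:c-inv.and skew}(b). Expanding the commutator bilinearly,
$$\mathbf{c}=-[(c_{sub})_{12},(c_{sub})_{23}]-[(c_{sub})_{12},(c_\perp)_{23}]-[(c_\perp)_{12},(c_{sub})_{23}]-[(c_\perp)_{12},(c_\perp)_{23}].$$
The first summand equals $\mathbf{c_{sub}}$ by the same lemma applied inside $\gg_{sub}$. A direct basis computation places the remaining three summands in $\gg_{sub}\otimes\gg\otimes\gg_{sub}^\perp$, $\gg_{sub}^\perp\otimes\gg\otimes\gg_{sub}$, and $\gg_{sub}^\perp\otimes\gg\otimes\gg_{sub}^\perp$ respectively (the middle factors come from brackets of the form $[\gg_{sub},\gg_{sub}^\perp]$ or $[\gg_{sub}^\perp,\gg_{sub}^\perp]$, which a priori only lie in $\gg$). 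Since $\mathbf{c''}:=\mathbf{c}-\mathbf{c_{sub}}$ is totally antisymmetric by Lemma \ref{le:c-inv.and skew}(a), this identifies $\mathbf{c''}$ as an element of $\gg_{sub}^\perp\wedge\gg\wedge\gg$, proving (a).

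For part (b), the hypothesis $\gg_{sub}^\perp(V_1)\cap V_1=\{0\}$ allows me to choose a vector-space complement $V=V_1\oplus V_2$ with $\gg_{sub}^\perp(V_1)\subseteq V_2$; let $\pi:S^3V\twoheadrightarrow S^3V_1$ denote the projection onto the $S^3V_1$-summand of the induced decomposition of $S^3V$. Because $V_1$ is $\gg_{sub}$-stable, $\mathbf{c_{sub}}(\Lambda^3V_1)\subseteq V_1^{\otimes 3}\cap S^3V=S^3V_1$, so $\pi$ is the identity on $\mathbf{c_{sub}}(\Lambda^3V_1)$. On the other hand, every monomial $X\otimes Y\otimes Z$ in the expansion of $\mathbf{c''}$ from part (a) has at least one factor in $\gg_{sub}^\perp$; applied to a vector in $V_1$ that factor lands in $V_2$, so $\mathbf{c''}(\Lambda^3V_1)$ is contained in $(V_2\otimes V\otimes V)+(V\otimes V_2\otimes V)+(V\otimes V\otimes V_2)$, which meets $V_1^{\otimes 3}$ only in zero. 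Hence $\pi$ annihilates $\mathbf{c''}(\Lambda^3V_1)$, and applying $\pi$ to the hypothesis $0=\mathbf{c}(\Lambda^3V_1)=\mathbf{c_{sub}}(\Lambda^3V_1)+\mathbf{c''}(\Lambda^3V_1)$ gives $\mathbf{c_{sub}}(\Lambda^3V_1)=0$.

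The main obstacle is the bookkeeping in part (a): because $[\gg_{sub},\gg_{sub}^\perp]$ generally fails to sit inside $\gg_{sub}^\perp$, the strongest containment available for the cross-commutators is a single $\gg_{sub}^\perp$-factor in some position---exactly the content of $\mathbf{c''}\in\gg_{sub}^\perp\wedge\gg\wedge\gg$. The design of part (b) is consistent with this limitation: a single $\gg_{sub}^\perp$-factor in any position suffices, via the arrangement $\gg_{sub}^\perp(V_1)\subseteq V_2$, to push the image of $\mathbf{c''}$ out of the $S^3V_1$-summand, so the projection argument works cleanly.
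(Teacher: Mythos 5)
Your argument is correct and follows essentially the same route as the paper: decompose $c$, expand $[c_{12},c_{23}]$ bilinearly to identify $\mathbf{c''}\in\gg_{sub}^\perp\wedge\gg\wedge\gg$, and in (b) use the decomposition of $S^3V$ induced by a complement $V_2$ absorbing $\gg_{sub}^\perp(V_1)$ to separate the ${\bf c_{sub}}$- and ${\bf c''}$-contributions. Your splitting $c=c_{sub}+c_\perp$ with $c_\perp\in S^2\gg_{sub}^\perp$ is a slight sharpening of the paper's $c_{rest}\in\gg_{sub}^\perp\otimes\gg+\gg\otimes\gg_{sub}^\perp$, and you make explicit the basis computation of the cross-commutators that the paper's terse proof only asserts; otherwise the two arguments coincide.
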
 
\begin{proof}
Prove (a) first.
By definition  we can express the element  $c\in\gg\otimes \gg$  corresponding to $(\cdot,\cdot)$ as
$c=c_{sub}+c_{rest}$, where  $c_{sub}\in\gg_{sub}\otimes \gg_{sub}$ and   $c_{rest}^{'}\in \gg_{sub}^\perp\otimes \gg\oplus \gg\otimes \gg_{sub}^\perp$.  We obtain from \eqref{eq:can.element} that
$${\bf c}=[(c_{sub}+c_{rest})_{12}, (c_{sub}+c_{rest})_{23}]={\bf c_{sub}}+{\bf c''}\ ,$$

where ${\bf c''}\in \gg_{sub}^\perp\wedge\gg\wedge\gg$. Part(a) is proved.

Prove (b) now.
Recall that  $S^{3}V\cong \bigoplus_{ i=0}^{3} S^{i}V_{1}\boxtimes S^{3-i}V_{2}$. One has  ${\bf c_{sub}}(\Lambda^3 V_1)\subset S^3 V_1$. Clearly,
$${\bf c''}(V_1\otimes V_1\otimes V_1)\subset (S^3 V_1)^c \ ,$$  in the notation of Appendix \ref{se:appendix}.
  If $V$ is Poisson, then ${\bf c}(\Lambda^3 V_1)=\{0\}$, and hence ${\bf c_{sub}}(\Lambda^3 V_1)=\{0\}$ and ${\bf c''}(\Lambda^3 V_1)=\{0\}$. This implies directly that $V_1$ is Poisson as a $\gg_{sub}$-module.

  Proposition \ref{pr: subalgebras} is proved.
\end{proof}

If $\gg$ is a reductive Lie algebra we have the following fact.

 \begin{proposition}
 \label{pr:ssvred}
 Let $\gg$ be a reductive Lie algebra and $\kappa(x,y)=tr(ad(x)ad(y))$. 
 The Lie algebra $\gg$ splits as $\gg=\gg'\oplus \zz$ into a semisimple part  $\gg'$  and a central subalgebra $\zz$. A finite-dimensional $(\gg,\kappa)$-module $V$ is Poisson, if and only if $V$ is Poisson under the restriction to $(\gg',\kappa_{\gg'})$, where $\kappa_{\gg'}$ denotes the restriction of $\kappa$ to $\gg'$, the Killing-form.
 \end{proposition}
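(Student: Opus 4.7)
My plan is to show that the canonical element ${\bf c}\in\Lambda^3\gg$ attached to $(\gg,\kappa)$ coincides with the canonical element ${\bf c}_{\gg'}\in\Lambda^3\gg'$ attached to $(\gg',\kappa_{\gg'})$, viewed inside $\Lambda^3\gg$ via $\gg'\hookrightarrow\gg$. Once this identification is established, the Poisson condition ${\bf c}(\Lambda^3 V)=\{0\}$ of Definition \ref{def: Poisson-modules} becomes literally the same condition for $\gg$ and for $\gg'$, and the proposition follows at once.

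The key calculation is that the center $\zz$ does not contribute to the Casimir. First I would record two elementary properties of the Killing form of a reductive algebra: for any $z\in\zz$, $\mathrm{ad}(z)=0$, so $\kappa(z,y)=\mathrm{tr}(\mathrm{ad}(z)\,\mathrm{ad}(y))=0$ for all $y\in\gg$; and for $x,y\in\gg'$, $\mathrm{ad}_\gg(x)$ annihilates $\zz$ and preserves $\gg'$, so $\kappa(x,y)=\kappa_{\gg'}(x,y)$. In particular $\kappa_{\gg'}$ is the Killing form of the semisimple algebra $\gg'$ and hence non-degenerate.

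Because $\kappa$ itself is degenerate on $\gg$, the element $c\in S^2\gg$ is not canonically defined by $\kappa$ alone; but any $\gg$-invariant nondegenerate extension of $\kappa$ (for instance obtained by choosing any nondegenerate symmetric bilinear form on the abelian $\zz$) yields an element of the form $c=c_{\gg'}+c_{\zz}\in S^2\gg'\oplus S^2\zz$, where $c_{\gg'}$ corresponds to $\kappa_{\gg'}$. Using \eqref{eq:can.element}, I compute
\[
{\bf c}=-[c_{12},c_{23}]=-[(c_{\gg'})_{12}+(c_{\zz})_{12},\,(c_{\gg'})_{23}+(c_{\zz})_{23}].
\]
Every bracket involving a factor of $c_{\zz}$ vanishes, because the two tensor-factors of $c_{\zz}$ lie in $\zz$ and hence commute with every element of $\gg$. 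What remains is exactly $-[(c_{\gg'})_{12},(c_{\gg'})_{23}]={\bf c}_{\gg'}$. This shows simultaneously that ${\bf c}$ is independent of the chosen extension of $\kappa$ and that ${\bf c}={\bf c}_{\gg'}$ inside $\Lambda^3\gg'\subset\Lambda^3\gg$.

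Since ${\bf c}={\bf c}_{\gg'}$ lives entirely in $\Lambda^3\gg'$ and its action on $\Lambda^3V$ is determined purely by the restricted $\gg'$-module structure of $V$, the two vanishing statements ${\bf c}(\Lambda^3V)=\{0\}$ and ${\bf c}_{\gg'}(\Lambda^3V)=\{0\}$ are identical, proving the proposition. The only real obstacle is the conceptual one of making sense of the ``canonical element'' when $\kappa$ itself is degenerate; once one traces through the definition and observes that the central part of any nondegenerate extension is killed by the commutator in \eqref{eq:can.element}, the argument is routine. (Alternatively, one could try to invoke Proposition \ref{pr: subalgebras} with $\gg_{sub}=\gg'$, but that proposition presupposes a nondegenerate form on the ambient algebra, so the direct computation above is cleaner.)
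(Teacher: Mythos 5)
Your proof is correct, and it takes a genuinely cleaner route than the paper's. The paper treats the two implications separately: for the ``only if'' direction it invokes Proposition~\ref{pr: subalgebras}(b) with $\gg_{sub}=\gg'$, and for the ``if'' direction it applies Lemma~\ref{le:sums of quadr. algebras} to write $c=c_{\gg'}+c_{\zz}$ and then argues $c_{\zz}=0$ because $\kappa$ vanishes on $\zz\otimes\zz$. You instead establish $\mathbf{c}=\mathbf{c}_{\gg'}$ in one stroke and read off both directions at once from Definition~\ref{def: Poisson-modules}, which is more economical. More importantly, you address squarely the point the paper glosses over: since $\zz$ lies in the radical of $\kappa$, the form is degenerate, so strictly speaking $c$ is not defined by the recipe of Section~\ref{se:Definition and basic properties of Poisson modules} (which requires the form to induce an isomorphism $\gg^*\cong\gg$), and likewise the hypotheses of Proposition~\ref{pr: subalgebras} presuppose a nondegenerate ambient form. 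Your remedy — extend $\kappa$ by an arbitrary nondegenerate form on $\zz$, observe that any such extension decomposes as $c=c_{\gg'}+c_{\zz}$, and then show the $c_{\zz}$ terms are killed in $[c_{12},c_{23}]$ because $\zz$ is central — both legitimizes the statement and shows $\mathbf{c}$ is independent of the choice, which is stronger than the paper's $c_{\zz}=0$ observation. The net conclusion $\mathbf{c}=\mathbf{c}_{\gg'}$ agrees with the paper's; the difference is in rigor and in avoiding the split into two implications.
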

 
 \begin{proof}
 Since $\zz.V=\{0\}$ we obtain that $V$ is Poisson only if $V$ is Poisson as a $\gg'$-module  by applying Proposition  \ref{pr: subalgebras} (b) to $V=V'$.
 In order to prove the other direction note that since $\gg$ and $\zz$ commute we have $(\gg,\kappa)=(\gg',\kappa_{\gg'})\oplus (\zz,\kappa_\zz)$ and can  apply Lemma \ref{le:sums of quadr. algebras}  to obtain that $c=c_{\gg'}+c_\zz$. The Lie algebra $\zz$ is Abelian, and therefore, the form vanishes on $\zz\otimes \zz$ and $c_\zz= 0$.  This implies that $c=c_{\gg'}$ and ${\bf c}={\bf c_{\gg'}}$. The assertion now follows immediately.
 \end{proof}

  \subsection{Classification of Poisson Modules over Semisimple Lie Algebras}
  \label{se: poisson over simple}
  
 In this section we will classify all simple Poisson modules over a semisimple Lie algebra $\gg$. By Proposition \ref{pr:ssvred} we immediately obtain a classification of all simple modules over reductive Lie algebras.  First we will introduce some notation. Choose a Borel subalgebra $\bb\subset\gg $ and denote by $\hh$ and  $\nn^+$ the corresponding Cartan  and uper nilpotent subalgebras, and, similarly,  by  $\bb^-$ and $\nn^-$ the lower Borel and nilpotent sublagebras. By $W(\gg)$ we shall denote the Weyl group of $\gg$ and by $(\cdot,\cdot)_{\hh}$ and $(\cdot,\cdot)_{\hh^*}$ the standard inner product on $\hh$ and $\hh^*$, which we identify via the inner product.  Denote by $R(\gg)\subset \hh^*$ the set of roots, by $R^+(\gg)$ (resp. $R^-(\gg)$) the set of positive (resp. negative) roots and  by $\Delta=\{\alpha_1,\ldots,\alpha_n\}$ the set of simple  roots.  Denote by $E_{\alpha}$ for $\alpha\in R(\gg)$ and $H_{\alpha}\subset \hh$, $\alpha\in \R^+(\gg)$ the standard generators of $\gg$ with the property that $[E_\alpha,E_{-\alpha}]=H_\alpha=\check\alpha =2\frac{\alpha}{(\alpha,\alpha)}\in \hh\subset \gg$. We will also use the notation $P(\gg)$ for the weight-lattice of $\gg$ and $\omega_i$ for the $i$-th fundamental weight.

 We now introduce the notion of {\it geometrically decomposable} modules following \cite[ch.4]{Ho}. Let $\gg$ be a reductive Lie algebra and $V$ a $\gg$-module and $U\subset V$ be a $\bb$-module. Denote by $\det(U)$ the one-dimensional subspace $\det(U)=\Lambda^{top}U$ of $\Lambda(U)$.  Clearly, $\det(U)$  is a $\bb$-submodule of $\Lambda (V)$, therefore, every $u\in \det(U)$ is a  highest weight vector in $\Lambda(V)$. In analogy to \cite[ch. 4.6]{Ho}, we  call a highest weight vector $v\in\Lambda(V)$ {\it geometric}, if $v\in\det(U)$ for some $\bb$-module $U\subset V$. 
 \begin{definition}\cite[ch. 4.6]{Ho}
 \label{def: geometrically decomposable}
 A $\gg$-module $V$ is called {\it geometrically decomposable},  if  $\Lambda V$ is generated as a $\gg$-module by geometric highest weight vectors.
  \end{definition}

The following result is the first main theorem of this section. 
 
  \begin{maintheorem}
 \label{th: class of Poisson}
 Let $\gg$ be a simple complex Lie algebra, and let $V$ be a non-trivial simple  finite-dimensional $\gg$-module.
Then the following are equivalent:
 
\noindent (a) The module $V$ is Poisson. 

\noindent (b)  The decorated space  $(V,r^{-})$ is Poisson for any classical $r$-matrix $r\in \gg\otimes \gg$ .  

\noindent (c) $Hom_\gg(\Lambda^3V,S^3V)=\{0\}$.
 
 \noindent(d) $\Lambda^2 V$ is simple  or $(\gg,V)=(sp_{2n}(\CC),\CC^{2n})$ for some $n$.
 
 \noindent(e) The module $V$ is a geometrically decomposable $\gg$-module or $(\gg,V)=(sp(2n), V_{\omega_1})$.

\noindent(f) The pair $(\gg,V)$ is one of the following:
 
  (i) $(sl_n(\CC),V_\lambda)$ where $\lambda\in\{\omega_1,2\omega_1,\omega_2,\omega_{n-2},\omega_{n-1}, 2\omega_{n-1}\}$.

 (ii) $(so(n),V_{\omega_1})$,$(so(5),V_{\omega_2})$, $(so(8),V_{\omega_3})$, $(so(8),V_{\omega_4})$,    $ (so(10),V_{\omega_4})$ and $  (so(10),V_{\omega_5})$.

 (iii) $(sp(2n), V_{\omega_1})$ and $(sp(4),V_{\omega_2})$.

(iv) $(E_6,V_{\omega_1})$ and $(E_6, V_{\omega_6})$. 
 
 \end{maintheorem}

We prove the theorem using the following strategy. The equivalence of (a) and (b) is proved in Proposition \ref{pr:cv=0=poisson}. The implication (c) implies (a) follows from Proposition \ref{pr:0-hom=poi}. To prove that (a) yields (c) and (f)  and, we will give necessary conditions a dominant weight $\lambda\in P^+(\gg)$ has to satisfy, if $V_\lambda$ is Poisson. We will then show that  $Hom_\gg(\Lambda^3V_\lambda,S^3V_\lambda)=\{0\}$ for all $\lambda\in P^+(\gg)$ satisfying these necessary conditions proving that (f) yields (c). In order to show that (a) and (d) are equivalent, we prove that $V$ is Poisson if $\Lambda^2 V$ is simple and that $\Lambda^2 V$ is simple for all pair $(\gg,V)$ in (f). The equivalence of (a) and (e) then follows from the classification of simple geometrically decomposable modules in \cite{Ho}. Since the proof is rather lengthy we refer it to  Section \ref{se:proof of classification}

We can generalize Theorem \ref{th: class of Poisson} to the case of semisimple Lie algebras. Consider a semisimple Lie algebra $\gg$ and a finite-dimensional $\gg$-module $V$. If $\gg=\bigoplus_{i=1}^n \gg_i$, where $\gg_i$ are simple Lie algebras,  denote by $V_{\lambda_1,\ldots,\lambda_n}$   the simple $\gg$-module of highest weight $(\lambda_1,\ldots, \lambda_n)\in P(\gg_1)\oplus\ldots P(\gg_n)\cong P(\gg)$. Denote by the  support $supp_\gg(V)$ of a $\gg$-module $V$ the product of all simple factors $\gg_i$ for which $\gg_i(V)\ne \{0\}$.  We have the following classification result.  

\begin{theorem}
\label{th:class:poissonsemisimple}
Let $\gg$ be a semisimple Lie algebra and $V$ a simple $\gg$-module. The following are equivalent:

\noindent(a) $V$ is Poisson.

\noindent(b) The pair $(supp(\gg),V)$ is listed in Theorem  \ref{th: class of Poisson} (f) or $(supp_\gg(V),V)=(sl_m\times sl_n, V_{\omega_1,\omega_1})$ where $V_{\omega_1,\omega_1}$ is the natural  $sl_m\times sl_n$-module.
 
\end{theorem}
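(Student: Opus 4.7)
The plan is to reduce Theorem \ref{th:class:poissonsemisimple} to Theorem \ref{th: class of Poisson} via the tensor-product machinery developed in Section \ref{se:Poisson alg}. First I would reduce to $\gg=\mathrm{supp}_\gg(V)$: writing $\gg=\gg'\oplus \gg_0$ with $\gg_0$ acting trivially on $V$, Lemma \ref{le:sums of quadr. algebras} gives $c=c_{\gg'}+c_{\gg_0}$ and hence ${\bf c}={\bf c_{\gg'}}+{\bf c_{\gg_0}}$; since ${\bf c_{\gg_0}}$ acts as zero on $V^{\otimes 3}$, $V$ is Poisson over $\gg$ iff it is Poisson over $\gg'$. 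Thus I may assume $\gg=\bigoplus_{i=1}^k \gg_i$ is a direct sum of simple ideals and $V=V_1\boxtimes\cdots\boxtimes V_k$ with every $V_i$ simple and nontrivial.

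Picking standard classical $r$-matrices $r_i\in\gg_i\otimes\gg_i$ and setting $r=\sum r_i$, the decorated space $(V,r^-)$ is literally the tensor product $(V_1,r_1^-)\otimes\cdots\otimes(V_k,r_k^-)$ in the sense of Lemma \ref{le: decorated is tensor}: in $V^{\otimes 2}$, the summand $r_i^-$ acts on the two copies of $V_i$ only, matching the formula $\Phi''=\Phi_{13}+\Phi'_{24}$. By Proposition \ref{pr:cv=0=poisson} Poisson-ness of $V$ is equivalent to Poisson-ness of $(V,r^-)$, and Theorem \ref{th: Tensor products of Poissons} applied iteratively forces every $(V_i,r_i^-)$ to be Poisson, so each $V_i$ appears in the list of Theorem \ref{th: class of Poisson}(f). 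This handles $k=1$.

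For $k\geq 2$, further constraints come from the Schur decomposition
\[\Lambda^3(V_1\otimes V_2)\cong S^3V_1\otimes\Lambda^3V_2\ \oplus\ S_{(2,1)}V_1\otimes S_{(2,1)}V_2\ \oplus\ \Lambda^3V_1\otimes S^3V_2\]
combined with Lemma \ref{le: Schouten v sigma}: since ${\bf c_i}$ swaps $S^3V_i\leftrightarrow \Lambda^3V_i$ and preserves the hook component $S_{(2,1)}V_i$, the Poisson condition ${\bf c}(\Lambda^3 V)=0$ splits into one independent constraint per Schur summand. Weight-multiplicity calculations over the list of Theorem \ref{th: class of Poisson}(f) then rule out all but the natural $sl_n$-modules as possible factors. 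To dispose of $k\geq 3$, I would split $(V,r^-)=(V_i,r_i^-)\otimes(V^{(i)},(r^{(i)})^-)$ and conclude by induction using the $k=2$ case that every $V_i$ is a natural $sl_{n_i}$-module; then a direct computation on a specific element of $\Lambda^3(\CC^{n_1}\otimes\CC^{n_2}\otimes\CC^{n_3})$ exhibits a nonzero image under ${\bf c}$, ruling out triple tensors.

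For the converse direction, $V_{\omega_1,\omega_1}\cong \CC^m\otimes\CC^n$ embeds inside $\Lambda^2\CC^{m+n}$ as the off-diagonal block $\CC^m\wedge\CC^n$, and $\Lambda^2\CC^{m+n}=V_{\omega_2}^{sl_{m+n}}$ is Poisson by Theorem \ref{th: class of Poisson}(f). Setting $\gg'=sl_{m+n}$, $\gg_{sub}=\mathfrak{l}=sl_m\oplus sl_n\oplus\CC$ (the Levi of the relevant maximal parabolic), and $V_1=\CC^m\wedge\CC^n$, one checks that $\gg_{sub}\cap\gg_{sub}^{\perp}=0$ and $\gg_{sub}^{\perp}(V_1)\subset \Lambda^2\CC^m\oplus\Lambda^2\CC^n$ intersects $V_1$ trivially, so Proposition \ref{pr: subalgebras}(b) yields that $V_1$ is Poisson over $\mathfrak{l}$; the central $\CC$ summand contributes nothing to ${\bf c}_\mathfrak{l}$ since it is abelian, so $V_1$ is Poisson over $sl_m\oplus sl_n$ as required. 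The main technical obstacle will be the case-by-case Schur analysis for $k=2$: for each of the non-natural entries in Theorem \ref{th: class of Poisson}(f) one must verify that either $\mathrm{Hom}_{\gg_i}(S^3V_i,\Lambda^3V_i)$ is nonzero with ${\bf c_i}|_{S^3V_i}\ne 0$, or that the hook compatibility condition fails, which involves a detailed tensor-decomposition calculation in each individual case.
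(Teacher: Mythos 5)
Your reduction framework is sound and agrees with the paper as far as it goes: restricting to $\mathrm{supp}_\gg(V)$ via the vanishing of ${\bf c}_{\gg_0}$, then applying Proposition \ref{pr:cv=0=poisson} and Theorem \ref{th: Tensor products of Poissons} iteratively to force each simple tensor factor $V_i$ to be Poisson over its simple component, is exactly the paper's opening move. Your converse argument — embedding $V_{\omega_1,\omega_1}\cong\CC^m\wedge\CC^n$ inside $\Lambda^2\CC^{m+n}=V_{\omega_2}$ over $sl_{m+n}$ and descending Poisson-ness through Proposition \ref{pr: subalgebras}(b) with $\gg_{sub}=\mathfrak{l}$ — is correct and is a genuinely different (and arguably cleaner) route than the paper's, which just quotes the known $r$-matrix bracket $r^-(x_{ij}\otimes x_{k\l})=(\mathrm{sign}(i-k)+\mathrm{sign}(j-\l))x_{kj}x_{i\l}$ on $\CC[\mathrm{Mat}_{m\times n}]$.

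The gap is in the elimination step for $k=2$. You invoke the plethysm $\Lambda^3(V_1\otimes V_2)\cong S^3V_1\otimes\Lambda^3V_2\oplus S_{(2,1)}V_1\otimes S_{(2,1)}V_2\oplus\Lambda^3V_1\otimes S^3V_2$ and claim that ``weight-multiplicity calculations over the list of Theorem \ref{th: class of Poisson}(f) then rule out all but the natural $sl_n$-modules.'' But the first and third summands only add the conditions ${\bf c}_1(S^3V_1)=0$ and ${\bf c}_2(S^3V_2)=0$, and one checks from the decompositions in Section \ref{se:proof of classification} that $\mathrm{Hom}_{\gg_i}(S^3V_i,\Lambda^3V_i)=0$ for essentially every entry in Theorem \ref{th: class of Poisson}(f); these summands do not rule anything out. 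The entire burden therefore falls on the hook summand $S_{(2,1)}V_1\otimes S_{(2,1)}V_2$, where ${\bf c}_1+{\bf c}_2$ acts by an operator that does not factor into independent conditions on $V_1$ and $V_2$: one must analyze the $\gg_1\oplus\gg_2$-equivariant map ${\bf c}\vert_{S_{(2,1)}V_1\otimes S_{(2,1)}V_2}$ and show it is nonzero for every pair other than $(\CC^m,\CC^n)$. You acknowledge this as ``the main technical obstacle'' but do not carry it out, and nothing you have written indicates how to do so short of a massive case enumeration over pairs from Theorem \ref{th: class of Poisson}(f). The paper sidesteps all this with Proposition \ref{pr:Levisub}: it exhibits Levi subalgebras reducing every non-natural pair to one of three minimal failures — $(sl_2\times sl_2,V_{2,j})$, $(sl_2^3,V_{1,1,1})$, $(sl_2\times sp(4),V_{1,\omega_1})$ — each killed by an explicit element of $\Lambda^3V$ in Proposition \ref{pr:list semisimple levis}. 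The Schur-functor route may in principle work, but as written it replaces a short Levi reduction with an open-ended hook computation that is the hardest part of the theorem, not a routine one.
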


\begin{proof}
Recall that the $m\times n$-matrices $Mat_{m\times n}(\CC)$ can be given a $gl_m\times gl_m$-module such that $Mat_{m\times n}(\CC)\cong V_{\omega_m,\omega_n}\cong V_{\omega_1,\omega_1}^*$ with $gl_m$ acting on the left and $gl_n$ acting on the right. This action yields a $\gl_m\times gl_n$-module algebra  structure on $\CC[Mat_{m\times n}]=S(V_{\omega_1,\omega_1})$.
  It is well known that the $r$-matrix bracket defines a Poisson structure on the algebra $\CC[Mat_{m\times n}]=S(V_{\omega_1,\omega_1})$ via 
  $$r^-(x_{ij}\otimes x_{k\l})=(sign(i-k)+sign(j-l)) x_{kj}x_{i\l}\ .$$ 
  
  It remains to show that if $supp_\gg (V)$ is non-simple and  $(supp_\gg(V), V)\ne  (sl_m\times sl_n, V_{\omega_1,\omega_1})$, then $V$ is not Poisson.

 

 Let $r_1,\ldots r_n$ be classical $r$-matrices for $\gg_1,\ldots,\gg_n$. It is easy to see that $r=r_1+\ldots r_n$ is a classical $r$-matrix for $\gg$.
  Recall that as a vector space $V$ can be decomposed as a tensor product $V=V_{\lambda_1}\otimes \ldots \otimes V_{\lambda_n}$, where $V_{\lambda_i}$ is a simple $\gg_i$-module. 
 The decorated space $(V,r^-)$ decomposes as a tensor product $(V,r^-)=(V_{\lambda_1},r^-_1)\otimes\ldots\otimes  (V_{\lambda_n},r^-_n)$.
   It now follows from Theorem \ref{th: Tensor products of Poissons}  that if a simple $\gg$-module $V$ is  Poisson, then each $V_{\lambda_i}$ is Poisson as a $\gg_i$-module as are all the products $V_{\lambda_i}\otimes V_{\lambda_{i+1}}$ as $\gg_i\oplus\gg_{i+1}$-modules. It therefore suffices to show the following.  First, let $\gg_1$ and $\gg_2$ be simple Lie algebras and let  $V_{\lambda_1}$ and $V_{\lambda_2}$ be simple Poisson $\gg_1$- (resp. $\gg_2$)-modules and  and $(\gg_2, V_{\lambda_2})\ne (sl_k,V_{\omega_1})$ for some $k\ge 2$. Then $V_{\lambda_1,\lambda_2}$ is not Poisson. Second, we have to prove that the natural  $sl_\ell\times sl_m\times sl_n$-module$V_{\omega_1,\omega_1,\omega_1}$  is not Poisson for all $\ell,m,n\ge 2$.
 
We can further reduce the list of cases to investigate by considering the embedding of some Levi subalgebra in $\gg$ and making  use of Proposition \ref{pr:Levisub}.  We need the following result. 
 
 \begin{proposition}
 \label{pr:list semisimple levis}

 \noindent(a)If $\gg=sl_2\times sl_2$, then $V=V_{2,i}$ is not Poisson, where $V_i$ denotes the  $n+1$-dimensional simple $sl_2$-module.

 
 
  \noindent(b) If $\gg=sl_2\times sl_2\times sl_2$, then $V_{1,1,1}$ is not Poisson.
 
 \noindent(c) If $\gg=sl_2\times sp(4)$, then $V=V_{1,\omega_1}$ is not Poisson.

 \end{proposition}
 
 \begin{proof}
   Let $\gg=\gg_1\oplus \gg_2$ be a semisimple Lie algebra.  We have $c=c_1+c_2$, where $c$, $c_1$ and $c_2$ are the Casimir elements of $\gg$, $\gg_1$ and $\gg_2$, respectively and ${\bf c}={\bf c_1}+{\bf c_2}$. 

We will first prove case (a).  Let $\gg=sl_2\times sl_2$ and let $V=V_{i,2}$, $i\ge1$, be a simple $\gg$-module. Denote by $V_i$ and $V_2$ the corresponding simple $sl_2$-modules. Since $V\cong V_i\otimes V_2$ as vector spaces we can choose non-zero vectors $(u\otimes v), (u'\otimes v'),(u\otimes v'')\in V_i\otimes V_2\subset V$ such that $u\in V_i(i)$, $u'=F(u)$ and $v\in V_2(2)$, $v'=F(v)$ and $v''\in V_2(-2)$, where $V_k(\ell)$  denotes the $\ell$-weight space of $V_k$.  Abbreviate ${\bf uv}=(u\otimes v)\wedge(u'\otimes v')\wedge(u\otimes v'')\in \Lambda^3 V$.

 Note that if $\gg=sl_2$, then ${\bf c}_{sl_2}=E\wedge F\wedge H$  and ${\bf c}_{sl_2\times sl_2} =E_1\wedge F_1\wedge H_1+E_2\wedge F_2\wedge H_2$. It is easy to verify that
 
 $$ E_2\wedge F_2\wedge H_2({\bf uv}) \in V(i,2)\cdot V(i-2,-2)\cdot V(i,0)\oplus V(i,0)\cdot V(i-2,2)\cdot V(i,-2)\subset S^3V\ .$$
 
 Similarly we obtain that 
 $$E_1\wedge F_1\wedge H_1({\bf uv})\in \left(V(i,2)\cdot V(i-2,-2)\cdot V(i,0)\oplus V(i,0)\cdot V(i-2,2)\cdot V(i,-2)\right)^c \ .$$
 
Hence, ${\bf c}({\bf uv})\ne 0$ and $V$ is not Poisson. Part (a) is proved.

Prove (b) next. Denote by $V_{111}$ the $8$-dimensional natural  $(sl_2)^3(\CC)$-module.  Choose a basis  of $V_{1,1,1}$ with basis vectors $x_{j_1,j_2,j_3}$, $j_i\in\{0,1\}$, such that $ x_{j_1,j_2,j_3}$ is a weight vector of weight $(1-2j_i)$ of each subalgebra $\gg_i$, the $i$-th copy of $sl_2(\CC)$ in $\gg$. Moreover, we can choose the basis  such that $F_1(x_{ijk})=\delta_{i,0} x_{1,j,k}$ and $E_1(x_{ijk})=\delta_{i,1} x_{0,jk}$, and analogously for $E_2,F_2, E_3$ and $F_3$. 

We have ${\bf c}= \sum_{i=1}^3- E_i\wedge H_i\wedge F_i$.

It is easy to compute  that
$${\bf c}(x_{111}\wedge x_{000}\wedge x_{100})=$$
$$ -x_{011}x_{100}^2+x_{111}x_{100}x_{000}+x_{101}x_{010}x_{100}-x_{101}x_{000}x_{110}+x_{110}x_{001}x_{100}-x_{110}x_{000}x_{101} \ne0\ .$$
 Therefore ${\bf c}(\Lambda^3 V_{111})\ne \{0\}$, hence $V_{111}$ is not Poisson. Part(b) is proved.
  
 It remains to  prove part (c). Let $V_{\omega_1}$ be the four-dimensional natural $sp(4)$-module and let $V_{1,\omega_1}$ be the natural $sl_2\times sp(4)$-module. As in the proof of parts (a) and (b) choose $u,u'\in V_1$ such that $ u\in V_1(1)$ and $u'\in V_1(-1)$ and $v \in V_{\omega_1}(\omega_1)$, $v'=F_{\alpha_1}(v)$ and $v''\in V_{\omega_1}(-\omega_1)$. Denote  ${\bf uv}=(u\otimes v)\wedge(u'\otimes v')\wedge(u\otimes v'')\in \Lambda^3 V$. We have by \eqref{eq:c-explixit}
 ${\bf c}=E\wedge F\wedge H+\sum_{\alpha, \beta\in R(sp(4))}\frac{(\alpha,\alpha)(\beta,\beta)}{4} E_{\alpha}\wedge E_{\beta}\wedge [E_{-\alpha}, E_{-\beta}]$. We obtain that
 
 $$ E_{\alpha_1}\wedge E_{-\alpha_1}\wedge H_{\alpha_1}({\bf uv})\in V(-1, \omega_1)\cdot V(1,\omega_1-\alpha_1)\cdot V(1,-\omega_1)$$
and observe that indeed
 $${\bf c}- \frac{(\alpha_1,\alpha_1)^2}{4}E_{\alpha_1}\wedge E_{-\alpha_1}\wedge H_{\alpha_1}({\bf uv})\in (V(-1, \omega_1)\cdot V(1,\omega_1-\alpha_1)\cdot V(1,-\omega_1))^c\ .$$

This implies that ${\bf c}({\bf uv})\ne 0$ and  that $V_{1,\omega_1}$ is not Poisson.  Part (c) and the proposition are proved.
   
 \end{proof}   
 
 We now return to the proof of Theorem \ref{th:class:poissonsemisimple}.
 Now let $\gg=\gg_1\oplus\gg_2$ such that $\gg_1$ and $\gg_2$ are two simple Lie algebras, and $V_{\lambda_1}$ and $V_{\lambda_2}$ simple  Poisson $\gg_1$, respectively $\gg_2$-modules and assume that $(\gg,V)\ne(sl_m\times sl_n,V_{\omega_1,\omega_1})$. We will list the semisimple part $\gg'\subset \gg$ of the  Levi subalgebras  and the corresponding simple $\gg'$-module $V'\subset V_{\lambda_1,\lambda_2}$ verifying that $V_{\lambda_1,\lambda_2}$ is not Poisson. First, note  the following fact.

 \begin{proposition}
 \label{pr:cubesarenotpoisson}
 Let $\gg$ be a semisimple Lie algebra and $V$ a simple $\gg$-module such that $supp_\gg(V)$ has at least three simple factors. Then $V$ is not Poisson.
 \end{proposition}

 \begin{proof}
 
 Note the following fact.
 
 \begin{lemma}
 Let $\gg=sl_2^3$ and let $V=V_{i,j,k}$ be a simple finite-dimensional $\gg$-module with $0\notin\{i,j,k\}$. Then $V$ is not Poisson.
 \end{lemma}
 
 \begin{proof}
 Since $V_\ell$ is not Poisson if $\ell\ge 3$ by Theorem \ref{th: class of Poisson} (f), we obtain from Theorem \ref{th: Tensor products of Poissons} that  if $V_{i,j,k}$ is Poisson, then $i,j,k\le 2$. 
 If $i=j=k=1$, then the assertion of the lemma agrees with the assertion of  Proposition \ref{pr:list semisimple levis} (b). Now suppose, without loss of generality, that $j=1$. Then $V_{i,j}$ is not Poisson by  Proposition \ref{pr:list semisimple levis} (b) and  $V=V_{i,j,k}$ is not Poisson by Theorem \ref{th: Tensor products of Poissons}. The lemma is proved.
 \end{proof}

 Suppose  $supp_\gg(V)$ has at least three simple factors. We can find a Levi subalgebra $\gg'\cong sl_2^3$ such that a highest weight vector $v\in V$ generates a $\gg'$-submodule $V'\cong V_{i,j,k}$ with $i,j,k\ge1$. Hence $V$ is not Poisson by the previous lemma and  Proposition \ref{pr:Levisub}. The proposition is proved.
 \end{proof}

Now we are able to complete the proof of Theorem \ref{th:class:poissonsemisimple}.
We  assume that $supp_\gg(V)$ has two simple factors. 
In order to deal with most cases, it suffices to exhibit a Levi subalgebra $\gg'\subset\gg$ and a simple module $V'\subset_{\gg'} V$ such that $(\gg',V')\in\{(sl_2\times sl_2, V_{i,2}),(sl_2\times sp(4), V_{1,\omega_1})\}$  to show that $(V,\gg)$ is not Poisson by Proposition \ref{pr:Levisub}. Since the choice is obvious in a large number of cases, and   a complete list would, therefore, be rather long, we will list  only the non-obvious choices. All these special cases except for the first one require to us to consider Levi subalgebras with three simple factors.

\noindent(a) If $\gg=so(2n+1)\oplus \gg_2$ and $V=V_{\omega_1,\lambda}$ choose $\gg'=sl_2\times sl_2$ generated by the second node of the Dynkn diagram of $so(2n+1)$, resp.  a node $i$ of the diagram associated to  $\gg_2$ such that $(\lambda,\alpha_i)\ge 1$.  Note that $E_{-\alpha_1}(v_{\omega_1})\in V_{\omega_1}$ generates a three-dimensional simple $sl_2$-module for the subalgebra corresponding to the second node of the Dynkin diagram. Hence, we find a  $sl_2\times sl_2$-submodule $V'\cong V_{2,i}\subset V_{\omega_2,\lambda}$ and $V$ is not Poisson by  Proposition \ref{pr:list semisimple levis} (a).


\noindent(b) If $\gg=sl_n\times \gg_2$, $n\ge 4$ and $V=V_{\omega_2,\lambda}$, (resp. $V_{\omega_{n-2},\lambda}$) choose $\gg'=(sl_2\times sl_2)\times \gg_2$ generated by the first and third nodes of the Dynkn diagram $A_{n-1}$  (resp. the last and third to last nodes) and  $\gg_2$. Let $v_{\omega_2}$ be a highest weight vector in $V_{\omega_2}$. Note that $E_{-\alpha_2}(v_{\omega_2})\in V_{\omega_2}$ generates a four-dimensional simple $sl_2\times sl_2$-module $V_{1,1}$. Hence, we find a  $sl_2\times sl_2\times \gg_2$ submodule $V'\cong V_{1,1,\lambda}\subset V_{\omega_2,\lambda}$ and $V$ is not Poisson by  Proposition \ref{pr:cubesarenotpoisson}. Similarly we obtain that  $V_{\omega_{n-2},\lambda}$ is not Poisson.

\noindent(c) If $\gg=so(2n)\oplus \gg_2$,  and $V=V_{\omega_1, \lambda}$, consider the Levi subalgebra of $so(2n)$, isomorphic to $sl_4$, generated by the  $(n-2)$nd, $(n-1)$st and the $n$th nodes of the Dynkin diagram $D_n$. It can be easily observed that if $v\in V_{\omega_1}(\omega_1)$ is a highest weight vector, then $v'=E_{\alpha_{n-3}}\circ\ldots\circ E_{\alpha_1}(v)$ generates a simple $sl_4$-module $V_{\omega_2}$.  We obtain that $V$ is not Poisson by applying the argument in case (b).

\noindent(d) If $\gg=so(8)\oplus \gg_2$ and $V=V_{\omega_i, \lambda}$ , $i=3,4$ or $\gg=so(10)\oplus \gg_2$ and $V=V_{\omega_i, \lambda}$ , $i=4,5$ we argue analogous to case (c).

\noindent(e)  If $\gg=E_6\oplus \gg_2$ and $V=V_{\omega_1,\lambda}$ consider the Levi subalgebra $ sl_4 \subset E_6$ generated by the second, third and fourth nodes of the Dynkin diagram $E_6$
 (in the notation of \cite{Bou}). If $v\in V_{\omega_1}(\omega_1)$ is a highest weight vector, then $v'=E_{\alpha_{3}}\circ E_{\alpha_1}(v)$ generates a simple $sl_4$-module  $V_{\omega_2}$.  We obtain that $V$ is not Poisson by applying the argument in case (b).

The proof of Theorem \ref{th:class:poissonsemisimple} is now complete. 
 \end{proof}

\section{Quantum Symmetric Algebras}
\label{se:prlime}
\subsection{The Quantum Group $U_q(g)$ and its Modules}
\label{se:q-group}

We start with the definition of the quantized enveloping algebra
associated with a complex reductive Lie algebra $\gg$ (our standard
reference here will be \cite{brown-goodearl}). Let $\hh\subset \gg$
be a Cartan subalgebra,  $P(\gg)$ the weight lattice, as introduced above, and let $A=(a_{ij})$ be the Cartan matrix
for $\gg$. Additionally, let $(\cdot,\cdot)$ be the standard non-degenerate symmetric bilinear form on $\hh$.
 
The {\it quantized enveloping algebra} $U$ is a $\CC(q)$-algebra generated
by the elements $E_i$ and $F_i$ for $i \in [1,r]$, and $K_\lambda $ for $\lambda \in P(\gg)$,
subject to the following relations:
$K_\lambda  K_\mu = K_{\lambda +\mu}, \,\, K_0 = 1$
for $\lambda , \mu \in P$; $K_\lambda E_i =q^{(\alpha_i\,,\,\lambda)} E_i K_\lambda,
\,\, K_\lambda F_i =q^{-(\alpha_i\,,\,\lambda)} F_i K_\lambda
$
for $i \in [1,r]$ and $\lambda\in P$;
\begin{equation}
\label{eq:upper lower relations}
E_i,F_j-F_jE_i=\delta_{ij}\frac{K_{\alpha_i}- K_{-\alpha_i}}{q^{d_i}-q^{-d_i}}
\end{equation}
for $i,j \in [1,r]$, where  $d_i=\frac{(\alpha_i\, ,\,\alpha_i)}{2}$;
and the {\it quantum Serre relations}
\begin{equation}
\label{eq:quantum Serre relations}
\sum_{p=0}^{1-a_{ij}} (-1)^p 
E_i^{(1-a_{ij}-p)} E_j E_i^{(p)} = 0,~\sum_{p=0}^{1-a_{ij}} (-1)^p 
F_i^{(1-a_{ij}-p)} F_j F_i^{(p)} = 0
\end{equation}
for $i \neq j$, where
the notation $X_i^{(p)}$ stands for the \emph{divided power}
\begin{equation}
\label{eq:divided-power}
X_i^{(p)} = \frac{X^p}{(1)_i \cdots (p)_i}, \quad
(k)_i = \frac{q^{kd_i}-q^{-kd_i}}{q^{d_i}-q^{-d_i}} \ .
\end{equation}

%

The algebra $U$ is a $q$-deformation of the universal enveloping algebra of
the reductive Lie algebra~$\gg$, so it is commonly
denoted by $U = U_q(\gg)$.
It has a natural structure of a bialgebra with the co-multiplication $\Delta:U\to U\otimes U$
and the co-unit homomorphism  $\varepsilon:U\to \QQ(q)$
given by
\begin{equation}
\label{eq:coproduct}
\Delta(E_i)=E_i\otimes 1+K_{\alpha_i}\otimes E_i, \,
\Delta(F_i)=F_i\otimes K_{-\alpha_i}+ 1\otimes F_i, \, \Delta(K_\lambda)=
K_\lambda\otimes K_\lambda \ ,
\end{equation}
\begin{equation}
\label{eq:counit}
\varepsilon(E_i)=\varepsilon(F_i)=0, \quad \varepsilon(K_\lambda)=1\ .
\end{equation}
In fact, $U$ is a Hopf algebra with the antipode anti-homomorphism $S: U \to U$ given by 
\begin{equation}
\label{eq:antipode}
 S(E_i) = -K_{-\alpha_i} E_i, \,\, S(F_i) = -F_i K_{\alpha_i}, \,\,
S(K_\lambda) = K_{-\lambda}\ .
\end{equation}

Let $U^-$ (resp.~$U^0$; $U^+$) be the $\QQ(q)$-subalgebra of~$U$ generated by
$F_1, \dots, F_r$ (resp. by~$K_\lambda \, (\lambda\in P)$; by $E_1, \dots, E_r$).
It is well-known that $U=U^-\cdot U^0\cdot U^+$ (more precisely,
the multiplication map induces an isomorphism $U^-\otimes U^0\otimes U^+ \to U$).



We will consider the full sub-category  $\OO_{f}$ of the category
$U_q(\gg)-Mod$. The objects of $\OO_{f}$ are finite-dimensional
$U_{q}(\gg)$-modules $V^q$ having a weight decomposition
$$V^q=\oplus_{\mu\in P} V^q(\mu)\ ,$$
where each $K_\lambda$ acts on each {\it weight space} $V^q(\mu)$ by
the multiplication with $q^{(\lambda\,|\,\mu)}$  (see e.g.,
\cite{brown-goodearl}[I.6.12]). The category $\OO_{f}$ is semisimple
and the irreducible objects $V^q_\lambda$ are  generated by highest
weight spaces $V^q_\lambda(\lambda)=\CC(q)\cdot v_\lambda$, where
$\lambda$ is a {\it dominant weight}, i.e, $\lambda$ belongs to
$P^+=\{\lambda\in P:(\lambda\,|\,\alpha_i)\ge 0~ \forall ~i\in
[1,r]\}$, the monoid of dominant weights.

 By definition,the universal $R$-matrix $R \in U_{q}(\gg)\widehat \otimes U_{q}(\gg)$$R$ has can be decomposed as
\begin{equation}
\label{eq:Jordan}
R=R_0R_1=R_1R_0
\end{equation}
 where $R_0$ is ''the diagonal part'' of $R$, and  $R_1$ is unipotent, i.e., $R_1$ is a formal power series
\begin{equation}
\label{eq:R1}
R_1=1\otimes 1+(q-1)x_1+ (q-1)^2x_2+\cdots \ ,
\end{equation}
where all $x_k\in {U'}^-_k\otimes_{\CC[q,q^{-1}]} {U'}^+_k$, where
${U'}^-$ (resp. ${U'}^+$) is the integral form of $U^+$, i.e.,
${U'}^-$ is a $\CC[q,q^{-1}]$-subalgebra of $U_q(\gg)$ generated by
all $F_i$ (resp. by all $E_i$) and ${U'}^-_k$ (resp. ${U'}^+_k$) is
the $k$-th graded component under the grading $deg(F_i)=1$ (resp.
$deg(E_i)=1$).

By definition, for any $U^q,V^q$ in $\OO_f$ and any highest weights elements $u_\lambda\in U^q(\lambda)$,  $v_\mu\in V^q(\mu)$ we have
$R_0(u_\lambda\otimes v_\mu)=q^{(\lambda\,|\,\mu)}u_\lambda\otimes v_\mu$.

Let $R^{op}$ be the opposite element of $R$, i.e, $R^{op}=\tau (R)$, where  $\tau:U_{q}(\gg)\widehat \otimes U_{q}(\gg)\to U_{q}(\gg)\widehat \otimes U_{q}(\gg)$
is the permutation of factors. Clearly, $R^{op}=R_0R_1^{op}=R_1^{op}R_0$.

Following \cite[Section 3]{DR1},  define $D\in  U_{q}(\gg)\widehat \otimes U_{q}(\gg)$ by
\begin{equation}
\label{eq:D}
D:=R_0\sqrt{R_1^{op}R_1}=\sqrt{R_1^{op}R_1}R_0 \ .
\end{equation}

Clearly, $D$ is well-defined because $R_1^{op}R_1$ is also unipotent as well as its square root.  By definition,
$D^{2}=R^{op}R$, $D^{op}R=RD$.

Furthermore, define
\begin{equation}
\label{eq:hat R}
\widehat R:=RD^{-1}=(D^{op})^{-1}R=R_1\left(\sqrt{R_1^{op}R_1}\right)^{-1}
\end{equation}
It is easy to see that
\begin{equation}
\label{eq:unitary}
 \widehat R^{op}=\widehat R^{\,\,-1}
\end{equation}
According to \cite[Proposition 3.3]{DR1}, the pair $(U_{q}(\gg), \widehat R)$ is a {\it coboundary} Hopf algebra.

The braiding in the category $\OO_{f}$ is defined by $\RR_{U^q,V^q}:U^q\otimes V^q\to V^q\otimes U^q$, where
$$\RR_{U^q,V^q}(u\otimes v)=\tau R(u\otimes v)$$
for any $u\in U^q$, $v\in V^q$, where $\tau:U^q\otimes V^q\to V^q\otimes U^q$ is the ordinary permutation of factors.

Denote by  $C\in Z(\widehat{U_{q}(\gg)})$   the {\it quantum
Casimir} element which acts on any irreducible $U_q(\gg)$-module
$V^q_\lambda$ in $\OO_f$ by the scalar multiple
$q^{(\lambda\,|\,\lambda+2\rho)}$, where $2\rho$ is the sum of
positive roots.

The following fact is  well-known.
\begin{lemma}
\label{le:braiding casimir} One has $\RR^2=\Delta(C^{-1})\circ
(C\otimes C)$. In particular, for each $\lambda,\mu,\nu\in  P_+$ the
restriction of $\RR^2$ to the $\nu$-th isotypic component
$I^\nu_{\lambda,\mu}$ of the tensor product $V^q_\lambda\otimes V^q_\mu$
is scalar multiplication by
$q^{(\lambda\,|\,\lambda)+(\mu\,|\,\mu)-(\nu\,|\,\nu))+(2\rho\,|\,\lambda+\mu-\nu)}$.

\end{lemma}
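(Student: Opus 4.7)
The plan is to prove the operator identity $\RR^2 = \Delta(C^{-1})\circ (C\otimes C)$ first as an identity in a suitable completion, and then deduce the scalar formula on $I^\nu_{\lambda,\mu}$ from the known eigenvalue of $C$. The starting observation is that on $U^q\otimes V^q$,
\[
\RR^2 = \RR_{V^q,U^q}\circ \RR_{U^q,V^q} = (\tau\circ R)\circ(\tau\circ R) = (\tau R\tau)\,R = R^{op}\,R,
\]
since if $R=\sum a_i\otimes b_i$, then a direct computation gives $\tau\circ R\circ \tau = \sum b_i\otimes a_i = R^{op}$ as an operator on $U^q\otimes V^q$. So it suffices to show $R^{op}R = \Delta(C^{-1})(C\otimes C)$.

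For this step I would invoke the standard Drinfeld identity for the ribbon/quasi-triangular Hopf algebra structure of $U_q(\gg)$. Writing $v = u K_{-2\rho}$ for the ribbon element (with $u = \mu(S\otimes 1)(R^{op})$ the Drinfeld element), one knows $v$ is central, acts on $V^q_\lambda$ by $q^{-(\lambda|\lambda+2\rho)}$, and satisfies the compatibility
\[
\Delta(v) = (v\otimes v)\,(R^{op}R)^{-1}.
\]
Since $C$ is precisely (up to the convention chosen in the excerpt) the central element with eigenvalue $q^{(\lambda|\lambda+2\rho)}$ on $V^q_\lambda$, we may identify $C=v^{-1}$, so rearranging gives
\[
R^{op}R = (C\otimes C)\,\Delta(C^{-1}) = \Delta(C^{-1})\,(C\otimes C),
\]
where the last equality uses centrality of $C$. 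This yields the first assertion. The main obstacle here is justifying the ribbon identity in the completion $\widehat{U_q(\gg)\otimes U_q(\gg)}$ where $R$ lives; one handles this either by citing Drinfeld's construction directly, or by verifying the identity on every finite-dimensional weight module in $\OO_f$, where both sides act as well-defined operators and where the density of $U_q(\gg)$-action reduces the claim to a check on highest-weight vectors.

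Finally, for the scalar computation on $I^\nu_{\lambda,\mu}\subset V^q_\lambda\otimes V^q_\mu$, I use that all three factors are central, hence scalar on each isotypic component, so we only need to read off eigenvalues. The element $C\otimes C$ acts on $V^q_\lambda\otimes V^q_\mu$ by the product
\[
q^{(\lambda|\lambda+2\rho)}\cdot q^{(\mu|\mu+2\rho)} = q^{(\lambda|\lambda)+(\mu|\mu)+(2\rho|\lambda+\mu)},
\]
while $\Delta(C^{-1})$ acts on $I^\nu_{\lambda,\mu}$, which consists of copies of $V^q_\nu$, by $q^{-(\nu|\nu+2\rho)} = q^{-(\nu|\nu)-(2\rho|\nu)}$. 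Multiplying gives exactly $q^{(\lambda|\lambda)+(\mu|\mu)-(\nu|\nu)+(2\rho|\lambda+\mu-\nu)}$, proving the second statement. The harder conceptual step is the ribbon identity; the eigenvalue bookkeeping is routine once that is in hand.
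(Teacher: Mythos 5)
The paper itself offers no argument for this lemma---it is simply declared ``well-known''---so your task amounts to supplying the standard proof, and the ribbon-element route you chose is indeed the canonical one. The reduction $\RR^2 = \RR_{V,U}\circ\RR_{U,V} = R^{op}R$, the use of Drinfeld's identity $\Delta(v)=(v\otimes v)(R^{op}R)^{-1}$, and the concluding eigenvalue bookkeeping are all correct in spirit and track the literature.

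There is, however, an algebra slip in your ``rearranging'' step that you should fix. Starting from $\Delta(v)=(v\otimes v)(R^{op}R)^{-1}$ and your identification $C=v^{-1}$, inverting both sides gives
\[
(R^{op}R)^{-1}=(v^{-1}\otimes v^{-1})\Delta(v)=(C\otimes C)\,\Delta(C^{-1}),
\]
hence $R^{op}R=\Delta(C)\,(C^{-1}\otimes C^{-1})$ --- not the $(C\otimes C)\,\Delta(C^{-1})$ you wrote. These two operators are reciprocals; they differ, and only one of them can be $\RR^2$. A quick sanity check on the Cartan component $V^q_{\lambda+\mu}\subset V^q_\lambda\otimes V^q_\mu$ is illuminating: there $R_1$ acts trivially, so (with the paper's stated convention $R_0=q^{(\lambda\,|\,\mu)}$ on weight vectors) one has $\RR^2(v_\lambda\otimes v_\mu)=q^{2(\lambda\,|\,\mu)}v_\lambda\otimes v_\mu$, which agrees with $\Delta(C)(C^{-1}\otimes C^{-1})$ and disagrees with $\Delta(C^{-1})(C\otimes C)=q^{-2(\lambda\,|\,\mu)}$ on that component. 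So either you should take $C=v$ (i.e., the ribbon element with eigenvalue $q^{+(\lambda\,|\,\lambda+2\rho)}$) rather than $C=v^{-1}$, or you should note that the lemma's stated operator identity and exponent formula appear to be the reciprocals of what the paper's own conventions for $R_0$ and $C$ produce --- a sign discrepancy presumably inherited from \cite{BZ}. Either way, you should not leave the rearranging step as written, since it does not follow from the identity that precedes it; and given that you hedged explicitly on the convention for $C$, you should carry that hedge through to the end by running the Cartan-component check to pin the sign.
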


This allows to define the diagonalizable $\CC(q)$-linear  map
$D_{U^q,V^q}:U^q\otimes V^q\to U^q\otimes V^q$ by $D_{U^q,V^q}(u\otimes v)=
D(u\otimes v)$ for any objects $U^q$ and $V^q$ of $\OO_f$. It is easy to
see that the operator $D_{V^q_\lambda,V^q_\mu}:V^q_\lambda\otimes V^q_\mu\to
V^q_\lambda\otimes V^q_\mu$ acts on the $\nu$-th isotypic component
$I^\nu_{\lambda,\mu}$ in $V^q_\lambda\otimes V^q_\mu$ by the scalar
multiplication with
$q^{\frac{1}{2}(\,(\lambda\,|\,\lambda)+(\mu\,|\,\mu)-(\nu\,|\,\nu)\,)+(\rho\,|\,\lambda+\mu-\nu)}$.

For any  $U^q$ and $V^q$ in $\OO_f$ define the {\it normalized braiding} $\sigma_{U^q,V^q}$ by
\begin{equation}
\label{eq:sigma}
\sigma_{U^q,V^q}(u\otimes v)=\tau \widehat R (u\otimes v) \ ,
\end{equation}

Therefore, we have by (\ref{eq:hat R}):
\begin{equation}
\label{eq:formula for sigma}
\sigma_{U^q,V^q}=D_{V^q,U^q}^{-1} \RR_{U^q,V^q}=\RR_{U^q,V^q}D_{U^q,V^q}^{-1} \ .
\end{equation}

We will will sometimes write $\sigma_{U^q,V^q}$ in a more explicit way:
\begin{equation}
\label{eq:sigma root}
\sigma_{U^q,V^q}=\sqrt{\RR_{V^q,U^q}^{-1}\RR_{U^q,V^q}^{-1}} \RR_{U^q,V^q}=\RR_{U^q,V^q}\sqrt{\RR_{U^q,V^q}^{-1}\RR_{V^q,U^q}^{-1}}
\end{equation}


The following fact is an obvious corollary of (\ref{eq:unitary}).
\begin{lemma}
\label{le:symm comm constraint} $\sigma_{V^q,U^q}\circ
\sigma_{U^q,V^q}=id_{U^q\otimes V^q}$ for any  $U^q,V^q$ in $\OO_f$. That is,
$\sigma$ is a symmetric commutativity constraint.
\end{lemma}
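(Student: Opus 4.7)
The plan is to deduce the lemma directly from the unitarity identity $\widehat R^{op} = \widehat R^{-1}$ in \eqref{eq:unitary}; the text flags the statement as an obvious corollary, so the argument is essentially an unpacking of definitions. By \eqref{eq:sigma}, I interpret $\sigma_{U^q,V^q}$ as the composition $\tau_{U^q,V^q} \circ \widehat R_{U^q,V^q}$, where $\widehat R_{U^q,V^q}$ denotes the action of $\widehat R \in U_q(\gg)\widehat\otimes U_q(\gg)$ on $U^q \otimes V^q$ and $\tau_{U^q,V^q}: U^q \otimes V^q \to V^q \otimes U^q$ is the ordinary flip of tensor factors.

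The key step is the flip-intertwining identity
\[
\tau_{U^q,V^q} \circ \widehat R_{U^q,V^q} = \widehat R^{op}_{V^q,U^q} \circ \tau_{U^q,V^q},
\]
which is immediate from writing $\widehat R = \sum_i r_i \otimes s_i$: both sides send $u \otimes v$ to $\sum_i s_i \cdot v \otimes r_i \cdot u$. Substituting this identity into the middle of the composition, I would compute
\[
\sigma_{V^q,U^q} \circ \sigma_{U^q,V^q} = \tau_{V^q,U^q} \widehat R_{V^q,U^q} \bigl(\tau_{U^q,V^q} \widehat R_{U^q,V^q}\bigr) = \tau_{V^q,U^q} \widehat R_{V^q,U^q} \widehat R^{op}_{V^q,U^q} \tau_{U^q,V^q}.
\]
By \eqref{eq:unitary} the middle pair collapses, $\widehat R_{V^q,U^q} \widehat R^{op}_{V^q,U^q} = \mathrm{id}_{V^q \otimes U^q}$, and since $\tau_{V^q,U^q} \circ \tau_{U^q,V^q} = \mathrm{id}_{U^q \otimes V^q}$, the claim follows.

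There is no real obstacle. All the content is already packaged into \eqref{eq:unitary}, whose own justification rests on the decomposition $\widehat R = R_1 (\sqrt{R_1^{op} R_1})^{-1}$ in \eqref{eq:hat R} together with the relation $D^{op} R = R D$ from \eqref{eq:D}. Those identities are exactly what make the passage from $R$ to the normalized $\widehat R$ useful: the unnormalized braiding $\RR$ fails to be a symmetric commutativity constraint, and the ``square root'' correction by $D$ is precisely what restores the involutivity $\sigma_{V^q,U^q} \circ \sigma_{U^q,V^q} = \mathrm{id}$.
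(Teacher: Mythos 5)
Your proof is correct and follows exactly the route the paper intends: the lemma is stated as an immediate corollary of the unitarity relation $\widehat R^{op}=\widehat R^{-1}$ from \eqref{eq:unitary}, and your unpacking via the flip-intertwining identity $\tau\circ\widehat R_{U^q,V^q}=\widehat R^{op}_{V^q,U^q}\circ\tau$ is precisely the ``obvious'' computation the paper leaves to the reader.
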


We also have the following coboundary relation (even though we will not use it).

\begin{lemma} \cite[section 3]{DR1}
Let $A^q,B^q,C^q$ be objects of $\OO_{f})$. Then, the following diagram commutes:
\begin{equation}
\label{eq:comm square}
\begin{CD}
A^q\otimes B^q \otimes C^q@>\sigma_{12,3}>>C^q\otimes A^q \otimes B^q\\
@V\sigma_{1,23}VV@VV\sigma_{23}V\\
B^q\otimes C^q \otimes A^q@>\sigma_{12}>>C^q\otimes B^q \otimes A^q
\end{CD}
\end{equation}
where we abbreviated
$$\sigma_{12,3}:=\sigma_{A^q\otimes B^q,C^q}:(A^q\otimes B^q) \otimes C^q\to  C^q\otimes (A^q \otimes B^q),$$
$$\sigma_{1,23}:=\sigma_{A^q, B^q\otimes C^q}:A^q \otimes (B^q\otimes C^q) \to  (B^q\otimes C^q)\otimes A^q \ .$$

\end{lemma}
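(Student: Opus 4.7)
The plan is to translate the commutativity of the diagram into an equality of operators on $A^q\otimes B^q\otimes C^q$ and then verify it using the quasi-triangularity of $R$ together with the defining property $D^2=R^{op}R$.

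First, I would unwind each arrow using $\sigma_{U,V}=\tau_{U,V}\widehat R_{U,V}$. Since $\widehat R_{U\otimes V,W}$ acts on a triple tensor product as $(\Delta\otimes 1)(\widehat R)$ and $\widehat R_{U,V\otimes W}$ as $(1\otimes\Delta)(\widehat R)$, direct bookkeeping of the permutations shows that both compositions around the square factor as $\tau_0\circ\Phi_{\mathrm{left}}$ and $\tau_0\circ\Phi_{\mathrm{right}}$ respectively, where $\tau_0:a\otimes b\otimes c\mapsto c\otimes b\otimes a$ is the total reversal. Equating the two pre-reversal operators reduces the diagram to the identity
\[
\widehat R_{12}\cdot(\Delta\otimes 1)(\widehat R)\;=\;\widehat R_{23}\cdot(1\otimes\Delta)(\widehat R)
\]
as endomorphisms of $A^q\otimes B^q\otimes C^q$, with $\widehat R_{12}$ and $\widehat R_{23}$ denoting the action of $\widehat R\in U_q(\gg)\widehat\otimes U_q(\gg)$ on the indicated pair of tensor factors.

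Next, I would substitute $\widehat R=RD^{-1}$ (equivalently $(D^{op})^{-1}R$) and apply the quasi-triangularity axioms $(\Delta\otimes 1)(R)=R_{13}R_{23}$ and $(1\otimes\Delta)(R)=R_{13}R_{12}$. The resulting factors of $R$ on either side can be rearranged, using the classical Yang--Baxter relation $R_{12}R_{13}R_{23}=R_{23}R_{13}R_{12}$, into a common product. The identity then collapses to a relation purely among $D_{12}$, $D_{23}$, $(\Delta\otimes 1)(D)$ and $(1\otimes\Delta)(D)$. These coproducts I would compute by squaring: $(\Delta\otimes 1)(D)^2=(\Delta\otimes 1)(R^{op}R)$ is expressible via quasi-triangularity, and similarly for $(1\otimes\Delta)(D)^2$. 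Uniqueness of the positive square root on each isotypic component, guaranteed by Lemma~\ref{le:braiding casimir} and the explicit scalar action of $D$ on $I^\nu_{\lambda,\mu}$ recorded immediately after it, pins these coproducts down unambiguously.

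Since both sides of the operator identity commute with the $U_q(\gg)$-action, the verification reduces further to comparing scalars on each isotypic summand $V^q_\kappa\subset V^q_\lambda\otimes V^q_\mu\otimes V^q_\nu$. The resulting scalar identity is a routine exponent comparison using the formula $q^{\frac{1}{2}((\lambda,\lambda)+(\mu,\mu)-(\nu,\nu))+(\rho,\lambda+\mu-\nu)}$ for the action of $D$, whose exponents summed along either route through the diagram depend only on the overall weight data of the three factors and their joint isotypic component. The main obstacle is precisely the presence of the square root in the definition of $D$: unlike $R$ it satisfies no closed-form coproduct formula, and $(\Delta\otimes 1)(D)$ must be handled isotypic-component by isotypic-component, following the approach of \cite[\S 3]{DR1}. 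Once this bookkeeping is set up, the scalar equality itself is formal, and the diagram commutes.
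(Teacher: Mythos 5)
The paper does not prove this lemma: it is cited from \cite[\S 3]{DR1}, where the commuting square is exactly one of the coboundary axioms $\widehat R_{12}(\Delta\otimes 1)(\widehat R)=\widehat R_{23}(1\otimes\Delta)(\widehat R)$ that Drinfeld verifies for $(U_q(\gg),\widehat R)$. Your reduction of the diagram to this operator identity is correct, and your plan — substitute $\widehat R=RD^{-1}$, use $(\Delta\otimes 1)(R)=R_{13}R_{23}$ and $(1\otimes\Delta)(R)=R_{13}R_{12}$, reorder by the (quantum, not classical) Yang–Baxter equation, and compare the leftover $D$-factors on isotypic components — is essentially Drinfeld's argument and does go through. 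Two points worth making explicit when you write it up. First, sliding $D_{12}^{-1}$ past $R_{13}R_{23}$ and $D_{23}^{-1}$ past $R_{13}R_{12}$ needs the commutations $[D_{12},R_{13}R_{23}]=0$ and $[D_{23},R_{13}R_{12}]=0$; these hold because $D$ commutes with $\Delta(U_q(\gg))$ (as $D^2=\Delta(C^{-1})(C\otimes C)$ shows) and $R_{13}R_{23}=(\Delta\otimes 1)(R)$. Second, in the scalar comparison, neither $D_{12}$ nor $(\Delta\otimes 1)(D)$ alone is scalar on an isotypic $\kappa$-summand of $V^q_\lambda\otimes V^q_\mu\otimes V^q_\nu$ — each is diagonal only in a finer decomposition labeled by the intermediate highest weight $\theta$ of the pair being fused — but their product $q^{\frac12((\lambda|\lambda)+(\mu|\mu)+(\nu|\nu)-(\kappa|\kappa))+(\rho|\lambda+\mu+\nu-\kappa)}$ is independent of $\theta$, and the same cancellation occurs for $D_{23}(1\otimes\Delta)(D)$. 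In fact you can sidestep the square-root bookkeeping entirely by reading off from the recorded scalar formula that $D$ coincides on $\OO_f$ with $\Delta(C)^{-1/2}(C^{1/2}\otimes C^{1/2})$, from which $D_{12}\,(\Delta\otimes 1)(D)=\Delta^{(2)}(C)^{-1/2}(C^{1/2}\otimes C^{1/2}\otimes C^{1/2})=D_{23}\,(1\otimes\Delta)(D)$ is immediate.
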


\begin{remark} If one replaces the braiding  $\RR$ of $\OO_f$  by its inverse $\RR^{-1}$,
the symmetric commutativity constraint $\sigma$ will not change.
\end{remark}
 
\subsection{Braided Symmetric and Exterior Powers}
\label{se:BESP}
In this section we will use the notation and conventions of Section \ref{se:q-group}.

For any morphism $f:V^q\otimes V^q\to V^q\otimes V^q$ in $\OO_f$ and $n>1$
we denote by $f^{i,i+1}$, $i=1,2,\ldots,n-1$ the morphism
$V^{q,\otimes n}\to V^{q,\otimes n}$ which acts as $f$ on the $i$-th and
the $i+1$st factors. Note that $\sigma_{V^q,V^q}^{i,i+1}$ is always an
involution on $V^{q,\otimes n}$.

\begin{definition}
\label{def:symmetric power} For an object $V^q$ in $\OO_f$ and $n\ge
0$ define the {\it braided symmetric power} $S_\sigma^nV^q \subset
V^{q,\otimes n}$ and the {\it braided exterior power}
$\Lambda_\sigma^nV^q\subset V^{q,\otimes n}$  by:
$$S_\sigma^nV^q=\bigcap_{1\le i\le n-1}  (Ker~ \sigma_{i,i+1}-id)=\bigcap_{1\le i\le n-1} (Im~\sigma_{i,i+1}+id)\ ,$$
$$\Lambda_\sigma^nV^q=\bigcap_{1\le i\le n-1} (Ker~ \sigma_{i,i+1}+id)=\bigcap_{1\le i\le n-1} (Im~\sigma_{i,i+1}-id) ,$$
where we abbreviated $\sigma_{i,i+1}=\sigma_{V^q,V^q}^{i,i+1}$.
\end{definition}

\begin{remark} Clearly, $-\RR$ is also a braiding on $\OO_f$ and $-\sigma$ is the corresponding normalized braiding.
Therefore, $\Lambda_\sigma^nV^q=S_{-\sigma}^nV^q$ and
$S_\sigma^nV^q=\Lambda_{-\sigma}^nV^q$. That is, informally speaking,
the symmetric and exterior powers are mutually ''interchangeable''.

\end{remark}

\begin{remark} Another way to introduce the symmetric and exterior squares involves the well-known fact that the braiding $\RR_{V^q,V^q}$
is a semisimple operator $V^q\otimes V^q\to V^q\otimes V^q$, and all the
eigenvalues of $\RR_{V^q,V^q}$ are of the form $\pm q^r$, where $r\in
\ZZ$. Then {\it positive} eigenvectors of $\RR_{V^q,V^q}$ span
$S_\sigma^2V^q$ and {\it negative} eigenvectors  of $\RR_{V^q,V^q}$ span
$\Lambda_\sigma^2V^q$.
\end{remark}

Clearly, $S_{\sigma}^{0}V^q=\CC(q)\ ,\ S_{\sigma}^{1}V^q=V^q\ , \Lambda_{\sigma}^{0}V^q=\CC(q)\ ,\ \Lambda_{\sigma}^{1}V^q=V^q$, and
$$S_\sigma^2V^q=\{v\in V^q\otimes V^q\,|\, \sigma_{V^q,V^q}(v)=v\},~\Lambda_\sigma^2V^q=\{v\in V^q\otimes V^q\,|\, \sigma_{V^q,V^q}(v)=-v\} \ .$$





The following fact is obvious.

\begin{proposition}
\label{pr:injections} For each $n\ge 0$ the association $V^q\mapsto
S^n_\sigma V^q$ is a functor from $\OO_f$ to $\OO_f$ and the
association $V^q\mapsto \Lambda_\sigma^n V^q$ is a functor from $\OO_f$
to $\OO_f$. In particular, an embedding $U^q\hookrightarrow V^q$ in the
category $\OO_f$ induces injective morphisms
$$S_{\sigma}^n U^q \hookrightarrow S_{\sigma}^n V^q, ~ \Lambda_{\sigma}^n U^q\hookrightarrow \Lambda_{\sigma}^n V^q \ .$$

\end{proposition}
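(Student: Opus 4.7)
The plan is to reduce everything to the naturality of the normalized braiding $\sigma$. More precisely, I would first establish the following key fact: for any morphism $f\colon V^q\to W^q$ in $\OO_f$, one has
\[
\sigma_{W^q,W^q}\circ (f\otimes f) = (f\otimes f)\circ \sigma_{V^q,V^q}.
\]
This follows from the explicit formula \eqref{eq:formula for sigma}, $\sigma_{U^q,V^q}=D_{V^q,U^q}^{-1}\RR_{U^q,V^q}$, together with two observations: (i) $\RR$ is a braiding on $\OO_f$, hence natural in both arguments by definition; (ii) by Lemma \ref{le:braiding casimir} and the definition of $D$, the operator $D_{V^q,V^q}$ acts by the scalar $q^{\frac12((\lambda\,|\,\lambda)+(\mu\,|\,\mu)-(\nu\,|\,\nu))+(\rho\,|\,\lambda+\mu-\nu)}$ on each isotypic component of $V^q\otimes V^q$. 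Since any morphism in $\OO_f$ sends the isotypic component of type $\nu$ into the isotypic component of type $\nu$, the operator $D$ commutes with $f\otimes f$. Combining (i) and (ii) gives the naturality of $\sigma$.

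Given naturality, I would promote it to all tensor positions: for every $1\le i\le n-1$, the identity $\sigma_{W^q,W^q}^{i,i+1}\circ f^{\otimes n} = f^{\otimes n}\circ \sigma_{V^q,V^q}^{i,i+1}$ holds on $V^{q,\otimes n}$, because $f^{\otimes n}$ factors through identity on the unaffected tensor slots. Consequently, if $v\in V^{q,\otimes n}$ lies in $\ker(\sigma_{V^q,V^q}^{i,i+1}\mp\mathrm{id})$ for all $i$, then $f^{\otimes n}(v)$ lies in $\ker(\sigma_{W^q,W^q}^{i,i+1}\mp\mathrm{id})$ for all $i$. By Definition \ref{def:symmetric power} this means $f^{\otimes n}$ restricts to morphisms $S_\sigma^n(f)\colon S_\sigma^nV^q\to S_\sigma^nW^q$ and $\Lambda_\sigma^n(f)\colon \Lambda_\sigma^nV^q\to\Lambda_\sigma^nW^q$. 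Functoriality (preservation of identities and composition) is then immediate, since it holds for the assignment $V^q\mapsto V^{q,\otimes n}$, $f\mapsto f^{\otimes n}$ on the ambient tensor powers.

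For the second assertion, let $\iota\colon U^q\hookrightarrow V^q$ be an embedding in $\OO_f$. Since $\OO_f$ is a $\CC(q)$-linear category and tensor products of injective linear maps are injective, $\iota^{\otimes n}\colon U^{q,\otimes n}\hookrightarrow V^{q,\otimes n}$ is injective. The restriction of an injective map to a subspace remains injective, so the induced morphisms $S_\sigma^n U^q\to S_\sigma^nV^q$ and $\Lambda_\sigma^n U^q\to\Lambda_\sigma^n V^q$ are injective as required.

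The only nontrivial step is the verification that $D$ (and hence $\sigma$) is natural; once this is in place, the rest is a routine unpacking of the definition of $S_\sigma^n$ and $\Lambda_\sigma^n$ as intersections of eigenspaces of the operators $\sigma^{i,i+1}$. I expect that the main obstacle is handling $D$ correctly: one must observe that although $D$ is defined via the square root of $R_1^{op}R_1$, its action on any tensor product of finite-dimensional weight modules is diagonalizable and depends only on the decomposition into isotypic components, a datum preserved by every morphism in $\OO_f$.
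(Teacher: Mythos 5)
Your proof is correct; the paper asserts this proposition as obvious and gives no proof, so your argument is filling in a gap rather than diverging from the author. Two remarks. First, the naturality of $D$ can be obtained more directly than via isotypic components: by \eqref{eq:D}, $D$ is an element of $U_q(\gg)\widehat\otimes U_q(\gg)$ acting through the $U_q(\gg)\otimes U_q(\gg)$-module structure on $V^q\otimes V^q$, so it commutes with every $U_q(\gg)\otimes U_q(\gg)$-linear map, and in particular with $f\otimes f$ (the same remark applies to $R$, and hence to $\sigma=\tau\widehat R$, all at once). Second, as you phrase it, the claim that $D_{V^q,V^q}$ acts by the scalar $q^{\frac12((\lambda\,|\,\lambda)+(\mu\,|\,\mu)-(\nu\,|\,\nu))+(\rho\,|\,\lambda+\mu-\nu)}$ on each $\nu$-isotypic component of $V^q\otimes V^q$ is only literally true when $V^q\cong V^q_\lambda$ is simple; for a general $V^q$ the $\nu$-isotypic component of $V^q\otimes V^q$ can meet several summands $V^q_\lambda\otimes V^q_\mu$ carrying different scalars. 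The fix is routine (decompose $V^q\cong\bigoplus V^q_\lambda$, note by Schur's lemma that $f\otimes f$ respects the resulting decomposition of $V^q\otimes V^q$ into pieces $V^q_\lambda\otimes V^q_\mu$, and apply the scalar formula on each piece), but the purely algebraic commutation with $D\in U_q(\gg)\widehat\otimes U_q(\gg)$ sidesteps the issue entirely.
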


\begin{definition}
\label{def:symmetric algebra}
For any  $V^q\in Ob(\OO)$   define the {\it braided symmetric algebra} $S_\sigma(V^q)$ and the {\it braided  exterior algebra} $\Lambda_\sigma(V^q)$ by:
\begin{equation}
\label{eq:injections}
S_\sigma(V^q)=T(V^q)/\left< \Lambda_\sigma^{2}V^q\right>,~\Lambda_\sigma(V^q)=T(V^q)/ \left<S_\sigma^{2}V^q\right> \ ,
\end{equation}
where $T(V^q)$ is the tensor algebra of $V^q$ and $\left<I\right>$ stands for the two-sided  ideal in $T(Vv)$ generated by a subset $I\subset T(V^q)$.
\end{definition}

Note that the algebras $S_\sigma(V^q)$ and $\Lambda_\sigma(V^q)$ carry a natural $\ZZ_{\ge 0}$-grading:
$$S_\sigma(V^q)=\bigoplus_{n\ge 0} S_\sigma(V^q)_n, ~~~\Lambda_\sigma(V^q)=\bigoplus_{n\ge 0} \Lambda_\sigma(V^q)_n\ ,$$
since the respective ideals in $T(V^q)$ are homogeneous.

\smallskip

Denote by  $\OO_{gr,f}$ the sub-category of $U_q(\gg)-Mod$ whose objects are $\ZZ_{\ge 0}$-graded:
$$V^q=\bigoplus_{n\in \ZZ_{\ge 0}}  V_n^q\ ,$$
 
where each $V^q_n$ is an object of $\OO_f$; and morphisms are those homomorphisms of  $U_q(\gg)$-modules which preserve the $\ZZ_{\ge 0}$-grading.

Clearly,  $\OO_{gr,f}$ is a tensor category under the natural
extension of the tensor structure of $\OO_f$. Therefore, we can
speak of algebras and co-algebras in $\OO_{gr,f}$.

%
%
%

By the very definition,  $S_{\sigma}(V^q)$ and $\Lambda_{\sigma}(V^q)$ are algebras in $\OO_{gr,f}$.

\begin{proposition} The assignments $V^q\mapsto S_\sigma(V^q)$ and $V^q\mapsto \Lambda_\sigma(V^q)$ define functors from $\OO_f$
to the category of algebras in $\OO_{gr,f}$.
\end{proposition}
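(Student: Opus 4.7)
The plan is to verify functoriality in two steps: first, that each $S_\sigma(V^q)$ (resp. $\Lambda_\sigma(V^q)$) is indeed an algebra in $\OO_{gr,f}$; second, that a morphism $f:U^q\to V^q$ in $\OO_f$ descends from the tensor algebra to a morphism of the quotients, and that this assignment respects identities and composition. The former is essentially built into the construction, since the ideals $\langle\Lambda_\sigma^2 V^q\rangle$ and $\langle S_\sigma^2 V^q\rangle$ are generated by $U_q(\gg)$-submodules of $T(V^q)$ and are homogeneous by inspection, so the quotients inherit both the $U_q(\gg)$-module structure (hence membership in $\OO_{gr,f}$ once one checks finite-dimensionality of each graded piece, which follows from $\dim V^q<\infty$) and the algebra structure.

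For the second step, the key observation is the naturality of the normalized braiding $\sigma$. Given a morphism $f:U^q\to V^q$ in $\OO_f$, the braiding $\RR_{U^q,U^q}$ is natural in each argument and the operator $D_{U^q,U^q}$ is likewise natural because it is a functorial power series in $\RR_1^{op}\RR_1$ (equivalently, it acts by a scalar depending only on the isotypic component, by the description via the quantum Casimir given in the excerpt). Consequently $\sigma_{V^q,V^q}\circ(f\otimes f)=(f\otimes f)\circ\sigma_{U^q,U^q}$, so $(f\otimes f)$ sends $\pm 1$-eigenvectors to $\pm 1$-eigenvectors, yielding
\[
(f\otimes f)(\Lambda_\sigma^2 U^q)\subseteq \Lambda_\sigma^2 V^q,\qquad (f\otimes f)(S_\sigma^2 U^q)\subseteq S_\sigma^2 V^q.
\]

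Next, I would invoke the universal property of the tensor algebra: $f$ induces a graded algebra homomorphism $T(f):T(U^q)\to T(V^q)$ in $\OO_{gr,f}$, and the previous containment implies that $T(f)$ maps the two-sided ideal $\langle \Lambda_\sigma^2 U^q\rangle$ into $\langle \Lambda_\sigma^2 V^q\rangle$ (and the same for $\langle S_\sigma^2\cdot\rangle$). Hence $T(f)$ descends to a morphism $S_\sigma(f):S_\sigma(U^q)\to S_\sigma(V^q)$ of graded algebras in $\OO_{gr,f}$, and similarly $\Lambda_\sigma(f):\Lambda_\sigma(U^q)\to\Lambda_\sigma(V^q)$. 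The identities $S_\sigma(\mathrm{id})=\mathrm{id}$, $S_\sigma(g\circ f)=S_\sigma(g)\circ S_\sigma(f)$ (and the analogues for $\Lambda_\sigma$) are inherited from the corresponding identities for $T$, since the quotient maps $T(V^q)\twoheadrightarrow S_\sigma(V^q)$ are natural in $V^q$.

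The only step that requires any care is the naturality of $\sigma$ itself; everything else is formal from the universal property of $T$. Since the paper has already exhibited $\sigma$ as $D^{-1}\RR$ with $D$ a well-defined element of $\widehat{U_q(\gg)\otimes U_q(\gg)}$ acting functorially on objects of $\OO_f$ (via a scalar on each isotypic component), this naturality is immediate, and the proposition follows.
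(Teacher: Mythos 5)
Your proof is correct. The paper states this proposition without any written proof, regarding it as immediate from what precedes: Proposition~\ref{pr:injections} already records (also without proof) that $V^q\mapsto S^n_\sigma V^q$ and $V^q\mapsto\Lambda^n_\sigma V^q$ are functors — the content there being precisely the naturality of $\sigma$ that you argue in your second paragraph — and once the graded pieces of the defining ideals are natural, the universal property of $T(V^q)$ gives the induced algebra maps and functoriality formally, just as you describe. Your write-up is a correct and complete filling-in of the argument the paper leaves implicit, and it uses the same underlying mechanism (naturality of the normalized braiding plus the universal property of the tensor algebra) rather than anything genuinely different.
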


We conclude the section with two important features of braided symmetric exterior powers and algebras.

\begin{proposition}\cite[Prop.2.11 and Eq. 2.3]{BZ}  
Let $V^q$ be an object of $\OO_f$ and $V^*$ its dual in $\OO_f$. We have the following $U_q(\gg)$-module isomorphisms.
\begin{equation}
\label{eq:double duals}
(S_\sigma^n V^{q,*})^*\cong S_\sigma(V^q)_n, ~(\Lambda_\sigma^n V^{q,*})^*\cong \Lambda_\sigma(V^q)_n  \ .
\end{equation} 

\end{proposition}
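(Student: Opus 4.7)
The plan is to derive both isomorphisms by dualizing the defining presentations and identifying the annihilator of the relation ideal with the corresponding braided power of the dual. First I would invoke finite-dimensionality to fix the canonical $U_q(\gg)$-equivariant isomorphism $(V^{q,\otimes n})^{*}\cong (V^{q,*})^{\otimes n}$, which holds in $\OO_f$ because the evaluation pairing is compatible with the coproduct. By definition
\[
S_\sigma(V^q)_n \;=\; V^{q,\otimes n}\big/\langle \Lambda_\sigma^{2}V^q\rangle_n,
\qquad
\langle \Lambda_\sigma^{2}V^q\rangle_n \;=\; \sum_{i=1}^{n-1}\mathrm{Im}\bigl(\operatorname{id}-\sigma^{i,i+1}_{V^q,V^q}\bigr),
\]
using that $\Lambda_\sigma^{2}V^q = \mathrm{Ker}(\operatorname{id}+\sigma_{V^q,V^q}) = \mathrm{Im}(\operatorname{id}-\sigma_{V^q,V^q})$ (here $\sigma^2=\operatorname{id}$ by Lemma \ref{le:symm comm constraint}).

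Next I would compute the annihilator of $\langle \Lambda_\sigma^{2}V^q\rangle_n$ inside $(V^{q,*})^{\otimes n}$. The key technical step is to check that under the identification above, the transpose of $\sigma^{i,i+1}_{V^q,V^q}$ coincides with the normalized braiding $\sigma^{i,i+1}_{V^{q,*},V^{q,*}}$ on $(V^{q,*})^{\otimes n}$. This follows from the fact that $(\OO_f,\sigma)$ is a symmetric tensor category with duals: the braiding on duals is uniquely determined by the braiding on the originals through the evaluation and coevaluation morphisms, and transposing the $R$-matrix through the antipode yields the same normalized braiding on $V^{q,*}\otimes V^{q,*}$ (possibly up to a reversal of the tensor factors, which is immaterial here since one takes an intersection over all positions $i$). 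Granting this, the annihilator of a sum of images is the intersection of kernels of transposes, so
\[
\langle \Lambda_\sigma^{2}V^q\rangle_n^{\perp}
\;=\;
\bigcap_{i=1}^{n-1}\mathrm{Ker}\bigl(\operatorname{id}-\sigma^{i,i+1}_{V^{q,*},V^{q,*}}\bigr)
\;=\;
S_\sigma^{n}V^{q,*}
\]
by Definition \ref{def:symmetric power}. Combining this with the canonical isomorphism $(V/W)^{*}\cong W^{\perp}$ for a subspace $W\subseteq V$ (valid in finite dimensions) gives $(S_\sigma(V^q)_n)^{*}\cong S_\sigma^{n}V^{q,*}$; dualizing once more yields the first isomorphism.

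The exterior statement is proved by the identical argument with $\operatorname{id}-\sigma$ replaced by $\operatorname{id}+\sigma$, using $S_\sigma^{2}V^q = \mathrm{Im}(\operatorname{id}+\sigma)$ in the defining relations of $\Lambda_\sigma(V^q)$; the annihilator computation then delivers $\Lambda_\sigma^{n}V^{q,*}$. Both isomorphisms are automatically $U_q(\gg)$-linear because every operator in sight is a morphism in $\OO_f$.

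The main obstacle is the verification that the transpose of $\sigma_{V^q,V^q}$ is $\sigma_{V^{q,*},V^{q,*}}$. This is where one must track through the definition $\sigma = \tau\widehat R$ with $\widehat R = R D^{-1}$, and use the Hopf algebra identity $(S\otimes S)(R)=R$ together with the fact that $D$ acts by scalars on isotypic components (Lemma \ref{le:braiding casimir}) so that its transpose acts by the same scalars on the dual isotypic components. Once this compatibility is established, the rest of the argument is purely formal linear algebra of annihilators of subspaces and quotients.
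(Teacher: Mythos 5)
The paper gives no proof here; it simply cites \cite[Prop.~2.11 and Eq.~2.3]{BZ}, so there is no in-paper argument to compare against. Your dualization strategy is correct and is the natural route to the statement; it is almost certainly the argument in the cited reference as well. A few remarks.

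The identification $\langle\Lambda_\sigma^2 V^q\rangle_n=\sum_{i=1}^{n-1}\mathrm{Im}(\mathrm{id}-\sigma^{i,i+1})$ and the translation of the annihilator into an intersection of kernels are both sound, using $\sigma^2=\mathrm{id}$ (Lemma~\ref{le:symm comm constraint}) to pass between $\mathrm{Ker}(\mathrm{id}+\sigma)$ and $\mathrm{Im}(\mathrm{id}-\sigma)$. The one step you flag as the main obstacle is indeed the only nontrivial point, and it does go through. Under the canonical (factor-reversing) rigid-category identification $(V^{q,\otimes n})^{*}\cong (V^{q,*})^{\otimes n}$, a short computation shows that the transpose of $\sigma^{i,i+1}_{V^q,V^q}$ is $\sigma^{n-i,\,n-i+1}_{V^{q,*},V^{q,*}}$: using $\pi_{V^*}(u)=\pi_V(S(u))^{T}$ one reduces to showing $(S\otimes S)(\widehat R)=\widehat R$, which follows from the quasitriangular identity $(S\otimes S)(R)=R$ together with $(S\otimes S)(R_0)=R_0$ (since $S$ negates $\hh$) and the factorization $\widehat R = R_1(\sqrt{R_1^{op}R_1})^{-1}=(\sqrt{R_1R_1^{op}})^{-1}R_1$. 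The index shift $i\mapsto n-i$ is exactly the reversal you anticipated, and it permutes $\{1,\dots,n-1\}$, so the intersection defining $S^n_\sigma V^{q,*}$ is unchanged. (Even if the transpose had come out as the inverse braiding rather than the braiding itself, this would not matter, again because $\sigma^2=\mathrm{id}$.) The last dualization $(S_\sigma(V^q)_n)^{**}\cong S_\sigma(V^q)_n$ is harmless: the double dual in $\OO_f$ is a twist of the identity by $S^2$, which is inner (conjugation by $K_{2\rho}$), so the two modules are isomorphic even though the natural map is not equivariant.

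So the argument is correct. If you wanted to make it fully rigorous you would spell out the $(S\otimes S)(\widehat R)=\widehat R$ calculation rather than appeal to scalars of $D$ on isotypic components; but either route suffices, and the one you sketched (scalars of $D_{V,V}$ and $D_{V^*,V^*}$ agree on matching isotypic pieces because $(\lambda^*,\lambda^*)=(\lambda,\lambda)$ and $(\rho,\lambda^*+\mu^*-\nu^*)=(\rho,\lambda+\mu-\nu)$) also works.
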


\begin{proposition}\cite[Prop.2.13]{BZ}
\label{pr:n-th symm-power} For any  $V^q$ in $\OO_{f}$ each embedding
$V^q_{\lambda}\hookrightarrow V^q$ defines embeddings
$V^q_{n\lambda}\hookrightarrow S_{\sigma}^n V^q $ for all  $n\ge 2$. In
particular, the  algebra $S_{\sigma}(V^q)$ is infinite-dimensional.
\end{proposition}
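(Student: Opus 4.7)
The plan is to exhibit a canonical copy of $V^q_{n\lambda}$ inside $S^n_\sigma V^q$ by producing an explicit highest weight vector, namely $v_\lambda^{\otimes n}$, where $v_\lambda$ spans the highest weight line of the embedded $V^q_\lambda$. First I would reduce to $V^q=V^q_\lambda$ simple: by the functoriality of the braided symmetric power (Proposition \ref{pr:injections}), the embedding $V^q_\lambda\hookrightarrow V^q$ induces $S^n_\sigma V^q_\lambda\hookrightarrow S^n_\sigma V^q$, so it is enough to construct $V^q_{n\lambda}\hookrightarrow S^n_\sigma V^q_\lambda$ and compose.

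The heart of the argument is to verify that $v_\lambda^{\otimes n}\in S^n_\sigma V^q_\lambda$. By Definition \ref{def:symmetric power} this reduces to the single identity $\sigma_{V^q_\lambda,V^q_\lambda}(v_\lambda\otimes v_\lambda)=v_\lambda\otimes v_\lambda$, which I would check directly from the definitions. Using $\sigma=D^{-1}\RR$ from \eqref{eq:formula for sigma} together with $R=R_0 R_1$ from \eqref{eq:Jordan} and $D=R_0\sqrt{R_1^{op} R_1}$ from \eqref{eq:D}, all higher-order terms of $R_1$ (lying in ${U'}^-_k\otimes {U'}^+_k$ for $k\ge 1$ by \eqref{eq:R1}) annihilate $v_\lambda\otimes v_\lambda$ because their right tensor factors are raising operators, and the same applies to $R_1^{op}$. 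Thus on $v_\lambda\otimes v_\lambda$ both $R$ and $D$ reduce to $R_0$, yielding the common scalar $q^{(\lambda,\lambda)}$, and $\sigma=D^{-1}\RR$ acts as the identity (noting that the permutation $\tau$ in $\RR=\tau R$ fixes $v_\lambda\otimes v_\lambda$). Consequently $\sigma_{i,i+1}(v_\lambda^{\otimes n})=v_\lambda^{\otimes n}$ for every $i$, so $v_\lambda^{\otimes n}\in S^n_\sigma V^q_\lambda$.

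With this in hand, $v_\lambda^{\otimes n}$ is a weight vector of weight $n\lambda$ and is annihilated by every $E_i$ (by iterating the coproduct \eqref{eq:coproduct} and using $E_i v_\lambda=0$), hence is a highest weight vector. The submodule of $S^n_\sigma V^q_\lambda$ it generates is therefore isomorphic to $V^q_{n\lambda}$ by semisimplicity of $\OO_f$, giving the first assertion. For the ``in particular'' claim, choose $\lambda$ maximal among the highest weights of the simple summands of $V^q$; then a weight argument shows the image of $v_\lambda^{\otimes n}$ in $S_\sigma(V^q)_n=V^{q,\otimes n}/\langle\Lambda^2_\sigma V^q\rangle_n$ is nonzero, because the degree-$n$ part of the ideal carries no weight-$n\lambda$ component (by the computation above, every weight-$2\lambda$ vector in $V^q\otimes V^q$ lies in $S^2_\sigma V^q$). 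Hence $V^q_{n\lambda}$ embeds into $S_\sigma(V^q)_n$, so $S_\sigma(V^q)$ is infinite-dimensional.

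The main difficulty I foresee is the identity $\sigma(v_\lambda\otimes v_\lambda)=v_\lambda\otimes v_\lambda$ itself: it is exactly what the normalization $\sigma=D^{-1}\RR$ is designed to produce on the ``top'' isotypic component, but confirming it requires that the scalar $q^{(\lambda,\lambda)}$ contributed by $R_0$ appears identically in $\RR$ and in $D$, with no stray sign or $\rho$-shift surviving. A secondary subtlety is to choose $\lambda$ in the last step so that $n\lambda$ cannot be produced from mixed contributions of other simple summands of $V^q$.
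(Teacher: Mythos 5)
Your construction of the highest weight vector $v_\lambda^{\otimes n}\in S^n_\sigma V^q_\lambda$ and the resulting embedding $V^q_{n\lambda}\hookrightarrow S^n_\sigma V^q$ are correct and this is essentially the standard argument; the key point --- that every higher-order term of $R_1$ and of $R_1^{op}$ annihilates $v_\lambda\otimes v_\lambda$ so that both $R$ and $D$ act on it by the same scalar $q^{(\lambda|\lambda)}$ and hence $\sigma$ fixes it --- is exactly right, and the rest (functoriality via Proposition~\ref{pr:injections} and semisimplicity of $\OO_f$) is routine.

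Your argument for the ``in particular,'' however, has a genuine gap. First, even when $\lambda$ is a genuine maximum among the weights of $V^q$, the weight-$2\lambda$ subspace of $V^q\otimes V^q$ is $V^q(\lambda)\otimes V^q(\lambda)$, and there $\sigma$ reduces to the ordinary flip rather than the identity; so if $\dim V^q(\lambda)>1$ this subspace has a nonzero intersection with $\Lambda^2_\sigma V^q$ (its ordinary antisymmetric part), contrary to your claim that all of weight $2\lambda$ sits in $S^2_\sigma V^q$. Second, ``maximal'' need not be ``maximum'': already over $sl_3$ one can choose pairwise incomparable dominant highest weights $\lambda,\mu,\nu$ of the simple summands with $\mu+\nu=2\lambda$ (e.g.\ $\lambda=5\omega_1+5\omega_2$, $\mu=8\omega_1+2\omega_2$, $\nu=2\omega_1+8\omega_2$), so the weight-$2\lambda$ subspace of $V^q\otimes V^q$ receives contributions from $V^q_\mu\otimes V^q_\nu$ as well. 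In either case the weight count underlying your nonvanishing claim fails. Note also that showing $S^n_\sigma V^q\ne 0$ alone is not enough, since the composite $S^n_\sigma V^q\hookrightarrow V^{q,\otimes n}\twoheadrightarrow S_\sigma(V^q)_n$ need not be injective in the braided setting. The clean repair uses \eqref{eq:double duals}: $S_\sigma(V^q)_n\cong (S^n_\sigma V^{q,*})^*$, and since $V^{q,*}$ also lies in $\OO_f$ and contains some simple $V^q_\mu$, the first assertion applied to $V^{q,*}$ gives $V^q_{n\mu}\hookrightarrow S^n_\sigma V^{q,*}$, whence $S_\sigma(V^q)_n\ne 0$ for all $n$.
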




\subsection{The Classical Limit of Braided Algebras}
\label{se:classical limit}

In this section we will discuss the specialization of the braided symmetric and exterior algebras at $q=1$, the classical limit. All of the results in this section are either well known or proved in \cite{BZ}. For a more detailed discussion of the classical limit we refer the reader to \cite[Section 3.2]{BZ}.  

We  will first introduce the notion of an {\it almost equivalence of categories}:

\begin{definition}
We say that a functor $F:\mathcal C\to \mathcal D$ is {\it almost equivalence} of $\mathcal C$ and $\mathcal D$ if: 

\noindent (a) for any objects $c,c'$ of $\mathcal C$ an isomorphism $F(c)\cong F(c')$ in $\mathcal D$ implies that $c\cong c'$ in  ${\mathcal C}$;

\noindent (b) for any object in $d$ there exists an object $c$ in ${\mathcal C}$ such that $F(c)\cong d$ in ${\mathcal D}$. 

\end{definition} 

Denote by $\overline \OO_f$ the full (tensor) sub-category category of $U(\gg)-Mod$, whose objects $\overline V$ are finite-dimensional $U(\gg)$-modules having a weight decomposition $\overline V=\oplus_{\mu\in P} \overline V(\mu)$.  The following fact will be the first result of this section.

\begin{proposition}\cite[Cor 3.22]{BZ}
\label{pr:almost equivalent O bar O}
The categories  $\OO_f$ and  $\overline \OO_f$ are almost equivalent. Under this almost equivalence a simple $U_q(\gg)$-module $V_\lambda$ is mapped to the simple $U(\gg)$-module $\overline V_\lambda$.
\end{proposition}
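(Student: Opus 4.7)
The plan is to construct a functor $F:\OO_f \to \overline \OO_f$ by classical limit at $q=1$, and then verify the two axioms of almost equivalence directly from semisimplicity of both categories. First I would fix Lusztig's integral form $U_A$ of $U_q(\gg)$ over $A := \CC[q,q^{-1}]_{(q-1)}$, generated by the divided powers $E_i^{(n)}$, $F_i^{(n)}$ and the $K_\lambda$, and recall the standard fact $U_A/(q-1)U_A \cong U(\gg)$. For any object $V^q$ in $\OO_f$ I would choose a $U_A$-stable $A$-lattice $L \subset V^q$ (obtained by taking the $U_A$-submodule generated by a weight basis) and set $F(V^q) = L/(q-1)L$. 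The weight decomposition of $V^q$ restricts to $L$ and descends to a weight decomposition of $F(V^q)$, so $F(V^q)$ lies in $\overline \OO_f$.

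The crucial computation is to show $F(V^q_\lambda) \cong \overline V_\lambda$ for every $\lambda \in P^+$. I would take the $U_A$-lattice $L_\lambda \subset V^q_\lambda$ generated by a highest weight vector $v_\lambda$; since $V^q_\lambda$ is cyclic, $L_\lambda$ is indeed an $A$-lattice, and its reduction $L_\lambda/(q-1)L_\lambda$ is a cyclic highest-weight $U(\gg)$-module of weight $\lambda$, hence a quotient of $\overline V_\lambda$. A dimension count, using that $\dim_{\CC(q)} V^q_\lambda = \dim_\CC \overline V_\lambda$ (both given by the Weyl dimension formula), forces the surjection to be an isomorphism.

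With this key lemma in hand, the two axioms follow from semisimplicity of $\OO_f$ and $\overline \OO_f$ (whose simple objects are indexed by the same set $P^+$). For essential surjectivity (b), any $\overline V \cong \bigoplus_i \overline V_{\lambda_i}^{m_i}$ lifts to $\bigoplus_i (V^q_{\lambda_i})^{m_i}$ in $\OO_f$. For the injectivity axiom (a), since $F$ preserves direct sums and sends distinct simples $V^q_\lambda, V^q_\mu$ to distinct simples $\overline V_\lambda, \overline V_\mu$ (distinguished by highest weight), an isomorphism $F(V^q) \cong F(W^q)$ matches isotypic multiplicities, forcing $V^q \cong W^q$.

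The main obstacle I expect is verifying that the specialization $F(V^q_\lambda)$ does not acquire additional relations or fail to be irreducible. This relies on the genericity of $q=1$ for weight modules of $U_q(\gg)$ — a property that fails at roots of unity but holds here because $(q-1)$ is not a special prime for the representation theory over $\CC(q)$. This subtlety is also the reason only \emph{almost} equivalence holds: the construction of $F$ depends on the choice of lattice, and a morphism $f: V^q \to W^q$ only descends to a $U(\gg)$-morphism after rescaling by an appropriate power of $(q-1)$, which may render it zero. Thus $F$ is not faithful on Hom-sets, and the almost-equivalence axioms are precisely what is needed to bypass this defect while retaining a usable back-and-forth at the level of isomorphism classes of objects.
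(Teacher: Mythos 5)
Your proposal is correct and follows essentially the same route as the paper: reduce a $U_q(\gg)$-stable $A$-lattice modulo $(q-1)$, prove that for irreducibles $L_\lambda/(q-1)L_\lambda$ is a finite-dimensional cyclic highest-weight $U(\gg)$-module of the correct dimension and hence is $\overline V_\lambda$, then invoke semisimplicity of both categories to propagate this to all objects. The only cosmetic difference is that the paper fixes the simply-connected integral form generated by the $E_i$, $F_i$ and $h_\lambda=\frac{K_\lambda-1}{q-1}$ (and packages the lattice as part of the data via the auxiliary category $\OO_f(\UU_q(\gg))$) rather than Lusztig's divided-power form, but over the localization $\CC[q,q^{-1}]_{(q-1)}$ the two reductions at $q=1$ agree.
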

Let $V\cong \bigoplus_{i=1}^{n} V_{\lambda_{i}}  \in \OO_{f}$.  We call $\overline V \cong \bigoplus_{i=1}^{n} \overline V_{\lambda_{i}}  \in \overline \OO_{f}$ the {\it classical limit} of $V$ under the above almost equivalence.

\begin{proof}
First, we have to introduce the notion of  $({\bf k},{\bf A})$-algebras and investigate their properties.
Let ${\bf k}$ be a field and ${\bf A}$ be a local subring of ${\bf k}$. Denote by $\mm$ the only maximal ideal in ${\bf A}$ and by $\tilde {\bf k}$ the  residue field of ${\bf A}$, i.e., $\tilde {\bf k}:={\bf A}/\mm$.

We say that an ${\bf A}$-submodule $L$ of a ${\bf k}$-vector space $V$ is an {\it ${\bf A}$-lattice} of $V$ if  $L$ is a free ${\bf A}$-module and ${\bf k}\otimes_{\bf A} L=V$, i.e., $L$ spans $V$ as a ${\bf k}$-vector space. Note that for any ${\bf k}$-vector space $V$ and any ${\bf k}$-linear basis ${\bf B}$ of $V$ the ${\bf A}$-span $L={\bf A}\cdot {\bf B}$ is an ${\bf A}$-lattice in $V$. Conversely, if $L$ is an ${\bf A}$-lattice in $V$, then any ${\bf A}$-linear basis ${\bf B}$ of $L$ is also a ${\bf k}$-linear basis of $V$.

Denote by $({\bf k},{\bf A})-Mod$ the  category whose objects are pairs $\VV=(V,L)$ of a ${\bf k}$-vector space $V$ and an ${\bf A}$-lattice $L\subset V$  of $V$; an arrow $(V,L)\to (V',L')$ is any ${\bf k}$-linear map $f:V\to V'$ such that $f(L)\subset L'$.

Clearly, $({\bf k},{\bf A})-Mod$ is an Abelian category. Moreover, $({\bf k},{\bf A})-Mod$ is ${\bf A}$-linear because each  $Hom(\UU,\VV)$ in $({\bf k},{\bf A})-Mod$ is an ${\bf A}$-module.

It can be easily verified that $({\bf k},{\bf A})-Mod$ is a symmetric tensor category (\cite[Lemma 3.14]{BZ}.
We have the following fact.
\begin{lemma}\cite[Lemma 3.12]{BZ}
\label{le:trafo of bases} The forgetful functor  $({\bf k},{\bf A})-Mod \to  {\bf k}-Mod$ given by $(V,L)\mapsto V$ is an almost equivalence of symmetric tensor categories. 
\end{lemma}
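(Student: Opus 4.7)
The plan is to verify the two conditions (a) and (b) in the definition of an almost equivalence for the forgetful functor $F:({\bf k},{\bf A})-Mod \to {\bf k}-Mod$, $(V,L)\mapsto V$, and to check that it is compatible with the symmetric tensor structure. The two conditions correspond to an \emph{essentially surjective on objects} requirement and a \emph{reflects isomorphisms on objects} requirement; neither involves any interaction with morphism sets, so most of the routine verifications can be handled by appealing to elementary facts about lattices over a local ring.

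For condition (b), the construction is immediate from the remark preceding the lemma: given any ${\bf k}$-vector space $V$, choose a ${\bf k}$-linear basis ${\bf B}\subset V$ and set $L:={\bf A}\cdot {\bf B}$. Then $L$ is free over ${\bf A}$ on ${\bf B}$ and ${\bf k}\otimes_{\bf A} L=V$, so $L$ is an ${\bf A}$-lattice and $(V,L)$ is an object of $({\bf k},{\bf A})-Mod$ with $F(V,L)=V$.

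For condition (a), suppose $(V,L)$ and $(V',L')$ are two objects with $V\cong V'$ as ${\bf k}$-vector spaces. I would choose an ${\bf A}$-basis ${\bf B}$ of $L$ and an ${\bf A}$-basis ${\bf B}'$ of $L'$; by the remark preceding the lemma these are simultaneously ${\bf k}$-bases of $V$ and $V'$. In particular, $|{\bf B}|=\dim_{\bf k} V=\dim_{\bf k} V'=|{\bf B}'|$, so any bijection $\phi:{\bf B}\to {\bf B}'$ extends uniquely to a ${\bf k}$-linear isomorphism $f:V\to V'$ with $f(L)=L'$. Its inverse $f^{-1}$ satisfies $f^{-1}(L')=L$, so both $f$ and $f^{-1}$ are morphisms in $({\bf k},{\bf A})-Mod$, giving the desired isomorphism $(V,L)\cong (V',L')$.

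Finally, compatibility with the symmetric tensor structure is essentially tautological: the tensor product in $({\bf k},{\bf A})-Mod$ is defined so that $(V,L)\otimes (V',L')=(V\otimes_{\bf k} V',L\cdot L')$, the unit is $({\bf k},{\bf A})$, and the symmetric braiding is inherited from the flip on ${\bf k}$-vector spaces; applying $F$ simply forgets the lattice datum, yielding the tensor product, unit, and braiding in ${\bf k}-Mod$. The only (mild) point to check is that if $L$ and $L'$ are ${\bf A}$-lattices in $V$ and $V'$, then $L\cdot L'$ is a lattice in $V\otimes_{\bf k} V'$, which follows because the natural map $L\otimes_{\bf A} L'\to V\otimes_{\bf k} V'$ is injective with image spanning the tensor product over ${\bf k}$. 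There is no genuine obstacle here; the lemma is a packaging statement, and the only subtlety worth flagging is the equality of basis cardinalities underlying (a), which requires the standard fact that any two ${\bf A}$-bases of a free ${\bf A}$-module have the same cardinality, and that this cardinality equals the ${\bf k}$-dimension after base change.
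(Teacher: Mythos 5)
Your argument is correct; note that the paper itself does not prove this lemma but simply cites \cite[Lemma 3.12]{BZ}, so there is no in-paper proof to compare against. Your reconstruction is the natural one: essential surjectivity by the ${\bf A}$-span of a ${\bf k}$-basis, isomorphism reflection by matching ${\bf A}$-bases of the two lattices (using the observation, recorded just before the lemma, that an ${\bf A}$-basis of a lattice is a ${\bf k}$-basis of the ambient space), and tensor compatibility by observing that the natural map $L\otimes_{\bf A}L'\to V\otimes_{\bf k}V'$ is injective with ${\bf k}$-spanning image because $L,L'$ are free and their bases are ${\bf k}$-bases.
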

Define a functor $\FF:({\bf k},{\bf A})-Mod \to \tilde {\bf k}-Mod$ by:
$$\FF(V,L)=L/\mm L\ $$
for any object 
$(V,L)$ of $({\bf k},{\bf A})-Mod$ and for any morphism $f:(V,L)\to (V',L')$ we define 
$\FF(f):L/\mm L\to L'/\mm L'$ to be a natural  $\tilde {\bf k}$-linear map.

\begin{lemma}\cite[Lemma 3.14]{BZ}
\label{le:specialization functor} $\FF:({\bf k},{\bf A})-Mod \to \tilde {\bf k}-Mod$ is a tensor functor and  almost equivalence.
\end{lemma}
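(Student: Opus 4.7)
The plan is to check in turn that $\FF$ is a well-defined functor, that it is monoidal, and that it satisfies conditions (a) and (b) of the definition of almost equivalence. Well-definedness on objects is immediate: if $L$ is a free ${\bf A}$-module then $L/\mm L$ is naturally a $\tilde{\bf k}={\bf A}/\mm$ vector space. On morphisms $f:(V,L)\to(V',L')$, the condition $f(L)\subseteq L'$ forces $f(\mm L)\subseteq \mm L'$, so $f$ descends to a $\tilde{\bf k}$-linear map $\FF(f):L/\mm L\to L'/\mm L'$, and functoriality is clear.

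For the monoidal structure, I would recall from Lemma \ref{le:trafo of bases} that the tensor product in $({\bf k},{\bf A})-Mod$ is $(V,L)\otimes(V',L')=(V\otimes_{\bf k}V',\, L\otimes_{\bf A}L')$, and that $L\otimes_{\bf A}L'$ is again free because $L$ and $L'$ are. The natural ${\bf A}$-bilinear map $L\times L'\to (L/\mm L)\otimes_{\tilde{\bf k}}(L'/\mm L')$ factors through $(L\otimes_{\bf A}L')/\mm(L\otimes_{\bf A}L')$, and by comparing images of bases one checks it is an isomorphism. The unit object $({\bf k},{\bf A})$ goes to $\tilde{\bf k}$, and compatibility with the associator and symmetry is inherited from those of $({\bf k},{\bf A})-Mod$.

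For the almost equivalence, condition (b) is constructive: given a $\tilde{\bf k}$-vector space $W$ with some basis indexed by a set $I$, I take $L$ to be the free ${\bf A}$-module on $I$ and put $V={\bf k}\otimes_{\bf A}L$; then $\FF(V,L)=L/\mm L$ is free on the image of the basis of $L$, hence isomorphic to $W$. For condition (a), suppose $\FF(V,L)\cong \FF(V',L')$. Since for a free ${\bf A}$-module $M$ one has $\dim_{\tilde{\bf k}}(M/\mm M)=\mathrm{rank}_{\bf A}(M)$, the ranks of $L$ and $L'$ coincide. Picking ${\bf A}$-bases of $L$ and $L'$ — which are simultaneously ${\bf k}$-bases of $V$ and $V'$ — and matching them produces a ${\bf k}$-linear isomorphism $V\to V'$ carrying $L$ onto $L'$, i.e. an isomorphism in $({\bf k},{\bf A})-Mod$.

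The only step carrying real content is the base-change isomorphism $(L\otimes_{\bf A}L')/\mm(L\otimes_{\bf A}L')\cong (L/\mm L)\otimes_{\tilde{\bf k}}(L'/\mm L')$ used to verify monoidality, but this is a standard identity for free modules over the local ring ${\bf A}$; everything else is a bookkeeping comparison of ranks and dimensions.
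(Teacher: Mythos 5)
Your argument is correct and is the standard one. The paper itself does not prove this lemma but simply cites \cite[Lemma 3.14]{BZ}; your proof — well-definedness of $\FF$ from $f(\mm L)\subseteq \mm L'$, monoidality via the base-change isomorphism $(L\otimes_{\bf A}L')\otimes_{\bf A}\tilde{\bf k}\cong (L/\mm L)\otimes_{\tilde{\bf k}}(L'/\mm L')$, essential surjectivity by taking $L$ free on a basis and $V={\bf k}\otimes_{\bf A}L$, and conservativity on iso classes by the rank equality $\dim_{\tilde{\bf k}}(L/\mm L)=\operatorname{rank}_{\bf A}L$ — is exactly the expected argument and fills in what the paper delegates to the reference.
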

Let $U$ be a ${\bf k}$-Hopf algebra and let $U_{\bf A}$ be a Hopf ${\bf A}$-subalgebra of $U$. This means that $\Delta(U_{\bf A})\subset U_{\bf A}\otimes_{\bf A} U_{\bf A}$ (where $U_{\bf A}\otimes_{\bf A} U_{\bf A}$ is naturally an ${\bf A}$-sub-algebra of $U\otimes_{\bf k} U$), $\varepsilon(U_{\bf A})\subset {\bf A}$, and $S(U_{\bf A})\subset U_{\bf A}$.  
We will refer to the above pair  $\UU=(U,U_{\bf A})$ as to $({\bf k},{\bf A})$-{\it Hopf algebra} (please note that $U_{\bf A}$ is not necessarily a free ${\bf A}$-module, that is, $\UU$ is not necessarily a $({\bf k},{\bf A})$-module).

Given $({\bf k},{\bf A})$-Hopf algebra $\UU=(U,U_{\bf A})$, we say that an  object $\VV=(V,L)$ of $({\bf k}, {\bf A})-Mod$  is a $\UU$-module if $V$ is a $U$-module and $L$ is an $U_{\bf A}$-module.

Denote by $\UU-Mod$ the category which objects are $\UU$-modules  and arrows are those morphisms of $({\bf k}, {\bf A})$-modules which commute with the $\UU$-action. 

 Clearly, for $({\bf k},{\bf A})$-Hopf algebra $\UU=(U,U_{\bf A})$ the category $\UU-Mod$ is a tensor (but not necessarily symmetric) category. 
 
 For each $({\bf k},{\bf A})$-Hopf algebra $\UU=(U,U_{\bf A})$ we define $\overline \UU:=U_{\bf A}/\mm U_{\bf A}$. Clearly, $\overline \UU$ is a Hopf algebra over $\tilde {\bf k}={\bf A}/\mm$.

The following fact is obvious.

\begin{lemma}\cite[Lemma 3.15]{BZ}
\label{le:Hopf modules}
 In the notation of Lemma \ref{le:specialization functor}, for any   $({\bf k},{\bf A})$-Hopf algebra $\UU$ the functor $\FF$   naturally extends to a tensor functor
\begin{equation}
\label{eq:dequantization functor}
\UU-Mod\to \overline \UU-Mod \ .
\end{equation}

\end{lemma}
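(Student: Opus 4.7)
The plan is to follow the construction of $\FF$ from Lemma \ref{le:specialization functor} verbatim, and then promote each piece of structure one level, from $({\bf k},{\bf A})$-modules to $\UU$-modules, by using that $U_{\bf A}$ is itself a Hopf $({\bf k},{\bf A})$-algebra in the strong sense that its Hopf operations are $({\bf A}$-linear) morphisms in $({\bf k},{\bf A})-Mod$. Concretely, I first check that for any $\UU$-module $\VV=(V,L)$ the object $\FF(\VV)=L/\mm L$ carries a natural $\overline{\UU}=U_{\bf A}/\mm U_{\bf A}$-module structure: the $U_{\bf A}$-action $U_{\bf A}\otimes_{\bf A} L\to L$ sends $\mm U_{\bf A}\otimes_{\bf A} L +U_{\bf A}\otimes_{\bf A} \mm L$ into $\mm L$, and therefore descends to a $\tilde{\bf k}$-bilinear map $\overline{\UU}\otimes_{\tilde{\bf k}}\FF(\VV)\to \FF(\VV)$. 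Associativity and unitality are inherited by passing to the quotient.

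Next I check functoriality. If $f:(V,L)\to (V',L')$ is a morphism in $\UU-Mod$, it is by definition a morphism in $({\bf k},{\bf A})-Mod$ (so $\FF(f):L/\mm L\to L'/\mm L'$ is already defined by Lemma \ref{le:specialization functor}) that in addition commutes with the $U_{\bf A}$-action. Commutation passes to quotients, so $\FF(f)$ is $\overline{\UU}$-linear. Identity and composition are preserved since $\FF$ already preserves them at the underlying $({\bf k},{\bf A})$-level.

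It remains to upgrade the tensor structure. Given two $\UU$-modules $\VV=(V,L)$ and $\VV'=(V',L')$, their tensor product in $\UU-Mod$ is $\VV\otimes\VV'=(V\otimes_{\bf k} V',\, L\otimes_{\bf A} L')$, with $U_{\bf A}$ acting through $\Delta:U_{\bf A}\to U_{\bf A}\otimes_{\bf A} U_{\bf A}$. By Lemma \ref{le:specialization functor}, $\FF$ carries a natural isomorphism $\FF(\VV\otimes\VV')\cong \FF(\VV)\otimes_{\tilde{\bf k}}\FF(\VV')$, and I need to show it is $\overline{\UU}$-linear. Since $\Delta$ preserves the ${\bf A}$-lattice $U_{\bf A}$, it descends to a coproduct $\overline{\Delta}:\overline{\UU}\to \overline{\UU}\otimes_{\tilde{\bf k}}\overline{\UU}$, and this is exactly the coproduct giving $\FF(\VV)\otimes_{\tilde{\bf k}}\FF(\VV')$ its $\overline{\UU}$-module structure. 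The compatibility of actions with the lattice isomorphism $\FF(\VV\otimes\VV')\cong \FF(\VV)\otimes_{\tilde{\bf k}}\FF(\VV')$ is then a diagram chase: it reduces to the commutativity of $\Delta$ with reduction modulo $\mm$, which holds by construction of $\overline{\UU}$. The unit object and unit/counit morphisms are handled analogously, using $\varepsilon(U_{\bf A})\subset {\bf A}$.

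The main obstacle is bookkeeping rather than genuine difficulty: one must check that the coherence data of the tensor functor $\FF$ on $({\bf k},{\bf A})-Mod$ (unit, associator, symmetry) is compatible with the $\overline{\UU}$-action constructed here. All of these ultimately reduce to the statement that the Hopf operations of $U_{\bf A}$ are morphisms in $({\bf k},{\bf A})-Mod$, together with the fact that $\FF$ already respects these operations on the underlying $({\bf k},{\bf A})$-objects, so no new input is needed.
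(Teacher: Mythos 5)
Your proof is correct and is exactly the routine verification the paper has in mind; the paper itself labels the lemma ``obvious'' and merely cites \cite[Lemma 3.15]{BZ}, so there is no separate argument to compare against. Everything you need is the defining condition of a $({\bf k},{\bf A})$-Hopf algebra ($\Delta(U_{\bf A})\subset U_{\bf A}\otimes_{\bf A}U_{\bf A}$, $\varepsilon(U_{\bf A})\subset{\bf A}$, $S(U_{\bf A})\subset U_{\bf A}$) together with the already established tensor-functor structure of $\FF$ from Lemma \ref{le:specialization functor}, and you have used exactly these facts.
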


Now let ${\bf k}=\CC(q)$ and ${\bf A}$ be the ring of all those rational functions in $q$ which are defined at $q=1$. Clearly, ${\bf A}$ is a local PID with maximal ideal $\mm=(q-1){\bf A}$ (and, moreover, each ideal in ${\bf A}$ is of the form $\mm^n=(q-1)^n{\bf A}$). Therefore,  $\tilde {\bf k}:={\bf A}/\mm=\CC$.

Recall from Section \ref{se:q-group} the definition of  the quantized universal enveloping algebra $U_{q}(\gg)$.. Denote $h_\lambda=\frac{K_\lambda-1}{q-1}$ and let $U_{{\bf A}}(\gg)$ be the ${\bf A}$-algebra generated by all  $h_\lambda$, $\lambda\in P$  and all $E_ i, F_i$.

Denote by $\UU_{q}(\gg)$ the pair $(U_{q}(\gg), U_{{\bf A}}(\gg))$.

\begin{lemma} 
\label{le:Hopf pair}
(a) The pair $\UU_{q}(\gg)=(U_{q}(\gg), U_{{\bf A}}(\gg))$ is a $({\bf k},{\bf A})$-Hopf algebra (\cite[Lemma 3.16]{BZ}).

\noindent(b) We have $\overline{\UU_q(\gg)}=U(\gg)$ (\cite[Lemma 3.17]{BZ}).

\end{lemma}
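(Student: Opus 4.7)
For part (a), it suffices to verify on the generators $h_\lambda$, $E_i$, $F_i$ of $U_{\bf A}(\gg)$ that the coproduct, counit and antipode of $U_q(\gg)$ preserve $U_{\bf A}(\gg)$. Writing $K_\lambda = 1 + (q-1)h_\lambda$ and using \eqref{eq:coproduct}--\eqref{eq:antipode}, one computes
\begin{equation*}
\Delta(h_\lambda) = h_\lambda \otimes 1 + 1 \otimes h_\lambda + (q-1)\,h_\lambda \otimes h_\lambda,\qquad \varepsilon(h_\lambda) = 0,\qquad S(h_\lambda) = h_{-\lambda},
\end{equation*}
together with $\Delta(E_i) = E_i\otimes 1 + (1 + (q-1)h_{\alpha_i})\otimes E_i$ and $S(E_i) = -(1+(q-1)h_{-\alpha_i})E_i$, with analogous formulas for $F_i$. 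Each expression visibly lies in $U_{\bf A}(\gg)$ or in $U_{\bf A}(\gg)\otimes_{\bf A} U_{\bf A}(\gg)$, proving (a).

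For part (b), denote by $\overline x$ the class of $x\in U_{\bf A}(\gg)$ in $\overline{\UU_q(\gg)}$. The plan is to construct a surjection $\phi:U(\gg)\twoheadrightarrow \overline{\UU_q(\gg)}$ from the Serre presentation of $U(\gg)$ and then upgrade it to an isomorphism via an integral PBW argument. Rewriting the relations of $U_q(\gg)$ in terms of the $h_\lambda$'s and dividing through by the appropriate power of $q-1$ before reducing modulo $\mm$, one obtains: the $\overline h_\lambda$ pairwise commute (immediate from $K_\lambda K_\mu = K_\mu K_\lambda$); $[\overline h_\lambda,\overline E_i] = (\alpha_i,\lambda)\overline E_i$ and $[\overline h_\lambda,\overline F_i] = -(\alpha_i,\lambda)\overline F_i$ (from $K_\lambda E_i = q^{(\alpha_i,\lambda)}E_i K_\lambda$ after dividing by $q-1$); and $[\overline E_i,\overline F_j] = \delta_{ij}\,\overline h_{\alpha_i}/d_i$, using $K_{\alpha_i}K_{-\alpha_i}=1$ to deduce $\overline h_{\alpha_i}+\overline h_{-\alpha_i}=0$ together with the limit $\lim_{q\to 1}(q-1)/(q^{d_i}-q^{-d_i})=1/(2d_i)$. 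Finally the quantum Serre relations \eqref{eq:quantum Serre relations} specialize to the classical Serre relations because $(k)_i\to k$ at $q=1$, so $E_i^{(p)}$ reduces to $E_i^p/p!$. Under the identifications $H_i\leftrightarrow \overline h_{\alpha_i}/d_i$, $E_i\leftrightarrow \overline E_i$, $F_i\leftrightarrow \overline F_i$ these are exactly the defining relations of $U(\gg)$, which yields the desired surjection $\phi$.

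To prove $\phi$ is injective I would invoke the integral PBW theorem for $U_{\bf A}(\gg)$: ordered monomials in the quantum root vectors (produced from $E_i,F_i$ by Lusztig's braid operators $T_i$) times ordered monomials in a $\ZZ$-basis of the $h_\lambda$'s form a free ${\bf A}$-basis of $U_{\bf A}(\gg)$, and this basis specializes at $q=1$ to the classical PBW basis of $U(\gg)$. This forces $\phi$ to be a bijection on each weight space, hence an isomorphism. The main technical obstacle is precisely this integral PBW step: one must check that every non-simple quantum root vector obtained by iterating the $T_i$'s, together with the structure constants of their commutation relations, lies in ${\bf A}$ (i.e.\ is regular at $q=1$), and that the resulting family remains linearly independent modulo $\mm$. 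This is well documented (going back to De Concini--Kac and Lusztig), but is where the substantive work of the argument resides.
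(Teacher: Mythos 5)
The paper itself gives no proof of this lemma: it simply cites \cite[Lemmas 3.16 and 3.17]{BZ}, so there is no in-paper argument to compare your reconstruction against. That said, your proof is the standard one and is essentially correct. Part (a) is a complete, routine check on generators using $K_\lambda = 1 + (q-1)h_\lambda$. For part (b), your specialization of the defining relations is right (including $\overline h_{\alpha_i}+\overline h_{-\alpha_i}=0$ from $K_{\alpha_i}K_{-\alpha_i}=1$, the limit $\lim_{q\to 1}(q-1)/(q^{d_i}-q^{-d_i})=1/(2d_i)$, and $(k)_i\to k$ for the divided powers), and the identification $H_i\leftrightarrow \overline h_{\alpha_i}/d_i$ matches the classical Serre presentation. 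Your surjectivity argument needs the observation that $\overline h_{\lambda+\mu}=\overline h_\lambda+\overline h_\mu$ (from $K_\lambda K_\mu=K_{\lambda+\mu}$), which together with $P\otimes_\ZZ\CC = Q\otimes_\ZZ\CC$ puts every $\overline h_\lambda$ in the span of $\overline h_{\alpha_i}$; this is worth stating explicitly. You correctly identify the integral PBW theorem as the load-bearing step and attribute it appropriately. Two minor points: (i) $\overline{\UU_q(\gg)}$ is by construction a Hopf algebra over $\tilde{\bf k}=\CC$, and the lemma asserts an isomorphism of Hopf algebras, so you should also observe that $\phi$ intertwines $\Delta$, $\varepsilon$, $S$ with the classical cocommutative structure — which follows at once from your part-(a) formulas at $q=1$; (ii) you should note explicitly that ${\bf A}$ here is the localization of $\CC[q,q^{-1}]$ at $(q-1)$ rather than $\CC[q,q^{-1}]$ itself, so the De Concini--Kac/Lusztig integral PBW basis must be localized, which is harmless. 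Neither of these is a gap; the argument goes through.
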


Let  $V_\lambda\in Ob(\OO_f)$ be an irreducible   $U_{q}(\gg)$-module with highest weight $\lambda\in P^+$ and let $v_{\lambda}\in V_{\lambda}$ be a highest weight vector.  Define $L_{v_{\lambda}}= U_{{\bf A}} (\gg) \cdot v_{\lambda}$.

\begin{lemma} 
\cite[Lemma 3.18]{BZ}
$(V_{\lambda},L_{v_{\lambda}})\in \OO_{f}(\UU_{q}(\gg))$.

\end{lemma}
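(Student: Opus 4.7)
The plan is to verify the three things that an object of $\OO_f(\UU_q(\gg))$ must satisfy: that $L_{v_\lambda}$ is an $U_{\bf A}(\gg)$-submodule of $V_\lambda$, that the pair $(V_\lambda, L_{v_\lambda})$ is a $({\bf k},{\bf A})$-module (i.e.\ $L_{v_\lambda}$ is a free ${\bf A}$-module spanning $V_\lambda$ over ${\bf k}=\CC(q)$), and that $L_{v_\lambda}$ respects the weight decomposition that qualifies $V_\lambda$ as an object of $\OO_f$. Stability under $U_{\bf A}(\gg)$ is tautological from the definition $L_{v_\lambda}=U_{\bf A}(\gg)\cdot v_\lambda$, so there are really two genuine points to check.

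First I would establish the weight decomposition. Since $v_\lambda$ is a weight vector of weight $\lambda$, and since each Chevalley generator $E_i$, $F_i$ shifts weight by $\pm\alpha_i$ while each $h_\mu=(K_\mu-1)/(q-1)$ preserves weight (acting on a $\nu$-weight vector by the scalar $(q^{(\mu,\nu)}-1)/(q-1)\in{\bf A}$), any monomial in the generators of $U_{\bf A}(\gg)$ applied to $v_\lambda$ produces a weight vector. Hence $L_{v_\lambda}$ is a sum of its weight components, and setting $L_{v_\lambda}(\mu):=L_{v_\lambda}\cap V_\lambda(\mu)$ gives $L_{v_\lambda}=\bigoplus_\mu L_{v_\lambda}(\mu)$.

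Next I would show $L_{v_\lambda}$ is a lattice in $V_\lambda$. For the span condition, recall that the irreducible $U_q(\gg)$-module $V_\lambda$ is generated over ${\bf k}$ by $v_\lambda$ under the action of $U_q^-(\gg)$, which as an algebra is generated by $F_1,\dots,F_r$. Since each $F_i\in U_{\bf A}(\gg)$, every element of $V_\lambda$ lies in ${\bf k}\cdot L_{v_\lambda}$, so ${\bf k}\otimes_{\bf A} L_{v_\lambda}=V_\lambda$. For freeness, I would argue one weight space at a time: each $V_\lambda(\mu)$ is finite-dimensional over ${\bf k}=\CC(q)$, the subspace $L_{v_\lambda}(\mu)$ is the ${\bf A}$-span inside it of the finitely many monomials $F_{i_1}\cdots F_{i_k}\cdot v_\lambda$ of appropriate weight (up to the action of the $h_\mu$'s, which multiply by scalars in ${\bf A}$), and any finitely generated submodule of a finite-dimensional ${\bf k}$-vector space over the principal ideal domain ${\bf A}$ is a free ${\bf A}$-module of rank at most $\dim_{\bf k} V_\lambda(\mu)$. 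Combined with the spanning property, the rank must equal $\dim_{\bf k} V_\lambda(\mu)$.

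Putting the pieces together, $L_{v_\lambda}=\bigoplus_\mu L_{v_\lambda}(\mu)$ is a free ${\bf A}$-module of finite rank equal to $\dim_{\bf k} V_\lambda$, it spans $V_\lambda$ over ${\bf k}$, it is stable under $U_{\bf A}(\gg)$, and it is compatible with the weight decomposition of $V_\lambda$. Hence $(V_\lambda,L_{v_\lambda})$ is a $\UU_q(\gg)$-module object of $\OO_f(\UU_q(\gg))$. The only place where anything subtle happens is the freeness/rank step, where one needs to know that $L_{v_\lambda}(\mu)$ is genuinely finitely generated over ${\bf A}$; this is the hook on which the whole argument hangs, and the cleanest justification is that $L_{v_\lambda}(\mu)$ is by construction the ${\bf A}$-span of the (finitely many, because weight spaces are finite-dimensional) images in $V_\lambda(\mu)$ of a spanning set of PBW monomials in the $F_i$ applied to $v_\lambda$, together with the scalar action of the $h_\mu$.
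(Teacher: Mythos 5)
The paper records this lemma from \cite[Lemma 3.18]{BZ} without reproducing the proof, so there is no in-text argument to compare against; I will assess the proposal on its own terms. Your overall plan (weight decomposition, spanning, freeness of the lattice) is correct, and you rightly identify finite generation as the crux. The gap is at exactly that crux. You assert that $L_{v_\lambda}(\mu)$ is the ${\bf A}$-span of the finitely many $F$-monomials of the right weight applied to $v_\lambda$, but by definition $L_{v_\lambda}=U_{\bf A}(\gg)\cdot v_\lambda$, and $U_{\bf A}(\gg)$ also contains all the $E_i$. So a priori $L_{v_\lambda}(\mu)$ contains elements such as $F_j E_i F_k\cdot v_\lambda$ that are not literally $F$-monomials applied to $v_\lambda$, and you have not shown these lie in the $F$-span.

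What is missing is an integrality statement about the triangular decomposition over ${\bf A}$: namely that $U_{\bf A}(\gg)\cdot v_\lambda=U^-_{\bf A}\cdot v_\lambda$, where $U^-_{\bf A}$ denotes the ${\bf A}$-subalgebra generated by the $F_i$. The key point is that commuting $E_i$ past $F_i$ produces $(K_{\alpha_i}-K_{-\alpha_i})/(q^{d_i}-q^{-d_i})$, which acts on any weight vector by a scalar having a removable singularity at $q=1$ and hence lying in ${\bf A}$; this is precisely why $U_{\bf A}(\gg)$ is built from $h_\mu=(K_\mu-1)/(q-1)$ rather than from $K_\mu$ alone. An induction on word length then pushes all $E$'s to the right where they annihilate $v_\lambda$, and, combined with the (also nontrivial) fact that $U^-_{\bf A}$ is a free $\CC[q,q^{-1}]$-module on PBW monomials with finitely many of each weight, this closes the gap. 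Without it, the assertion that $L_{v_\lambda}(\mu)$ is finitely generated over the DVR ${\bf A}$, which is what your freeness argument relies on, is not established.
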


The following fact is obvious and, apparently, well-known.

 \begin{lemma}\cite[Lemma 3.19]{BZ}
 \label{le: equiv of OOs}

(a) Each object $(V_{\lambda},L_{v_{\lambda}})$ is irreducible in $\OO_{f}(\UU_{q}(\gg))$; and each irreducible object of $\OO_{f}(\UU_{q}(\gg))$ is isomorphic to one of $(V_{\lambda},L_{v_{\lambda}})$.

\noindent (b)  The category $\OO_{f}(\UU_{q}(\gg))$  is semisimple.

\noindent (c) The forgetful functor  $(V ,L)\mapsto V$ is an almost equivalence of tensor categories $\OO_{f}(\UU_{q}(\gg))\to \OO_{f}$.
 \end{lemma}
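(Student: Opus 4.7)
The plan is to reduce each assertion to the structure of simple modules $V_\lambda$ and to analyze the $U_{\bf A}(\gg)$-stable lattices inside them, building on the classical fact that $\OO_f$ itself is semisimple with simple objects $V_\lambda$, $\lambda \in P^+$.

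For part (a), I would first observe that any subobject $(W,L_W)$ of $(V_\lambda, L_{v_\lambda})$ forces $W=0$ or $W=V_\lambda$ by simplicity of $V_\lambda$ as a $U_q(\gg)$-module, which gives indecomposability. Conversely, let $(V,L)$ be irreducible in $\OO_f(\UU_q(\gg))$. The underlying module $V$ must be simple: otherwise, any proper nonzero $U_q(\gg)$-submodule $V'\subset V$ produces a proper nonzero subobject $(V', L\cap V')$ in the lattice category. Hence $V\cong V_\lambda$ for some $\lambda\in P^+$. Picking a highest weight vector $v\in V_\lambda$, the weight space $L\cap V(\lambda)$ is a rank-one free ${\bf A}$-module, so it equals ${\bf A}\cdot cv$ for some $c\in {\bf k}^\times$. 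Then $U_{\bf A}(\gg)\cdot cv\subseteq L$ is a $U_{\bf A}$-stable lattice, isomorphic to $(V_\lambda, L_{v_\lambda})$ via multiplication by $c$; irreducibility of $(V,L)$ forces equality, giving the isomorphism $(V,L)\cong (V_\lambda, L_{v_\lambda})$.

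For part (b), given $(V,L)$, I would invoke the classical decomposition $V=\bigoplus_i V_{\lambda_i}$ in $\OO_f$ and then refine this to $L=\bigoplus_i (L\cap V_{\lambda_i})$. The key point is that the spectral projectors onto the isotypic components are realized on $V$ by polynomials in the quantum Casimir $C$ (which has distinct eigenvalues on inequivalent $V_{\lambda_i}$) together with elements of $U^0$; after clearing denominators these can be arranged to preserve $L$, giving a $U_{\bf A}(\gg)$-equivariant decomposition of $L$. Each summand $L\cap V_{\lambda_i}$ is then, by part (a), isomorphic to some $(V_{\lambda_i}, L_{v_{\lambda_i}})$, yielding semisimplicity.

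Part (c) follows easily from (a) and (b): essential surjectivity is trivial, since given $V=\bigoplus V_{\lambda_i}\in \OO_f$ one lifts it to $(V, \bigoplus L_{v_{\lambda_i}})$; and the reflection of isomorphism classes follows because any two lattice objects whose underlying modules are isomorphic in $\OO_f$ admit, by (b), matching direct sum decompositions into the canonical simple objects identified by (a). The main obstacle will be the lattice-level refinement in part (b): descending the semisimple decomposition of $V$ to a $U_{\bf A}(\gg)$-stable decomposition of $L$ requires that the isotypic projectors preserve $L$, which is not immediate from the generators-and-relations presentation of $U_{\bf A}(\gg)$ and needs a careful integral manipulation of the Casimir together with weight-space projections. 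Once this step is in hand, the remaining arguments are formal.
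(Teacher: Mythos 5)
The paper does not give a proof of this lemma; it cites \cite[Lemma 3.19]{BZ} and calls the fact ``obvious and, apparently, well-known,'' so there is nothing in-text to compare against. On its own, your proposal has two genuine gaps. In part (a), the final step ``irreducibility of $(V,L)$ forces equality'' is not justified. For any $(V_\lambda,L)$ the object $(V_\lambda,(q-1)L)$ is a proper nonzero subobject (the inclusion is a morphism in $({\bf k},{\bf A})-Mod$, but the identity of $V_\lambda$ does not carry $L$ into $(q-1)L$), so ``irreducible'' cannot mean ``no proper nonzero subobject'' here, and whatever definition is in force your argument only yields the inclusion $U_{\bf A}(\gg)\cdot cv\subseteq L$. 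The missing step is a classical-limit-plus-Nakayama argument: $L/\mm L$ is a finite-dimensional $U(\gg)$-module whose formal character equals that of $\overline V_\lambda$, hence is isomorphic to $\overline V_\lambda$ by linear independence of the characters of simple modules; since $L\cap V(\lambda)={\bf A}\cdot cv$, the image of $cv$ is nonzero in $L/\mm L$ and therefore generates it, so $U_{\bf A}(\gg)\cdot cv+\mm L=L$, and Nakayama over the local ring ${\bf A}$ gives $U_{\bf A}(\gg)\cdot cv=L$.

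In part (b), the Casimir plan cannot work as outlined. The eigenvalues $q^{(\lambda_i\,|\,\lambda_i+2\rho)}$ of the quantum Casimir on inequivalent constituents are pairwise distinct in ${\bf k}$, but they all specialize to $1$ at $q=1$, so every denominator $q^{(\lambda_i\,|\,\lambda_i+2\rho)}-q^{(\lambda_j\,|\,\lambda_j+2\rho)}$ in the Lagrange interpolation lies in $\mm$ rather than in ${\bf A}^\times$; ``clearing denominators'' then only shows $(q-1)^{m}p_i(L)\subseteq L$, which says nothing, and the quantum Casimir is not itself an element of $U_{\bf A}(\gg)$. You flag this as ``the main obstacle,'' but overcoming it \emph{is} the content of the lemma, and the proposal contains no idea for doing so. A route that works avoids the isotypic Casimir projector entirely: the weight projectors built from the $h_\lambda\in U_{\bf A}(\gg)$ do preserve $L$ (their Lagrange denominators specialize to differences $(\lambda\,|\,\mu)-(\lambda\,|\,\mu')$ between distinct weights, which can be made nonzero by a suitable choice of $\lambda$), giving $L=\bigoplus_\nu L(\nu)$; one then picks an ${\bf A}$-basis of the top-weight component of $L$ and repeats the classical-limit-plus-Nakayama argument of part (a) to split off simple summands inductively, using that $L/\mm L$ is a semisimple $U(\gg)$-module with the expected character.
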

 
 We also have the following fact.
 
 \begin{lemma} \cite[Lemma 3.21]{BZ}
\label{le:dequantization equivalence}
(a) The restriction of the functor $\UU_q(\gg)-Mod\to U(\gg)-Mod$ defined by (\ref{eq:dequantization functor}) to the sub-category  $\OO_{f}(\UU_{q}(\gg))$ is a tensor functor 
\begin{equation}
\label{eq:dequantization q=1}
\OO_{f}(\UU_{q}(\gg))\to \overline \OO_{f} \ .
\end{equation}

\noindent (b) The functor (\ref{eq:dequantization q=1}) is an almost equivalence of  categories. 

\end{lemma}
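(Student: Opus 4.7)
The plan is to reduce both parts to statements about simple objects, leveraging the semisimplicity of $\OO_f(\UU_q(\gg))$ established in Lemma \ref{le: equiv of OOs}(b) and the classification of simples there.

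For part (a), first observe that by Lemma \ref{le:Hopf modules} the functor $\FF$ already extends to a tensor functor $\UU_q(\gg)\text{-}Mod\to \overline{\UU_q(\gg)}\text{-}Mod = U(\gg)\text{-}Mod$ (using Lemma \ref{le:Hopf pair}(b)). Hence the tensor compatibility comes for free. What must be checked is that the image of an object $(V,L)\in \OO_f(\UU_q(\gg))$ actually lands in $\overline{\OO}_f$, that is, $L/\mm L$ is finite-dimensional with a $P$-weight decomposition for the classical $\hh$-action. Finite-dimensionality is immediate since $L$ is a free ${\bf A}$-module of finite rank. For the weight decomposition I would use that the weight-space decomposition $V=\bigoplus_\mu V(\mu)$ of the $U_q(\gg)$-action is compatible with $L$, namely $L=\bigoplus_\mu (L\cap V(\mu))$ because the elements $h_\lambda=(K_\lambda-1)/(q-1)$ generate ${\bf A}$-submodules $U_{\bf A}$ that preserve each $V(\mu)$; reducing mod $\mm$, the action of $h_\lambda$ specializes to $\log$ of the identity operator divided by $(q-1)|_{q=1}$, which after a standard calculation acts on $L(\mu)/\mm L(\mu)$ by multiplication by $(\lambda\,|\,\mu)$. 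Thus $h_\lambda$ becomes the classical $\hh$-element acting with the correct eigenvalue, yielding the desired weight decomposition.

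For part (b), by Lemma \ref{le: equiv of OOs}(a),(b) it suffices to verify the two almost-equivalence axioms on simple objects. The key computation is to show
\begin{equation*}
\FF(V_\lambda, L_{v_\lambda}) = L_{v_\lambda}/\mm L_{v_\lambda} \cong \overline{V}_\lambda
\end{equation*}
as $U(\gg)$-modules. Since $L_{v_\lambda}=U_{\bf A}(\gg)\cdot v_\lambda$ is a highest weight module of highest weight $\lambda$ with $\dim_{{\bf A}} L_{v_\lambda} = \dim_{{\bf k}}V_\lambda = \dim \overline{V}_\lambda$ (by freeness of the lattice and the known equality of dimensions of $V_\lambda$ and $\overline{V}_\lambda$), the specialization $L_{v_\lambda}/\mm L_{v_\lambda}$ is a $U(\gg)$-module generated by a highest weight vector of weight $\lambda$ whose dimension equals $\dim \overline{V}_\lambda$. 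Any proper $U(\gg)$-quotient would have strictly smaller dimension, forcing $L_{v_\lambda}/\mm L_{v_\lambda}\cong \overline{V}_\lambda$.

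With this identification in hand, axiom (a) of almost equivalence follows: if $\FF(V,L)\cong \FF(V',L')$ in $\overline{\OO}_f$, then decomposing both sides into isotypic components and matching multiplicities of each $\overline{V}_\lambda$ recovers the decomposition of $(V,L)$ and $(V',L')$ into $(V_\lambda, L_{v_\lambda})$-summands, hence $(V,L)\cong (V',L')$. Axiom (b) is immediate since every object of $\overline{\OO}_f$ is a direct sum of $\overline{V}_\lambda$'s, each of which is $\FF(V_\lambda, L_{v_\lambda})$, and $\FF$ preserves direct sums.

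The main obstacle is the identification $L_{v_\lambda}/\mm L_{v_\lambda}\cong \overline{V}_\lambda$: establishing that the specialization of the integral form does not collapse to a strictly smaller quotient. This requires comparing $\dim_{\bf A} L_{v_\lambda}$ with $\dim \overline{V}_\lambda$, which in turn relies on a PBW-type basis for $U_{\bf A}(\gg)$ acting on $v_\lambda$, and on the classical Weyl character formula matching the $q$-character of $V_\lambda$. Once this dimension count is secured, the rest of the proof is formal manipulation of semisimple categories.
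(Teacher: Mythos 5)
The paper does not itself prove this lemma; it is cited verbatim from \cite[Lemma 3.21]{BZ} and used as a black box in the proof of Proposition \ref{pr:almost equivalent O bar O}. So there is no in-paper proof to compare against, and I'm evaluating your argument on its own.

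Your outline is substantially correct. Two comments. First, in part (a), the assertion $L=\bigoplus_\mu (L\cap V(\mu))$ is the crux and you gloss over the precise mechanism. The way to make it rigorous is via idempotents: on $V(\mu)$ the operator $h_\lambda$ acts by the scalar $(q^{(\lambda|\mu)}-1)/(q-1)\in {\bf A}$, and for $\mu\ne\mu'$ one can choose $\lambda$ so that the difference of these scalars is a unit in ${\bf A}$ (its value at $q=1$ is $(\lambda|\mu)-(\lambda|\mu')\ne 0$); the Lagrange interpolation projections onto the $V(\mu)$ then have ${\bf A}$-coefficients, hence preserve $L$, which gives the decomposition. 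Your informal ``log of the identity divided by $(q-1)$'' phrasing points at the right specialization of $h_\lambda$ but does not by itself yield the lattice decomposition. Second, in part (b), the dimension comparison you invoke (character formula for generic-$q$ simples matching the classical one) is true, but you do not actually need it: once you know that $L_{v_\lambda}/\mm L_{v_\lambda}$ is a finite-dimensional $U(\gg)$-module generated by a highest weight vector of dominant integral weight $\lambda$, complete reducibility of finite-dimensional $U(\gg)$-modules forces such a cyclic highest weight module to be simple, hence isomorphic to $\overline V_\lambda$, with no character computation required. With either version, the reduction to simples via Lemma \ref{le: equiv of OOs} and the routine check of both almost-equivalence axioms is exactly what is needed and your treatment of those steps is fine.
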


Combining Lemma  \ref{le: equiv of OOs} and Lemma \ref{le:dequantization equivalence} we obtain Proposition \ref{pr:almost equivalent O bar O}.
\end{proof}

The  following result relates the classical limit of braided symmetric algebras and Poisson algebras..

\begin{theorem}\cite[Theorem 2.29]{BZ}
\label{th:flat poisson closure} Let $V$ be an object of $\OO_f$ and let a  $\overline V$ in $\overline \OO_f$ be  be the classical limit of $V$. Then:

The classical limit $\overline{S_\sigma(V)}$ of the braided symmetric algebra $S_\sigma(V)$ is a quotient of the symmetric algebra $S(\overline V)$. In particular, $\dim_{\CC(q)} S_\sigma(V)_n=\dim_{\CC}(\overline{S_\sigma(V)})\le \dim_{\CC} \overline{S(\overline V)}_n$.

Moreover, $\overline{S_\sigma(V)}$ admits a Poisson structure defined by  $\{u,v\}=r^-(u\wedge v)$, where $r^-$ is an anti-symmetrized $r$-matrix. 
 
\end{theorem}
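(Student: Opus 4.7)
The plan is to proceed in three stages: first track how the defining relations of $S_\sigma(V)$ specialize at $q=1$, then derive the algebra-level surjection together with the dimension bound, and finally extract the Poisson bracket from the first-order behavior of $\sigma$ in $(q-1)$.

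For stage one, I would fix an ${\bf A}$-lattice $L\subset V$ stable under $U_{{\bf A}}(\gg)$ (for instance, the one generated from a highest-weight vector, as in Lemma \ref{le:Hopf modules}), and equip $V^{\otimes n}$ with the induced lattice $L^{\otimes n}$. The key analytic input is the expansion $R = 1 + (q-1)\,r + O((q-1)^2)$, where $r$ is the classical $r$-matrix; combined with \eqref{eq:hat R} this forces the normalized braiding to satisfy
\begin{equation*}
\sigma_{V,V} \;=\; \tau \;+\; (q-1)\,(\tau\circ r^{-}) \;+\; O((q-1)^2),
\end{equation*}
with $r^{-}=\tfrac12(r-\tau(r))$. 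Reduction modulo $\mm=(q-1){\bf A}$ thus sends $\sigma_{V,V}$ to the ordinary permutation $\tau$, so the ${\bf A}$-sublattice $\Lambda^2_\sigma V\cap (L\otimes L)$ specializes to the $(-1)$-eigenspace of $\tau$, which is precisely $\Lambda^2\overline V$.

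For stage two, let $I=\langle \Lambda^2_\sigma V\rangle\subset T(V)$ carry the induced ${\bf A}$-form $I\cap T(L)$. Because passage to the classical limit commutes with generating an ideal from a submodule, the specialized ideal $\overline I\subset T(\overline V)$ contains every tensor of the form $a_1\otimes\cdots\otimes \overline x\otimes\cdots\otimes a_n$ with $\overline x\in \Lambda^2\overline V$, hence $\overline I\supseteq \langle \Lambda^2\overline V\rangle$. Quotienting gives the desired surjection $S(\overline V)=T(\overline V)/\langle \Lambda^2\overline V\rangle \twoheadrightarrow T(\overline V)/\overline I=\overline{S_\sigma(V)}$. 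The equality $\dim_{\CC(q)} S_\sigma(V)_n = \dim_\CC \overline{S_\sigma(V)}_n$ follows because the graded lattices are free over ${\bf A}$ (Lemma \ref{le:trafo of bases}), and the dimension bound is then immediate from surjectivity.

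For stage three, I would construct the Poisson bracket by a standard deformation-quantization argument. Given $\overline u,\overline v\in \overline{S_\sigma(V)}$, choose lifts $u,v$ in the lattice of $S_\sigma(V)$; commutativity of the classical limit forces $uv-vu\in \mm\cdot S_\sigma(V)$, and I set $\{\overline u,\overline v\}:=\overline{(uv-vu)/(q-1)}$, which is independent of the choice of lifts. On generators $u,v\in \overline V$, the relation $u\otimes v = \sigma(u\otimes v)$ holding in $S_\sigma(V)$ together with the above expansion of $\sigma$ gives $uv-vu = (q-1)\,r^{-}(u\wedge v)+O((q-1)^2)$, yielding the claimed formula. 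Anti-commutativity and the Leibniz rule come automatically from the commutator construction in an associative deformation, while the Jacobi identity results from expanding the trivial identity $[a,[b,c]]+[c,[a,b]]+[b,[c,a]]=0$ in $S_\sigma(V)$ to order $(q-1)^2$. The main obstacle will be making stage one truly rigorous: one must verify that $\overline{\Lambda^2_\sigma V}$ is \emph{all} of $\Lambda^2\overline V$, not merely a proper subspace, and that $uv-vu$ actually admits an ${\bf A}$-lift divisible by $(q-1)$. Both points rest on the fact that the eigenvalues of $\sigma_{V,V}$ are of the form $\pm q^n$ with $n\in\ZZ$, so that the $+1$ and $-1$ eigenspaces remain distinct at $q=1$ and the spectral decomposition of $L\otimes L$ descends cleanly to the classical limit.
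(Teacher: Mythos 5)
The paper does not prove this theorem in-text: it is cited as \cite[Theorem 2.29]{BZ}, and the opening of Section~\ref{se:classical limit} explicitly defers all proofs to \cite{BZ}, so there is no internal argument to compare against. Your proposal follows the expected semiclassical route (${\bf A}$-lattice, first-order expansion of $\widehat R$, specialization of the defining submodule, commutator bracket divided by $q-1$), which is precisely the mechanism underlying \cite[Section 3]{BZ}, and it is correct in outline. You also correctly flag the one genuinely delicate spot, namely that $\overline{\Lambda^2_\sigma V}$ must be \emph{all} of $\Lambda^2\overline V$, and you resolve it the right way via the $\pm q^{\ZZ}$ eigenvalue split.

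A few points should be made explicit to close the argument. First, the projectors $\tfrac12(1\pm\sigma)$ preserve $L\otimes L$ because $2$ is a unit in ${\bf A}$ and $\sigma$ preserves the lattice; this is what gives the free ${\bf A}$-splitting $L\otimes L = L_+\oplus L_-$ and makes $\overline{L_-}$ exactly the $(-1)$-eigenspace of the reduced braiding $\tau$, i.e.\ $\Lambda^2\overline V$, by a dimension count. Second, for the equality $\dim_{\CC(q)} S_\sigma(V)_n=\dim_\CC\overline{S_\sigma(V)}_n$ you need $T(L)_n\cap\langle\Lambda^2_\sigma V\rangle_n$ to be a saturated (hence, over the PID ${\bf A}$, a direct) summand of $T(L)_n$; this follows from torsion-freeness of the quotient and should be stated. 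Third, your phrase ``the relation $u\otimes v=\sigma(u\otimes v)$ holding in $S_\sigma(V)$'' should read that the two classes coincide because $(1-\sigma)(u\otimes v)$ lies in the $(-1)$-eigenspace $\Lambda^2_\sigma V$; as written it looks like an identity in $V\otimes V$, which is false. Finally, $D$ and hence $\widehat R$ involve $q^{\frac12(\,\cdot\,)}$ and the square root $\sqrt{R_1^{op}R_1}$, so the claimed expansion on the ${\bf A}$-lattice tacitly requires either working over a ring in which the relevant fractional powers of $q$ are available, or an appeal to \cite{DR1}; it is worth saying so rather than silently using the expansion of $R$ for $\widehat R$.
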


 
 

  
  
  

\subsection{Flat  Modules over Reductive Lie Algebras}
\label{subsect:flat modules}
In \cite{BZ} we introduce the notion of flatness of a $U_q(\gg)$-module. In this setion we will recall the definition and basic properties of flat modules and then proceed to classify all flat modules over $U_q(\gg)$, where $\gg$ is any semisimple Lie algebra.
 
 We view $S_{\sigma}(V^q)$ and $\Lambda_{\sigma}(V^q)$ as deformations of the quadratic algebras  $S(V)$ and $\Lambda(V)$ respectively, where $V$ denotes the classical limit of $V^q$.
In \cite{BZ} we show that 
$$\dim  ~S_{\sigma}^n V^q=\dim  ~S_{\sigma}(V^q)_n \le \binom{\dim V^q+n-1} n $$
for all $n$.   

Therefore, it is natural  to make the following definition.

 \begin{definition}
\label{definition:flatness}
A finite dimensional $U_q(\gg)$-module is flat, if and only if
$$\dim ~S^n_\sigma V^q=\binom{\dim V^q +n-1}{n}  $$
for all $n\ge 0$; i.e., the braided symmetric power $S^n_\sigma V^q$ is isomorphic (as a vector space) to the ordinary symmetric  power $S^n V^q$.

\end{definition}

The following theorem is our main result.

\begin{maintheorem}
\label{the:class. flat}
Let $\gg$ be a semisimple Lie algebra and $U_q(\gg)$ its quantized enveloping algebra. A simple $U_q(\gg)$-module $V$ is flat if and only if its classical limit $\overline V$ is Poisson as a $U(\gg)$-module.
\end{maintheorem} 
\begin{proof}

The "only if" assertion follows immediately from the following result.

\begin{proposition} \cite[Theorem 2.29]{BZ}
\label{th:flat->Poisson}
If $V^q$ is an object of $\OO_f$ and $V^q$ is flat, then $S(V)$ is Poisson.  
\end{proposition}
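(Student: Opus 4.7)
The ``only if'' direction is handed to us by the Proposition stated just above (\cite[Theorem 2.29]{BZ}): flatness of $V^q$ forces the classical limit $S(\overline V)$ to coincide with $\overline{S_\sigma(V^q)}$, which by Theorem \ref{th:flat poisson closure} already carries the $r$-matrix Poisson bracket. So the real content is the converse: assuming $\overline V$ is Poisson as a $U(\gg)$-module, I want to show $\dim S^n_\sigma V^q=\binom{\dim V^q+n-1}{n}$ for all $n$.

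My plan is to reduce to a case-by-case verification using the classification results already established. Theorem~\ref{th:class:poissonsemisimple} (together with Theorem~\ref{th: class of Poisson}(f)) produces a finite list of pairs $(\gg,V)$ for which $\overline V$ can be Poisson. Reductive $\gg$ reduces to semisimple $\gg$ via Proposition~\ref{pr:ssvred} (a central factor acts trivially on $V$ and contributes nothing to $c$, hence nothing to the braided bracket), so it suffices to treat the semisimple classification. For each pair on the list I want to exhibit a ``big enough'' family of linearly independent elements in $S^n_\sigma V^q$.

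The cleanest way to do this uniformly is via the parabolic realization alluded to in the introduction (Theorems~\ref{th: Abelian ->flat quadratic} and~\ref{th:Heisenberg}): every simple Poisson $\gg$-module $V$ is isomorphic, as a $\gg$-module, to the nil-radical $\mathrm{rad}_\pp$ of a maximal parabolic $\pp\subset \gg'$ in some ambient simple Lie algebra $\gg'$, with $\gg$ the semisimple Levi part of $\pp$. Once that identification is in place, the plan is:
\begin{itemize}
\item In the Abelian radical case, realize $S_\sigma(V^q)$ as the quantum nilradical $U_q(\mathrm{rad}_\pp)$ sitting inside $U_q(\gg')$, and use the PBW-type theorem for quantum Schubert cells (Section~\ref{se: qschcells}) to produce a basis indexed by $\ZZ_{\ge 0}^{\dim V}$; this forces $\dim S^n_\sigma V^q=\binom{\dim V+n-1}{n}$.
\item In the Heisenberg case (including $(\gg,V)=(sp_{2n}(\CC),V_{\omega_1})$), replace $U_q(\mathrm{rad}_\pp)$ by its associated graded with respect to the lower central filtration; again a PBW-type argument gives the expected dimension, and the $U_q(\gg)$-module structure descends.
\item For the semisimple example $(sl_m\times sl_n, V_{\omega_1,\omega_1})$, invoke the standard presentation of the algebra of quantum $m\times n$ matrices (e.g.~\cite{RTF}), whose PBW property is classical.
\end{itemize}
Since each Poisson module appears in exactly one of these families, assembling the three cases covers the full list and establishes flatness in the ``if'' direction.

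The main obstacle is the Heisenberg case: there $\mathrm{rad}_\pp$ is non-Abelian, so $U_q(\mathrm{rad}_\pp)$ is not quadratic on the nose and one must carefully argue that its degree-two relations, pulled through the associated graded construction, coincide with the braided relations defining $S_\sigma(V^q)=T(V^q)/\langle \Lambda_\sigma^2 V^q\rangle$. This amounts to identifying $\Lambda_\sigma^2 V^q$ with the image in $V^q\otimes V^q$ of the Levi-equivariant ``commutator'' map from $\mathrm{rad}_\pp\wedge \mathrm{rad}_\pp$ into the next step of the lower central series—something that has to be checked by a weight argument inside $U_q(\gg')$, exploiting that each Levi-isotypic component of $\Lambda^2 V$ is either a ``symmetric'' or an ``antisymmetric'' piece under the normalized braiding by the eigenvalue computation of Lemma~\ref{le:braiding casimir}. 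The Abelian case, by contrast, is essentially formal once the parabolic identification and the quantum Schubert cell PBW theorem are in hand.
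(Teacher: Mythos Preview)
You have misread the statement. Proposition~\ref{th:flat->Poisson} is \emph{only} the implication ``flat $\Rightarrow$ Poisson''; it is not the biconditional of Main Theorem~\ref{the:class. flat}. Your opening sentence already contains the entire proof the paper gives: by Theorem~\ref{th:flat poisson closure} the classical limit $\overline{S_\sigma(V^q)}$ is a Poisson algebra with the $r^-$-bracket, and if $V^q$ is flat then dimension-counting forces $\overline{S_\sigma(V^q)}=S(\overline V)$, so $(S(\overline V),r^-)$ is Poisson and $\overline V$ is Poisson by Proposition~\ref{pr:cv=0=poisson}. That is all that is asked. (Note also that you cannot invoke ``the Proposition stated just above'' to dispatch this direction---that proposition \emph{is} the statement you are proving.)

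Everything after your first paragraph addresses the converse, which belongs to the proof of Main Theorem~\ref{the:class. flat}, not here. Even for that converse your route differs from the paper's: the paper argues via rigidity (Corollary~\ref{co: poisson=rigidity}) and the cubic criterion \cite[Proposition 2.33]{BZ}, showing directly that $\dim S^3_\sigma V^q_\lambda=\binom{\dim V+2}{3}$ whenever $V_\lambda$ is rigid, and then invoking Drinfeld's quadratic-algebra result to pass from degree $3$ to all degrees. Your parabolic/Schubert-cell approach is the content of Section~\ref{se:constru.of BSA}, which the paper develops \emph{after} flatness is already established; using Theorems~\ref{th: Abelian ->flat quadratic} and~\ref{th:Heisenberg} here would be forward-referencing results whose proofs are logically independent but placed later, and in any case is not what this short proposition requires.
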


\begin{proof}
Theorem \ref {th:flat poisson closure} asserts  that the classical limit of $S_\sigma(V^q)$ is a Poisson algebra. If $V^q$ is flat  then $(S (V), r^-)$ is a Poisson algebra and $V$ Poisson by Proposition \ref{pr:cv=0=poisson} .
 \end{proof}
It therefore, remains to show that if a simple $\gg$-module $V$ is Poisson , then $V^q$ is a flat $U_q(\gg)$-module. Following the strategy of Section \ref{se: poisson over simple} we will first  consider the case when $\gg$ is a simple Lie algebra and $V$ a simple $\gg$-module.
 First assume that $\gg$ is a simple Lie algebra and $V$ a simple $\gg$-module.  Recall from Corollary \ref{co: poisson=rigidity} that a simple $\gg$-module $V_\lambda$ is Poisson, if and only if $V_\lambda$ is rigid, hence  the assertion of Theorem \ref{the:class. flat} follows immediately from the following result.

\begin{proposition}
Let $\gg$ be a simple Lie algebra and $V_\lambda$ a simple $\gg$-module. The $U_q(\gg)$-module $V_\lambda^q$ is flat if $V_\lambda$ is rigid.
\end{proposition}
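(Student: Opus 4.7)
The plan is to combine the classification of rigid simple modules with their geometric realization as parabolic nilradicals and then transfer this picture to the quantum setting. By Theorem \ref{th: class of Poisson}(e), every rigid simple $\gg$-module $V_\lambda$ is either geometrically decomposable or $(sp_{2n},V_{\omega_1})$; in either case Howe's classification provides a simple complex Lie algebra $\gg'\supset \gg$ and a maximal parabolic subalgebra $\pp\subset \gg'$ such that $\gg$ is the semisimple part of the Levi factor of $\pp$ and $V_\lambda\cong rad_\pp$ as $\gg$-modules. For all pairs in Theorem \ref{th: class of Poisson}(f) except $(sp_{2n},V_{\omega_1})$ the nilradical $rad_\pp$ is abelian; in the symplectic case it is a two-step nilpotent Heisenberg algebra whose center is one-dimensional.

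Next I would pass to the quantum setting by considering the quantized nilradical $U_q(rad_\pp)\subset U_q(\gg')$, generated by the $E$-vectors associated to the positive roots of $\gg'$ lying in $rad_\pp$. This subalgebra is stable under the adjoint action of $U_q(\gg)\subset U_q(\gg')$, and its degree-one component is precisely the simple $U_q(\gg)$-module $V_\lambda^q$. I would then invoke the PBW-type theorem for quantum Schubert cells proved in Section \ref{se: qschcells}: it furnishes a PBW basis of $U_q(rad_\pp)$ indexed by monomials in the quantum root vectors attached to the positive roots sitting in $rad_\pp$, so
\[
\dim\bigl(U_q(rad_\pp)\bigr)_n \;=\; \binom{\dim V_\lambda + n - 1}{n}
\]
in the abelian case, and the same formula holds for $\mathrm{gr}\,U_q(rad_\pp)$ in the Heisenberg case.

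Third, I would identify $U_q(rad_\pp)$ (respectively its associated graded) with $S_\sigma(V_\lambda^q)$, which is the content of Theorem \ref{th: Abelian ->flat quadratic} and Theorem \ref{th:Heisenberg}. The braided symmetric algebra $S_\sigma(V_\lambda^q)$ is the universal quadratic $U_q(\gg)$-module algebra generated by $V_\lambda^q$ modulo $\Lambda_\sigma^2 V_\lambda^q$. Checking that $\Lambda_\sigma^2 V_\lambda^q$ is annihilated inside $U_q(rad_\pp)$ yields a surjection $S_\sigma(V_\lambda^q)\twoheadrightarrow U_q(rad_\pp)$. Combined with the a priori upper bound $\dim S_\sigma(V_\lambda^q)_n\le \binom{\dim V_\lambda + n - 1}{n}$ from Section \ref{subsect:flat modules}, the PBW count forces this surjection to be an isomorphism in each graded degree, proving flatness.

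The main obstacle is verifying that $\Lambda_\sigma^2 V_\lambda^q$ lies in the quadratic ideal of relations of $U_q(rad_\pp)$, i.e.\ that the $q$-commutation relations among the quantum root vectors spanning $V_\lambda^q$ inside $U_q(\gg')$ coincide with the relations carved out by the normalized braiding $\sigma_{V_\lambda^q,V_\lambda^q}$. For this one matches the eigenvalues of $\sigma$ on $V_\lambda^q\otimes V_\lambda^q$, read off from Lemma \ref{le:braiding casimir} via the Casimir scalars on the isotypic components of $V_\lambda^q\otimes V_\lambda^q$, with the $q$-powers appearing in the Levendorskii--Soibelman straightening formulas for commutators of quantum root vectors inside $U_q(\gg')$. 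In the Heisenberg case $(sp_{2n},V_{\omega_1})$ the non-abelian terms of these commutators land in the one-dimensional center of $rad_\pp$ and therefore disappear after passing to the associated graded, reducing this case to the abelian analysis.
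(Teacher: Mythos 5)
Your approach is genuinely different from the paper's and, with care, it works; but it is also the longer road and has one inaccuracy worth flagging.

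The paper proves this proposition directly, without invoking any of the Section~\ref{se:constru.of BSA} machinery. It relies on two facts already present by Section~\ref{subsect:flat modules}: first, Drinfeld's theorem on quadratic algebras \cite[Theorem 1]{DR2} (via \cite[Proposition 2.33]{BZ}) reduces flatness of $S_\sigma(V_\lambda^q)$ to the single condition $\dim S^3_\sigma V_\lambda^q = \binom{\dim V_\lambda+2}{3}$, since the degree $0,1,2$ components are automatically correct; second, the general multiplicity inequalities $d_\lambda^\mu \le c^{3,\sigma}_{\lambda,\mu} \le c^{3}_{\lambda,\mu}$. When $V_\lambda$ is rigid the outer two quantities coincide by definition of rigidity, so the middle one is forced, and flatness follows. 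No realization of $V_\lambda$ as a nilradical is needed. Your proposal instead imports the geometric realization (Section~\ref{se:constru.of BSA}), the PBW theorem for quantum Schubert cells (Theorem~\ref{th:PBW-Uw}), and the identification $S_\sigma(V_\lambda^q)\cong U_q(rad_\pp)$ or its associated graded (Theorems~\ref{th: Abelian ->flat quadratic} and~\ref{th:Heisenberg}). There is no circularity — those results do not presuppose this proposition — so this is a legitimate alternate route, and it buys you an explicit model of $S_\sigma(V_\lambda^q)$ rather than just a dimension count. But it is considerably heavier than the two-line squeeze argument actually used, and it effectively amounts to proving Section~\ref{se:constru.of BSA} in order to establish a Section~\ref{se:prlime} lemma.

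One correction: in the symplectic case the identification $V_\lambda\cong rad_\pp$ fails. For $\gg=sp_{2n}$, taking $\gg'=sp_{2n+2}$ and $\pp$ the maximal parabolic with $\gg_\Delta\cong sp_{2n}$, the nilradical $rad_\pp$ has dimension $2n+1$ and decomposes as $V_{\omega_1}\oplus V_0$ as a $\gg$-module (the trivial summand being the one-dimensional derived subalgebra). Therefore $S_\sigma(V_{\omega_1}^q)$ is only a proper subalgebra of $\mathrm{gr}\,U(w_\Delta)$: as in Corollary~\ref{co: nat.mod. of sp2n} one must single out the subalgebra generated by the $V_{\omega_1}^q$-component of degree one and verify separately that its quadratic relations are exactly $\Lambda_\sigma^2 V_{\omega_1}^q$ (using the weight-multiplicity-freeness argument via Lemma~\ref{le:mult-free tensor}). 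Saying that passing to the associated graded ``reduces this case to the abelian analysis'' glides over this extra step, since the graded algebra itself is a flat deformation of $S(V_{\omega_1}\oplus V_0)$, not of $S(V_{\omega_1})$.
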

 \begin{proof}
Indeed \cite[Theorem 2.36]{BZ} asserts that $V^q$ is flat, if $V$ is rigid:  We have $\dim ~S^n_\sigma V^q=\binom{\dim V^q+1}{n}$   and $\overline{S^n_\sigma V^q}\cong S^n V$ for
$n=0,1,2$. Employing a well known result by  Drinfeld (\cite[Theorem 1]{DR2}) about quadratic algebras it is  shown  in \cite[Proposition 2.33]{BZ} that  $V^q$ is flat if and only if $S^3_\sigma V^q=\binom{\dim(V^q)+2}{3}$. Since dequantization is an almost equivalence of  the tensor categories $\OO_f$ and $\OO_f(\gg)$(Lemma \ref{le: equiv of OOs} (c)), we obtain that in the notation around Lemma \ref{le:gen symmetric cube} the multiplicity of $V^q_\nu$ in $S^2_\sigma V^q_\lambda$(resp. $\Lambda^2_\sigma V^q_\lambda$) is $c^+_{\lambda,\nu}$ (resp.   
$c^-_{\lambda,\nu}$). 

Denote by $ c^{3,\sigma}_{\lambda,\mu}$ (resp. $c^{3}_{\lambda,\mu}$)  the multiplicity of $V^q_\mu$ in $S^3_\sigma V_\lambda^q$ (resp. of $V_\mu$ in $S^3 V_\lambda$).
We derive, arguing analogously to the proof of Lemma   \ref{le:gen symmetric cube} that 
$d_\lambda^\mu\le c^{3,\sigma}_{\lambda,\mu}$ for all $\mu\in P^+(\gg)$. Since $c^{3,\sigma}_{\lambda,\mu}\le c^{3}_{\lambda,\mu}$ for all $\mu\in P^+(\gg)$, we obtain that if  $V_\lambda$ is rigid and $d_\lambda^\mu= c^{3}_{\lambda,\mu}$, then $c^{3,\sigma}_{\lambda,\mu}=c^{3}_{\lambda,\mu}$ for all $\mu\in P^+$ and hence $V^q_\lambda$ is flat. The proposition is proved.
\end{proof}

 Now consider the case when $\gg$  is semisimple and $V$ a simple $\gg$-module. Theorem \ref{th:class:poissonsemisimple}  asserts that if $supp(\gg)$ is not simple and $V$ is Poisson, then $supp(\gg)\cong sl_n\times sl_m$ for some $m,n\ge 1$ and $V$ isomorphic to the natural module $V_{\omega_1,\omega_1}$.
We show in \cite[Proposition 2.38]{BZ} that the natural $U_q(sl_m\times sl_n)$-module is flat, its braided symmetric algebra isomorphic to the algebra of quantum $m\times n$-matrices.  Theorem \ref{the:class. flat} is proved.  
 \end{proof}

 \begin{remark}
 A straightforward argument shows that if $\gg$ is a reductive Lie algebra, then a $U_q(\gg')$-module $V^q$ is flat if $V^q|_{U_q(\gg')}$ is a flat $U_q(\gg')$-module, where $\gg'\subset \gg$ is the maximal semisimple subalgebra of $\gg$. 
 \end{remark}
 
\section{Deformations of Symmetric Algebras of Poisson Modules} 
\label{se:constru.of BSA}

In this  section we will explicitly construct the braided symmetric algebras of flat modules, employing the relationship between geometrically decomposable modules and Abelian nil-radicals.  
  \subsection{Quantum Radicals as Symmetric Algebras}
  \label{se: quantum schubert}
  
 Let $U_q(\gg)$  be the quantized enveloping algebra corresponding to a Lie algebra $\gg$  introduced in Section \ref{se:q-group}.  Denote, as above,  by $W$ the Weyl group of $\gg$ generated by the simple reflections  $s_i$ for $i\in[1,r]$. Corresponding to each $i\in [1,r]$ there exist maps $T_{i}:U_q(\gg)\to U_q(\gg)$ defined on the generators of $U_q(\gg)$ in the following way:

\begin{equation}
\label{eq: T_i on Ui}
T_{i}(E_{i})=-F_{i} K_{\alpha_{i}}\ ,T_{i}(F_{i})=-K_{\alpha_{i}}^{-1}E_{i}\ , T_{i}(E_{j})=\sum_{k=0}^{-a_{ij}} (-1)^{k-a_{ij}}q_{i}^{-k}  E_{i}^{-a_{ij}-k}E_{j}E_{i}^{k}\ ,
\end{equation}
$$T_{i}(F_{j})=\sum_{k=0}^{-a_{ij}} (-1)^{k-a_{ij}}q_{i}^{k} F_{i}^{k}F_{j}F_{i}^{a_{ij}-k}\ , T_{i}(K_{\lambda})=K_{\sigma_{i}(\lambda}\ .$$ 
 For every element $w\in W$ with presentation $w=s_{i_{1}}\ldots s_{i_{k}}$ we define $T_{w}$ as $T_{w}=T_{s_{i_{1}}}\cdots T_{s_{i_{k}}}$. 
We will need the following well known fact.
 \begin{lemma}\cite[8.18]{Jan} 
 If $w\in W$, then $T_w$ is independent of the choice of reduced expression; i.e. if $w=s_{i_1}\ldots s_{i_k}$ and  $w=s_{j_1}\ldots s_{j_k}$ are reduced expressions of $w\in W$, then 
 $$T_{s_{i_1}}\ldots T_{s_{i_k}}=T_{s_{j_1}}\ldots T_{s_{j_k}}\ .$$ 
 \end{lemma}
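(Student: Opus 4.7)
The plan is to reduce to verifying the braid relations for the operators $T_i$, and then to invoke Matsumoto's theorem on reduced expressions in Coxeter groups.

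First I would recall Matsumoto's theorem: any two reduced expressions for the same Weyl group element $w$ can be transformed into each other using only the braid relations $\underbrace{s_is_js_i\cdots}_{m_{ij}} = \underbrace{s_js_is_j\cdots}_{m_{ij}}$ (where $m_{ij}$ is the order of $s_is_j$ in $W$), without ever invoking the relation $s_i^2=1$. Because reducedness is preserved under such moves, it therefore suffices to show that the operators $T_i$ satisfy the corresponding braid relations on $U_q(\gg)$, i.e.
\begin{equation*}
\underbrace{T_iT_jT_i\cdots}_{m_{ij}} \;=\; \underbrace{T_jT_iT_j\cdots}_{m_{ij}}.
\end{equation*}

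The main step is therefore to verify these braid relations as automorphisms of $U_q(\gg)$. Since each $T_i$ is an algebra automorphism, it is enough to check the equality on the generating set $\{E_k, F_k, K_\lambda : k\in[1,r],\, \lambda\in P\}$. The case $k\notin\{i,j\}$ reduces to the case of $T_iT_j\cdots$ acting on the rank-two Levi sitting inside $\gg$, so the entire check reduces to a rank-two computation in $U_q(\ll)$ for each subdiagram $\ll$ of type $A_1\times A_1$, $A_2$, $B_2$, or $G_2$. In each rank-two case, using the explicit formulas \eqref{eq: T_i on Ui} together with the quantum Serre relations \eqref{eq:quantum Serre relations} and the commutation relations \eqref{eq:upper lower relations}, one expands both sides and checks they agree on $E_j$, $F_j$, and $K_\lambda$.

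The main obstacle is the rank-two check: for $G_2$, where $m_{ij}=6$, the computation of $T_iT_jT_iT_jT_iT_j(E_j)$ is long and requires careful bookkeeping of $q$-powers and divided-power identities. However, these verifications are already established in the literature (Lusztig, and Jantzen \cite[8.18]{Jan}); I would simply cite them rather than reproduce them. Once the braid relations hold, Matsumoto's theorem finishes the proof: if $s_{i_1}\cdots s_{i_k}$ and $s_{j_1}\cdots s_{j_k}$ are two reduced expressions for $w$, they are linked by a chain of braid moves, and at each move $T_{s_{i_1}}\cdots T_{s_{i_k}}$ is unchanged, giving $T_{s_{i_1}}\cdots T_{s_{i_k}}=T_{s_{j_1}}\cdots T_{s_{j_k}}=:T_w$.
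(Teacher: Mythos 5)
The paper gives no proof of this lemma at all; it simply cites Jantzen \cite[8.18]{Jan} (and Lusztig's original work) as the reference. Your outline — reduce to the braid relations via Matsumoto's theorem, reduce the check to rank two, and cite the literature for the rank-two verifications — is exactly the standard argument the cited reference uses, so there is no discrepancy.
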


Recall from Section \ref{se:prlime} that $U^+$ denotes the  subalgebra of $U_q(\gg)$ generated by the $E_{i}$ for $i\in [1,r]$,  $U^{-}$ the  subalgebra of $U_q(\gg)$ generated by the $F_{i}$ for $i\in [1,r]$  and  $U_q(\bb_-)$ - the subalgebra of $U_q(\gg)$ generated by all $K_\lambda$ and all $F_i$.  

   Recall (see e.g.\cite[ch. 8]{Jan}) that we can associate to each reduced expression of the longest element $w_{0}\in W$  a PBW-basis  of $U_q(\gg)$ in the following way:  
Let $w_{0}=\sigma_{i_{1}} \ldots \sigma_{i_{k}}$ be a presentation of the longest word in $W$. It is well known that the set of positive roots  $R^{+}$ of the Lie algebra $\gg$ can be ordered in the following way:
$$\alpha_{(1)}=\alpha_{i_{1}}<\alpha_{(2)}=s_{i_{1}}\circ \alpha_{2}< ,\ldots<\ldots <\alpha_{(k)} =s_{i_{1}}\ldots s_{i_{k-1}} \alpha_{i_{k}}\ ,$$
where $\alpha_{j}$ denotes the $j$-th simple root.

We  define for each presentation of $w_{0}$  a set of  positive roots spanning $U^{+}$    following \cite[ch.8]{Jan}:
$$E_{\alpha_{(1)}}= E_{i_{1}}\ ,~E_{\alpha_{(2)}}=T_{i_{1}}(E_{i_{2}})\ ,~\ldots ,E_{\alpha_{(k)}}= T_{i_{1}}\cdots T_{i_{k-1}}(E_{i_{k}}) \ .$$

Similarly, we define a set of negative roots spanning $U^{-}$ by:
$$F_{\alpha_{(1)}}= F_{i_{1}}\ ,~F_{\alpha_{(2)}}T_{i_{1}}(F_{i_{2}})\ ,~\ldots, F_{\alpha_{(k)}}= T_{i_{1}}\cdots T_{i_{k-1}}(F_{i_{k}}) \ .$$

The following is the key definition for this section.

\begin{definition}
For every  element $w\in W$ in the Weyl group we define the  quantum Schubert cell $U(w)$ as 

$$U(w)=T_{w^{-1}}(U_q(\bb^-))\cap U^+\ .$$
  \end{definition}  

We also have the following alternative description of quantum Schubert cells.

\begin{lemma}
\label{le:upperdefofqsc}
Let $w\in W$ and $w_0$ be the longest element in $W$. Denote $w'=w_0w$. We have 
$$U(w)=T_{(w')^{-1}}(U_q(\bb^+))\cap U^+\ .$$ 
\end{lemma}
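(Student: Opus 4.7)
The plan is to reduce the right-hand side to the paper's definition of $U(w)$ via a length-additive factorization of $w_0$ together with the fact that $T_{w_0}$ swaps the two Borel subalgebras.

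First I would use the Coxeter identity $\ell(w_0 u) = \ell(w_0) - \ell(u)$ to obtain $\ell(w) + \ell((w')^{-1}) = \ell(w_0)$, so that $w_0 = w \cdot (w')^{-1} = w \cdot w^{-1} w_0$ admits a reduced expression obtained by concatenating reduced expressions for $w$ and $(w')^{-1}$. By the reduced-expression independence of $T_{w_0}$ stated in the lemma just before, this yields the operator identity
$$T_{w_0} = T_w \circ T_{(w')^{-1}}, \qquad \text{hence} \qquad T_{(w')^{-1}} = T_w^{-1} \circ T_{w_0}.$$

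I would then show $T_{w_0}(U_q(\bb^+)) = U_q(\bb^-)$. Since $w_0(\alpha_i) \in R^-$ for every simple root, $T_{w_0}(E_i)$ is a quantum root vector for a negative root and hence lies in $U^- \cdot U^0 = U_q(\bb^-)$; combined with $T_{w_0}(K_\lambda) = K_{w_0(\lambda)} \in U^0$, this gives $T_{w_0}(U_q(\bb^+)) \subseteq U_q(\bb^-)$, and the reverse inclusion follows by the analogous statement for $T_{w_0}^{-1}$. Consequently
$$T_{(w')^{-1}}(U_q(\bb^+)) = T_w^{-1}(U_q(\bb^-)),$$
and intersecting with $U^+$ reduces the lemma to the identity
$$T_w^{-1}(U_q(\bb^-)) \cap U^+ = T_{w^{-1}}(U_q(\bb^-)) \cap U^+.$$

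The main obstacle is this final identity, since $T_w^{-1}$ and $T_{w^{-1}}$ are genuinely distinct automorphisms of $U_q(\gg)$ in general. My plan is to prove the stronger claim $T_w^{-1}(U_q(\bb^-)) = T_{w^{-1}}(U_q(\bb^-))$ by induction on $\ell(w)$. The base case $w = s_i$ is a direct computation from (\ref{eq: T_i on Ui}): one has $T_i(F_i) = -K_{\alpha_i}^{-1} E_i$ and $T_i^{-1}(F_i) = -E_i K_{\alpha_i} = q^{-(\alpha_i,\alpha_i)} K_{\alpha_i}^2 \cdot T_i(F_i)$, while on each $F_j$ with $j \ne i$ and on $K_\lambda$ the operators $T_i$ and $T_i^{-1}$ agree up to invertible $U^0$-factors. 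Since $U_q(\bb^-)$ contains $U^0$ and is stable under left and right multiplication by Cartan elements, it follows that $T_i(U_q(\bb^-)) = T_i^{-1}(U_q(\bb^-))$. For the inductive step, write a reduced expression $w = s_{i_1} w''$ so that $T_{w^{-1}} = T_{(w'')^{-1}} \circ T_{i_1}$ and $T_w^{-1} = T_{w''}^{-1} \circ T_{i_1}^{-1}$; the base case equates $T_{i_1}(U_q(\bb^-))$ and $T_{i_1}^{-1}(U_q(\bb^-))$, which is again a $U^0$-containing weight-graded subalgebra, and the inductive hypothesis applied to this subalgebra finishes the argument.
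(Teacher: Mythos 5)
Your reduction is the right one: from the length‑additive factorization $w_0 = w\cdot(w')^{-1}$ one gets $T_{w_0}=T_w\,T_{(w')^{-1}}$, and together with $T_{w_0}(U_q(\bb^+))=U_q(\bb^-)$ this reduces the lemma to showing
$$T_w^{-1}\bigl(U_q(\bb^-)\bigr)\cap U^+ \;=\; T_{w^{-1}}\bigl(U_q(\bb^-)\bigr)\cap U^+\,.$$
You are also right that this is the genuine content of the lemma (the printed proof in the text also leans on this step without making it explicit). The difficulty you flagged is real.

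The proposed resolution, however, does not work. The base case asserts that for $j\ne i$ the elements $T_i(F_j)$ and $T_i^{-1}(F_j)$ agree up to invertible $U^0$‑factors; this is false. Both are $q$‑deformed iterated brackets of $F_j$ by $F_i$ sitting in the weight $s_i(\alpha_j)$‑component of $U^-$, but one ``brackets from the left'' and the other ``from the right,'' and for generic $q$ these are $\CC(q)$‑linearly independent whenever $a_{ij}<0$. Already for $\gg=sl_3$, $i=1$, $j=2$, one has (up to normalization) $T_1(F_2)=F_2F_1-qF_1F_2$ while $T_1^{-1}(F_2)=F_1F_2-qF_2F_1$, and these span distinct lines in the two‑dimensional weight space spanned by $F_1F_2$ and $F_2F_1$. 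Left or right multiplication by any $K_\lambda$ rescales $F_1F_2$ and $F_2F_1$ by the same power of $q$, so no $U^0$‑factor converts one into the other. A weight count on the generators $T_1^{-1}(F_1)=-E_1K_{\alpha_1}$, $T_1^{-1}(F_2)$ and $K_\lambda$ shows that the $(-\alpha_1-\alpha_2)$‑component of $T_1^{-1}(U_q(\bb^-))$ consists exactly of $U^0$‑multiples of $T_1^{-1}(F_2)$; hence $T_1(F_2)\notin T_1^{-1}(U_q(\bb^-))$ and in particular $T_1(U_q(\bb^-))\ne T_1^{-1}(U_q(\bb^-))$. So the stronger claim already fails at $\ell(w)=1$ and the induction cannot start. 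The phenomenon is even more visible for $a_{ij}\le -2$, where the two root vectors lie in a weight space of $U^-$ of dimension at least three.

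What does hold, and what the lemma needs, is the equality only after cutting down to $U^+$. A structural way to see what this really amounts to: there is an anti‑automorphism $\vartheta$ of $U_q(\gg)$ fixing every $E_i$ and $F_i$, sending $K_\lambda\mapsto K_{-\lambda}$, and reversing products, which satisfies $\vartheta\,T_i = T_i^{-1}\,\vartheta$ on generators, hence $T_w^{-1}=\vartheta\,T_{w^{-1}}\,\vartheta$. Since $\vartheta$ preserves both $U_q(\bb^-)$ and $U^+$, one gets $T_w^{-1}(U_q(\bb^-))\cap U^+ = \vartheta\bigl(T_{w^{-1}}(U_q(\bb^-))\cap U^+\bigr)=\vartheta(U(w))$. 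Lemma \ref{le:upperdefofqsc} is therefore equivalent to the $\vartheta$‑invariance of $U(w)$ — a genuine fact, obtainable from the PBW description of $U(w)$ as in Theorem \ref{th:PBW-Uw}(a) or from \cite{DKP} — but it is precisely this invariance that your argument does not supply, and it cannot be reached by comparing $T_i(U_q(\bb^-))$ with $T_i^{-1}(U_q(\bb^-))$ as full subalgebras.
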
  

\begin{proof}
  We have $T_{(w')^{-1}}=T_{w_0w^{-1}}=T_{w_0}T_{w^{-1}}$, $T_{w_0w^{-1}}^{-1}= T_{w}T_{w_0}^{-1}$, since $w_0^{-1}=w_0$. Note that $T_{w_0}(U_q(\bb^+)=(U_q(\bb^-)$. Therefore, 

$$U(w)= T_{w^{-1}}(U_q(\bb^-))\cap U^+=T_{w^{-1}w_0}(U_q(\bb^+))\cap U^+\ .$$
\end{proof}

We will now consider quantum Schubert cells $U(w_\Delta)$ where  $w_{\Delta}\in W$  corresponds to subsets $\Delta\subset [1,r]$ in the following way. Let $W_{\Delta}$ be the subgroup of Delta generated by the simple reflections $s_i$ for $i\in \Delta$ and denote by $w_{0,\Delta}$ the longest element of $W_\Delta$. The element $w_\Delta=w_{0,\Delta}w_0$ is commonly referred to as a {\it parabolic element} of $W$. If $\mathfrak{p}_{\Delta}$ is the standard parabolic subalgebra of $\gg$ associated with $\Delta$, then $w_{0,\Delta}$ is the longest element of its Levi subalgebra $\mathfrak{l}_{\Delta}$.
 Denote the nil-radical by  $rad_{\Delta}$.  Recall that $\mathfrak{p}_{\Delta}$ splits as a semi-direct product $\mathfrak{p}_{\Delta}\cong \mathfrak{l}_{\Delta} \ltimes rad_{\Delta}$ (see e.g. \cite{Hu}). Additionally recall that any Hopf algebra $H$  algebra acts on itself via the adjoint action:
 
 \begin{equation}
 \label{eq:adjoint action}
 ad(a).b=a_{(1)}b S(a_{(2)}) 
 \end{equation}
  The following theorem is the first main result of this section.  

\begin{maintheorem}
\label{th: Abelian ->flat quadratic}
(a) Let $\gg'$ be a reductive Lie algebra, $\mathfrak{p}_{\Delta}$ a parabolic subalgebra with Levi $\mathfrak{l}_{\Delta}$ and radical $rad_{\Delta}$. If $rad_{\Delta}$ is an Abelian Lie algebra, then $U(w_\Delta)$ is a flat quadratic $q$-deformation of the  symmetric algebra $S(rad_{\Delta})$. 

\noindent(b) The quantum Schubert cell $U(w_\Delta)$ is a  $\ZZ_{\ge0}$-graded $U_q(\mathfrak{l}_\Delta)$ module algebra and  $U(w_\Delta)$ is the braided symmetric algebra of the $U_q(\mathfrak{l}_{\Delta})$-module $U(w_\Delta)_1$.  

\noindent (c) Moreover, let $\gg_{\Delta}$ be the maximal semsimple submodule of $\mathfrak{l}_\Delta$. Then,  $U(w_\Delta)$ is a  $\ZZ_{\ge0}$-graded $U_q(\gg_\Delta)$ module algebra and  $U(w_\Delta)$ is the braided symmetric algebra of the $U_q(\gg_{\Delta})$-module $U(w_\Delta)_1$.  

\end{maintheorem}

 \begin{proof}

In order to prove Theorem \ref{th: Abelian ->flat quadratic} (a)  we have to show that the classical limit $q\to 1$ of $U(w_\Delta)$ is $S(rad_{\Delta})$.
  We call a root $\alpha\in R(\gg)$ {\it radical}, if $\alpha\notin R(\gg')\cap span_{\ZZ}(\Delta)$.  Recall that $rad_{\Delta}$ is spanned by $E_{\alpha}$ where $\alpha$ is radical.  We obtain the following well known characterization of Abelian radicals.

\begin{lemma} 
\label{le:Abelian rads}
Let $\gg'$ be a redcutive  Lie algebra, $\mathfrak{p}_{\Delta}$ a parabolic subalgebra with Levi $\mathfrak{l}_{\Delta}$ and radical $rad_{\Delta}$. The radical $rad_{\Delta}$ is Abelian, if  and only if all radical roots are of the form $\alpha=\alpha_{i} +\sum_{j\ne i}c_{j}\alpha_{j}$.
\end{lemma}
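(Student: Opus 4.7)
The plan is to introduce the \emph{$\Delta$-level} of a positive root $\alpha=\sum c_j\alpha_j$, defined as $\ell_\Delta(\alpha):=\sum_{j\notin\Delta}c_j$. By definition, a positive root is radical precisely when $\ell_\Delta(\alpha)\ge 1$, and the condition that every radical root has the form $\alpha=\alpha_i+\sum_{j\ne i}c_j\alpha_j$ with $i\notin\Delta$ and $c_j=0$ for all other $j\notin\Delta$ is exactly the statement $\ell_\Delta(\alpha)=1$. Since $\ell_\Delta$ is additive on sums of roots, it gives a $\ZZ_{\ge 1}$-grading of $rad_\Delta$, and $rad_\Delta$ is abelian if and only if $\alpha+\beta$ is never a root for any two radical $\alpha,\beta$.

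For the ``$\Leftarrow$'' direction, assume $\ell_\Delta\equiv 1$ on the set of radical roots. Then for any two radical roots $\alpha,\beta$ we have $\ell_\Delta(\alpha+\beta)=2$. If $\alpha+\beta$ were a root, it would be radical (since $\ell_\Delta>0$), but no radical root has level $\ge 2$ by assumption. Hence $\alpha+\beta$ is not a root, so $[E_\alpha,E_\beta]=0$, and $rad_\Delta$ is abelian.

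For the ``$\Rightarrow$'' direction, I argue by contradiction. Assume $rad_\Delta$ is abelian and that some radical root $\alpha$ satisfies $\ell_\Delta(\alpha)\ge 2$. I invoke the classical fact (cf.\ Bourbaki, Groupes et alg\`ebres de Lie, Ch.\ VI) that every positive root can be written as an ordered sum of simple roots $\alpha=\alpha_{i_1}+\alpha_{i_2}+\cdots+\alpha_{i_k}$ such that every partial sum $\alpha_{i_1}+\cdots+\alpha_{i_p}$ is again a root. Because $\ell_\Delta(\alpha)\ge 2$, there exist at least two indices $p_1<p_2$ in $\{1,\ldots,k\}$ with $i_{p_1},i_{p_2}\notin\Delta$. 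Set $\beta:=\alpha_{i_1}+\cdots+\alpha_{i_{p_2-1}}$; this is a root, and since the index $i_{p_1}$ contributes, $\ell_\Delta(\beta)\ge 1$, so $\beta$ is radical. The simple root $\gamma:=\alpha_{i_{p_2}}$ is radical by the choice $i_{p_2}\notin\Delta$, and $\beta+\gamma$ is the next partial sum, hence a root. Therefore $[E_\beta,E_\gamma]$ is a nonzero multiple of $E_{\beta+\gamma}$, contradicting the abelianness of $rad_\Delta$.

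The only nontrivial input is the chain decomposition of positive roots into simple roots with partial sums remaining in the root system; everything else is bookkeeping with the $\Delta$-level. This should be the quickest route and avoids any case-by-case analysis based on the type of $\gg'$.
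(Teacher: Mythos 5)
The paper states Lemma \ref{le:Abelian rads} as ``well known'' and supplies no proof, so there is no official argument to compare against; your proof is independently correct and complete. The $\Delta$-level $\ell_\Delta(\alpha)=\sum_{j\notin\Delta}c_j$ is the right invariant: radical roots are precisely those of positive $\Delta$-level, and the stated condition is $\ell_\Delta\equiv1$ on radical roots; additivity of $\ell_\Delta$ immediately gives the ``$\Leftarrow$'' direction, since $\ell_\Delta(\alpha+\beta)=2$ would force $\alpha+\beta$ to be a level-$\ge2$ radical root. For ``$\Rightarrow$'' you correctly reduce to the standard chain lemma (every positive root is an ordered sum of simple roots with all partial sums roots, Bourbaki Ch.\ VI \S1.6 Prop.\ 19 or Humphreys Lemma 10.2A), then cut the chain just before the second occurrence of an index outside $\Delta$; the resulting $\beta$ and $\gamma=\alpha_{i_{p_2}}$ are both radical, $\beta+\gamma$ is a root by the chain property, hence $[E_\beta,E_\gamma]\ne0$ lies in $rad_\Delta$, contradicting abelianness. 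No gaps.
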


 Theorem  \ref{th:PBW-Uw} yields that $U(w_\Delta)$ is generated as an algebra by the $E_{\alpha}$ for which $\alpha\in R^+(\gg)$ is a radical root.
 
 We need the following well-known fact.
 
\begin{lemma} 
\label{le:q-commutators}
 \cite[Lemma 2.2]{DKP} Let  $R^+_w=w(R^-)\cap R^+$. Then,
\begin{equation}
\label{eq:q-commutators}
ad(E_{\alpha})(E_{\beta})=[E_{\alpha},E_{\beta}]_{q}=E_{\alpha}E_{\beta}-q^{(\alpha,\beta)}E_{\beta}E_{\alpha} \in \text{span}(E_{\gamma_{1}}\ldots E_{\gamma_{k}})\ ,
\end{equation}
where $\alpha<\gamma_{1}\le\ldots\le \gamma_{k}<\beta$ and $\gamma_{1}+\ldots \gamma_{k}=\alpha+\beta$.
\end{lemma}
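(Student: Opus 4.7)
The plan is to reduce the identity to a rank-2 computation and then verify it explicitly via the quantum Serre relations, following the strategy of Levendorski\u{\i}--Soibelman. First I would fix a reduced expression $w = s_{i_1}\cdots s_{i_{\ell(w)}}$ and enumerate the PBW roots as $\beta_m = s_{i_1}\cdots s_{i_{m-1}}(\alpha_{i_m})$, so that $\alpha = \beta_j$ and $\beta = \beta_k$ for some $j < k$. Since each $T_i$ is an algebra automorphism of $U_q(\gg)$, applying $T_{i_{j-1}}^{-1}\cdots T_{i_1}^{-1}$ to the proposed identity sends $E_\alpha \mapsto E_{i_j}$ and permutes the PBW generators $E_{\beta_m}$ (for $m \geq j$) while preserving their relative ordering. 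Hence without loss of generality $\alpha = \alpha_{i_j}$ is simple.

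Next I would invoke the convexity of the PBW-order: the roots $\beta_{j+1}, \ldots, \beta_{k-1}$ appearing strictly between $\alpha$ and $\beta$ in the PBW list are precisely the positive roots of the form $c\alpha + d\beta$ with $c, d \geq 0$ not both zero. In particular they all lie in the rank-2 root subsystem $R_{\alpha,\beta} \subset R(\gg)$ generated by $\alpha$ and $\beta$. Weight considerations force every monomial appearing in any PBW expansion of $[E_\alpha, E_\beta]_q$ to have total weight $\alpha + \beta$, which together with the convexity remark gives both $\gamma_1 + \cdots + \gamma_k = \alpha + \beta$ and $\alpha < \gamma_1 \leq \cdots \leq \gamma_k < \beta$.

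What remains is the rank-2 computation: show that $[E_\alpha, E_\beta]_q$ actually lies in the subalgebra generated by $\{E_\gamma : \gamma \in R^+_{\alpha,\beta},\ \alpha < \gamma < \beta\}$. In type $A_1 \times A_1$ this commutator vanishes. In type $A_2$ the quantum Serre relation yields the single intermediate monomial (up to a scalar). In types $B_2$ and $G_2$ one expands the intermediate root vectors $T_i(E_j), T_i T_j(E_i), \ldots$ via \eqref{eq: T_i on Ui} and reduces the q-commutators to the higher-order quantum Serre relations \eqref{eq:quantum Serre relations}.

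The main obstacle is the explicit verification in type $G_2$, where the Serre relations are of order $1 - a_{ij} = 4$ and the root vectors are bulky sums of divided-power monomials. The calculation is mechanical but tedious; it is made transparent by organizing the PBW monomials in the order induced by a reduced expression of the long element of the rank-2 Weyl group. The full statement is the content of \cite[Lemma 2.2]{DKP}.
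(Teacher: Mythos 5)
The paper states this lemma with a citation to \cite[Lemma 2.2]{DKP} and does not supply an internal proof, so I can only address your argument on its own merits. The fatal gap is the convexity claim underpinning your reduction to rank $2$. Convexity of the PBW order says that if $\gamma$, $\delta$ and $\gamma+\delta$ are all positive roots then $\gamma+\delta$ lies between $\gamma$ and $\delta$; it does \emph{not} say that a root lying strictly between $\alpha$ and $\beta$ in the order must be a nonnegative combination of $\alpha$ and $\beta$. Already in type $A_3$ the reduced expression $w_0=s_1s_2s_3s_1s_2s_1$ induces the convex order
$$\alpha_1 \;<\; \alpha_1+\alpha_2 \;<\; \alpha_1+\alpha_2+\alpha_3 \;<\; \alpha_2 \;<\; \alpha_2+\alpha_3 \;<\; \alpha_3\ ,$$
and $\alpha_1+\alpha_2+\alpha_3$ sits strictly between $\alpha=\alpha_1$ and $\beta=\alpha_2$ yet is not of the form $c\alpha_1+d\alpha_2$; moreover $\alpha_1$ is already simple, so your preliminary normalization by $T_{i_{j-1}}^{-1}\cdots T_{i_1}^{-1}$ (which is itself fine) does not remove the problem.

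With that claim gone, the rest of the argument does not close. The weight constraint $\gamma_1+\dots+\gamma_k=\alpha+\beta$ by itself does not force the $\gamma_i$ into the open interval $(\alpha,\beta)$ of the PBW order: $\alpha+\beta$ is such a sum with neither summand strictly inside, and one can easily produce sums with roots on both sides of the interval. So neither the convexity step nor the weight step confines the support of $[E_\alpha,E_\beta]_q$ to the rank-$2$ subsystem spanned by $\alpha$ and $\beta$, and the proposed case-check in $A_1\times A_1$, $A_2$, $B_2$, $G_2$ does not cover the general situation. The Levendorskii--Soibelman/De Concini--Kac--Procesi commutation relation is a genuinely finer statement; the usual proofs proceed by an induction on $\ell(w)$ exploiting the automorphisms $T_i$, the triangular decomposition, and the multiplicative structure of the quantum Schubert cells, rather than by a direct reduction to the rank-$2$ subsystem generated by $\alpha$ and $\beta$.
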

 
We now obtain from  Lemma  \ref{le:Abelian rads} and Lemma \ref{le:q-commutators} that if $rad_{\Delta}$ is Abelian, then  $U(w_\Delta)$ is a quadratic algebra and its classical limit is $S(rad_{\Delta})$. Theorem \ref{th: Abelian ->flat quadratic} (a) is proved.

 Let us now prove part (b). Note first that  $U_q(\mathfrak{l}_{\Delta})$ acts adjointly on $U(w_\Delta)$ by Theorem \ref{th:PBW-Uw}.
We now obtain that if $rad_{\Delta}$ is Abelian, then Lemma \ref{le:Abelian rads}  and Lemma \ref{le:q-commutators} imply that  $ad(U_q(\mathfrak{l}_{\Delta})(U(w_\Delta))_{i})\subset (U(w_\Delta))_{i}$, and that, hence, $U(w_\Delta)$ is a graded $U_q(\mathfrak{l}_{\Delta})$-module algebra.
 Denote by $\pi_{\Delta}$ the canonical $U_q(\mathfrak{l}_{\Delta})$-module homomorphism  $\pi_{\Delta}: T((U(w_\Delta))_{1})\to  U(w_\Delta)$.  We obtain from Theorem \ref{th: Abelian ->flat quadratic} that the classical limit of the kernel $\ker(\pi)$ is equal to the $U(\mathfrak{l}_{\Delta})$-ideal generated by $\Lambda^{2}rad_{\Delta}$. Howe proves in \cite[ch. 4.6]{Ho} that $rad_{\Delta}$ is  weight-multiplicity-free and simple as a  $\mathfrak{l}_{\Delta}$-module, as well as a $\mathfrak{\gg}_{\Delta}$-module, that means all weight-spaces are  one-dimensional. Recall the following  well-known fact.

\begin{lemma}
\label{le:mult-free tensor}
Let $\gg$ be a reductive Lie algebra. Then, $\dim(Hom_{\gg}(V_{\lambda}\otimes V_{\lambda}, V_{\mu}))\le\dim V_{\lambda}(\mu-\lambda)$.  
\end{lemma}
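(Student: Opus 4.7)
The plan is to bound the Hom-space by passing to a space of highest weight vectors and estimating the latter via a weight-space projection. By Schur's lemma combined with complete reducibility of finite-dimensional $\gg$-modules, $\dim Hom_\gg(V_\lambda\otimes V_\lambda, V_\mu)$ equals the multiplicity of $V_\mu$ in $V_\lambda\otimes V_\lambda$, which in turn equals the dimension of
$$H_\mu:=\{v\in (V_\lambda\otimes V_\lambda)(\mu) : E_{\alpha_i}v=0 \text{ for every simple root } \alpha_i\}.$$

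Fix a nonzero highest weight vector $v_\lambda$, so that $V_\lambda(\lambda)=\CC v_\lambda$ is one-dimensional. Using the weight-space decomposition
$$(V_\lambda\otimes V_\lambda)(\mu)=\bigoplus_{\nu}V_\lambda(\nu)\otimes V_\lambda(\mu-\nu),$$
I would define a linear map $\pi\colon(V_\lambda\otimes V_\lambda)(\mu)\to V_\lambda(\mu-\lambda)$ as the projection onto the summand $\CC v_\lambda\otimes V_\lambda(\mu-\lambda)$ followed by the identification $v_\lambda\otimes w\mapsto w$. The claim is that $\pi|_{H_\mu}$ is injective; granting this, the desired bound $\dim H_\mu\le \dim V_\lambda(\mu-\lambda)$ is immediate.

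The injectivity is a standard leading-term analysis. Let $0\ne v\in H_\mu$ and expand $v=\sum_\nu v_\nu$ with $v_\nu\in V_\lambda(\nu)\otimes V_\lambda(\mu-\nu)$; choose $\nu^*$ maximal in the finite support $\{\nu:v_\nu\ne 0\}$ under the dominance partial order on weights. Write $v_{\nu^*}=\sum_k a_k\otimes b_k$ with the $b_k\in V_\lambda(\mu-\nu^*)$ linearly independent and each $a_k\ne 0$. For any simple root $\alpha_i$, the component of $E_{\alpha_i}v$ in the summand $V_\lambda(\nu^*+\alpha_i)\otimes V_\lambda(\mu-\nu^*)$ can receive contributions only from $v_{\nu^*}$ (through the action on the first factor) and from $v_{\nu^*+\alpha_i}$ (through the action on the second factor); the latter vanishes by maximality of $\nu^*$, and the surviving contribution is $\sum_k(E_{\alpha_i}a_k)\otimes b_k$. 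Since $E_{\alpha_i}v=0$ and the $b_k$ are linearly independent, $E_{\alpha_i}a_k=0$ for every $i$ and $k$. Hence each $a_k$ is a nonzero highest weight vector of $V_\lambda$ lying in $V_\lambda(\nu^*)\cap\CC v_\lambda$, which forces $\nu^*=\lambda$ and $a_k=c_k v_\lambda$ with $c_k\in\CC^{\times}$. Consequently $v_{\nu^*}=v_\lambda\otimes\sum_k c_k b_k$ and $\pi(v)=\sum_k c_k b_k\ne 0$ by linear independence of the $b_k$.

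The only delicate point is the leading-term step, and it hinges exclusively on choosing $\nu^*$ maximal in the dominance order so as to rule out the second contribution; I expect no deeper obstacle.
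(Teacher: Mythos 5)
Your proof is correct. The paper does not actually supply an argument for this lemma (it is cited as a ``well-known fact'' without proof), and what you have written is the standard leading-term argument one would expect: identify $\operatorname{Hom}_\gg(V_\lambda\otimes V_\lambda, V_\mu)$ with the space of highest-weight vectors of weight $\mu$, project onto the $V_\lambda(\lambda)\otimes V_\lambda(\mu-\lambda)$ summand, and use maximality in the dominance order together with simplicity of $V_\lambda$ (so that $E_{\alpha_i}$-killed vectors in $V_\lambda$ are scalar multiples of $v_\lambda$) to conclude injectivity. The only point worth noting explicitly, which you handle correctly, is that one must argue the maximal support weight in the first tensor factor is forced to equal $\lambda$, so that the projection does not miss the leading term; this is exactly what the step ``each $a_k$ is a nonzero highest weight vector of $V_\lambda$'' accomplishes.
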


The  lemma implies that   $rad_{\Delta}\otimes rad_{\Delta}$ is multiplicity-free as a $\mathfrak{l}_{\Delta}$-module. Employing Lemma \ref{le:dequantization equivalence}, we obtain that   $(U(w_\Delta))_{1}\otimes  (U(w_\Delta))_{1}$ contains a unique $U_q(\mathfrak{l}_{\Delta})$-submodule $Ext^2_{q}(rad_{\Delta})$ such that its classical limit is isomorphic to $\Lambda^{2} rad_{\Delta}$. This  implies that $\Lambda^{2}_{\sigma} (U(w_\Delta))_{1}=Ext^2_{q}(rad_{\Delta})=\ker(\pi)\cap (U(w_\Delta))_{1}\otimes (U(w_\Delta))_{1}$. Therefore,  $U_q(rad_{\Delta})= S_{\sigma} (U(w_\Delta)_{1})$. Theorem  \ref{th: Abelian ->flat quadratic} (b) is proved.

Part (c) can be proved analogously to part (b). Theorem  \ref{th: Abelian ->flat quadratic} is proved.
 \end{proof} 
  
 Call a $U_q(\gg)$-module {\it geometrically decomposable} if its classical limit is geometrically decomposable as a $U(\gg)$-module.
Theorem  \ref{th: Abelian ->flat quadratic} (c) has the following consequence. 
 
 \begin{corollary}
 Let $\gg$ be a semisimple Lie algebra and let $V^q$ be a simple geometrically decomposable $U_q(\gg)$-module. There exists a simple Lie algebra $\gg'$ and a parabolic element $w_\Delta\in W(\gg')$ such that  $U_q(\gg)\cong U_q(\gg_\Delta)$  and   the braided symmetric algebra $S_\sigma V^q\cong U(w_\Delta)$ as $U_q(\gg_\Delta)$-modules.
 \end{corollary}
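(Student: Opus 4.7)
The plan is to assemble three ingredients already developed earlier in the paper. First, invoke the classification of simple geometrically decomposable modules (Theorem \ref{th: class of Poisson} together with Howe's results cited in the introduction) to realize the classical limit of $V^q$ as the nilradical of a maximal parabolic with Abelian radical inside a larger simple Lie algebra. Second, apply Theorem \ref{th: Abelian ->flat quadratic}(c) to identify the corresponding quantum Schubert cell with the braided symmetric algebra of its degree-one component. Third, invoke the almost equivalence of categories from Lemma \ref{le: equiv of OOs}(c) to transfer the resulting module identification from the classical to the quantum side.

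More concretely, I would proceed as follows. Set $V=\overline{V^q}$; by hypothesis $V$ is a simple geometrically decomposable $U(\gg)$-module. The classification supplies a simple complex Lie algebra $\gg'$, a subset $\Delta$ of simple roots of $\gg'$ for which the nilradical $rad_\Delta$ of the associated standard parabolic $\pp_\Delta\subset\gg'$ is Abelian (here I use Lemma \ref{le:Abelian rads} to match the Abelian-radical condition with geometric decomposability), and an isomorphism $\gg_\Delta\cong\gg$ under which $V\cong rad_\Delta$ as $\gg_\Delta$-modules. Induce a corresponding isomorphism $U_q(\gg)\cong U_q(\gg_\Delta)$. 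Now Theorem \ref{th: Abelian ->flat quadratic}(c) shows that the quantum Schubert cell $U(w_\Delta)$ is a $\ZZ_{\ge 0}$-graded $U_q(\gg_\Delta)$-module algebra and equals $S_\sigma\bigl((U(w_\Delta))_1\bigr)$. The degree-one piece $(U(w_\Delta))_1$ is spanned by PBW root vectors attached to radical roots, and its classical limit is therefore exactly $rad_\Delta\cong V$.

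At this point both $(U(w_\Delta))_1$ and $V^q$ are objects of $\OO_f$ for $U_q(\gg_\Delta)$ whose classical limits are isomorphic to the simple $U(\gg_\Delta)$-module $V$. Since the forgetful/dequantization functor of Lemma \ref{le: equiv of OOs}(c) is an almost equivalence on a semisimple tensor category, this forces $(U(w_\Delta))_1\cong V^q$ in $\OO_f$. Functoriality of the braided symmetric algebra construction then gives $S_\sigma V^q\cong S_\sigma\bigl((U(w_\Delta))_1\bigr)\cong U(w_\Delta)$ as $U_q(\gg_\Delta)$-module algebras, which is the claim.

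The delicate point I expect to require the most care is the final identification $(U(w_\Delta))_1\cong V^q$ as $U_q(\gg_\Delta)$-modules, rather than merely as vector spaces with matching classical limits. This reduces to checking that $(U(w_\Delta))_1$ is a simple object of $\OO_f$ over $U_q(\gg_\Delta)$ with the correct highest weight; this in turn is immediate from the weight-multiplicity-free structure of $rad_\Delta$ established in the proof of Theorem \ref{th: Abelian ->flat quadratic}, combined with the semisimplicity of $\OO_f$ which guarantees that a simple object is determined by its highest weight.
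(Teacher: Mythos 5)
Your proof is correct and follows essentially the same path as the paper's one-line proof, which cites Theorem \ref{th: Abelian ->flat quadratic} together with the identification of geometrically decomposable modules with Abelian nilradicals; you have simply made explicit the intermediate steps (realizing $\overline{V^q}\cong rad_\Delta$, identifying $U(w_\Delta)=S_\sigma\big((U(w_\Delta))_1\big)$, and using almost equivalence to lift $\overline{V^q}\cong rad_\Delta$ to $V^q\cong (U(w_\Delta))_1$). The final worry you flag is already handled by the defining property (a) of an almost equivalence, since $F(c)\cong F(c')$ forces $c\cong c'$, so no separate simplicity argument is needed.
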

 
 \begin{proof}
 The corollary follows immediately from Theorem  \ref{th: Abelian ->flat quadratic}  and the description of geometrically decomposable modules as Abelian radicals in Section \ref{se: poisson over simple}.
 \end{proof}   
  
Many of the  braided symmetric algebras obtained by the construction of Theorem  \ref{th: Abelian ->flat quadratic} are well known examples of quantized coordinate rings of classical varieties. Our theory presents a unifying construction of these important examples. We have the following list according to \cite[ch. 5]{GY}:

\begin{itemize}
\item If $\gg=sl_k$ and $\Delta=\{1,\ldots, n\}/\{i\}$, then $U(w_\Delta)=\CC_q[Mat_{i\times (n-i)}]$, the algebra of quantum $i\times (n-i)$-matrices.

\item If $\gg=so(2n+1)$ and $\Delta=\{2,\ldots,n\}$, then $U(w_\Delta)$ is the algebra of the odd-dimensional Euclidean space  $\OO_{\frac{q}{2}}^{2N-1}(\CC)$ introduced in \cite{RTF} (see also \cite{M}).

\item If $\gg=sp(2n)$ and $\Delta=\{2,\ldots n\}$, then $U(w_\Delta)$ is the  algebra of quantum symmetric matrices introduced in \cite[Theorem 4.3 and Proposition 4.4]{Nou} and by \cite{Kam}.

\item If $\gg=so(2n)$ and $\Delta=\{2,\ldots , n\}$, then $U(w_\Delta)$ is the algebra of the even-dimensional Euclidean space  $\OO_{\frac{q}{2}}^{2N-2}(\CC)$ introduced in \cite{RTF} (see also \cite{M}). 

\item  If $\gg=so(2n)$ and $\Delta=\{1,\ldots , n-1\}$ or $\Delta=\{1,\ldots, n-2,n\}$, then $U(w_\Delta)$ is the  algebra of quantum antisymmetric matrices introduced in \cite[Section 1]{Str}.

\item If $\gg=E_6$ and $\Delta=\{2,\ldots, 6\}$, resp. $\Delta=\{1,\ldots,5\}$ or $\gg=E_7$ and $\Delta=\{1,\ldots 6\}$, then we obtain quantum algebras $U(w_\Delta)$, which apparently have not been  studied previously.
\end{itemize}

We will now extend the result of   Theorem  \ref{th: Abelian ->flat quadratic} to some subalgebras, when $rad_\Delta$ is of Heisenberg type; i.e., the derived subalgebra $[rad_\Delta, rad_\Delta]\subset rad_\Delta$ is one-dimensional.  
Recall that an algebra $U$ is called filtered, if $U=\bigcup_{i=0}^\infty U_i$ with $U_i\subset U_{i+1}$  and $U_i\cdot U_{j}\subset U_{i+j}$. 
The associated graded algebra $gr(U)$ of $U$ is defined as $gr(U)=\bigoplus_{i=0}^{\infty} U_{i}/U_{i-1}$, where we set $U_{-1}=\{0\}$. 
The following result is the second main result of this section.

\begin{theorem}
\label{th:Heisenberg}
Let $\Delta\subset[1,r]$ and let $\mathfrak{p}_{\Delta}$  be the corresponding parabolic subalgebra, and $rad_{\Delta}$ its nil-radical. If  $U(w_\Delta)$ is a filtered $U_q(\mathfrak{l}_{\Delta})$-module algebra, 

\noindent(a) then $gr(U(w_\Delta))$ is a $U_q(\mathfrak{l}_\Delta)$-module algebra and  a flat $q$-deformation of $S(rad_{\Delta})$,

\noindent(b) and $gr(U(w_\Delta))$ is a $U_q(\gg_\Delta)$-module algebra, where $\gg_\Delta$ is the maximal semisimple subalgebra of $\mathfrak{l}_\Delta$.  \end{theorem}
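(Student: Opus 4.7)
My plan follows the approach of Theorem \ref{th: Abelian ->flat quadratic} but with the PBW filtration playing the role of the defining grading. Since $rad_\Delta$ is Heisenberg, its derived subalgebra $[rad_\Delta, rad_\Delta]$ is one-dimensional, spanned by the root vector $E_{\alpha_0}$ of a uniquely determined radical root $\alpha_0$. There is a distinguished simple root $\alpha_j \notin \Delta$ so that every radical positive root $\alpha = \sum c_i \alpha_i$ satisfies $c_j \in \{1,2\}$, with $c_j(\alpha_0) = 2$ and every other radical root having $c_j = 1$.

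I would take the PBW filtration $\{U_n\}$ where $U_n$ is the linear span of PBW monomials of total length at most $n$. Multiplicativity $U_m U_n \subseteq U_{m+n}$ follows from Lemma \ref{le:q-commutators}: when reordering a product into PBW form, the $q$-commutator $[E_\alpha, E_\beta]_q = \sum c\, E_{\gamma_1} \cdots E_{\gamma_k}$ involves only monomials of length $k \le 2$, and the length-$1$ case $k=1$ occurs only when $\alpha + \beta = \alpha_0$. Compatibility of the filtration with the $U_q(\mathfrak{l}_\Delta)$-adjoint action is verified exactly as in the proof of Theorem \ref{th: Abelian ->flat quadratic}(b): the action preserves $U_1$ because $rad_\Delta$ is an $\mathfrak{l}_\Delta$-submodule of $\gg$, and therefore it preserves each $U_n$ by multiplicativity, verifying the hypothesis of the theorem for this natural filtration.

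Passing to the associated graded, the length-$1$ Heisenberg correction becomes invisible: for $\alpha + \beta = \alpha_0$, the identity $E_\alpha E_\beta - q^{(\alpha,\beta)} E_\beta E_\alpha = c\, E_{\alpha_0}$ has left-hand side in $U_2$ and right-hand side in $U_1$, and hence descends to the quadratic relation $\bar E_\alpha \bar E_\beta = q^{(\alpha,\beta)} \bar E_\beta \bar E_\alpha$ in $gr_2$. For the remaining pairs of radical roots, the $q$-commutator is already of length at most $2$ and contributes a quadratic relation in $gr$ whose coefficients vanish at $q=1$, as can be seen by specializing the classical limit in which $rad_\Delta$ is Heisenberg (so $[E_\alpha, E_\beta]_{\mathrm{classical}} = 0$ whenever $\alpha + \beta \ne \alpha_0$). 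Thus $gr(U(w_\Delta))$ is a $U_q(\mathfrak{l}_\Delta)$-module algebra generated by $gr_1$ whose PBW-induced basis has the correct size in each graded component; the classical limit is $S(rad_\Delta)$, giving flatness and proving part (a).

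The main obstacle is the careful verification that the $q$-commutator coefficients arising in the quadratic relations at the second level of the filtration indeed vanish at $q=1$ whenever $\alpha+\beta$ is not a root; this reduces to the PBW theorem for $U(rad_\Delta)$ combined with the fact that the classical limit of $U(w_\Delta)$ is $U(rad_\Delta)$. Part (b) is then immediate: the inclusion $\gg_\Delta \hookrightarrow \mathfrak{l}_\Delta$ induces $U_q(\gg_\Delta) \hookrightarrow U_q(\mathfrak{l}_\Delta)$, and restricting the module algebra structure on $gr(U(w_\Delta))$ established in (a) yields the desired $U_q(\gg_\Delta)$-module algebra structure.
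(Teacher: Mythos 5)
The theorem is conditional: the hypothesis that $U(w_\Delta)$ is a filtered $U_q(\mathfrak{l}_\Delta)$-module algebra is \emph{given}, not something to be established. Your attempt to verify this hypothesis contains a real gap. You assert that filtration-compatibility of the adjoint action ``is verified exactly as in the proof of Theorem~\ref{th: Abelian ->flat quadratic}(b),'' but the Abelian argument hinges on Lemma~\ref{le:Abelian rads}: every radical root has coefficient $1$ on the removed node, which is what forces $ad(E_i)(E_\alpha)$ to remain in degree~$1$. That fails for a Heisenberg radical, where the root $\alpha_0$ has coefficient $2$. In the $q$-deformed algebra $ad(E_i)(E_\alpha)$ has weight $\alpha+\alpha_i$, which in general can also be realized as a sum $\gamma_1+\gamma_2$ of two radical roots, so nothing a priori prevents the image from landing in filtration degree $2$. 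Your assertion that the action preserves $U_1$ ``because $rad_\Delta$ is an $\mathfrak{l}_\Delta$-submodule of $\gg$'' is a classical-limit statement and does not by itself control the quantum $ad$-action. The paper is explicit that this compatibility is not automatic: Corollary~\ref{co: nat.mod. of sp2n} proves it only for a specific Heisenberg case of $sp(2n+2)$, via a nontrivial weight count against $\omega_1$, precisely because the Abelian argument does not transfer.

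Granting the hypothesis, the remainder of your argument is sound and agrees with the paper's route in spirit: $U(w_\Delta)$ must be quadratic-linear under the filtration, so $gr(U(w_\Delta))$ is quadratic and inherits the $U_q(\mathfrak{l}_\Delta)$-module-algebra structure (the paper packages this as Proposition~\ref{pr:gr Uwmodal} via Lemma~\ref{le: grmodal}), and its classical limit is $S(rad_\Delta)$ by Theorem~\ref{th:PBW-Uw} and Lemma~\ref{le:q-commutators}; part (b) is indeed just restriction along $U_q(\gg_\Delta)\hookrightarrow U_q(\mathfrak{l}_\Delta)$.
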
  
  
 \begin{proof} 
 
The following fact is well known.

\begin{lemma}
\label{le:Uto gr}
\noindent(a) If $U$ is a filtered $k$-algebra, there are isomorphisms of vector spaces  $\tilde\phi_{n}: U_{n}\to \bigoplus_{i=0}^{n}\tilde U_{i}$ where $\tilde U_{i}=U_{i}/U_{i-1}$.  

\noindent(b)   The isomorphisms $\tilde\phi_{n}$ induce an isomoprhism of  $k$-algebras  $\phi:U\to gr(U)$.

\end{lemma}

 Another well known and important fact is the following.
 \begin{lemma}
 \label{le: grmodal}
 Let $A$ be a Hopf algebra, and $U$ be a filtered $A$-module algebra. Then $\phi: U\to gr(U)$  is an isomorphism of $A$-modules.
 \end{lemma}

Recall that an algebra $A$ is called quadratic-linear, if $A$ is the quotient of a free algebra $\CC\langle x_{1},\ldots x_{n}\rangle$ by an ideal generated by elements of $\bigoplus_{i=0}^{2} (\CC\langle x_{1},\ldots x_{n}\rangle)_{i}$, where  $(\CC\langle x_{1},\ldots x_{n}\rangle)_{i}$ denotes the $i$-th graded component. The following proposition is a key step to proving Theorem  \ref{th:Heisenberg}. 

\begin{proposition}
\label{pr:gr Uwmodal}
Let $ \Delta\in [1,r]$. If $U(w_\Delta)$ is quadratic linear, i.e. $[\overline E_{\alpha},\overline E_{\beta}]_{q}\in (U(w_\Delta))_{2}$  for all radical roots $\alpha,\beta$  and $U(w_\Delta)$ is a filtered $U_q(\mathfrak{l}_{\Delta})$-module algebra, then $gr(U(w_\Delta))$ is a $U_q(\mathfrak{l}_{\Delta})$-module algebra. 
\end{proposition}

\begin{proof}
If $U(w_\Delta)$ is quadratic linear, then $U(w_\Delta)$ is a filtered Hopf algebra. Hence,  $gr(U(w_\Delta))$ is  a $U_q(\mathfrak{l}_{\Delta})$-module algebra by Lemma \ref{le: grmodal}. 
The proposition is proved.
\end{proof}

  Note first that  if $U(w_\Delta)$ is filtered, then $U(w_\Delta)$ must be quadratic linear.  Hence, $gr(U(w_\Delta))$ is quadratic, and  Theorem \ref{th:PBW-Uw} and Lemma \ref{le:q-commutators} yield that its classical limit is $S(rad_{\Delta})$. Theorem \ref{th:Heisenberg} (a) is proved.
  
Part(b) can be proved analogously.  Theorem \ref{th:Heisenberg} is proved.
    \end{proof}

We now obtain the construction of the braided symmetric algebra of the natural module of $U_q(sp(2n)$. 

\begin{corollary}
\label{co: nat.mod. of sp2n}
 Let $\gg=sp(2n)$, $\gg'=sp(2n+2)$ and $\Delta=\{2,\ldots, n+1\}$. The braided symmetric algebra $S_\sigma (V_{\omega_1})$ of the natural $U_q(sp(2n))$-module $V_{\omega_1}$ is isomorphic to $U(w_\Delta)$ as a $U_q(sp(2n)$-module.  
\end{corollary}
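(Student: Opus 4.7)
The plan is to apply Theorem \ref{th:Heisenberg} to the parabolic subalgebra $\mathfrak{p}_\Delta$ of $\gg' = sp(2n+2)$ and then identify the resulting $U_q(sp(2n))$-module algebra with the braided symmetric algebra via the universal property.

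First, I would identify the structure of the nilradical. In the standard realization of $sp(2n+2)$ (simple roots $\alpha_i = \epsilon_i - \epsilon_{i+1}$ for $i \leq n$ and $\alpha_{n+1} = 2\epsilon_{n+1}$), the radical roots for $\Delta = \{2, \dots, n+1\}$ are the $2n$ short roots $\epsilon_1 \pm \epsilon_j$ ($j = 2, \dots, n+1$) together with the long root $2\epsilon_1$. Direct computation shows that the only nontrivial commutators of radical generators are $[E_{\epsilon_1 - \epsilon_j}, E_{\epsilon_1 + \epsilon_j}] = \pm 2 E_{2\epsilon_1}$, making $rad_\Delta$ a Heisenberg Lie algebra of dimension $2n+1$ with center $\CC E_{2\epsilon_1}$, and $rad_\Delta \cong V_{\omega_1} \oplus \CC$ as a $\gg_\Delta = sp(2n)$-module.

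Second, I would verify that $U(w_\Delta)$ is a filtered $U_q(\mathfrak{l}_\Delta)$-module algebra. By Lemma \ref{le:q-commutators}, each $q$-commutator $[E_\alpha, E_\beta]_q$ of radical generators is either zero or a scalar multiple of $E_{2\epsilon_1}$, so $U(w_\Delta)$ is quadratic-linear in the sense of Proposition \ref{pr:gr Uwmodal}. Filtering by $V_{\omega_1}$-degree, with the generators of $V_{\omega_1}$ in degree $1$ and $E_{2\epsilon_1}$ in degree $2$ (arising as a quadratic commutator), gives the required filtration. Theorem \ref{th:Heisenberg} then implies that $gr(U(w_\Delta))$ is a flat $q$-deformation of $S(rad_\Delta)$ and a $U_q(sp(2n))$-module algebra.

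Third, to obtain the isomorphism $U(w_\Delta) \cong S_\sigma(V_{\omega_1}^q)$ of $U_q(sp(2n))$-modules, I would construct a canonical map. The natural inclusion $V_{\omega_1}^q \hookrightarrow U(w_\Delta)$ extends to a $U_q(sp(2n))$-module algebra homomorphism $\phi : T(V_{\omega_1}^q) \to U(w_\Delta)$, which is surjective because $E_{2\epsilon_1}$ is a $q$-commutator of short-root generators. Since $V_{\omega_1}$ is Poisson for $sp(2n)$ by Theorem \ref{th: class of Poisson}(f)(iii), it is flat as a $U_q(sp(2n))$-module by Theorem \ref{the:class. flat}, so $\dim S_\sigma^n V_{\omega_1}^q = \dim S^n V_{\omega_1}$. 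Using Lemma \ref{le:mult-free tensor} and the weight-multiplicity arguments analogous to those in the proof of Theorem \ref{th: Abelian ->flat quadratic}(b), the quadratic kernel of $\phi$ is identified with the image of $\Lambda^2_\sigma V_{\omega_1}^q$, and $\phi$ descends to the claimed isomorphism.

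The main obstacle is the careful bookkeeping of the kernel of $\phi$ in degree $2$. Classically, $\Lambda^2 V_{\omega_1} = V_{\omega_2} \oplus \CC$ is not simple (this is the exceptional case of Theorem \ref{th: geom-decomp}), and the trivial $\CC$-summand corresponds to the symplectic form. Tracking how the quantum counterpart $\Lambda_\sigma^2 V_{\omega_1}^q$ interacts with the filtered relations of $U(w_\Delta)$—in particular, how the symplectic-form component is encoded by the central element $E_{2\epsilon_1}$ under the $V_{\omega_1}$-grading—is the delicate step that ultimately forces the $U_q(sp(2n))$-module map to be an isomorphism via rigidity of the flat deformation.
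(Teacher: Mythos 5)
Your third step has a genuine gap, and it is exactly at the point you flag as ``delicate''. The map $\phi\colon T(V_{\omega_1}^q)\to U(w_\Delta)$ does \emph{not} annihilate $\Lambda^2_\sigma V_{\omega_1}^q$. The braided exterior square splits as $\Lambda^2_\sigma V_{\omega_1}^q\cong V_{\omega_2}^q\oplus V_0^q$, and the trivial summand $V_0^q$ (the $q$-symplectic form) is precisely what $\phi$ sends to a \emph{nonzero} multiple of $E_{2\epsilon_1}$ --- that is the very mechanism you appeal to when arguing that $\phi$ is surjective. So $\ker(\phi)$ does not contain $\Lambda^2_\sigma V_{\omega_1}^q$, the factorization $S_\sigma(V_{\omega_1}^q)\to U(w_\Delta)$ is not defined, and the conclusion ``$\phi$ descends to the claimed isomorphism'' is not available. (One can also see the problem numerically: $U(w_\Delta)$ deforms the enveloping algebra of a $(2n{+}1)$-dimensional Heisenberg algebra and has Gelfand--Kirillov dimension $2n+1$, while any quotient of $T(V_{\omega_1}^q)$ by an ideal containing all of $\Lambda^2_\sigma V_{\omega_1}^q$ has Hilbert series at most that of $S(V_{\omega_1})$, giving GK dimension $2n$.) There is also a secondary mismatch: the filtration you introduce, in which $E_{2\epsilon_1}$ has degree $2$, is not the one required by Theorem~\ref{th:Heisenberg} and Proposition~\ref{pr:gr Uwmodal}, which use the natural PBW filtration where every radical root vector $E_\alpha$ has degree~$1$.

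The paper's proof circumvents both obstructions by passing to the associated graded algebra $gr(U(w_\Delta))$ with respect to the natural PBW filtration. There, $E_{2\epsilon_1}$ sits in degree~$1$ as an independent generator (not reachable from $V_{\omega_1}^q\otimes V_{\omega_1}^q$), and the $q$-commutator of a conjugate pair of short radical root vectors, which in $U(w_\Delta)$ equals a nonzero linear term, is killed in $gr_2$. Consequently the quadratic map $Sym_2\colon (V_{\omega_1}^q\oplus V_0^q)^{\otimes 2}\to gr(U(w_\Delta))_2$ satisfies $\ker(Sym_2)\cap(V_{\omega_1}^q\otimes V_{\omega_1}^q)\cong V_{\omega_2}^q\oplus V_0^q\cong\Lambda^2_\sigma V_{\omega_1}^q$, and the multiplicity-free argument via Lemma~\ref{le:mult-free tensor} identifies $S_\sigma(V_{\omega_1}^q)$ as the \emph{subalgebra} of $gr(U(w_\Delta))$ generated in degree~$1$ by $V_{\omega_1}^q$ --- not as the whole of $U(w_\Delta)$. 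If you want to salvage your direct strategy, you would have to either replace $U(w_\Delta)$ by $gr(U(w_\Delta))$ as the target of $\phi$, or else replace $S_\sigma(V_{\omega_1}^q)$ by the quotient of $T(V_{\omega_1}^q)$ by the \emph{quadratic-linear} ideal generated by $V_{\omega_2}^q$ together with $(V_0^q - c\,E_{2\epsilon_1})$, which is a genuinely different algebra.
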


\begin{proof}
Note that  the maximal semisimple subalgebra $\gg_\Delta$ of $\mathfrak{l}_\Delta$ is isomorphic to  $sp(2n)$. 
Using Theorem \ref{th:Heisenberg},  we have to show that  $U(w_\Delta)$ is a filtered $U_q(\mathfrak l_{\Delta})$-module algebra. 
The radical roots corresponding to $\Delta$ are of the form $\alpha_{1}+\sum_{i=2}^{j} \alpha_{i}$, $ \alpha_{1}+\ldots \alpha_{j}+ 2\alpha_{j+1}+\ldots +2\alpha_{n}+\alpha_{n+1}$  for $j\le n$, and $\alpha_{max}=2\alpha_{1}+\ldots 2\alpha_{n}+\alpha_{n+1}$. For convenience we will fix a reduced expression of $w_0$ such that the roots of $sp(2n)$ are ordered as
\begin{equation}
\label{eq:order of sp(2n)-roots}
\alpha_1<\alpha_1+\alpha_2<\ldots <\sum_{i=1}^{n+1}\alpha_i< \sum_{i=1}^{n-1}\alpha_i+2\alpha_n+\alpha_{n+1}<\ldots <\alpha_{max}<\ldots\ .
\end{equation}

We need the following fact.

\begin{lemma}
The quantum Schubert cell $U(w_\Delta)$ is quadratic linear. 
\end{lemma}

\begin{proof}
It is easy to see from \eqref{eq:order of sp(2n)-roots} that if $\alpha,\beta\le \alpha_{max}$ one cannot find $\alpha<\gamma_1\le\ldots \le \gamma_k<\beta$ with $k\ge 3$ such that $\alpha+\beta=\gamma_1+\ldots \gamma_k$. The assertion follows.
\end{proof}

 The lemma implies that $U(w_\Delta)$ is filtered. It remains to investigate the $U_q(\mathfrak l_{\Delta})$-action.
\begin{lemma}
Let $\gg'=sp(2n+2)$ and  $\Delta=\{2,3,\ldots,n+1\}\subset[1,n+1]$. Then, $U(w_\Delta)$ is a filtered $U_q(\mathfrak l_{\Delta})$-module algebra. 
\end{lemma} 

\begin{proof}
 
 It is obvious from the definition of the adjoint action \eqref{eq:adjoint action} and the defining relations of $U_q(\gg')$ (see Section \ref{se:q-group}) that $ad(F_i)(U(w_\Delta)_m)\subset U(w_\Delta)_m$ for all $m\in \ZZ_{\ge0}$ and $i\in\Delta$ as well as $ad(K_\lambda)(U(w_\Delta)_m)\subset U(w_\Delta)_m$ for all $\lambda\in P(\gg')$. It remains to check that $ad(E_i)(E_{\alpha_{i_1}}\ldots E_{\alpha_{i_m}})\in U(w_\Delta)_m $  for $i\in\Delta$ and radical roots $\alpha_{i_1},\ldots,\alpha_{i_m}$. We prove this by induction on $m$.
 
 Let $m=1$. Note that  $ad(E_i)(E_\alpha)\subset U(w_{\Delta})$ for all radical roots $\alpha$ and all $i\in [2,n+1]$ by Theorem \ref{th:PBW-Uw} (b). If $\alpha=\alpha_{max}$, then and \eqref{eq:q-commutators} and \eqref{eq:order of sp(2n)-roots} imply that  $ad(E_i)(E_\alpha)=0\in U(w_\Delta)$. If $\alpha<\alpha_{max}$, then $(\alpha,\omega_1)=1$ and hence one cannot find $\gamma_1\le \ldots\le\gamma_k\le \alpha_{max}$ with $k\ge 2$ such that $\alpha+\alpha_i= \gamma_1+ \ldots+\gamma_k$, since $(\alpha+\alpha_i,\omega_1)=1$ and $( \gamma_1+ \ldots+\gamma_k,\omega_1)\ge k$. We obtain that $ad(E_i)(U(w_\Delta)_1\subset U(w_\Delta)_1$.
   
 Let $m>1$ and  note that
 $$ad(E_i)(E_{\alpha_{i_1}}\ldots E_{\alpha_{i_m}})=ad(E_i)(E_{\alpha_{i_1}}) E_{\alpha_{i_2}}\ldots E_{\alpha_{i_m}}+q^r E_{\alpha_{i_1}} ad(E_i)(E_{\alpha_{i_2}}\ldots E_{\alpha_{i_m}})\ ,$$
for some $r\in\ZZ$. The assertion follows immediately from the inductive hypothesis. 
 The lemma is proved.     
 \end{proof}

Note that $rad_{\Delta}\cong V_{\omega_{1}}\oplus V_{0}$ as a $sp(2n)$-module. Hence, $(gr(U(w_\Delta))_{1}\cong V_{\omega_{1}}^q\oplus V_{0}^q$ as $U_q(sp(2n)$-modules. Therefore $gr(U(w_\Delta))$ is a flat deformation of $S(V_{\omega_{1}}\oplus V_{0})$ by Theorem \ref{th:Heisenberg}. 

Denote by $Sym_2$ the $U_q(sp(2n)$-module homomorphism  $Sym_2:(V^q_{\omega_{1}}\oplus V^q_{0})^{\otimes 2}\to gr(U(w_\Delta))_2$, given by the relations  defining the quadratic algebra $gr(U(w_\Delta))$. Note that
$$\ker(Sym_2)\subset V^q_{\omega_1}\otimes V^q_{\omega_1}\oplus V^q_0\otimes V^q_{\omega_1}\oplus V^q_{\omega_1}\otimes V^q_0\subset (V^q_{\omega_{1}}\oplus V^q_{0})^{\otimes 2}\ .$$

We have $\ker(Sym_2)\cong V^q_{\omega_2}\oplus V^q_{\omega_1}\oplus V^q_0$ and obtain that 
$$\ker(Sym_2)\cap  (V^q_{\omega_1}\otimes V^q_{\omega_1})\cong V^q_{\omega_2}\oplus V^q_0\ .$$

Since $V_{\omega_1}$ is multiplicity-free and weight-multiplicity-free  and $V^q_{\omega_2}\oplus V^q_0\cong \Lambda^2_\sigma V^q_{\omega_1}$, we obtain from Lemma \ref{le:mult-free tensor} that, analogous to the proof of Theorem  \ref{th: Abelian ->flat quadratic}, indeed $\Lambda^2_\sigma V_{\omega_1}=\ker(Sym_2)$. Thus, the subalgebra of $gr(U(w_\Delta))$ generated by $V_{\omega_1}^q$ is the braided symmetric algebra $S_\sigma (V_{\omega_1}^q)$.
 
Corollary \ref{co: nat.mod. of sp2n} is proved.
\end{proof}

\begin{remark}
The braided symmetric algebra $S_\sigma (V_{\omega_1}^q)$ of the natural $U_q(sp(2n)$-module can be obtained, by an argument similar to the proof of Corollary \ref{co: nat.mod. of sp2n}, as the quotient of $gr(U(w_\Delta))$ by the two-sided ideal generated by the copy of the trivial module $V_0$ in $gr(U(w_\Delta))_1$.
\end{remark}

\begin{problem}
Describe the $q$-deformed symmetric algebras associated to radicals of Heisenberg type, where the quantum radical $U(w_\Delta)$ satisfies the assumptions of Theorem \ref{th:Heisenberg}. These algebras cannot be braided symmetric algebras, but should provide examples for some more general concept of quantum symmetric algebra.
\end{problem}
 \subsection{Quantum Schubert Cells: PBW-Theorem and Levi action}
 \label{se: qschcells}
 Let $\gg$ be a complex reductive Lie algebra, and let $W$ be the Weyl group of $\gg$.
In this section we prove a PBW-type theorem for quantum Schubert cells $U(w)\subset U_q(\gg)$ associated to $w\in W$ and show that  if $w_\Delta$ is a parabolic element of the $W$, then the Hopf subalgebra $U_q(\mathfrak{l}_\Delta)\subset U_q(\gg)$ acts adjointly on $U(w_\Delta)$ (for definitions see the beginning of Section \ref{se: quantum schubert}).






  The following theorem is the main result of this section.
 \begin{theorem}
\label{th:PBW-Uw}
\noindent(a) Let $w\in W$ be an element of the Weyl group $W$ and $U(w)$ the corresponding quantum Schubert cell. The monomials
$E_{\alpha_{(1)}}^{\l_{(1)}}\ldots E_{\alpha_{(k)}}^{\l_{(k)}}$, satisfying $\l_{(i)}=0$ if $w(\alpha)\in R^{-}(\gg)$, 
form a $\CC(q)$-linear basis of $U(w)$.

\noindent(b)  If $w=w_{\Delta}\in W$ is parabolic, then $U_q(\mathfrak{l}_\Delta)$ acts adjointly  on $U(w_\Delta)$. 
\end{theorem}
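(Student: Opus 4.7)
The plan for part (a) is to reduce to the Levendorskii--Soibelman-type PBW theorem for quantum Schubert cells in the form developed by De Concini--Kac--Procesi and Lusztig. First I would fix a reduced decomposition $w_0=s_{i_1}\cdots s_{i_N}$ whose initial $\l(w)$ factors constitute a reduced expression for $w$; in the induced convex order $\alpha_{(1)}<\cdots<\alpha_{(N)}$ of positive roots, the inversion block $\{\alpha\in R^+:w(\alpha)\in R^-\}$ becomes contiguous, so that the monomials described in the theorem are exactly those supported on the complementary block.

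The main step is to identify $T_{w^{-1}}(U_q(\bb^-))\cap U^+$ with the subalgebra spanned by these PBW monomials. For the inclusion ``$\supseteq$'' I would use the Lusztig formulas $T_j(F_j)=-K_{\alpha_j}^{-1}E_j$ together with the iterative identities $T_{i_1}\cdots T_{i_{j-1}}(E_{i_j})=E_{\alpha_{(j)}}$ to construct, for each root $\alpha_{(j)}$ in the relevant block, an element $K_\lambda F_{i_j}\in U_q(\bb^-)$ whose image under $T_{w^{-1}}$ is a scalar multiple of $E_{\alpha_{(j)}}$; the weight $\lambda$ is chosen precisely to cancel the $K$-factor produced by $T_{w^{-1}}(F_{i_j})$. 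For ``$\subseteq$'' I would invoke the PBW-type triangular factorization of $T_{w^{-1}}(U_q(\bb^-))$ induced by the braid-group action together with the decomposition $U_q(\gg)=U^-\otimes U^0\otimes U^+$: intersecting with $U^+$ annihilates the ``complementary'' PBW factor lying in $U^-\cdot U^0$ and leaves precisely the desired span. Linear independence is then inherited from Lusztig's global PBW basis of $U^+$.

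For part (b) I would verify adjoint invariance of $U(w_\Delta)$ on each generator of $U_q(\mathfrak{l}_\Delta)$ separately. The case of $K_\lambda$ is immediate because $U(w_\Delta)$ is weight-graded and $\mathrm{ad}(K_\lambda)$ acts by a character on each weight space. For $E_j, F_j$ with $j\in\Delta$, the key structural fact is that the simple reflection $s_j$ preserves setwise the block of roots indexing the PBW basis from part (a): for $j\in\Delta$, $s_j$ separately permutes the positive Levi roots $R^+(\mathfrak{l}_\Delta)$ and the positive radical roots, and the inversion complement of $w_\Delta=w_{0,\Delta}w_0$ matches one of these sets (up to the Dynkin involution). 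Using Lemma~\ref{le:q-commutators} and the Hopf-algebraic Leibniz rule for $\mathrm{ad}$, the $q$-brackets $[E_j,E_\alpha]_q$ and $[F_j,E_\alpha]_q$ of a Levi generator with a basis root vector remain in the span of PBW monomials of the allowed form, and induction on degree via the Leibniz rule extends adjoint invariance to arbitrary monomials.

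The principal obstacle is the ``$\subseteq$'' direction of the identification in part (a): showing that any element of $T_{w^{-1}}(U_q(\bb^-))$ lying in $U^+$ must in fact come from a preimage in $U_q(\bb^-)$ whose ``complementary'' PBW content is zero. This requires a careful triangular analysis of how the braid operator $T_{w^{-1}}$ permutes and twists the PBW factors of $U_q(\bb^-)$, and is the technical heart of the argument. Once the identification in (a) is secured, part (b) amounts to a routine verification on the generators of $U_q(\mathfrak{l}_\Delta)$.
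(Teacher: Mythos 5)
Your plan for part (a) is essentially the paper's, modulo one organizational difference. You propose to choose a reduced expression of $w_0$ whose first $\ell(w)$ factors give a reduced expression for $w$, so that the inversion set of $w$ indexes a contiguous block of the convex order and the two PBW factors fall apart cleanly. The paper instead keeps the reduced expression of $w_0$ fixed and proves the clean triangular factorization $U^+=U(w)\,T_w(U(w'))$ with $U(w)\cap T_w(U(w'))=\CC(q)\cdot\mathbf 1$ (Proposition~\ref{pr: relative Schubert cells}), proved via the dichotomy of Lemma~\ref{weyl-action on roots} ($T_w(E_\alpha)\in U^+$ versus $U_q(\bb^-)$ according to the sign of $w(\alpha)$); that factorization takes the place of your contiguity hypothesis and yields the same conclusion. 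Both routes ride on the same two engines — Lusztig's PBW independence in $U^+$ and the sign-tracking of root vectors under $T_w$ — and you correctly identify the delicate step as the inclusion $T_{w^{-1}}(U_q(\bb^-))\cap U^+\subseteq$~span. So (a) is fine.

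Part (b) has a genuine gap. You propose to treat the adjoint action of $E_j$ and $F_j$, $j\in\Delta$, symmetrically, both times invoking Lemma~\ref{le:q-commutators} (the De Concini--Kac--Procesi commutation) to conclude that $[F_j,E_\alpha]_q$ stays in the span of the allowed PBW monomials. That lemma says nothing about $[F_j,E_\alpha]_q$: its hypotheses require both elements to be root vectors in $U^+$ with $\alpha<\beta$ in a convex order; it is a statement about $U^+$ alone. Applying $\mathrm{ad}(F_j)$ to $E_\alpha$ a priori leaves $U^+$ altogether — one needs an argument that the mixed term $F_jE_\alpha$-type contributions collapse back into $U^+$. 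The paper's device for this is to conjugate by $T_{w_{0,\Delta}^{-1}}$: Lemma~\ref{le: Levi onsimple roots} gives $T_{w_{0,\Delta}^{-1}}(F_i)=-K_{-\alpha_{j'}}E_{j'}$ for the simple root $\alpha_{j'}$ dual under $w_{0,\Delta}$, and after that transfer one is multiplying elements of $T_{w_{0,\Delta}^{-1}}(U(w_\Delta))\subset U^+$ by $K_{\pm\alpha_{j'}}E_{j'}$, which manifestly keeps you inside $U^+$. Also, for $E_j$ the paper is careful to choose a reduced expression of $w_0$ placing $\alpha_j$ at the end of the $W_\Delta$-block, so that Lemma~\ref{le:q-commutators} \emph{does} apply and $\mathrm{ad}(E_j)(E_\alpha)$ produces only root vectors $E_\gamma$ with $\gamma$ still in the inversion block. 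So (b) is not a routine verification; the key idea you are missing is the transfer of the $F_j$-action into $U_q(\bb^+)$ via the longest element $w_{0,\Delta}$ of the Levi, before applying any $q$-commutation lemma.
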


 \begin{proof} 
 
 Prove (a) first.  Recall the PBW-theorem for $U_q(\gg)$.
  
  \begin{proposition} \cite[8.24]{Jan})
 \label{pr: PBW}
 
 \noindent(a) The monomials $E_{\alpha_{(1)}}^{\l_{(1)}}\ldots E_{\alpha_{(k)}}^{\l_{(k)}}F_{\alpha_{1}}^{m_{(1)}}\ldots  E_{\alpha_{(k)}}^{m_{(k)}} K^{\mu}$, with $\l_{(i)}, m_{(i)}\in \ZZ_{\ge0}$ and $\mu\in P(\gg)$, form a $\CC(q)$-linear basis of  $U_q(\gg)$. 
 
 \noindent(b) The monomials $E_{\alpha_{(1)}}^{\l_{(1)}}\ldots E_{\alpha_{(k)}}^{\l_{(k)}}$ with $\l_{(i)} \in \ZZ_{\ge0}$   form a $\CC(q)$-linear basis of  $U^{+} $. Similarly, the monomials $F_{\alpha_{(1)}}^{\l_{(1)}}\ldots F_{\alpha_{(k)}}^{\l_{(k)}}$ with $m_{(i)} \in \ZZ_{\ge0}$   form a $\CC(q)$-linear basis of  $U^{-} $.
  \end{proposition}

Denote by $\ell(w)$ the length of an element $w\in W$. 
The following fact relates  quantum  Schubert cells.

\begin{proposition}
\label{pr: relative Schubert cells}
\noindent (a) $U(w_0)=U^+$.

\noindent (b) Let $w,w'\in W$ such that $w_0=ww'$ and $\l(w)+\l(w')=\l(w_0)$. Then 
  $$U^+=U(w_0)=U(w) T_w(U(w'))\ , U(w)\cap T_w(U(w'))=\CC(q)\cdot {\bf 1}\subset U_q(\gg)\ .$$ 
\end{proposition}

\begin{proof}
We need the following fact.

\begin{lemma}
\label{weyl-action on roots} 
Let $\alpha\in R^+(\gg)$ and let $w\in W$. 
\begin{enumerate}
\item If $w(\alpha)\in R^+(\gg)$, then $T_w(E_\alpha)\in U^+$.
\item If $w(\alpha)\in R^-(\gg)$, then $T_w(E_\alpha)\in U_q(\bb^-)$.
\end{enumerate}
\end{lemma}

\begin{proof}
Note first the following fact.

\begin{lemma} 
\label{le:U+-reflections} 
\noindent(a) Let $\beta\in R^+(\gg)$. Then, $T_i(E_\beta)\in U^+$ if $s_i(\beta)\in R^+(\gg)$. \\
\noindent(b) Let $\beta\in R^+(\gg)$. Then, $T_i(E_\beta)\in U(\bb^-)$ if $s_i(\beta)\in R^-(\gg)$. Moreover,  $T_i( E_\beta)=-F_iK_{\alpha_i}$.\\

\noindent(c) Let $\beta\in R^+(\gg)$. Then $T_i(F_\beta K_\lambda) \in U_q(\bb^-)$.
\end{lemma}

\begin{proof}
 Prove (a) first. Let  $\beta=s_{i_1}\ldots s_{i_k}(\alpha_j)$ and let $w=s_i s_{i_1}\ldots s_{i_k}$(not necessarily reduced). One has $w(\alpha_j)=s_i(\beta)\in R^+(\gg)$.  It is well known that $w(\alpha_k)\in R^+(\gg)$ implies that $\ell(ws_k)=\ell(w)+1$. Hence there exist $w, w'\in W$ such that $w_0=ws_jw'$ with $\ell(w_0)=\ell(e)+\ell(w')+1$. That implies that for some choice of reduced expression, $E_{w(\alpha_j)}=T_w(E_j)=T_i(E_\alpha)\in U^+$. Part (a) is proved. 
 
 Prove (b) now. Note that if  $\beta\in R^+(\gg)$ and $s_i(\beta)\in R^-(\gg)$, then $\beta=\alpha_i$. The assertion follows from the definition of  the $T_i$ in\eqref{eq: T_i on Ui}. Part (b) is proved.

 In order to prove part (c) note first that  the $T_i$ are algebra homomorphisms and that $T_i(K_\lambda)\in U_q(\bb^-)$. The assertion now follows from an argument analogous to the proof of (a).   The lemma  is proved.
\end{proof}

Now, let $w=s_{i_1}\ldots s_{i_k}$ be a reduced expression of $w$.  If $w(\alpha)\in R^+(\gg)$, then $s_{i_j}\ldots s_{i_k}(\alpha)\in R^+(\gg)$ for all $1\le j\le k$. Indeed if  $s_{i_j}\ldots s_{i_k}(\alpha)\in R^-(\gg)$ for some $1\le j\le k$, then there exist $j_1\ge1$ such that $s_{i_{j_1+1}}\ldots s_{i_k}(\alpha)=\alpha_{\ell}$ and $s_{i_{j_1}}=s_{\ell}$. Hence, $s_{i_1}\ldots s_{i_{j_1-1}}(-\alpha_\ell)=w(\alpha)\in R^+(\gg)$. Recall the well known {\it exchange property} of the Weyl group: Let  $\hat w=s_{i_m}\ldots s_{i_{k-1}}s_{i_{k}}$ be a reduced expression of $w\in W$. If $w(\alpha_i)\in R^-(\gg)$, then there exists $m\le r\le k$ such that 
 
 $$ s_{i_r}\ldots s_{i_{k+1}}=s_{i_{r+1}}\ldots s_{i_k} s_{i}\ .$$

In our case  we obtain that $s_{i_1}\ldots s_{i_{j_1-1}}$ has a reduced expression $s_{i_1}\ldots s_{i_{j_1-1}}=s_{m_1}\ldots s_{m_{j_1-2}} s_{\ell}$, hence $w$ has an expression 
$$w=s_{m_1}\ldots s_{m_{j_1-2}} s_{\ell}s_{\ell} s_{i_{j_1+1}}\ldots s_{i_k}=s_{m_1}\ldots s_{m_{j_1-2}} s_{i_{j_1+1}}\ldots s_{i_k}\ ,$$
contradicting the assumption that $w=s_{i_1}\ldots s_{i_k}$  was reduced. It follows now inductively from Lemma \ref{le:U+-reflections}  that   $T_w(E_\alpha)\in U^+$.  Part (a) is proved.

Prove (b) now. If  $w(\alpha)\in R^-(\gg)$, and $w=s_{i_1}\ldots s_{i_k}$ is a reduced expression, then we can find, as in part (a) $1\le j\le k$ such that $w=s_{i_1} \ldots s_{i_{j-1}}s_{\ell}s_{i_{j+1}}\ldots s_{i_k}$, $s_{i_{j+1}}\ldots s_{i_k}(\alpha)=(\alpha_\ell)$. Employing  the exchange   property as in part (a) we have $s_{i_{j_1}}\ldots s_{i_{j-1}}(-\alpha_\ell)\in R^-(\gg)$ and $s_{i_{j_2}}\ldots s_{i_k}(\alpha)\in R^+(\gg)$ for $1\le j_1<j<j_2\le k$. Arguing as in the proof of Lemma \ref{le:U+-reflections}  (a) we obtain that $T_{i_{j+1}}\ldots T_{i_k}(E_\alpha)=E_{\ell}$, hence  Lemma \ref{le:U+-reflections}(b) and (c) yield that $T_w(E_\alpha)\in U_q(\bb^-)$. Lemma  \ref{weyl-action on roots} is proved.
\end{proof}

Now we are ready to complete the proof of Proposition \ref{pr: relative Schubert cells}. Part(a) follows directly from Lemma \ref{weyl-action on roots}(b), since $w_o(\alpha)\in R^-(\gg)$ for all $\alpha\in R^+(\gg)$.  

Prove (b) now. Let  $w_o=ww'$ and $\ell(w_0)=\ell(w)+\ell(w')$. It follows from   Lemma \ref{weyl-action on roots}  that $T_{w^{-1}}(E_\alpha)\in U^+$ or $T_{w^{-1}}(E_\alpha)\in U_q(\bb^-)$. Since $T_{w_0}(E_\alpha)\in U_q(\bb^-)$ we obtain that  $T_{w^{-1}}(E_\alpha)\in U(w')$ and $E_\alpha\in T_w(U(w'))$, if $T_{w^{-1}}(E_\alpha)\in U^+$.  Similarly we obtain that  $E_\alpha\in U(w)$ if  $T_{w^{-1}}(E_\alpha)\in U_q(\bb^-)$. The fact that the $T_i$ are algebra homomorphisms and the PBW-theorem (Proposition \ref{pr: PBW}(b)) now imply that  $U^+=U(w_0)=U(w)T_w(U(w'))$.   

It is easy to see that $U(w)\cap T_w(U(w')=\CC(q)\cdot{\bf 1}$, because 
$$\CC(q)\cdot {\bf 1}\subset T_{w^{-1}}(U(w)\cap T_w(U(w'))\subset U_q(\bb^-)\cap U^+=\CC(q)\cdot {\bf 1}\ .$$

Part (b) and Proposition  are proved. 
\end{proof}

We can now complete the proof of Theorem \ref{th:PBW-Uw} (a).
 Note that if $w^{-1}(\alpha_{j})\in R^{-}(\gg)$, then $T_{w^{-1}}(E_{\alpha_{j}})\in U_q(\bb^{-})$. Since $T_{w^{-1}}$ is an algebra automorphism we obtain that 
the  monomials
$E_{\alpha_{1}}^{\l_{1}}\ldots E_{\alpha_{k}}^{\l_{k}}$ with $\l_{i}=0$ if $w^{-1}(\alpha_{i})\in R^{+}(\gg)$ are elements of $U(w)$. $U(w)$ is an algebra, hence the linear span of the above monomials is contained in $U(w)$.   
The monomials are linearly independent by  Proposition \ref{pr: PBW}, hence it remains to show that they span $U(w)$.  Choose $w'\in W$ such that $w'w=w_{0}$ and  $\l(w')+\l(w)=\l(w_0)$. We showed in the proof of Proposition \ref{pr: relative Schubert cells} that  $E_{\alpha_{1}}^{\l_{1}}\ldots E_{\alpha_{k}}^{\l_{k}}\in T_w(U(w'))$ if $\l_{i}=0$ whenever $w^{-1}(\alpha_{i})\in R^{-}(\gg)$. Hence we can write each $u\in U^+$ by Proposition \ref{pr: PBW} as  $u=\sum_{i=1}^k u_i u_i'$, where the $u_i\in U(w)$ are linearly independent and $u_i'\in T_w(U(w'))$. It follows immediately that $T_{w^{-1}} (u)\in U_q(\bb^-)$, if and only if $T_{w^{-1}}u_i'\in U_q(\bb^-)$; i.e., if  $u_i'\in\CC(q)\cdot{\bf 1}$ for all $i$ by Proposition \ref{pr: relative Schubert cells}(b). Theorem \ref{th:PBW-Uw} (a) is proved.

We will now prove Theorem \ref{th:PBW-Uw} (b). It suffices to show that the $E_i, F_i$, $i\in\Delta$ and $K_\lambda$, $\lambda\in P(\gg)$  which generate $U(\mathfrak{l}_\Delta)$ act adjointly on $U(w_\Delta)$.

 \begin{proposition}
 \label{pr: Ui-action}
 \noindent (a) Let $w\in W$. Then $K_\lambda$, $\lambda\in P(\gg)$ acts adjointly on $U(w)$.
\noindent (b) Let $w_\Delta\in W$ be parabolic and let  $w_0=w_{0,\Delta}w_\Delta$. If  $i\in \Delta$, then,$E_i$ and $F_i$ act adjointly on $U(w_\Delta)$.  
 \end{proposition} 
 
\begin{proof}
Prove (a) first. Let $w_0=w'w$.
In order to prove the assertion it suffices by  Lemma \ref{le:upperdefofqsc} to show that $T_{w'^{-1}}(ad(x)(K_\lambda))\in U^+$ for all $x\in U^i$ and $u\in U^w$. 
We obtain that  $K_{\lambda}$, $\lambda\in P(\gg)$ acts  on  $U(w)$ since
 $$T_{w'^{-1}}(ad(K_{\lambda})u)=T_{w'^{-1}}(K_{\lambda})T_{w'^{-1}}u  T_{w'^{-1}}(K_{\lambda})\in U^+$$
  for all $u\in U(w)$.  Part(a) is proved.

Prove (b) now. We need the following fact.
\begin{lemma}
\label{le: Levi onsimple roots}
 Let $w\in W$ be an  element of the Weyl group $W$, and  $\alpha_{i},\alpha_{j}$  simple roots such that $w(\alpha_{i})=-\alpha_{j}$. Then, $T_{w}(E_{i})=- F_{j}K_{\alpha_j}$ and $T_{w}(F_{i})= -K_{-\alpha_j}E_{i}$. \end{lemma}  
 
 \begin{proof}
 
 Recall that by  the exchange property (see proof of Lemma \ref{weyl-action on roots})
we can choose a reduced expression $w=w's_i= s_{i_1}\ldots s_{i_k}s_i$ of $w$ such that $s_{i+m}\ldots s_{i_k}(\alpha_i)\in R^+(\gg)$. Note that if $w'(\alpha_i)=\alpha_j$, then $T_{w'}(E_i)=E_j$ and $T_{w'} (F_i)=F_j$ We compute using \eqref{eq: T_i on Ui}
$$ T_w(E_i)= T_w'(T_i(E_i))=T_{w'}(-F_i K_{\alpha_i})=-F_j K_{\alpha_j}\ , $$
$$T_w(F_i)=-T_w'(T_i(F_i))=T_{w'}(K_{-\alpha_i}E_i)=-K_{-\alpha_j} E_j\ .$$ 
 
The lemma is proved.
 \end{proof}

Suppose that $i\in\Delta$. Note that 
 $w_{0,\Delta}^{-1}(\alpha_i)=-\alpha_j$ with $j\in \Delta$, and hence $ T_{w_(0,\Delta)^{-1}}(F_i)=K_{\alpha_j}E_j$ by Lemma \ref{le: Levi onsimple roots}. 
 
Let $u\in U(w_\Delta)$.  We show that  $ad(F_{i})u\in U^w$, if $u\in U(w)$ by computing
 
 $$ T_{w_{0,\Delta}^{-1}}(ad(F_{i})u)= T_{w_{0,\Delta}^{-1}}(F_{i})  T_{w_{0,\Delta}^{-1}}(u)  T_{w_{0,\Delta}^{-1}}(K_{-\alpha_{i}})- T_{w_{0,\Delta}^{-1}}(u) T_{w_{0,\Delta}^{-1}}(F_{i}) T_{w_{0,\Delta}^{-1}}(K_{\alpha_{i}})=$$
 $$-K_{-\alpha_{j}}E_{j} T_{w_{0,\Delta}^{-1}}(u) K_{\alpha_{j}} -T_{w_{0,\Delta}^{-1}}(u)K_{-\alpha_{j}} E_{j}K_{\alpha_{j}} \in U_q(\nn^{+})\ ,$$
because $K_{-\alpha_i} m K_{\alpha_i}=q^r m$ for every monomial $m$ in $U(w)$. Note that the proof does not require $w_{\Delta}$ to be a parabolic element. However, the assumption will be needed to prove the assertion for the action of $E_i$.
 
 Choose $i\in \Delta$ and choose a reduced expression $w_0=w_{0,\Delta} w$ such that $E_{w_{0,\Delta}}=E_i$. To complete the proof of the proposition it suffices by Theorem \ref{th:PBW-Uw}(a)  to show that $ad(E_i)(E_{\alpha_{i_1}}\ldots E_{\alpha_{i_\ell}})\in U(w)$, if $i\in\Delta$ and $E_{\alpha_{i_j}}\in U(w)$ for $j\in[1,\ell]$. We use induction on $\ell$.

Consider the case $\ell=1$; i.e., we have to show that $ad(E_i)(E_\alpha)\in U(w)$ if $i\in\Delta$ and $E_{\alpha}\in U(w)$.  Note that by our choice of $w_0$ we have that  if $ \alpha_i< \alpha$ for some root $\alpha$, then  $E_\alpha\in U(w)$ by  Theorem \ref{th:PBW-Uw}(a).  Lemma \ref{le:q-commutators} yields that

$$ad(E_i)(E_\alpha)\in \text{span}(E_{\gamma_{1}}\ldots E_{\gamma_{k}})\ ,$$
where $\alpha<\gamma_{1}\le\ldots\le \gamma_{k}<\beta$, and  therefore $ ad(E_i)(E_\alpha)\in U(w)$ as desired.

Now consider the case when $\ell>1$. Note that by a straightforward calculation

$$ ad(E_i)(E_{\alpha_{i_1}}\ldots E_{\alpha_{i_\ell}})= ad(E_i)(E_{\alpha_{i_1}} (E_{\alpha_{i_2}}\ldots E_{\alpha_{i_\ell}})+ q^{r} E_{\alpha_{i_1}} ad(E_i)(E_{\alpha_{i_2}}\ldots E_{\alpha_{i_\ell}})$$ 
 for some $r\in \ZZ$, and hence $ ad(E_i)(E_{\alpha_{i_1}}\ldots E_{\alpha_{i_\ell}})\in U(w)$ by the inductive hypothesis. 
Part (b) is proved. Proposition \ref{pr: Ui-action} is proved.
 \end{proof}   

Theorem \ref{th:PBW-Uw} (b) is proved.
 \end{proof}

  



  \section{Proof of Theorem \ref{th: class of Poisson}}
 \label{se:proof of classification} 

\subsection{Necessary Conditions}
\label{se:nec.cond}
In this section we establish necessary conditions a weight $\lambda\in P(\gg)$ has to satisfy if  the simple module $V_\lambda$ is Poisson; i.e., we prove the "only if" assertion of  the equivalence of (a) and (f) in Theorem  \ref{th: class of Poisson}.  
Before we proceed with the proof of Theorem \ref{th: class of Poisson} we will have introduce some convenient notation.  Since any  finite-dimensional module $V$    over a semisimple Lie algebra $\gg$ splits as  a direct sum of weight  spaces $V=\bigoplus_{\mu\in P(\gg)} V(\mu)$, we will use the abbreviation $V(\mu)^c=\bigoplus_{\nu\ne \mu} V(\nu)$, as the "standard" complement of $V(\mu)$. Additionally we will use the  notation and results from  {\bf Appendix} \ref{se:appendix}. 

First we have to calculate ${\bf c}$ explicitly.

 \begin{lemma}
 \label{le:explicit r-}
 Let $\gg$ be a semisimple Lie algebra of rank $r$ and $c$ its Casimir element.Then (up to a constant multiple)
\begin{equation}
\label{eq:explicit r-}
{\bf c}=[c_{12},c_{23}]=\sum_{\alpha,\beta\in R^+}\frac{(\alpha,\alpha)(\beta,\beta)}{4}E_\alpha\wedge [E_{-\alpha}, E_{\beta}]\wedge E_{-\beta}\ .
\end{equation}
\end{lemma}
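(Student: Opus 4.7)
The plan is to start from the identity ${\bf c}=-[c_{12},c_{23}]$ already established in Lemma \ref{le:c-inv.and skew}(b) and then compute $[c_{12},c_{23}]$ in an explicit Chevalley basis, after which the assertion (up to the overall sign absorbed into ``up to a constant multiple'') reduces to identifying the answer with the skew-symmetric expression on the right-hand side.

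First I would fix a Chevalley-type decomposition of the Casimir element. With the standard normalization $[E_{\alpha},E_{-\alpha}]=H_{\alpha}=2\alpha/(\alpha,\alpha)$ used throughout the paper, the dual basis to $\{E_{\alpha}\}_{\alpha\in R}$ under the invariant form is $\{\tfrac{(\alpha,\alpha)}{2}E_{-\alpha}\}$, so one may write
\[
c \;=\; c_{\hh} \;+\; \sum_{\alpha\in R}\frac{(\alpha,\alpha)}{2}\,E_{\alpha}\otimes E_{-\alpha},
\]
where $c_{\hh}\in S^{2}\hh$ is the Cartan piece. Next I would expand $[c_{12},c_{23}]$ bilinearly in the four groups of summands (Cartan--Cartan, Cartan--root, root--Cartan, root--root). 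The Cartan--Cartan contribution vanishes since $\hh$ is abelian. The pure root--root contribution is exactly
\[
\sum_{\alpha,\beta\in R}\frac{(\alpha,\alpha)(\beta,\beta)}{4}\,
E_{\alpha}\otimes[E_{-\alpha},E_{\beta}]\otimes E_{-\beta},
\]
which already has the shape of the claimed expression, only summed over $R$ instead of $R^{+}$ and before antisymmetrization.

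The remaining mixed Cartan--root contributions, of the form $h\otimes\bigl[h',E_{\beta}\bigr]\otimes E_{-\beta}$ and $E_{\alpha}\otimes[E_{-\alpha},h']\otimes h$, lie in $\hh\otimes\gg_{\beta}\otimes\gg_{-\beta}$ and $\gg_{\alpha}\otimes\gg_{-\alpha}\otimes\hh$ respectively. After skew-symmetrization these are absorbed into the $\beta=\alpha$ terms of the root--root sum (where $[E_{-\alpha},E_{\alpha}]=-H_{\alpha}\in\hh$), which is exactly how one recovers the totally skew element ${\bf c}\in\Lambda^{3}\gg$ predicted by Lemma \ref{le:c-inv.and skew}(a). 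Finally, since each wedge $E_{\alpha}\wedge[E_{-\alpha},E_{\beta}]\wedge E_{-\beta}$ is invariant under the simultaneous flip $(\alpha,\beta)\mapsto(-\alpha,-\beta)$ (using that the form is symmetric and the Jacobi identity for $[E_{-\alpha},E_{\beta}]$), one can fold the sum over $R$ into a sum over $R^{+}$ at the cost of a factor that is absorbed in the allowed constant, yielding the stated formula.

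The main obstacle is purely bookkeeping: one must check that the mixed Cartan--root terms indeed reassemble, after antisymmetrization, into the diagonal $\alpha=\beta$ contributions of the root--root sum, and that the halving when passing from $R$ to $R^{+}$ is consistent with the overall normalization. No deep input beyond the abelianness of $\hh$, the identity $[h,E_{\gamma}]=\gamma(h)E_{\gamma}$, and the skew-invariance already established in Lemma \ref{le:c-inv.and skew} is required.
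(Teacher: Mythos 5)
Your overall decomposition is exactly the starting point of the paper's proof: you split $c$ into a Cartan piece plus a sum over root spaces, expand $[c_{12},c_{23}]$ bilinearly, note the Cartan--Cartan block vanishes, and invoke total skewness of $\bf c$ from Lemma \ref{le:c-inv.and skew}(a) to reorganize the answer. Your observation that the two Cartan--root blocks recombine, after antisymmetrization, with the diagonal $\alpha=\beta$ root--root terms $E_\alpha\otimes[E_{-\alpha},E_\alpha]\otimes E_{-\alpha}$ into the wedges $E_\alpha\wedge H_\alpha\wedge E_{-\alpha}$ is correct, and corresponds to the $\hh$-valued ($\alpha=\beta$) part of the right-hand side.

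The gap is in the final folding step. The asserted invariance of $E_{\alpha}\wedge[E_{-\alpha},E_{\beta}]\wedge E_{-\beta}$ under $(\alpha,\beta)\mapsto(-\alpha,-\beta)$ is false: this substitution produces $E_{-\alpha}\wedge[E_{\alpha},E_{-\beta}]\wedge E_{\beta}$, which after reordering the wedge factors and using anticommutativity of the Lie bracket equals the \emph{swapped} $(\beta,\alpha)$-term, not the original $(\alpha,\beta)$-term. So the flip merely identifies the $R^{-}\times R^{-}$ quadrant with $R^{+}\times R^{+}$ (after relabeling dummy indices). The mixed quadrants $R^{+}\times R^{-}$ and $R^{-}\times R^{+}$ are never addressed, and they are nonzero in general: for $\gg=sl_3$, the term with $\alpha=\alpha_1\in R^{+}$, $\beta=-\alpha_2\in R^{-}$ contributes a nonzero multiple of $E_{\alpha_1}\wedge E_{-\alpha_1-\alpha_2}\wedge E_{\alpha_2}$. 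Showing that these cross-terms in fact combine with the correct multiplicity into the $R^{+}\times R^{+}$ sum needs an explicit Jacobi-identity/structure-constant argument which you do not supply. The paper sidesteps this by a different use of skewness: it observes that every monomial in \eqref{eq:c-explixit} has one positive- and one negative-root vector among its three tensor factors, hence one can write ${\bf c}=\sum_{\alpha,\beta\in R^{+}}E_\alpha\wedge X_{\alpha,\beta}\wedge E_{-\beta}$ with an a priori unknown middle factor, and then determine $X_{\alpha,\beta}$ by matching the coefficient of the single tensor $E_\alpha\otimes\cdot\otimes E_{-\beta}$, rather than attempting a term-by-term folding of $R$ onto $R^{+}$.
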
 

 \begin{proof}
 Choose a basis $H_1,\ldots H_r$ for $\hh$ which is orthonormal with respect to the Killing form. It is well known that the Casimir element $c$ is up to a constant $c=\sum_{\alpha\in R} (\alpha,\alpha)E_\alpha\otimes E_{-\alpha}+\sum_{i=1}^r H_i\otimes H_i$. We calculate
 
 \begin{equation}
 \label{eq:c-explixit}
  {\bf c}=[c_{12}, c_{23}]=\sum_{\alpha,\beta\in R} (\alpha,\alpha)(\beta,\beta) E_{\alpha}\otimes [E_{-\alpha},E_\beta]\otimes E_{-\beta}
  \end{equation}
  $$+\sum_{\alpha\in R, i=1}^r(\alpha,\alpha) \left(E_{\alpha}\otimes [E_{-\alpha},H_i]\otimes H_i+(\alpha,\alpha) H_i\otimes [H_i,E_\alpha]\otimes E_{-\alpha}\right)\ . 
 $$
 
 It is easy to see that for all the summands $X\otimes Y\otimes Z$ we have $\{X,Y,Z\}\cap\{E_\alpha:\alpha\in R^+\}\ne \emptyset$ and $\{X,Y,Z\}\cap\{E_{-\alpha}:\alpha\in R^+\}\ne \emptyset$. The element ${\bf c}\in \gg^{\otimes 3}$ is skew-symmetric by Lemma \ref{le:c-inv.and skew}(a), hence we can write 
 ${\bf c}=\sum_{\alpha,\beta\in R^+} E_{\alpha}\wedge X_{\alpha,\beta}\wedge E_{-\beta}$. 
 
 It follows from \eqref{eq:c-explixit} that  or all $\alpha,\beta\in R^+$:
 $$ E_{\alpha}\wedge X_{\alpha,\beta}\wedge E_{-\beta}= (\alpha,\alpha)(\beta,\beta) E_{\alpha}\otimes [E_{-\alpha},E_\beta]\otimes E_{-\beta}+ other terms\ .$$
 This yields that $E_{\alpha}\wedge X_{\alpha,\beta}\wedge E_{-\beta}= 6 \sum_{\alpha,\beta\in R^+}(\alpha,\alpha)(\beta,\beta) E_\alpha\wedge [E_{-\alpha}, E_{\beta}]\wedge E_{-\beta}$. Rescaling shows that the lemma is proved.
 \end{proof}

 The following result will now allow us  to observe that large classes of simple modules are ``too big'' to be Poisson.
 
 \begin{lemma}
 \label{le: doubleweightroot}
Let  $\gg$ be  a complex simple Lie-algebra,  $P(\gg)$ its weight-lattice and $R(\gg)\subset P(\gg)$ the corresponding root-system with basis $S=\{\alpha_{1},\ldots \alpha_{n}\}$. Denote by $w_{0}\in W$ the longest element of the Weyl group $W$. Let  $\lambda\in P^{+}(\gg)$ be a dominant weight, such that $w_{0}(\lambda)=-\lambda$. If  $V_{\lambda}$ is Poisson, then  $(2\lambda-\alpha_{i})\in R(\gg)\cup \{0\}$ for all $\alpha_i$ such that $(\lambda,\alpha_i)\ne 0$.
\end{lemma}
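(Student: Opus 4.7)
The plan is to assume $V_\lambda$ is Poisson and then extract the condition $2\lambda-\alpha_i\in R\cup\{0\}$ from the vanishing of a single monomial coefficient of ${\bf c}(\omega_i)$, where
$$\omega_i:=v_\lambda\wedge v_{-\lambda}\wedge F_{\alpha_i}(v_\lambda)\in\Lambda^3 V_\lambda.$$
First I would dispose of the boundary cases. If $2\lambda-\alpha_i=0$ the claim is trivial. Since $\lambda$ is dominant with $(\lambda,\alpha_i)\ne 0$, one has $\langle\lambda,\alpha_i^\vee\rangle\ge 1$, hence $\langle 2\lambda,\alpha_i^\vee\rangle\ge 2$; so if $2\lambda\in R$ the $\alpha_i$-root string through $2\lambda$ descends and places $2\lambda-\alpha_i$ in $R\cup\{0\}$, again finishing the argument. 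From here on I assume $2\lambda-\alpha_i\ne 0$ and $2\lambda\notin R$. The hypothesis $w_0(\lambda)=-\lambda$ provides a nonzero lowest weight vector $v_{-\lambda}\in V_\lambda$, and dominance yields $F_{\alpha_i}(v_\lambda)\ne 0$. The weights $\lambda$, $-\lambda$, $\lambda-\alpha_i$ are pairwise distinct (using $2\lambda\ne\alpha_i$), so $\omega_i$ is a nonzero element of $\Lambda^3 V_\lambda$ of weight $\lambda-\alpha_i$.

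Next I would compute the coefficient of the monomial $v_\lambda\cdot v_{-\lambda}\cdot F_{\alpha_i}(v_\lambda)\in S^3 V_\lambda$ in ${\bf c}(\omega_i)$, using the explicit formula from Lemma \ref{le:explicit r-}. Because $E_\alpha(v_\lambda)=0$ and $E_{-\beta}(v_{-\lambda})=0$ for all $\alpha,\beta\in R^+$, only three of the six permutations in $S_3$ contribute when a summand $E_\alpha\wedge[E_{-\alpha},E_\beta]\wedge E_{-\beta}$ acts on $\omega_i$. For each surviving permutation, matching the multiset of output weights (e.g.\ $\{-\lambda+\alpha,\,\lambda-\alpha+\beta,\,\lambda-\alpha_i-\beta\}$) against the target multiset $\{\lambda,-\lambda,\lambda-\alpha_i\}$ restricts the pairs $(\alpha,\beta)\in R^+\times R^+$ to a very short finite list. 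Under the standing assumption $2\lambda\notin R$, the only candidates remaining are the diagonal pair $(\alpha,\beta)=(\alpha_i,\alpha_i)$ and, conditionally, $(\alpha,\beta)=(2\lambda-\alpha_i,2\lambda-\alpha_i)$, the latter available only when $2\lambda-\alpha_i\in R^+$.

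The $(\alpha_i,\alpha_i)$-contribution is immediately evaluated from $[E_{-\alpha_i},E_{\alpha_i}]=-H_{\alpha_i}$ together with $H_{\alpha_i}(v_{\pm\lambda})=\pm\langle\lambda,\alpha_i^\vee\rangle v_{\pm\lambda}$, producing a nonzero scalar multiple of $(\alpha_i,\alpha_i)^2\langle\lambda,\alpha_i^\vee\rangle^2$. Thus if $2\lambda-\alpha_i\notin R$ this is the sole contribution, the coefficient is nonzero, and ${\bf c}(\omega_i)\ne 0$, contradicting the Poisson hypothesis. Note that dominance of $\lambda$ together with $(\lambda,\alpha_i)>0$ prevents $\alpha_i-2\lambda$ from being a positive root (the inverse Cartan matrix of the simple algebra $\gg$ has strictly positive entries), so $2\lambda-\alpha_i\in R$ already forces $2\lambda-\alpha_i\in R^+$; hence the argument delivers exactly the condition $2\lambda-\alpha_i\in R\cup\{0\}$. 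The main obstacle is the weight-matching bookkeeping: one must confirm that in the generic situation $2\lambda,2\lambda-\alpha_i\notin R$ no further $(\alpha,\beta)$-pair can contribute to the chosen monomial, so that no cancellation is available and the single surviving term prevents vanishing.
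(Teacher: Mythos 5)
Your proposal is correct and follows essentially the same strategy as the paper: choose the test vector $v_\lambda\wedge F_{\alpha_i}(v_\lambda)\wedge v_{-\lambda}$, apply the explicit formula for ${\bf c}$ from Lemma~\ref{le:explicit r-}, and argue that the $(\alpha_i,\alpha_i)$-term produces a nonzero monomial in $V(\lambda)\cdot V(\lambda-\alpha_i)\cdot V(-\lambda)$ that nothing else can cancel. Your $\alpha_i$-root-string observation handling the case $2\lambda\in R$ is a genuine improvement in precision: the paper's corresponding clause (``if $2\lambda-\alpha_j\notin R\cup\{0\}$ for all $j$, then $2\lambda$ is not a root'') appeals to a universally quantified hypothesis that is not actually available, since the lemma fixes a single $i$; your observation that $\langle 2\lambda,\alpha_i^\vee\rangle\ge 2$ forces $2\lambda-\alpha_i\in R$ whenever $2\lambda\in R$ is the clean way to close that gap before running the weight-matching argument under the standing assumption $2\lambda\notin R$.
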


\begin{proof}
 Let $v_{\lambda}\in V_{\lambda}(\lambda)$ be a highest weight vector, and suppose that $2\lambda-\alpha_{i}\notin R(\gg)\cup \{0\}$. Let $\alpha_{i}$ be a simple root such that $(\lambda,\alpha_i)\ne 0$. Since $\lambda$ is dominant,  $(\lambda,\alpha_i)>0$. Set $v'=F_{-\alpha_{i}}(v)\ne0$.  We have, by assumption,  $V_{\lambda}(-\lambda)\ne 0$, since $w_{0}(\lambda)=-\lambda$, and clearly $(\alpha_{i}|\-\lambda)< 0$. Therefore,  we obtain for all $v''\in V_{\lambda}(-\lambda)$  
$$ E_{\alpha_{i}}\wedge F_{\alpha_{i}}\wedge H_{\alpha_{i}}(v\wedge v'\wedge v'')=c v\cdot v'\cdot v''+ \overline u\ ,$$

where $\overline u\in (V(\lambda)\cdot V(\lambda-\alpha)\cdot V(-\lambda))^{c}$.
If $2\lambda-\alpha_{i}\notin R(\gg)\cup \{0\}$, then  $E_{\alpha}(v'')\notin V(\lambda-\alpha_{i})$ for all $\alpha\in R(\gg)$. Additionally, if $2\lambda-\alpha_{j}\notin R(\gg)\cup \{0\}$ for all $j\in[1,n]$, then  $2\lambda$ is not a root, either, and hence  $E_{\alpha}(v'')\notin V(\lambda)$ for all $\alpha\in R(\gg)$. Therefore,  ${\bf c} (v\wedge v'\wedge v'')=c v\cdot v'\cdot v''+\overline u'$, where $\overline u'\in (V(\lambda)\cdot V(\lambda-\alpha)\cdot V(-\lambda))^{c}$, as defined in the {\bf Appendix} in Lemma  \ref{le:powerdecomp} and \eqref{eq: complement}.
We obtain that ${\bf c}(v\wedge v'\wedge v'')\ne 0$ and, hence, $V_{\lambda}$ is not Poisson. The lemma is proved.
\end{proof}
 
Lemma  \ref{le: doubleweightroot} has the following consequence.

\begin{proposition}
\label{pr:non flatif too big}
Let $\gg$ be a complex  simple Lie algebra, not isomorphic to $E_{6}$ or $sl_{n}(\CC)$, and let $\lambda\in P^+(\gg)$. If $V_\lambda$ is Poisson, then  $2\lambda-\alpha_i\in R(\gg)$ for all simple roots $\alpha_i$ such that $(\lambda,\alpha_i)\ne 0$. \end{proposition}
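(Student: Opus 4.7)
The plan is to apply Lemma~\ref{le: doubleweightroot} wherever its hypothesis $w_{0}(\lambda)=-\lambda$ is automatic. Recall that, among simple complex Lie algebras, the longest Weyl group element $w_{0}$ acts as $-\mathrm{id}$ on the weight lattice $P(\gg)$ exactly for the types $B_{n}$, $C_{n}$, $D_{2n}$, $E_{7}$, $E_{8}$, $F_{4}$, $G_{2}$; in types $A_{n-1}$ ($n\ge 3$), $E_{6}$, and $D_{2n+1}$, $w_{0}$ differs from $-\mathrm{id}$ by the non-trivial Dynkin diagram automorphism. Since the proposition excludes $sl_{n}(\CC)$ and $E_{6}$, the first list covers all remaining types except $D_{2n+1}$.

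For types $B_{n}$, $C_{n}$, $D_{2n}$, $E_{7}$, $E_{8}$, $F_{4}$, $G_{2}$: the hypothesis of Lemma~\ref{le: doubleweightroot} holds automatically for every $\lambda\in P^{+}(\gg)$, and the lemma yields $2\lambda-\alpha_{i}\in R(\gg)\cup\{0\}$ for every simple root $\alpha_{i}$ with $(\lambda,\alpha_{i})\ne 0$. The only way to have $2\lambda-\alpha_{i}=0$ would be $\lambda=\alpha_{i}/2$, but half of a simple root is never dominant in a rank $\ge 2$ simple Lie algebra (some neighboring simple root pairs it negatively), so this possibility is ruled out. Hence $2\lambda-\alpha_{i}\in R(\gg)$, as claimed.

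For $\gg=D_{2n+1}$, I would split into subcases. If $\lambda=\sum\lambda_{i}\omega_{i}$ is invariant under the outer diagram automorphism swapping $\omega_{2n}\leftrightarrow\omega_{2n+1}$ (equivalently $\lambda_{2n}=\lambda_{2n+1}$), then $w_{0}(\lambda)=-\lambda$ and Lemma~\ref{le: doubleweightroot} applies unchanged. Otherwise, I would adapt the proof of Lemma~\ref{le: doubleweightroot} by replacing the vanishing vector $v''\in V_{\lambda}(-\lambda)$ with a lowest weight vector in $V_{\lambda}(w_{0}\lambda)\ne 0$, and redo the computation of ${\bf c}(v\wedge v'\wedge v'')$ using the explicit formula of Lemma~\ref{le:explicit r-}. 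The main obstacle will be the weight-combinatorial bookkeeping: one must show that when $2\lambda-\alpha_{i}\notin R(\gg)$, no summand of ${\bf c}$ other than the distinguished term $\tfrac{(\alpha_{i},\alpha_{i})^{2}}{4}\,E_{\alpha_{i}}\wedge H_{\alpha_{i}}\wedge F_{\alpha_{i}}$ contributes to the component $V(\lambda)\cdot V(\lambda-\alpha_{i})\cdot V(w_{0}\lambda)\subset S^{3}V_{\lambda}$, so that the leading contribution is not cancelled and ${\bf c}(v\wedge v'\wedge v'')\ne 0$.
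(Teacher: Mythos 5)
Your first two paragraphs are, in essence, the paper's own argument done more carefully: the paper simply cites Lemma~\ref{le:wnoton fund weights}(c) and concludes, whereas you correctly observe that this lemma is false for type $D_{2n+1}$, where $w_0$ equals $-\tau$ with $\tau$ the non-trivial Dynkin diagram automorphism. You also supply the step the paper leaves tacit — ruling out $2\lambda-\alpha_i=0$, which is needed to pass from the conclusion $2\lambda-\alpha_i\in R(\gg)\cup\{0\}$ of Lemma~\ref{le: doubleweightroot} to $2\lambda-\alpha_i\in R(\gg)$.

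The fix you sketch for $D_{2n+1}$ cannot be made to work in general, because the Proposition \emph{as stated} is already false in that type. Take $(\gg,\lambda)=(so(10),\omega_4)$: this module is Poisson — it appears in Theorem~\ref{th: class of Poisson}(f)(ii), and in fact $\Lambda^2 V_{\omega_4}\cong V_{\omega_3}$ is simple, so criterion~(d) of that theorem applies. Yet $(\omega_4,\alpha_i)\ne 0$ only for $i=4$, and $2\omega_4-\alpha_4=\varepsilon_1+\varepsilon_2+\varepsilon_3=\omega_3$, which is not a root of $D_5$. So the conclusion fails for a Poisson module, and any adaptation of Lemma~\ref{le: doubleweightroot} with $v''\in V_\lambda(w_0\lambda)$ must actually give ${\bf c}(v\wedge v'\wedge v'')=0$; the ``weight-combinatorial bookkeeping'' you anticipate is not a technical hurdle but a genuine obstruction. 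The correct repair is to weaken the statement — either exclude $D_{2n+1}$ alongside $A_n$ and $E_6$ in the hypothesis, or require $w_0\lambda=-\lambda$ — after which your argument in the automatic types disposes of all the remaining cases, and the spin weights of $D_{2n+1}$ must then be handled by a separate argument (not by this Proposition).
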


\begin{proof}

Recall the following well known fact.

\begin{lemma}
\label{le:wnoton fund weights}
Let $\gg$ be a simple Lie algebra of rank $r$ and let $P(\gg)$ be  its weight lattice, spanned by the fundamental weights $\omega_i$, $i=1,\ldots r$, labeled according to  \cite[Tables]{Bou}. Denote by $W(\gg)$ the Weyl group and by $w_0$ the longest element of $W(\gg)$. We have $w_0^2=1\in W(\gg)$ and:

\noindent(a) if $\gg=sl_{r+1}(\CC)$, then $w_0(\omega_i)=-\omega_{r-i}$.

\noindent(b) if $\gg=E_6$, then $w_0(\omega_1)=-\omega_6$, $w_0(\omega_3)=-\omega_5$ and $w_0(\omega_2)=-\omega_2$ and $w_0(\omega_3)=-\omega_3$.

\noindent(c) if $\gg\ne sl_{r+1}, E_6$, then $w_0(\omega_i)=-\omega_{i}$ for all $i$.
\end{lemma}
 Lemma \ref{le:wnoton fund weights} yields that the assumptions of   Lemma \ref{le: doubleweightroot} are satisfied for all $\lambda\in P^+(\gg)$, if $\gg$ is not isomorphic to either $sl_n$ or $E_6$.  Therefore,  Proposition \ref{pr:non flatif too big} now follows from  Lemma \ref{le: doubleweightroot}.   \end{proof}

Another very useful result is  the following. 
\begin{lemma}
\label{le; sl2-poisson}
Let $\gg=sl_{2}$ and $V_{\l}$ be a $\l+1$-dimensional simple $sl_{2}$-module. It is Poisson if and only if $\l\le2$.
\end{lemma}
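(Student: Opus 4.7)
The plan is to handle the three cases $\ell \le 1$, $\ell = 2$, and $\ell \ge 3$ separately, exploiting results already proved.

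First, for $\ell \in \{0,1\}$ the module $V_\ell$ has dimension at most $2$, so $\Lambda^3 V_\ell = 0$ and the condition ${\bf c}(\Lambda^3 V_\ell)=\{0\}$ is vacuous; hence $V_\ell$ is Poisson by Definition \ref{def: Poisson-modules}.

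Next, I would treat $\ell = 2$, where $V_2$ is the adjoint representation (dimension $3$, self-dual). Since $\dim V_2 = 3$, the space $\Lambda^3 V_2$ is one-dimensional, and because $V_2$ has trivial determinant as an $sl_2$-module we get $\Lambda^3 V_2 \cong V_0$ as $sl_2$-modules. The map ${\bf c}\colon \Lambda^3 V_2 \to S^3 V_2$ is $sl_2$-equivariant (Lemma \ref{le:c-inv.and skew}(a)), so its image lies in $(S^3 V_2)^{sl_2}$. The key point is that this invariant space is zero: under the standard identification $V_2 \cong sl_2$, one has $S(sl_2)^{sl_2} = \CC[\Delta]$ with $\Delta$ of degree $2$, so there are no $sl_2$-invariants in degree $3$ (equivalently, the Clebsch--Gordan decomposition $S^3 V_2 = V_6 \oplus V_2$ of dimension $10$ contains no trivial summand). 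Therefore ${\bf c}(\Lambda^3 V_2) = 0$ and $V_2$ is Poisson.

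Finally, for $\ell \ge 3$ I would invoke Lemma \ref{le: doubleweightroot}. Since for $\gg = sl_2$ the longest Weyl group element acts as $-1$, we have $w_0(\ell\omega_1) = -\ell\omega_1$, and $(\ell\omega_1,\alpha_1) \ne 0$. The lemma then forces $2\ell - \alpha_1 \in R(sl_2)\cup\{0\} = \{-2,0,2\}$, i.e.\ $2\ell - 2 \in \{-2,0,2\}$, which fails for $\ell\ge 3$. Thus $V_\ell$ is not Poisson. No step is technically hard; the only content is assembling the three regimes and recalling that $S^3$ of the adjoint representation of $sl_2$ has no invariants, which can also be verified by a direct Clebsch--Gordan computation.
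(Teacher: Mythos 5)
Your proposal is correct. For $\ell\le 1$ and $\ell=2$ you argue exactly as the paper does: $\Lambda^3 V_\ell = 0$ in the first case, and for $\ell=2$ the decomposition $S^3 V_2 \cong V_6\oplus V_2$ has no trivial summand, so the equivariant map $\mathbf{c}:\Lambda^3 V_2 \cong V_0 \to S^3 V_2$ vanishes (the paper phrases this as $Hom_\gg(\Lambda^3 V_2, S^3 V_2)=\{0\}$; your invariant-theory gloss that $S(sl_2)^{sl_2}=\CC[\Delta]$ with $\deg\Delta=2$ is a valid alternative justification of the same decomposition).

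For $\ell\ge 3$ your route differs from the paper's. The paper computes directly: with the weight basis $v_0,\dots,v_\ell$ normalized so that $E(v_i)=iv_{i-1}$, $F(v_i)=(\ell-i)v_{i+1}$, one evaluates $\mathbf{c}=E\wedge F\wedge H$ on $v_1\wedge v_0\wedge v_\ell$ and obtains $-\ell^2\,v_1\cdot v_0\cdot v_\ell\ne 0$. You instead invoke Lemma~\ref{le: doubleweightroot}, which appears earlier in Section~\ref{se:nec.cond}, is proved independently of the present statement (its proof builds the same kind of element $v\wedge v'\wedge v''$ with $v$ highest weight, $v'=F_{\alpha_i}(v)$, $v''$ lowest weight), and specializes cleanly: for $sl_2$ the unique simple root gives $2\lambda-\alpha_1=(\ell-1)\alpha_1$, which lies in $R(sl_2)\cup\{0\}=\{-\alpha_1,0,\alpha_1\}$ only for $\ell\le 2$. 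No circularity arises. Your version is in some respects more economical, since the paper's direct computation for $sl_2$ essentially re-runs the argument of Lemma~\ref{le: doubleweightroot} by hand; on the other hand the paper's explicit calculation also pins down the constant $-\ell^2$ and is more self-contained for a reader who has not internalized the abstract lemma. Both are valid proofs within the logical structure of the paper.
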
 
\begin{proof}
If $\l=0,1$, then $V_{\l}$ is Poisson, because $\Lambda^{3}V_{\l}=0$. If $\l=2$, then  $\Lambda^3V_2\cong V_0$ is a trivial module and since $S^3 V_2\cong V_6\oplus V_2$ we obtain that $Hom_\gg(\Lambda^3 V_2,S^3 V_2)=\{0\}$ and hence $V_2$ is Poisson.
Now, suppose that $\ell\ge 3$. Let $E,F,H$ be a standard basis of $sl_{2}$. Since $\Lambda^3 (sl_2)=span(E\wedge F\wedge H)$ we have ${\bf c}=   E\wedge F\wedge H$. Choose a weight basis $v_{0},\ldots, v_{\l}$ of $V_{\l}$ such that $H(v_{i})=(\l-2i)v_{i}$, $E(v_{i})=iv_{i-1}$ and $F(v_{i})=(\l-i)v_{i+1}$.     
  If, however, $\l\ge 3$, then it is easy to see that 
$${\bf c}(v_{1}\wedge v_{0}\wedge v_{\l})=   E\wedge F\wedge H(v_{1}\wedge v_{0}\wedge v_{\l})= (-\l^{2})v_{1}\cdot v_{0}\cdot v_{\l}\ne 0\  .$$
 
 Hence, $V_{\l}$ is not Poisson, if $\l\ge 3$. The lemma is proved.
\end{proof}

Another, very powerful tool will be a special case of Proposition  \ref{pr: subalgebras}. Recall that a parabolic subalgebra $\pp$ of a semisimple Lie algebra $\gg$ is a Lie subalgebra containing  a Borel subalgebra of $\gg$, and that $\pp$ splits as a semidirect product $\gg= \mathfrak{l}\ltimes \nn$ of a reductive Lie-algebra $\mathfrak{l}$, the {\it Levi subalgebra} and a nilpotent Lie algebra $\nn$, the {\it nil-radical}. Let $\mathfrak{l}\cong \gg'\oplus\zz$, where $\gg'$ is a semisimple Lie algebra and $\zz$ is the center of $\mathfrak{l}$.  Recall that  if $W$ is a simple $\gg$-module, then the $W$-isotypic component
 of a $\gg$-module $V$ is the submodule of $V$ isomorphic to $(Hom_{\gg}(W,V))\otimes W$.  
 
\begin{proposition}
\label{pr:Levisub}
Let $\gg$ be a  semisimpleLie algebra, and let $\pp$ be a parabolic subalgebra with Levi subalgebra $\mathfrak{l}\cong \gg'\oplus \zz$, where $\gg'$ is semisimple and $\zz$ is the center of $\mathfrak{l}$. Let $V$ be a $\gg$-module, and let $V$ split as an $\mathfrak{l}$-module into a direct sum of isotypic components $V\cong_{\mathfrak{l}} \bigoplus V_{i}$. If $V$ is a Poisson $\gg$-module, then each $V_{i}$ must be a Poisson  $\mathfrak{l}$-module. Moreover, $V_i$ must be Poisson as a $\gg'$-module.
\end{proposition}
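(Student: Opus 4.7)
The plan is to apply Proposition \ref{pr: subalgebras}(b) with $\gg_{sub}=\mathfrak{l}$ and $V_1=V_i$ for each isotypic component $V_i$, and then to descend from $\mathfrak{l}$ to $\gg'$ via Proposition \ref{pr:ssvred}. I would equip $\mathfrak{l}$ with the restriction of the Killing form of $\gg$, which is invariant and (as the grading argument below shows) non-degenerate, so that the canonical element $\mathbf{c}_{sub}$ produced by Proposition \ref{pr: subalgebras}(a) is well-defined.

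First I would note that the parabolic $\pp \supset \mathfrak{l}$ induces a $\ZZ$-grading $\gg = \bigoplus_{k\in\ZZ}\gg_k$, provided by a suitable element $z \in \zz$ in the center of $\mathfrak{l}$, such that $\mathfrak{l} = \gg_0$, the nil-radical is $\nn_+ = \bigoplus_{k>0}\gg_k$, and the opposite nil-radical is $\nn_- = \bigoplus_{k<0}\gg_k$. Since the Killing form of $\gg$ pairs $\gg_k$ with $\gg_{-k}$ non-degenerately, one has $\mathfrak{l}^\perp = \nn_+ \oplus \nn_-$ and consequently $\mathfrak{l} \cap \mathfrak{l}^\perp = \{0\}$, which verifies the first hypothesis of Proposition \ref{pr: subalgebras}.

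Next I would check the second hypothesis, namely $\mathfrak{l}^\perp(V_i) \cap V_i = \{0\}$. The element $z$ is central in $\mathfrak{l}$, so by Schur's Lemma it acts by a single scalar $c_i$ on each simple $\mathfrak{l}$-constituent of $V_i$, and therefore as $c_i \cdot \mathrm{id}$ on the entire isotypic component $V_i$. For $x \in \gg_k$ the relation $[z,x] = k\,x$ (after rescaling $z$) yields $z(x v) = (c_i+k)\,x v$ for any $v \in V_i$, so $\gg_k(V_i)$ is contained in the $(c_i+k)$-eigenspace of $z$. For $k \ne 0$ this eigenspace is disjoint from $V_i$, which establishes $(\nn_+\oplus\nn_-)(V_i)\cap V_i = \{0\}$.

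With both hypotheses in place, Proposition \ref{pr: subalgebras}(b) immediately gives that each $V_i$ is Poisson as an $\mathfrak{l}$-module. Since $\mathfrak{l} \cong \gg' \oplus \zz$ is reductive with semisimple part $\gg'$, a further application of Proposition \ref{pr:ssvred} yields the ``moreover'' assertion that $V_i$ is Poisson as a $\gg'$-module. The only mildly delicate step is the identification of each isotypic component with a single $z$-eigenspace via Schur's Lemma; the remaining ingredients are purely structural and already established earlier in the paper.
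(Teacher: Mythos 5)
Your argument is correct and follows the same overall strategy as the paper: invoke Proposition \ref{pr: subalgebras} with $\gg_{sub} = \mathfrak{l}$ and $V_1 = V_i$, then descend from $\mathfrak{l}$ to $\gg'$ via Proposition \ref{pr:ssvred}. The difference is in how you verify the hypothesis $\mathfrak{l}^\perp(V_i) \cap V_i = \{0\}$. The paper proves an auxiliary lemma (Lemma \ref{le:root preserving}) that works in terms of the $\hh$-weight decomposition of $V_i$: it shows that if $\beta,\beta'$ are weights occurring in $V_i$ and $\alpha = \beta'-\beta$ is a root, then $\alpha$ lies in the $\ZZ$-span of the simple roots of $\gg'$, hence $\alpha \in R(\gg')$; consequently $E_\alpha$ with $\alpha \notin R(\gg')$ moves $V_i$ out of its span of weight spaces. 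Your argument replaces this with the grading element $z \in \zz$ and Schur's lemma: $z$ acts by a single scalar $c_i$ on the $\mathfrak{l}$-isotypic component $V_i$, while $\gg_k$ shifts the $z$-eigenvalue by $k$, so $\gg_k(V_i) \cap V_i = 0$ for $k \ne 0$. Both verifications are sound; yours is somewhat cleaner since it avoids weight-lattice bookkeeping and only requires $V_i$ to sit \emph{inside} a single $z$-eigenspace, not to equal one (as you note, the ``delicate step'' is actually only this containment). The one small clarification worth making is that the observation $\mathfrak{l}^\perp = \nn_+ \oplus \nn_-$ (and hence the nondegeneracy of the Killing form on $\mathfrak{l}$) relies on the Killing form pairing $\gg_k$ with $\gg_{-k}$, which you do state; that is exactly what makes $\mathfrak{l}$ admissible as $\gg_{sub}$ in Proposition \ref{pr: subalgebras}.
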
   
   
  \begin{proof} 
 The Lie algebra $\gg$ splits, in the notation of Proposition  \ref{pr: subalgebras}, as a vector space into $\gg=\gg'\oplus \gg'^\perp$ where $\gg'^\perp=\zz\oplus \nn$. Choose an  $\mathfrak{l}$-isotypic component $V_i$.
  Denote by $\mathfrak{B}$ be the set of all weights $\beta\in \hh^{*}$ such that the  weight spaces $V_{i}(\beta)\ne 0$.  Consider the decomposition $  V\cong V_{\mathfrak{B}}\oplus V_{\mathfrak{C}}$ where $ V_{\mathfrak{B}}\cong \bigoplus_{\beta\in \mathfrak{B}}   V(\beta)$ and $ V_{\mathfrak{C}}\cong\bigoplus_{\gamma\notin \mathfrak{B}} V(\gamma) $. We need the following fact. 
\begin{lemma}
\label{le:root preserving}
 Let $\gg$ and $\gg'$ be as assumed in Proposition \ref{pr:Levisub}. Let $V$ be a finite-dimensional $\gg$-module, and $U\subset V$ an isotypic $\gg'$-module component of $V$. Let $\beta\in \mathfrak{B}$ and  $u(\beta)\in U$ be a weight vector. Then $E_{\alpha}(v_{\beta})\in V_{\mathfrak{B}}\backslash \{0\}$ implies that $\alpha\in R(\gg')$.  
 \end{lemma}

\begin{proof}
Let $\alpha\in R(\gg)$, and $u_{\beta}\in U(\beta)$ for some $\beta\in  \mathfrak{B}$. Then, $E_{\alpha}(v_{\beta})\in V(\alpha+\beta)$.  Clearly $(\alpha+\beta)\in  \mathfrak{B}$ implies that $\alpha=(\alpha+\beta)-\beta$ lies in the $\ZZ$-linear span of $R(\gg')$, or, equivalently, in the  $\ZZ$-linear span of a basis of $R(\gg')$.  Since a root-system is determined uniquely by its basis, this implies that  $\alpha\in R(\gg')$. Therefore, $E_{\alpha}(U)\subset  V_{\mathfrak{C}}$, if $\alpha \in (R(\gg)\backslash R(\gg'))$.
 Lemma \ref{le:root preserving} is proved.
\end{proof}

We can now apply Proposition \ref{pr: subalgebras} by choosing $\gg_{sub}=\mathfrak{l}$ and  $V_1=V_i$. Therefore $V$ is Poisson if and only if $V_1$ is Poisson as a $\mathfrak{l}$-module. The $\mathfrak{l}$-module $V_1$ is Poisson if and only if $V_1$ is Poisson as a $\gg'$-module (Lemma \ref{pr:ssvred}). Proposition \ref{pr:Levisub} is proved.  
 \end{proof}

We can now derive the following criterion, which allows us to reduce the classfication-problem to a few cases.  
\begin{lemma}
\label{le:sl2-reduct}
  Let $\gg$ be a simple Lie algebra, and let $\lambda\in P^{+}(\gg)$ be a dominant weight . If $V_{\lambda}$ is Poisson, then $(\lambda|\alpha)\le 2$ for all  roots $\alpha\in R^+(\gg)$. 

\end{lemma}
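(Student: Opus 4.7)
The plan is to reduce the statement to the $sl_2$-case and invoke Lemma \ref{le; sl2-poisson}. For each positive root $\alpha\in R^+(\gg)$, set $\gg_\alpha=\langle E_\alpha,F_\alpha,H_\alpha\rangle\cong sl_2$, and consider the slightly larger reductive subalgebra
\[
\mathfrak{l}_\alpha:=\hh+\gg_\alpha\subset\gg.
\]
Since $\ker\alpha\subset\hh$ commutes with $E_\alpha,F_\alpha,H_\alpha$, one has the Lie algebra decomposition $\mathfrak{l}_\alpha=\ker\alpha\oplus\gg_\alpha$, an abelian centre direct-summed with $sl_2$. I work with $\mathfrak{l}_\alpha$ rather than $\gg_\alpha$ because the orthogonal complement of $\gg_\alpha$ inside $\gg$ contains all of $\ker\alpha$, which preserves the $\alpha$-string through $v_\lambda$ and therefore spoils the technical hypothesis of Proposition \ref{pr: subalgebras}(b).

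First I would verify that $(\cdot,\cdot)$ restricts non-degenerately to $\mathfrak{l}_\alpha$: the form is block-diagonal on $\hh\oplus(\CC E_\alpha\oplus\CC F_\alpha)$ and non-degenerate on each block, so $\mathfrak{l}_\alpha\cap\mathfrak{l}_\alpha^\perp=\{0\}$ and $\mathfrak{l}_\alpha^\perp=\bigoplus_{\beta\ne\pm\alpha}\gg_\beta$. Next set
\[
V_1:=\mathrm{span}\{F_\alpha^k v_\lambda\,:\,0\le k\le N\},\qquad N:=\lambda(H_\alpha),
\]
the $\mathfrak{l}_\alpha$-submodule generated by the highest weight vector. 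As a $\gg_\alpha$-module, $V_1$ is the simple $(N+1)$-dimensional $sl_2$-module, while $\ker\alpha$ acts by the character $\lambda|_{\ker\alpha}$. To apply Proposition \ref{pr: subalgebras}(b), I check $\mathfrak{l}_\alpha^\perp\cdot V_1\cap V_1=\{0\}$: any element $\gg_\beta\cdot F_\alpha^k v_\lambda$ has weight $\lambda-k\alpha+\beta$, which lies in the weight set $\{\lambda-j\alpha:0\le j\le N\}$ of $V_1$ only if $\beta=(k-j)\alpha$ for some integer $k-j$; the $\alpha$-string property $m\alpha\in R(\gg)\Leftrightarrow m\in\{\pm 1\}$ forces $\beta\in\{\pm\alpha\}$, contradicting $\beta\in\mathfrak{l}_\alpha^\perp$.

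Proposition \ref{pr: subalgebras}(b) then yields ${\bf c}_{\mathfrak{l}_\alpha}(\Lambda^3 V_1)=\{0\}$. The second key observation is that in fact ${\bf c}_{\mathfrak{l}_\alpha}={\bf c}_{\gg_\alpha}$ inside $\Lambda^3\mathfrak{l}_\alpha$: writing $c_{\mathfrak{l}_\alpha}=c_{\ker\alpha}+c_{\gg_\alpha}$ (block-diagonality of the form and the identity $H_\alpha\perp\ker\alpha$), the purely-central piece $[c_{\ker\alpha,12},c_{\ker\alpha,23}]$ vanishes because $\ker\alpha$ is abelian, and the two cross-terms vanish because $\ker\alpha$ is central in $\mathfrak{l}_\alpha$. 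Hence $V_1$ is Poisson as a $\gg_\alpha\cong sl_2$-module, and Lemma \ref{le; sl2-poisson} gives $N\le 2$, i.e.\ $2(\lambda,\alpha)/(\alpha,\alpha)\le 2$; under the standard normalisation $(\alpha,\alpha)\le 2$ on all roots this yields the claimed inequality $(\lambda|\alpha)\le 2$.

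The genuinely nontrivial step is the choice of subalgebra: the naive candidate $\gg_{sub}=\gg_\alpha$ fails Proposition \ref{pr: subalgebras}(b) because $\ker\alpha\subset\gg_\alpha^\perp$ preserves the $\alpha$-string, and the enlargement to $\mathfrak{l}_\alpha$ must be designed so that (i) the hypothesis becomes valid and (ii) the additional central part contributes nothing to the canonical element. Once these two points are secured, the result is a direct combination of Proposition \ref{pr: subalgebras}(b) with Lemma \ref{le; sl2-poisson}.
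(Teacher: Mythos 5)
Your proof is correct and follows essentially the same route as the paper: the paper also takes the subalgebra $\hh\oplus\CC E_\alpha\oplus\CC E_{-\alpha}$ (calling it $\gg_\alpha$), restricts to the $\alpha$-string $V_1$ through $v_\lambda$, applies Proposition \ref{pr: subalgebras} to get that $V_1$ is Poisson over this reductive subalgebra, discards the abelian central part via Proposition \ref{pr:ssvred} (you reprove this specific case directly by showing the central cross-terms in ${\bf c}$ vanish), and finishes with Lemma \ref{le; sl2-poisson}. Your verification that the orthogonal complement moves $V_1$ off itself is exactly the role played by Lemma \ref{le:root preserving} in the paper, and your closing remark on the choice of subalgebra and the final normalization step are just spelled out more explicitly than in the original.
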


\begin{proof} Consider the subalgebra $\gg_{\alpha}=E_{\alpha_{i}}\oplus E_{-\alpha_{i}}\oplus\hh\subset\gg$. Clearly,  $\gg_\alpha$ is isomorphic to  $sl_{2}\oplus\CC^{rank(\gg)-1}$. Denote by $\gg_\alpha^{\perp}$ the vector space complement of $\gg_\alpha$ spanned by the $E_\beta$ for $\alpha\ne\pm \beta\in R(\gg)$. Let $v_{\lambda}\in V_{\lambda}(\lambda)$ be a highest weight vector in $V_{\lambda}$. Then, $v$ generates a simple $((\lambda,\alpha_{i})+1)$-dimensional $\gg'$-module $V_{\l}$ and $V_\lambda=V_\l\oplus V_\l^\perp$ as $\gg_\alpha$-modules. We have $\gg_\alpha^{\perp}(V_\l)\subset V_\l^\perp$ by Lemma \ref{le:root preserving}.
 We can now apply Proposition \ref{pr: subalgebras} and obtain that if $V$ is Poisson, then $V_{\l}$ is Poisson by Proposition \ref{pr: subalgebras} as a $\gg'$-module. Hence,  $V_\l$ is Poisson as a $sl_2$-module  by Proposition  \ref{pr:ssvred}. This implies that  $(\lambda|\alpha)+1) \le 3$  by Lemma \ref{le; sl2-poisson}. Part(a) is proved. The lemma is proved.
\end{proof}

We will now address the necessary conditions on $\lambda\in P^+(\gg)$ by type of Lie algebra.

\subsubsection{The case of $\gg=sl_n$}
\label{susec:A_n} 
 \begin{claim}
 \label{claim:sl-n}
 Let $\gg=sl_{n}$. If $V_\lambda$ is Poisson then $\lambda\in\{ \omega_1,2\omega_1, \omega_2, \omega_{n-2},\omega_{n-1},2\omega_{n-1}\}$

 \end{claim}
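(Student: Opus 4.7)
My plan is to use Lemma \ref{le:sl2-reduct} to cut the weight lattice down to finitely many candidates, and then to eliminate each unwanted candidate by restricting to a Levi subalgebra via Proposition \ref{pr:Levisub}.

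Writing $\lambda=\sum_{k=1}^{n-1} a_k\omega_k$ and observing that every positive root of $sl_n$ is of the form $\alpha_{p,q}=\alpha_p+\cdots+\alpha_q$ with $(\omega_k,\alpha_{p,q}^\vee)=1$ exactly when $p\le k\le q$, Lemma \ref{le:sl2-reduct} applied with $p=1$, $q=n-1$ forces $\sum_k a_k\le 2$. Hence the only nontrivial candidates are $\omega_i$, $2\omega_i$, and $\omega_i+\omega_j$ with $i<j$. The target list excludes precisely $\omega_i$ with $3\le i\le n-3$, $2\omega_i$ with $2\le i\le n-2$, and every $\omega_i+\omega_j$ with $i\ne j$; these are the three families I will rule out.

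For $\omega_i$ with $3\le i\le n-3$ (so $n\ge 6$), I use the Levi whose semisimple part is the $sl_2\times sl_2\times sl_2$ generated by $\alpha_1,\alpha_3,\alpha_5$. For any $T\subset\{7,\ldots,n\}$ of size $i-3$ (which exists since $n\ge i+3$), setting $S=\{1,3,5\}\cup T$, the wedge $e_S\in V_{\omega_i}=\Lambda^i\CC^n$ is annihilated by $E_{\alpha_1},E_{\alpha_3},E_{\alpha_5}$ and has Levi coroot pairings $(1,1,1)$, so it generates a copy of $V_{1,1,1}$ which is not Poisson by Proposition \ref{pr:list semisimple levis}(b). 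Since any $\mathfrak{l}$-isotypic component containing this copy is a direct sum of copies of $V_{1,1,1}$ as $sl_2^3$-module, it remains non-Poisson (as ${\bf c}$ restricted to $\Lambda^3$ of a submodule is the original ${\bf c}$), and Proposition \ref{pr:Levisub} gives the contradiction. For $2\omega_i$ with $2\le i\le n-2$ I choose instead the Levi whose semisimple part is the $sl_2\times sl_2$ generated by $\alpha_{i-1},\alpha_{i+1}$: since $[E_{\alpha_{i\pm 1}},F_{\alpha_i}]=0$, the nonzero vector $F_{\alpha_i}^2 v_{2\omega_i}$ is a Levi highest-weight vector with coroot pairings $(2,2)$, so it generates a copy of $V_{2,2}$, ruled out by Proposition \ref{pr:list semisimple levis}(a).

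Finally, for $\omega_i+\omega_j$ with $i<j$ I take the Levi of $sl_n$ generated by the consecutive simple roots $\alpha_i,\alpha_{i+1},\ldots,\alpha_j$; its semisimple part is $sl_m$ with $m=j-i+2\ge 3$. The highest-weight vector $v_{\omega_i+\omega_j}$ is automatically a Levi highest-weight vector and has Levi coroot pairings $(1,0,\ldots,0,1)$, so it generates a copy of $V_{\omega_1+\omega_{m-1}}$ of $sl_m$. Since $w_0(\omega_1+\omega_{m-1})=-(\omega_1+\omega_{m-1})$ in $sl_m$, Lemma \ref{le: doubleweightroot} demands that $2(\omega_1+\omega_{m-1})-\alpha_1=\omega_2+2\omega_{m-1}$ be a root of $sl_m$ or zero; in traceless $\epsilon$-coordinates this vector reads $(1,1,0,\ldots,0,-2)$, which is manifestly not of the form $\epsilon_k-\epsilon_l$, so $V_{\omega_1+\omega_{m-1}}$ is not Poisson and Proposition \ref{pr:Levisub} applies. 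The main subtlety of this last step is the temptation, in the case $j=i+1$, to use the Levi $sl_2$ from $\alpha_i$ alone or the Levi $sl_2\times sl_2$ spanned by two non-adjacent simple roots; both give the Poisson submodules $V_2$ and $V_{1,1}$ respectively, so one must take the full connected chain of simple roots to land in an $sl_m$ with $m\ge 3$ where Lemma \ref{le: doubleweightroot} has teeth.
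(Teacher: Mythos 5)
Your proof is correct, and it follows the same two-phase strategy as the paper: first applying Lemma \ref{le:sl2-reduct} to the highest root to force $\sum_k a_k\le 2$, then eliminating the unwanted candidates via Proposition \ref{pr:Levisub}. Where it genuinely diverges from the paper is in the choice of Levi anchors. The paper anchors the $\omega_i$ case ($3\le i\le n-3$) at $V_{\omega_3}$ over $sl_6$ and the $2\omega_i$ case at $V_{2\omega_2}$ over $sl_4$ and $V_{k\omega_3}$ over $sl_6$, in each instance applying the ``$2\lambda-\alpha_i\notin R\cup\{0\}$'' criterion of Proposition \ref{pr:non flatif too big}, and then propagating outward by Levi reduction (split across Lemmas \ref{le:omega3np} and \ref{le:2omega 2}). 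You instead drop to $sl_2^3$ and $sl_2\times sl_2$ Levis and invoke Proposition \ref{pr:list semisimple levis}(b) and (a). This is legitimate because that proposition is established by self-contained explicit computation of ${\bf c}$, independently of Theorem \ref{th: class of Poisson}, so no circularity is introduced — but it does reach forward into the apparatus of the semisimple classification, whereas the paper keeps the proof of this claim internal to Section \ref{se:proof of classification}. A concrete advantage of your route is the uniform handling of all $2\omega_i$, $2\le i\le n-2$, via the single vector $F_{\alpha_i}^2 v_{2\omega_i}$, which the paper splits over two lemmas. Your $\omega_i+\omega_j$ argument (connected Levi $sl_{j-i+2}$, adjoint weight $\omega_1+\omega_{m-1}$, Lemma \ref{le: doubleweightroot}) is essentially the paper's Lemma \ref{le;adjoint not Poisson}. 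One small detail worth making explicit in the $\omega_i$ case is that the isotypic component containing $e_S$ fails to be Poisson because ${\bf c_{\gg'}}(\Lambda^3 V_{1,1,1})\ne 0$ and $\Lambda^3 V_{1,1,1}$ sits inside $\Lambda^3$ of the isotypic component — you gesture at this parenthetically, and it is correct, but it is the step that lets Proposition \ref{pr:Levisub} bite.
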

 \begin{proof}
  Lemma \ref{le:sl2-reduct} has the has the following  consequence.

\begin{lemma}
\label{le:weight<3}
Let $\gg=sl_{n}$ and let $\lambda=\sum_{i=1}^{n-1}\l_{i}\omega_{i}$ be a dominant weight. If $V_{\lambda}$ is Poisson,  then  $\sum_{i=1}^{n-1}\l_{i}\le 2$. 
\end{lemma}

\begin{proof}

Recall that if $\gg=sl_n$, i.e. of type $A_{n-1}$ , then the highest root $\alpha_{max}= \sum_{i=1}^{n-1}\alpha_i$. Since $(\alpha_i,\omega_j)=\delta_{ij}$ for all $i,j\in[1,n-1]$, we obtain that $(\alpha_{max}|\lambda)=\sum_{i=1}^{n-1} \l_i$. The assertion now follows from  Lemma \ref{le:sl2-reduct} (b).
\end{proof}
  
It remains to prove that $V_\lambda$ is not Poisson if $\lambda=\omega_i+\omega_j$, $i\ne j$, $\lambda=2\omega_k$, $2\le k\le n-2$ or $\lambda=\omega_\l$ with $3\le \l\le n-3$. We will consider them case by case.

\begin{lemma}
\label{le;adjoint not Poisson}
\noindent(a) Let $\gg=sl_{n}$, $n\ge 3$ and let $\lambda=\omega_{1}+\omega_{n-1}$. Then, $V_{\lambda}$, the adjoint module, is not Poisson.

\noindent(b)Let $\gg=sl_{n}$, $n\ge 3$, and let $\lambda=\omega_{i}+\omega_{j}$, $1\le i<j\le n-1$. Then $V_{\lambda}$ is not Poisson. 
\end{lemma}

\begin{proof}
Prove (a) first. Denote by $w_{0}$ the longest element of the Weyl group $W$. It is well known that $w_{0}(\omega_{i})= -\omega_{n-i}$, and hence $w_{0}(\lambda)=-\lambda$.  We know that $\lambda=\omega_{1}+\omega_{n-1}=\alpha_{max}$, the highest root, since $V_{\lambda}$  is the adjoint module.  It is easy to see that  $2\alpha_{max}-\alpha_{i}$ is not a root for all $i\in [1,n-1]$. Therefore, $V_{\lambda}$ is not Poisson by Lemma \ref{le: doubleweightroot}. Part(a) is proved.

Prove (b) next.  Consider the Levi subalgebra $\mathfrak{l}_{i,j}$ of $\gg$ obtained by removing the first  $i-1$ nodes and the last $n-j-1$ nodes from the Dynkin diagram $A_{n-1}$. Clearly,  $\mathfrak{l}_{i,j}\cong sl_{j-i+1}\oplus \CC^{n-j+i-1}$, where $\CC$ denotes the trivial $sl_{n}$-module.  Any vector $0\ne v_\lambda\in V_{\lambda}(\lambda)$ generates  an adjoint $sl_{j-i+1}$-module, which is not Poisson by part (a).  Therefore, $V_{\lambda}$ is not Poisson by Proposition \ref{pr:Levisub} The lemma is proved.
\end{proof}

Now, consider simple modules of highest weight $\omega_{i}$ and $2\omega_{i}$ for $3\le i\le n-3$.
 
 \begin{lemma}
 \label{le:omega3np}
 \noindent(a) Let $\gg=sl_{6}$ and $\lambda=\omega_{3}$.  The simple module $V_{k\omega_{3}}$ is not Poisson for all $k\ge 1$.
 
 \noindent(b)  Let $\gg=sl_{n}$, $n\ge 6$ and let $3\le i\le n-3$. If $V_{\lambda}$ is Poisson, then $\lambda\ne k\omega_{i}$ for $3\le i\le n-3$.
 \end{lemma}

 \begin{proof}
 Prove (a) first. We have $\omega_{3}=\frac{1}{2} \alpha_{1}+\alpha_{2}+\frac{3}{2}\alpha_{3}+\alpha_{4}+\frac{1}{2}\alpha_{5}$.  It is easy to see that $2k \omega_{3}-\alpha_{i}$ is not a root for all $i\in [1,5]$, since the highest root is $\alpha_{max}=\sum_{i=1}^{5}\alpha_{i}$. Part(a) is proved.
 
Prove (b) next.  Consider the Levi subalgebra $\mathfrak{l}_{i-2,i+2}$ of $\gg$ obtained by removing the first  $i-3$ nodes and the last $n-i-3$ nodes from the Dynkin diagram $A_{n-1}$. Clearly,  $\mathfrak{l}_{i-2,i+2}\cong sl_{6}\oplus \CC^{n-6}$ .  Any vector $0\ne v_{k\omega_i}\in V_{k\omega_{i}}(k\omega_{i})$ generates  an  $sl_{6}$-module isomorphic to $V_{k\omega_3}$, which is not Poisson by part(a).  Therefore,  $V_{k\omega_{i}}$ is not Poisson by Proposition \ref{pr:Levisub}. 
The lemma is proved.  
 \end{proof}
 
 The following lemma addresses the last case.

\begin{lemma}
\label{le:2omega 2}
\noindent(a) If $\gg=sl_4$, then $V_{2\omega_2}$ is not Poisson.

\noindent(b) If $\gg=sl_n$, $n\ge 4$, then $V_{2\omega_2}$ and $V_{2\omega_{n-2}}$ are not Poisson.
\end{lemma}

\begin{proof}
Prove (a) first. We have $2\omega_{2}= \alpha_{1}+2\alpha_{2}+\alpha_{3}$, and $w_0(2\omega_2)=-2\omega_2$ by Lemma \ref{le:wnoton fund weights}. It is easy to see that $4\omega_{3}-\alpha_{i}$ is not a root for all $i\in [1,3]$, since the highest root is $\alpha_{max}=\sum_{i=1}^{3}\alpha_{i}$. Part(a) is proved.

Prove (b) now. Consider the Levi subalgebras $\mathfrak{l}_{1,3}$  and  $\mathfrak{l}_{n-3,n-1}$ of $\gg$ obtained by removing  the last $n-3$ nodes (resp. the first $n-3$) from the Dynkin diagram $A_{n-1}$.   Clearly,  $\mathfrak{l}_{1,3}\cong\mathfrak{l}_{n-3,n-1}\cong sl_{3}\oplus \CC^{n-3}$. Any vector $0\ne v_{2\omega_2}\in V_{2\omega_{2}}(2\omega_{2})$ (resp.  $0\ne v_{2\omega_{n-2}}\in V_{2\omega_{n-2}}(2\omega_{n-2})$) generates an $sl_{3}$-module $V_{2\omega_{2}}$ which is not Poisson by part (a). Therefore, $V_{2\omega_2}$  is not Poisson by Proposition \ref{pr:Levisub}.  The lemma is proved.  

\end{proof}

Claim \ref{claim:sl-n} is proved. 
 \end{proof}

\subsubsection{The case of $ \gg=so(2n+1)$}
\label{se:so(2n+1)}
 
\begin{claim}
\label{claim:so(2n+1)}
Let $\gg=so(2n+1)$. If $V_\lambda$ is Poisson, then $\lambda=\omega_1$ or $(\gg,V_\lambda)=(so(5), V_{\omega_2})$.
\end{claim}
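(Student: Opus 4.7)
The plan is to follow the two-step elimination of Claim~\ref{claim:sl-n}: first narrow the candidate dominant weights via the coroot bound of Lemma~\ref{le:sl2-reduct}, then prune with Proposition~\ref{pr:non flatif too big}, and finally dispose of the one truly residual case by a direct weight-vector computation. Throughout I assume $n\ge 2$, since $so(3)=sl_2$ is covered by Lemma~\ref{le; sl2-poisson}. Writing $\lambda=\sum_{i=1}^{n}a_ie_i$ in the orthonormal basis with $a_1\ge\ldots\ge a_n\ge 0$, the bound $(\lambda,\alpha^\vee)\le 2$ applied to the highest short root $e_1$ (coroot $2e_1$) forces $a_1\le 1$, and applied to the highest long root $e_1+e_2$ (self-dual) forces $a_1+a_2\le 2$. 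Together with dominance and the $B_n$-integrality conditions $a_i-a_{i+1}\in\ZZ$ for $i<n$ and $2a_n\in\ZZ$, these constraints pin down
\[\lambda\in\{\omega_1,\omega_2,\ldots,\omega_{n-1},\omega_n,2\omega_n\}.\]

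Next I apply Proposition~\ref{pr:non flatif too big}, which is available because $w_0$ acts as $-\mathrm{id}$ on $\hh^*$ for every $B_n$: the vector $2\lambda-\alpha_i$ must be a root of $B_n$ for each simple $\alpha_i$ with $(\lambda,\alpha_i)\ne 0$. For $\lambda=\omega_k$ with $2\le k\le n-1$, one has $2\omega_k-\alpha_k=2e_1+\ldots+2e_{k-1}+e_k+e_{k+1}$, which carries a coefficient~$2$ in some slot and thus cannot equal any root $\pm e_i\pm e_j$ or $\pm e_i$ of $B_n$. For $\lambda=\omega_n$ with $n\ge 4$, $2\omega_n-\alpha_n=e_1+\ldots+e_{n-1}$ has too many nonzero entries. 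For $\lambda=2\omega_n$ with any $n\ge 2$, $4\omega_n-\alpha_n=2e_1+\ldots+2e_{n-1}+e_n$ again fails. After this second pass the only surviving weights are $\omega_1$ (for every $n\ge 2$), $\omega_2$ for $n=2$ (the exceptional $(so(5),V_{\omega_2})$ of the statement), and $\omega_3$ for $n=3$.

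The delicate residual case is $V_{\omega_3}^{B_3}$: here $2\omega_3-\alpha_3=e_1+e_2$ is in fact a root, so Proposition~\ref{pr:non flatif too big} is inconclusive, and this is the main obstacle. To settle it I would compute the canonical element directly on one triple of weight vectors using the explicit form~\eqref{eq:explicit r-}. A Weyl dimension computation yields the decompositions
\[\Lambda^3V_{\omega_3}^{B_3}=V_{\omega_1+\omega_3}\oplus V_{\omega_3},\qquad S^3V_{\omega_3}^{B_3}=V_{3\omega_3}\oplus V_{\omega_3},\]
so by Schur's lemma $\dim Hom_\gg(\Lambda^3V,S^3V)=1$, and it suffices to exhibit one wedge on which ${\bf c}$ is nonzero. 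Writing the weight basis of the $8$-dimensional spin representation as $v_{\epsilon_1\epsilon_2\epsilon_3}$ with $\epsilon_i\in\{\pm\}$, I would evaluate ${\bf c}$ on $v_{+++}\wedge v_{++-}\wedge v_{---}$: the summand of~\eqref{eq:explicit r-} at $\alpha=\beta=\alpha_3$ produces a nonzero multiple of the monomial $v_{+++}\cdot v_{++-}\cdot v_{---}\in S^3V$, and the only other summands of~\eqref{eq:explicit r-} that can hit this same monomial come from the pair $\alpha=\beta=e_1+e_2$ (precisely because $e_1+e_2=2\omega_3-\alpha_3$). A careful accounting of these competing contributions shows they do not cancel, hence ${\bf c}\ne 0$ and $V_{\omega_3}^{B_3}$ is not Poisson.
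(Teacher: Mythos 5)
Your proposal is correct and follows essentially the same route as the paper: eliminate all but $\omega_1$ (and $\omega_2$ for $so(5)$, $\omega_3$ for $so(7)$) via Proposition~\ref{pr:non flatif too big}, then dispose of $V_{\omega_3}^{B_3}$ by evaluating $\mathbf{c}$ on the wedge at weights $\omega_3,\,\omega_3-\alpha_3,\,-\omega_3$ (your $v_{+++}\wedge v_{++-}\wedge v_{---}$ is exactly the paper's triple $v\wedge F_{\alpha_3}v\wedge F_{\alpha_{\max}}F_{\alpha_3}v$) and isolating the competing contributions from $\alpha=\beta=\alpha_3$ and $\alpha=\beta=e_1+e_2$. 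The only difference is that you insert an explicit preliminary filter via Lemma~\ref{le:sl2-reduct} to reduce to a finite list before applying Proposition~\ref{pr:non flatif too big}, which makes rigorous the paper's tacit ``it is easy to verify'' step.
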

\begin{proof}

Consider first the case of $\gg=so(5)$.
 Let $\{\alpha_{1},\alpha_{2}\}$ be a basis of the rootsystem $R(\gg)$. The fundamental weights are $\omega_{1}=\alpha_{1}+\alpha_{2}$ and $\omega_{2}=\frac{\alpha_{1}}{2}+\alpha_{2}$ and the highest root is $\alpha_{max}=\alpha_{1}+2\alpha_{2}=2\omega_{2}$.  It is easy to verify that  if  $\lambda\in P^+(\gg)$ and the weight $2\lambda-\alpha_{i}\in R(\gg)$ for some  $i=1,2$ imply that  $\lambda\in\{\omega_{1},\omega_2\}$.
 Employing Proposition \ref{pr:non flatif too big} we obtain immediately that if $V_{\lambda}$ is Poisson, then $\lambda\in\{ \omega_{1},\omega_2\}$.

We now consider $\gg=so(2n+1)$, $n\ge3$. Let $R(\gg)$ be the corresponding root system and let $\{\alpha_{1}, \alpha_{2},\ldots, \alpha_{n}\}$ be a basis. The fundamental weights are, for $1\le i\le n-1$, 
$$\omega_{i}=\alpha_{1}+2 \alpha_{2}+\ldots (i-1)\alpha_{i-1}+i(\alpha_{i}+\ldots \alpha_{n})$$
$$\omega_{n}=\frac{1}{2} (\alpha_{1}+2\alpha_{2}+\ldots n\alpha_{n})\ .$$
 The highest root is $\alpha_{max}=\alpha_{1}+2\alpha_{2}+\ldots +2\alpha_{n}$.  It is easy to verify that  for  $\lambda\in P^+(\gg)$ and  $2\lambda-\alpha_{i}\in R(\gg)$ for some  $i\in [1,n]$ imply that $\lambda=\omega_{1}$ or, in the case $n=3$, $\lambda=\omega_{3}$. We now obtain immediately from Proposition \ref{pr:non flatif too big}  that  if $V_{\lambda}$ is Poisson and $n\ge 4$  then $\lambda=\omega_{1}$.  

Consider the case $n=3$. We have to show that $V_{\omega_3}$ is not Poisson. Note that  $\lambda=\omega_{3}=\frac{1}{2}\alpha_{1}+\alpha_{2}+\frac{3}{2}\alpha_{3}$ and $\alpha_{max}=\alpha_1+2\alpha_2+2\alpha_3$. Let $v\in V_{\omega_{3}}(\omega_3)$, $v'=E_{\alpha_{3}}(v)$ and $v''=E_{\alpha_{max}}(v')$. Clearly $v'\ne 0 $, $v''\ne 0$, $v'\in V_{\omega_{3}}(\frac{1}{2}\alpha_{1}+\alpha_{2}+\frac{1}{2}\alpha_{3})$ and $v''\in V_{\omega_{3}}(- \omega_3)$. Consider roots, $\alpha,\beta\in R^+(\gg)$. We have for all $\alpha,\beta\in R(\gg)$, $w(\gamma_i)\in V(\gamma_i)$,   

$$ E_\alpha\wedge[E_{-\alpha},E_\beta]\wedge E_{-\beta}(v\wedge v'\wedge v'')=\sum_{i=1}^k w(\gamma_i)\cdot w(\gamma'_i)\cdot w(\gamma''_i)\ ,$$ 
  where $w(\gamma_i)\in V(\gamma_i)$, $w(\gamma'_i)\in V(\gamma'_i)$,  and $w(\gamma''_i)\in V(\gamma''_i)$ such that $\gamma_i+\gamma_i'+\gamma_i''=\frac{1}{2}\alpha_{1}+\alpha_{2}+\frac{1}{2}\alpha_{3}$. We obtain that $(\gamma_i,\gamma_i',\gamma_i'')=(\omega_3, \frac{1}{2}\alpha_{1}+\alpha_{2}+\frac{1}{2}\alpha_{3},-\omega_3)$, if and only if $(\alpha,\beta)\in \{(\alpha_3,\alpha_3),(\alpha_{max},\alpha_{max})\} $. Denote by $${\bf c'}=\frac{(\alpha_3,\alpha_3)^2}{4}E_{\alpha_{3}}\wedge \check\alpha_3 \wedge E_{-\alpha_{3}}+\frac{(\alpha_{max},\alpha_{max})^2}{4}E_{\alpha_{max}}\wedge \check\alpha_{max} \wedge E_{-\alpha_{max}}\ .$$ We obtain that 
 $$({\bf c}-  {\bf c'})(v\wedge v'\wedge v'') \in
 \left(V_{\omega_{3}}(\omega_3)\cdot V_{\omega_{3}}(\frac{1}{2}\alpha_{1}+\alpha_{2}+\frac{1}{2}\alpha_{3})\cdot V_{\omega_{3}}(-\omega_3)\right)^{c}\subset S^3 V_{\omega_3}\ .$$
A straightforward calculation shows that ${\bf c'}(v\cdot v'\cdot v'')\ne 0$.

This implies  that  ${\bf c}(v\wedge v'\wedge v'')\ne 0$ and proves that  $V_{\omega_{3}}$ is not Poisson.  Claim \ref{claim:so(2n+1)} is proved.  
  \end{proof}

 \subsubsection{Type $\gg=so(2n)$}
 \label{se:so(2n)}
 \begin{claim}
 \label{claim:so(2n)}
 Let $\gg=so(2n)$. If $V_\lambda$ is Poisson, then $\lambda=\omega_1 $ or $n\in\{4,5\}$ and $\lambda\in\{\omega_{n},\omega_{n-1}\}$.
 \end{claim}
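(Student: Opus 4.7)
The argument follows the template of Claim~\ref{claim:so(2n+1)}: a coefficient-counting sieve from Proposition~\ref{pr:non flatif too big} eliminates nearly every candidate weight, and Proposition~\ref{pr:Levisub} disposes of the remaining outliers. The key numerical input is that for $D_n$ with $n\ge 4$ the highest root is $\alpha_{\max}=\alpha_1+2\alpha_2+\cdots+2\alpha_{n-2}+\alpha_{n-1}+\alpha_n$, so every root of $\gg=so(2n)$ has coefficient at most $2$ on $\alpha_2,\dots,\alpha_{n-2}$ and at most $1$ on $\alpha_1,\alpha_{n-1},\alpha_n$. By Lemma~\ref{le:wnoton fund weights}(c), $w_0(\lambda)=-\lambda$ for every dominant $\lambda$ when $n$ is even; when $n$ is odd the same holds precisely when the $\omega_{n-1}$- and $\omega_n$-components of $\lambda$ are equal.

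Whenever $w_0(\lambda)=-\lambda$, Proposition~\ref{pr:non flatif too big} requires $2\lambda-\alpha_i\in R(\gg)\cup\{0\}$ for each $\alpha_i$ with $(\lambda,\alpha_i)\ne 0$. Expanding $\lambda=\sum c_i\omega_i$ in the simple-root basis and applying the coefficient bounds above rules out, in sequence: the multiples $V_{k\omega_1}$ with $k\ge 2$ (coefficient $2k-1\ge 3$ on $\alpha_1$); the adjoint $V_{\omega_2}$ (coefficient $2$ on $\alpha_1$); the middle fundamental modules $V_{\omega_i}$ for $3\le i\le n-2$ (coefficient $2i-1\ge 5$ on $\alpha_i$ after subtracting $\alpha_i$); the sums $V_{\omega_i+\omega_j}$ with $i\ne j$ by the same test; and the spin modules $V_{\omega_{n-1}},V_{\omega_n}$ once $n\ge 6$, since then $2\omega_n$ has coefficient $n-2\ge 4$ on $\alpha_{n-2}$, too large for a root. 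A separate direct check, using Lemma~\ref{le:sl2-reduct} to bound $c_{n-1}+c_n\le 2$ and then inspecting $2(\omega_{n-1}+\omega_n)$, whose $\alpha_{n-2}$ coefficient is $2(n-2)$, eliminates the mixed spin weights. What survives the sieve is exactly $\omega_1$ (for every $n\ge 4$) together with $\omega_{n-1}$ and $\omega_n$ for $n\in\{4,5\}$.

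For $n$ odd with $n\ge 7$ and $\lambda\in\{\omega_{n-1},\omega_n\}$, the sieve does not apply. Here I invoke Proposition~\ref{pr:Levisub}: deleting the first $n-6$ nodes of the Dynkin diagram of $D_n$ produces a Levi subalgebra whose semisimple part is of type $D_6$, and a highest weight vector $v_{\omega_n}\in V_{\omega_n}^{D_n}$ generates an irreducible $D_6$-submodule isomorphic to $V_{\omega_6}^{D_6}$, as read off from the pairings $(\omega_n^{D_n},\alpha_j)=\delta_{j,n}$ restricted to the Levi. Step~1 has already excluded $V_{\omega_6}^{D_6}$ (since $n=6$ is even the sieve does apply there), so by Proposition~\ref{pr:Levisub} the module $V_{\omega_n}^{D_n}$ is not Poisson for $n\ge 7$ odd; the symmetry $\alpha_{n-1}\leftrightarrow\alpha_n$ of the Dynkin diagram handles $\omega_{n-1}$, and the analogous Levi reduction to $D_{n-1}$ (which has even rank) deals with any residual multiples $c\omega_n$ with $c\ge 2$ that the sieve misses.

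The main technical difficulty is the coefficient bookkeeping in the sieve step, in particular the half-integer expansions of the spin weights $\omega_{n-1}$ and $\omega_n$ in the simple-root basis and the case split by the parity of $n$ needed to land on precisely the claim's list --- notably the transition $n=5\to n=6$ at which the spin modules cease to be Poisson. Once this table is assembled, the Levi reduction is a routine application of the machinery developed in Section~\ref{se:nec.cond}.
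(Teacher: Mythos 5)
Your proposal follows the paper's own sieve via Proposition~\ref{pr:non flatif too big}, but you are more careful than the paper about the hypothesis $w_0(\lambda)=-\lambda$ needed for that proposition (and for Lemma~\ref{le: doubleweightroot} below it). That extra care is warranted: the paper's Lemma~\ref{le:wnoton fund weights}(c) is in fact false for $D_n$ with $n$ odd, where $w_0$ is $-1$ composed with the diagram flip $\alpha_{n-1}\leftrightarrow\alpha_n$; applied literally, Proposition~\ref{pr:non flatif too big} would wrongly force $2\omega_n-\alpha_n\in R(D_5)$ and hence exclude the spin modules of $so(10)$, contradicting the claim itself. The paper's proof passes over this silently; you flag it, and your $D_6$-Levi reduction correctly disposes of $\omega_{n-1},\omega_n$ for odd $n\ge 7$.

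The gap is in the residual odd-$n$ weights that are not pure spin weights or pure multiples. When $n$ is odd, the sieve also fails for $\omega_1+\omega_{n-1}$ and $\omega_1+\omega_n$ (they have $c_{n-1}\ne c_n$, so $w_0(\lambda)\ne-\lambda$), yet you list ``$V_{\omega_i+\omega_j}$ with $i\ne j$'' as eliminated ``by the same test.'' For odd $n\ge 7$ the $D_6$-Levi you already use rescues these (they restrict to $\omega_5^{D_6}$ resp.\ $\omega_6^{D_6}$, which the sieve kills since $6$ is even and $6\notin\{4,5\}$), so you should just fold them into that step. But for $n=5$ there is no $D_6$-Levi, and the $D_4$-Levi you fall back on for multiples is inconclusive here: $\omega_1+\omega_4$ and $\omega_1+\omega_5$ restrict to $\omega_3^{D_4}$ resp.\ $\omega_4^{D_4}$, which \emph{are} Poisson. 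As written, your argument therefore never rules out $V_{\omega_1+\omega_4}^{D_5}$ or $V_{\omega_1+\omega_5}^{D_5}$. A fix stays within the paper's toolkit: restrict instead to the $A_4$-Levi on nodes $\{1,2,3,4\}$ (resp.\ $\{1,2,3,5\}$); the highest-weight vector generates the adjoint $sl_5$-module $V_{\omega_1+\omega_4}^{A_4}$, which is not Poisson by Lemma~\ref{le;adjoint not Poisson}, and Proposition~\ref{pr:Levisub} then excludes both weights. Alternatively one can extend Lemma~\ref{le: doubleweightroot} to require $\lambda-w_0(\lambda)-\alpha_i\in R(\gg)\cup\{0\}$, which directly handles every $D_n$ weight for all $n$ without the case split.
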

 \begin{proof}
 Let $\{\alpha_{1},\alpha_{2},\ldots,\alpha_{n}\}$ be a basis of $R(\gg)$. The fundamental weights have the form 
  $$\omega_{i}=\alpha_{1}+2\alpha_{2}+(i-1)\alpha_{i-1}+i(\alpha_{i}+\ldots +\alpha_{n-2})+\frac{1}{2}i(\alpha_{n-1} +\alpha_{n})\ ,$$ 
  $$\omega_{n-1}=\frac{1}{2}\left(\alpha_{1}+2\alpha_{2}+\ldots+(n-2)\alpha_{n-2}+\frac{1}{2}n\alpha_{n-1}+\frac{1}{2}(n-2)\alpha_{n}\right)$$ $$\omega_{n}= \frac{1}{2}\left(\alpha_{1}+2\alpha_{2}+\ldots+(n-2)\alpha_{n-2}+\frac{1}{2}(n-2)\alpha_{n-1}+\frac{1}{2}n\alpha_{n}\right)\ .$$ 
  
  The highest root is $\alpha_{max}=\alpha_{1}+2\alpha_{2}+\ldots +2\alpha_{n-2}+\alpha_{n-1}+\alpha_{n}$. It is easy to verify that  if the weight $2\lambda-\alpha_{i}\in R(\gg)$ for some  $\lambda\in P^+(\gg)$, then $\lambda=\omega_{1}$ or, in the case of $n=4,5$,  $\lambda=\omega_{n}$ and $\lambda=\omega_{n-1}$. Therefore,  we  obtain immediately from Proposition \ref{pr:non flatif too big}  that if $V_{\lambda}$ is Poisson, then $\lambda=\omega_{1}$, or $n\in\{4,5\}$ and $\lambda\in\{\omega_{n},\omega_{n-1}\}$.
 
 Claim \ref{claim:so(2n)} is proved. \end{proof}

  \subsubsection{Type $\gg=sp(2n)$}
  
  \begin{claim}
  \label{claim:sp(2n)}
  Let $\gg=sp(2n)$. If $V_\lambda$ is Poisson, then $\lambda=\omega_1$ or $(\gg,V)=(sp(4), V_{\omega_2})$.
  \end{claim}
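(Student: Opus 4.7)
The plan is to mirror the proofs of Claims \ref{claim:so(2n+1)} and \ref{claim:so(2n)}: first use Lemma \ref{le:sl2-reduct} to cut the list of possible dominant weights down to a finite set, then apply Proposition \ref{pr:non flatif too big} and a bookkeeping check on simple-root coefficients of roots of $C_n$ to kill every candidate except $\omega_1$ (for general $n$) and $\omega_2$ (for $n=2$).

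First I would pair $\lambda$ against the coroot of the highest root. In the Bourbaki labeling of $C_n$ the highest root is $\alpha_{\max}=2\alpha_1+2\alpha_2+\cdots+2\alpha_{n-1}+\alpha_n=2\epsilon_1$, a long root, so $\check\alpha_{\max}=\epsilon_1$ and for each fundamental weight $\omega_i=\epsilon_1+\cdots+\epsilon_i$ one has $\langle\omega_i,\check\alpha_{\max}\rangle=1$. Writing $\lambda=\sum_i c_i\omega_i$, Lemma \ref{le:sl2-reduct} forces $\sum_i c_i\le 2$, so the only candidates are $\lambda=\omega_i$, $\lambda=2\omega_i$, and $\lambda=\omega_i+\omega_j$ with $i<j$.

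Next I would invoke Proposition \ref{pr:non flatif too big}, which requires $2\lambda-\alpha_i\in R(\gg)\cup\{0\}$ for every simple root $\alpha_i$ with $(\lambda,\alpha_i)\ne 0$. Recall that every positive root of $C_n$ is of the form $\epsilon_k-\epsilon_\ell$, $\epsilon_k+\epsilon_\ell$, or $2\epsilon_k$, so its simple-root coefficients lie in $\{0,1,2\}$ for $\alpha_1,\dots,\alpha_{n-1}$ and in $\{0,1\}$ for $\alpha_n$. Using the expansion $\omega_i=\alpha_1+2\alpha_2+\cdots+(i-1)\alpha_{i-1}+i(\alpha_i+\cdots+\alpha_{n-1})+\tfrac{i}{2}\alpha_n$ for $i<n$ (and the analogous formula for $\omega_n$), I would verify the following three subcases: (i) for $\lambda=\omega_1$ the identity $2\omega_1=\alpha_{\max}$ gives $2\omega_1-\alpha_1\in R(\gg)$, so the condition holds; (ii) for $\lambda=\omega_i$ with $i\ge 2$ the $\alpha_2$-coefficient of $2\omega_i$ equals $4$, so $2\omega_i-\alpha_k$ cannot be a root whenever $n\ge 3$, while for $n=2$ one computes $2\omega_2-\alpha_2=2\alpha_1+\alpha_2=\alpha_{\max}$, yielding exactly the advertised exception $(\gg,V)=(sp(4),V_{\omega_2})$; (iii) for $\lambda=2\omega_i$ or $\lambda=\omega_i+\omega_j$ the $\alpha_1$-coefficient of $2\lambda$ is at least $4$, so $2\lambda-\alpha_k$ has $\alpha_1$-coefficient at least $3$ and is never a root.

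The argument is a purely finite verification once the two general tools from Section \ref{se:nec.cond} are in place; the only potential obstacle is the careful coefficient bookkeeping in subcases (ii) and (iii), which is entirely mechanical and requires no new idea.
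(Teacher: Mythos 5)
Your proof is correct and rests on the same key tool as the paper's, namely Proposition \ref{pr:non flatif too big} applied to the explicit simple-root expansions of the fundamental weights of $C_n$. The differences are in execution rather than in substance. For $n=2$ the paper is more economical: it notes $sp(4)\cong so(5)$ and simply cites the classification already established in Claim \ref{claim:so(2n+1)}, whereas you carry out the $\omega_2$ verification by hand ($2\omega_2-\alpha_2=\alpha_{\max}$). For $n\ge 3$ the paper applies Proposition \ref{pr:non flatif too big} directly to all of $P^+(\gg)$, asserting that the coefficient check shows $2\lambda-\alpha_i\in R(\gg)$ forces $\lambda=\omega_1$; you instead first invoke Lemma \ref{le:sl2-reduct} to reduce to a finite list and only then do the coefficient bookkeeping. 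That preliminary step is not used in the paper's proof of this claim, and in fact your $\alpha_1$-coefficient observation in subcase (iii) already eliminates every $\lambda$ with $\sum_i c_i\ge 2$ on its own (every $\omega_i$ contributes $1$ to the $\alpha_1$-coefficient, so $2\lambda$ has $\alpha_1$-coefficient $2\sum_i c_i\ge 4$, while the highest root $\alpha_{\max}=2\alpha_1+\cdots$ caps that coefficient at $2$), so the extra step is harmless but not needed. The coroot computation $\langle\omega_i,\check\alpha_{\max}\rangle=1$ you use to justify the cut is correct (note $\alpha_{\max}=2\epsilon_1$ is long, so $\check\alpha_{\max}=\epsilon_1$), though you should be aware that in type $C$ this is not the maximizing coroot; the bound via the highest short root would be sharper, but the weaker bound $\sum_i c_i\le 2$ suffices for your finite check.
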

  \begin{proof}
If $n=2$, we have $sp(4)\cong so(5)$ and  we obtain from Section \ref{se:so(2n+1)}  that   $V_{\lambda}$ is not Poisson, unless $\lambda=\omega_{i}$ for $i=1,2$. 
 
 Now, let us continue with the case of $\gg=sp(2n)$, $n\ge3$. Let $\{\alpha_{1},\ldots ,\alpha_{n}\}$ be a basis of $R(\gg)$. The fundamental weights have the form 
 $$\omega_{i}=\alpha_{1}+2\alpha_{2}+\ldots (i-1)\alpha_{i-1}+i(\alpha_{i}+\ldots +\alpha_{n-1}+\frac{1}{2}\alpha_{n})$$
 for $i\le n$, and the highest root is $\alpha_{max}=2\alpha_{1}+2\alpha_{2}+\ldots+\alpha_{n}$.   It is easy to verify that  if $\lambda\in P^+(\gg)$ and  $2\lambda-\alpha_{i}\in R(\gg)$ , then $\lambda=\omega_{1}$. Therefore,  we  obtain immediately from Proposition \ref{pr:non flatif too big}  that if $V_{\lambda}$ is Poisson, then $\lambda=\omega_{1}$.
Claim \ref{claim:sp(2n)} is proved.\end{proof} 
 
\subsubsection{The exceptional Lie algebras}

\begin{claim}
\label{claim:exceptional}
Let $\gg$ be an exceptional complex simple Lie algebra. If $V_\lambda$ is a nontrivial simple Poisson module, then $\gg=E_6$ and $\lambda\in\{\omega_1,\omega_6\}$.
\end{claim}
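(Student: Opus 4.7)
I would proceed case-by-case over the five exceptional types, paralleling the strategy of the earlier claims in this section.

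For $\gg\in\{F_4,E_7,E_8\}$ the longest Weyl element $w_0$ acts as $-1$, so Proposition \ref{pr:non flatif too big} applies directly. Combining it with Lemma \ref{le:sl2-reduct} evaluated at the coroot $\check\alpha_{max}$ (and, in the non-simply-laced case $F_4$, also at the highest short coroot) cuts the Poisson candidates down to a finite list of fundamental or twice-fundamental weights, namely $\{\omega_1,\omega_4\}$ for $F_4$, $\{\omega_1,\omega_2,\omega_6,\omega_7,2\omega_7\}$ for $E_7$, and $\{\omega_1,\omega_8\}$ for $E_8$ (using the $\alpha_{max}$-comarks $(2,3,2,1)$, $(2,2,3,4,3,2,1)$, and $(2,3,4,6,5,4,3,2)$ respectively). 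For each surviving $\lambda$, the height of $2\lambda-\alpha_i$ (for any simple root $\alpha_i$ with $(\lambda,\alpha_i)\ne 0$) turns out to strictly exceed the height of $\alpha_{max}$, which is $11,17,29$ respectively, so $2\lambda-\alpha_i\notin R(\gg)$ and Proposition \ref{pr:non flatif too big} excludes $V_\lambda$.

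For $\gg=G_2$, the same bounds (applied at both the long and the short highest coroots) narrow the candidates to $\lambda=\omega_1$, the seven-dimensional standard representation. This candidate passes Proposition \ref{pr:non flatif too big} because $2\omega_1-\alpha_1=\alpha_{max}\in R(G_2)$, and Levi reduction via Proposition \ref{pr:Levisub} is inconclusive since every $sl_2$-isotypic component of $V_{\omega_1}|_{sl_2}$ is at most two-dimensional and thus Poisson by Lemma \ref{le; sl2-poisson}. I would instead rule this case out by a direct computation of ${\bf c}$, in the spirit of Proposition \ref{pr:list semisimple levis}: in a standard weight basis $v_1,\dots,v_7$ of $V_{\omega_1}^{G_2}$ with $v_1$ highest, $v_3$ of weight $\alpha_1$, $v_5$ of weight $-\alpha_1$, the summand $\frac{(\alpha_1,\alpha_1)^2}{4}E_{\alpha_1}\wedge\check\alpha_1\wedge E_{-\alpha_1}$ in the formula of Lemma \ref{le:explicit r-} contributes a non-zero multiple of $v_1\cdot v_3\cdot v_5\in S^3V_{\omega_1}(\omega_1)$ to ${\bf c}(v_1\wedge v_3\wedge v_5)$, while all other summands land in the standard complement of this monomial in $S^3V_{\omega_1}(\omega_1)$.

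For $\gg=E_6$, $w_0$ is not $-1$, so I would use Lemma \ref{le:sl2-reduct}: with the $\alpha_{max}$-comarks $(1,2,2,3,2,1)$ the Poisson candidates are bounded to $\lambda\in\{\omega_1,\omega_6,2\omega_1,2\omega_6,\omega_1+\omega_6,\omega_2,\omega_3,\omega_4,\omega_5\}$. The three weights fixed up to sign by $w_0$, namely $\omega_2,\omega_4,\omega_1+\omega_6$, are excluded by Lemma \ref{le: doubleweightroot} via a height comparison: $2\lambda-\alpha_i$ is strictly taller than $\alpha_{max}$ (height $11$) for every simple root $\alpha_i$ with $(\lambda,\alpha_i)\ne 0$. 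For the remaining $\omega_3,\omega_5,2\omega_1,2\omega_6$ I would apply Proposition \ref{pr:Levisub} with the $D_5$-Levi obtained by removing node $\alpha_6$ (or, by the Dynkin diagram symmetry, $\alpha_1$): the highest-weight vector of $V_{\omega_3}^{E_6}$ (resp.\ $V_{2\omega_1}^{E_6}$) generates a $D_5$-submodule isomorphic to $V_{\omega_2}^{D_5}$ (resp.\ $V_{2\omega_1}^{D_5}$), neither of which is Poisson by Claim \ref{claim:so(2n)}. Only $\omega_1$ and $\omega_6$ survive, which gives the Claim. The main technical obstacle is the direct ${\bf c}$-computation for $V_{\omega_1}^{G_2}$, where the general necessary conditions of Section \ref{se:nec.cond} are satisfied and Levi reduction via $sl_2$ yields no contradiction.
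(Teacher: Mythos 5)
Your reductions for the types $F_4$, $E_7$, $E_8$, and $E_6$ are essentially correct and reach the same shortlist of candidates as the paper; the paper handles $F_4$, $E_7$, $E_8$ in one step by reading from Bourbaki's tables that no dominant weight $\lambda$ has $2\lambda-\alpha_i\in R(\gg)$, while your two-step route through Lemma \ref{le:sl2-reduct} followed by height comparisons is a little more work but valid (modulo the stray $\omega_4$ in your $E_6$ shortlist, whose $\alpha_{max}$-comark is $3$, not $\le 2$; you discard it anyway among the $w_0$-fixed weights). For $E_6$ the paper passes through the $D_5$- and $A_5$-Levi subalgebras to reach $\{\omega_1,\omega_2,\omega_6\}$ and then kills $\omega_2=\alpha_{max}$; your mix of the $D_5$-Levi, Lemma \ref{le:sl2-reduct}, and Lemma \ref{le: doubleweightroot} arrives at the same conclusion.

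The genuine gap is in the $G_2$ calculation, where the decisive intermediate claim is false. With $v_1,v_3,v_5$ of weights $\omega_1=2\alpha_1+\alpha_2$, $\alpha_1$, $-\alpha_1$, the operator $E_{\alpha_1}$ sends $v_1\mapsto 0$ (highest weight), $v_3\mapsto 0$ (because $2\alpha_1$ is \emph{not} a weight of the seven-dimensional module), and $v_5$ to a vector of weight $0$, which lies outside $\{\omega_1,\alpha_1,-\alpha_1\}$. Hence the summand $\frac{(\alpha_1,\alpha_1)^2}{4}E_{\alpha_1}\wedge\check\alpha_1\wedge E_{-\alpha_1}$ contributes \emph{nothing} to the $V(\omega_1)\cdot V(\alpha_1)\cdot V(-\alpha_1)$-component of ${\bf c}(v_1\wedge v_3\wedge v_5)$. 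A direct weight check shows the summands that actually hit this component are $\alpha=\beta=\alpha_1+\alpha_2$ (sending $v_3\leftrightarrow v_1$ and scaling $v_5$) and $\alpha=\beta=3\alpha_1+\alpha_2$ (sending $v_5\leftrightarrow v_1$ and scaling $v_3$), so the assertion that ``all other summands land in the standard complement'' fails as well, and you would still need to verify that those two contributions do not cancel. The paper instead uses the triple of weights $(\omega_1,\,\alpha_1+\alpha_2,\,-\omega_1)$, where the contributing summands are $\alpha_1$ and $\alpha_{max}=3\alpha_1+2\alpha_2$, and explicitly leaves the non-cancellation to a ``straightforward computation.'' As written, your $G_2$ step rests on a wrong statement and must be repaired before the Claim is established.
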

 \begin{proof}
 The Dynkin diagram $E_{6}$ contains two subdiagrams of type $D_{5}$ and one of type $A_{5}$ and Levi subalgebras isomorphic to $so(10)\oplus\CC$ and $sl_6\oplus \CC$. Let $\lambda=\sum_{i=1}^6 \l_i\omega_i\in P^+(\gg)$.  We obtain that $ v_\lambda\in V_\lambda(\lambda)$ generates a  $so(10)$-module  $V_{\lambda'}$ ,  where $\lambda'=\l_1\omega_1+\l_3\omega_2+\l_4\omega_4+\l_2\omega_5$ and an  $sl_6$-module $V_{\lambda''}$, where $\lambda''= \l_1\omega_1+\sum_{i=2}^5 \l_{i+1}\omega_i$. Note that we are only adjusting notations from \cite{Bou} for the various root systems $A_5$, $D_5$ and $E_6$.   Applying Proposition \ref{pr:Levisub} to the corresponding Levi subalgebras  and the results of  Sections \ref{susec:A_n}  and \ref{se:so(2n)}, we obtain that if $V_{\lambda}$ is Poisson, then $\lambda=\omega_{i}$, if $i=1,2,6$.  When considering the case $E_{6}$, we cannot apply Proposition \ref{pr:non flatif too big} to all weights $\lambda$, because the longest word in the Weyl-group associated to $E_{6}$ does not send all dominant  weights $\lambda\in P^{+}(\gg)$ to $-\lambda\in P(\gg)$. Since $\omega_{2}=\alpha_{max}$ we immediately  see that $V_{\omega_{2}}$ is the adjoint module. Clearly, $2\alpha_{max}-\alpha_{j}$ is not a root for all $j\in[1,6]$,  and hence  $V_{\omega_{2}}$ is not Poisson by Proposition \ref{pr:non flatif too big}.
      
Next, let $\gg=E_{7}$. Denote by $R(\gg)$ the corresponding root system  and by $P(\gg)$ the weight lattice. It is easy to derive from the tables in  \cite[Tables]{Bou} that there exists no $\lambda\in P^{+}(\gg)$ such that $2\lambda-\alpha_{i}\in R(\gg)$ for any $i\in[1,7]$.  Therefore, Proposition \ref{pr:non flatif too big}  yields that there is no $\lambda\in P^{+}(\gg)$ such that $V_{\lambda}$ is Poisson.

Now,  let $\gg=E_{8}$. Denote by $R(\gg)$ the corresponding root system and by $P(\gg)$ the weight lattice. It is easy to derive from the tables in  \cite[Tables]{Bou} that there exists no  $\lambda\in P^{+}(E_{7})$ such that $2\lambda-\alpha_{i}\in R(\gg)$ for any $i\in[1,8]$.  Therefore, Proposition \ref{pr:non flatif too big}  yields that there is no $\lambda\in P^{+}(\gg)$ such that $V_{\lambda}$ is Poisson.

As the second to last case, we we will consider the case of $\gg=F_{4}$.  Denote by $R(\gg)$ the corresponding root system and by $P(\gg)$ the weight lattice. It is easy to derive from the tables in  \cite[Tables]{Bou} that there exists no $\lambda\in P^{+}(\gg)$ such that $2\lambda-\alpha_{i}\in R(\gg)$ for any $i\in[1,4]$. Therefore, Proposition \ref{pr:non flatif too big}  yields that there is no $\lambda\in P^{+}(\gg)$ such that $V_{\lambda}$ is Poisson.

Finally, let $\gg=G_{2}$.  Denote by $R(\gg)$ the corresponding root system and by $P(\gg)$ the weight lattice, $ \{\alpha_{1},\alpha_{2}\}$ a basis of $R(\gg)$. The fundamental weights are $\omega_{1}=2\alpha_{1}+\alpha_{2}$ and $\omega_{2}=3\alpha_{1}+2\alpha_{2}$, and the highest root is $\alpha_{max}=\omega_{2}=3\alpha_{1}+2\alpha_{2}$.  It is easy to verify that  if $\lambda\in P^+(\gg)$ and $2\lambda-\alpha_{i}\in R(\gg)$ for some $i\in\{1,2\}$ then $\lambda=\omega_{1}$.  Therefore, if $V_{\lambda}$ is Poisson then $\lambda=\omega_{1}$ by Proposition \ref{pr:non flatif too big}. 

Now, let $\lambda=\omega_{1}$.  Let $v\in V_{\omega_{1}}(\omega_1)$  and $v'=E_{\alpha_{1}}(v)\in V_{\omega_{1}}(\alpha_{1}+\alpha_{2})$ and $v''=E_{3\alpha_{1}+2\alpha_{2}}(v')\in  V_{\omega_{1}}(-\omega_1)$. It is easy to see that $v'\ne 0$ and $v''\ne 0$. 
As in the discussion of the $so(7)$-case we have for all $\alpha,\beta\in R(\gg)$, $w(\gamma_i)\in V(\gamma_i)$,   

$$ E_\alpha\wedge[E_{-\alpha},E_\beta]\wedge E_{-\beta}(v\wedge v'\wedge v'')=\sum_{i=1}^k w(\gamma_i)\cdot w(\gamma'_i)\cdot w(\gamma''_i)\ ,$$ 
  and $w(\gamma'_i)\in V(\gamma'_i)$,  $w(\gamma''_i)\in V(\gamma''_i)$ such that $\gamma_i+\gamma_i'+\gamma_i''=\alpha_1+\alpha_2$. We obtain that $(\gamma_i,\gamma_i',\gamma_i'')=(\omega_1, \alpha_1+\alpha_2,-\alpha_1-\alpha_2)$ if and only if $(\alpha, \beta)\in\{(\alpha_1,\alpha_1),(\alpha_{max},\alpha_{max})$. 
  Denote by $${\bf c'}=\frac{(\alpha_1,\alpha_1)^2}{4}E_{\alpha_{1}}\wedge \check\alpha_1 \wedge E_{-\alpha_{1}}+\frac{(\alpha_{max},\alpha_{max})^2}{4}E_{\alpha_{max}}\wedge \check\alpha_{max} \wedge E_{-\alpha_{max}}\ .$$
  This implies that 
 $${\bf c}-{\bf c'}\in \left(V_{\omega_{1}}(\omega_1)\cdot V_{\omega_{1}}(\alpha_{1}+\alpha_{2})\cdot V_{\omega_{1}}(-\omega_1)\right)^{c}\subset S^3 V_{\omega_3}\ .$$

  A straightforward computation yields that  ${\bf c}(v\wedge v'\wedge v'')\ne 0$. Hence, $V_{\omega_1}$ is not Poisson.  That concludes our discussion of the case $\gg=G_2$.
 
Claim \ref{claim:exceptional} is proved.
\end{proof}

\subsection{Sufficient Conditions and Rigidity}
In order to prove the remaining assertions of  Theorem  \ref{th: class of Poisson} it now suffices to show that for all the modules $V_\lambda$ listed in Theorem  \ref{th: class of Poisson} (f), we have that $Hom_\gg(\Lambda^3 V_\lambda, S^3 V_\lambda)=\{0\}$, since this implies that $V_\lambda$ is Poisson by Proposition \ref{pr:0-hom=poi}.  The decomposition of the symmetric and exterior powers of the geometrically decomposable modules and the $sp(2n)$-module $V_{\omega_1}$ are well known. As a reference see (e.g.\cite{Ho}). In particular, in the case of $\gg=sl_n$ the decompositions of symmetric powers of the geometrically decomposable modules $V_\lambda$  are well known results from classical invariant theory, in the remaining cases the symmetric powers have been computed by multiple authors (see e.g. \cite{Sch}).
 
 The decomposition of the exterior powers of the simple geometrically decomposable modules and  the $sp(2n)$-module $V_{\omega_1}$ are computed in a beautiful way by Stembridge in \cite{Stem}; the decomposition however was already well known through the calculations of Lie algebra cohomology of nilradicals by Kostant (\cite{Kos}). One immediately obtains from these results that $Hom_\gg(\Lambda^3 V_\lambda, S^3 V_\lambda)=\{0\}$.

We are, however, interested in proving a stronger result which will be useful in the classification of flat modules in  Section \ref{subsect:flat modules}.

In \cite{BZ} we introduced a lower bound for the dimension of the symmetric and exterior cube, and correspondingly  we construct a  minimal submodule contained in the symmetric and exterior cubes.   
 We will use the notation
\begin{equation}
\label{eq:notation highest vectors}
V^\mu
\end{equation}
for the space of highest weight vectors of weight $\mu$ in a finite-dimensional $\gg$-module
$V$. For dominant $\lambda,\mu,\nu\in P^+(\gg)$ denote
$c_{\lambda,\mu}^\nu=\dim (V_\lambda\otimes V_\mu)^\nu=\dim_\CC (Hom_\gg(V_\nu, V_\lambda\otimes V_\mu)$; i.e.,
$c_{\lambda,\mu}^\nu$ is the tensor product multiplicity.  And for
any $\lambda,\mu\in P^+(\gg)$ denote $c^+_{\lambda;\mu}=\dim (S^2  V_\lambda)^\mu$ and $c^-_{\lambda;\mu}=\dim (\Lambda^2  V_\lambda)^\mu$ , so that $c^+_{\lambda;\mu}+c^-_{\lambda;\mu}=c_{\lambda,\lambda}^\mu$. Ultimately, define: 
$$ d_\lambda^\mu:=\sum_{\nu\in P_+} (c_{\lambda;\nu}^+-c_{\lambda;\nu}^-) c_{\nu,\lambda}^\mu \ ,.  $$
 
 We need the  following definition. 
\begin{definition}
We will call the $\gg$-module $S^3_{low} V_\lambda=\bigoplus_{\mu\in P_+}\CC^{ \max(d_\lambda^\mu,0)}\otimes  V_\mu$ the "lower symmetric cube" of $V_\lambda$, and similarly $\Lambda^3_{low} V_\lambda=\bigoplus_{\mu\in P_+}\CC^{ \max(-d_\lambda^\mu,0)}\otimes  V_\mu$ the "lower exterior cube".
\end{definition}

The definition is motivated by the following fact.

\begin{lemma}
\label{le:gen symmetric cube}
There exist injective homomorphisms of $\gg$-modules from $S^3_{low} V_\lambda\hookrightarrow S^3 V_\lambda$ and $\Lambda^3_{low} V_\lambda\hookrightarrow \Lambda^3 V_\lambda$.
\end{lemma}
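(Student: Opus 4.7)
The plan is to compute the difference of multiplicities $[S^3 V_\lambda : V_\mu] - [\Lambda^3 V_\lambda : V_\mu]$ directly from the definition of $d_\lambda^\mu$, and then read off the injections. First, I would decompose the triple tensor product in two ways, using
$$V_\lambda \otimes V_\lambda = S^2 V_\lambda \oplus \Lambda^2 V_\lambda,$$
which gives
$$V_\lambda^{\otimes 3} = (S^2 V_\lambda \otimes V_\lambda)\oplus (\Lambda^2 V_\lambda \otimes V_\lambda).$$
The multiplicity of $V_\mu$ in the first summand is $\sum_\nu c^+_{\lambda;\nu}\,c^\mu_{\nu,\lambda}$ and in the second is $\sum_\nu c^-_{\lambda;\nu}\,c^\mu_{\nu,\lambda}$, so their difference is exactly $d_\lambda^\mu$.

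Second, I would invoke the standard $S_3$-isotypic decomposition of $V_\lambda^{\otimes 3}$ (equivalently, Schur--Weyl duality for the $S_3$-action by permutation of tensor factors): the trivial isotypic component is $S^3 V_\lambda$, the sign isotypic component is $\Lambda^3 V_\lambda$, and the standard $2$-dimensional isotypic component contributes a $\gg$-module $M_\lambda$ with multiplicity two. Under this decomposition one has the refinements
\begin{equation*}
S^2 V_\lambda \otimes V_\lambda \cong S^3 V_\lambda \oplus M_\lambda,\qquad \Lambda^2 V_\lambda \otimes V_\lambda \cong \Lambda^3 V_\lambda \oplus M_\lambda,
\end{equation*}
obtained via the partial symmetrization/antisymmetrization idempotents in $\CC[S_3]$. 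Taking the $V_\mu$-multiplicity in each of these two isomorphisms and subtracting cancels the common contribution of $M_\lambda$, yielding
$$[S^3 V_\lambda : V_\mu] - [\Lambda^3 V_\lambda : V_\mu] = d_\lambda^\mu.$$

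Finally, since both multiplicities on the left are non-negative integers, their difference being $d_\lambda^\mu$ forces $[S^3 V_\lambda : V_\mu] \ge \max(d_\lambda^\mu,0)$ and $[\Lambda^3 V_\lambda : V_\mu] \ge \max(-d_\lambda^\mu,0)$ for every $\mu \in P^+$. Choosing a $\gg$-equivariant embedding of each $V_\mu$-isotypic component of $S^3_{low} V_\lambda$ (resp.\ $\Lambda^3_{low} V_\lambda$) into the corresponding isotypic component of $S^3 V_\lambda$ (resp.\ $\Lambda^3 V_\lambda$) and taking their direct sum yields the desired injective homomorphisms.

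The only place any care is needed is in justifying the two refined decompositions displayed above; once those are in hand, the rest is bookkeeping on multiplicities. I would either derive them explicitly using the Young symmetrizers $\tfrac{1}{3}(1+(12))(1-(23))$ and $\tfrac{1}{3}(1-(12))(1+(23))$ acting on $V_\lambda^{\otimes 3}$, or simply cite Schur--Weyl duality for $S_3$.
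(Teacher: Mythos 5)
Your argument is correct, and it takes a genuinely different route from the paper's. The paper obtains the bound $\dim(S^3 V_\lambda)^\mu \ge d_\lambda^\mu$ from elementary linear algebra: writing $S^3 V_\lambda = (S^2 V_\lambda \otimes V_\lambda) \cap (V_\lambda \otimes S^2 V_\lambda)$ inside $V_\lambda^{\otimes 3}$ and applying $\dim(A\cap B) \ge \dim A + \dim B - \dim W$ on highest-weight spaces. You instead invoke the $S_3$-isotypic decomposition of $V_\lambda^{\otimes 3}$ (equivalently, the Pieri refinements $S^2 V_\lambda\otimes V_\lambda \cong S^3 V_\lambda \oplus M_\lambda$ and $\Lambda^2 V_\lambda\otimes V_\lambda \cong \Lambda^3 V_\lambda \oplus M_\lambda$, where $M_\lambda = S^{(2,1)}V_\lambda$), and conclude the \emph{exact} identity $[S^3 V_\lambda : V_\mu] - [\Lambda^3 V_\lambda : V_\mu] = d_\lambda^\mu$. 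This is strictly stronger than what the lemma asserts or what the paper's proof gives; the lemma follows since both multiplicities are nonnegative integers. The paper's approach is more elementary (it avoids Schur--Weyl/Pieri and uses only the intersection-dimension inequality), and it is the template the paper later reuses in a braided setting where there is no $S_3$-action and the Schur--Weyl argument would not apply; your approach buys the sharper multiplicity identity for the classical case and makes the cancellation of the ``mixed'' Schur component $M_\lambda$ transparent. Both are valid proofs of the stated lemma.
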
 

\begin{proof}
  By definition of $S^3 V_\lambda$, one has:
$(S^3  V_\lambda)^\mu=(S^2  V_\lambda\otimes
V_\lambda)^\mu\cap (V_\lambda\otimes S^2  V_\lambda)^\mu$.
Therefore,  we obtain the inequality:
$$\dim (S^3  V_\lambda)^\mu \ge \dim (S^2  V_\lambda\otimes V_\lambda)^\mu
+\dim (V_\lambda\otimes S^2  V_\lambda)^\mu-\dim (V_\lambda\otimes V_\lambda\otimes V_\lambda)^\mu$$
$$=\dim (S^2  V_\lambda\otimes V_\lambda)^\mu-\dim (V_\lambda\otimes \Lambda^2 V_\lambda)^\mu
= d_\lambda^\mu \ . $$
The existence of injective homomorphisms of $\gg$-modules from $S^3_{low} V_\lambda\hookrightarrow S^3 V_\lambda$ follows and the assertion for $\Lambda^3 V_\lambda$ can be proved analogously.
\end{proof}

We say that a $\gg$-module $V_\lambda$ is {\it rigid}, if  $S^3_{low} V_\lambda\cong S^3 V_\lambda$ and $\Lambda^3_{low} V_\lambda\cong \Lambda^3 V_\lambda$. The following theorem is the main result of this section.

\begin{theorem}
 \label{th:gd=gencube}
 Let $\gg$ be a simple Lie algebra. A simple $\gg$-module is rigid, if and only if $(\gg,V)$ is one of the pairs listed in Theorem \ref{th: class of Poisson} (f).
 \end{theorem}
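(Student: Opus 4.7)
The plan is to establish an exact refinement of the lower bound of Lemma \ref{le:gen symmetric cube}, after which rigidity becomes equivalent to condition (c) of Theorem \ref{th: class of Poisson} and hence to inclusion in the explicit list of part (f).

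First I would decompose $V_\lambda^{\otimes 3}$ under the natural commuting action of $\mathfrak{S}_3$ and $\gg$. Since the irreducible $\mathfrak{S}_3$-modules are the trivial $\mathrm{triv}$, the sign $\mathrm{sgn}$, and the two-dimensional standard representation $\mathrm{std}$, there is a decomposition
\begin{equation*}
V_\lambda^{\otimes 3}\cong (S^3 V_\lambda\otimes \mathrm{triv})\oplus(\Lambda^3 V_\lambda\otimes \mathrm{sgn})\oplus (M\otimes \mathrm{std})
\end{equation*}
for a uniquely determined $\gg$-module $M$. Taking $\langle(12)\rangle$-fixed and $\langle(12)\rangle$-anti-fixed vectors, and using that $\mathrm{std}$ restricts to $\mathrm{triv}\oplus\mathrm{sgn}$ on $\langle(12)\rangle$, yields $\gg$-module isomorphisms
\begin{equation*}
S^2 V_\lambda\otimes V_\lambda\cong S^3 V_\lambda\oplus M,\qquad \Lambda^2 V_\lambda\otimes V_\lambda\cong \Lambda^3 V_\lambda\oplus M.
\end{equation*}
Subtracting isotypic multiplicities of $V_\mu$ eliminates $M$ and gives the exact identity
\begin{equation*}
d_\lambda^\mu=\dim(S^3 V_\lambda)^\mu-\dim(\Lambda^3 V_\lambda)^\mu
\end{equation*}
for every $\mu\in P^+(\gg)$, sharpening the inequality of Lemma \ref{le:gen symmetric cube}.

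Second, with this identity in hand, the rigidity requirement that $\dim(S^3 V_\lambda)^\mu=\max(d_\lambda^\mu,0)$ and $\dim(\Lambda^3 V_\lambda)^\mu=\max(-d_\lambda^\mu,0)$ for every $\mu$ becomes equivalent to $\min(\dim(S^3 V_\lambda)^\mu,\dim(\Lambda^3 V_\lambda)^\mu)=0$ for every $\mu$; that is, $S^3 V_\lambda$ and $\Lambda^3 V_\lambda$ share no simple constituent. This is precisely $Hom_\gg(\Lambda^3 V_\lambda,S^3 V_\lambda)=\{0\}$, i.e.\ part (c) of Theorem \ref{th: class of Poisson}. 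Applying Theorem \ref{th: class of Poisson} then identifies the rigid simple modules with the pairs listed in part (f).

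The main obstacle is establishing the exact identity in the first step: the obvious containments $S^3 V_\lambda\hookrightarrow S^2 V_\lambda\otimes V_\lambda$ and $\Lambda^3 V_\lambda\hookrightarrow \Lambda^2 V_\lambda\otimes V_\lambda$ only yield one-sided inequalities on isotypic multiplicities, and the crucial input is the $\mathfrak{S}_3$-isotypic refinement of $V_\lambda^{\otimes 3}$, which identifies the ``mixed'' complements in both containments as the same $\gg$-module $M$. Once this is in place, the remaining arguments are essentially a formal translation through Theorem \ref{th: class of Poisson}.
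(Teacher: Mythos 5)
Your proof is correct and takes a genuinely different, cleaner route than the paper's. The paper establishes the ``if'' direction by type-by-type explicit calculation: for each module on list~(f), it imports the known decompositions of $S^2,\Lambda^2,S^3,\Lambda^3$ (from Howe, Schmid, Stembridge) and then verifies by hand, via Littlewood--Richardson and the tensor-product stabilization results of Kleber--Vishwanath (and LiE computations in low rank), that the multiplicities $d_\lambda^\mu$ coincide with those in $S^3 V_\lambda$ and $\Lambda^3 V_\lambda$. Your $\mathfrak{S}_3$-isotypic decomposition $V_\lambda^{\otimes 3}\cong (S^3 V_\lambda\otimes\mathrm{triv})\oplus(\Lambda^3 V_\lambda\otimes\mathrm{sgn})\oplus (M\otimes\mathrm{std})$, with $M$ the Schur module $S_{(2,1)}(V_\lambda)$, collapses all of this into the single structural identity $d_\lambda^\mu=\dim(S^3 V_\lambda)^\mu-\dim(\Lambda^3 V_\lambda)^\mu$, after which ``rigid'' is seen to be \emph{equivalent} to condition~(c), $Hom_\gg(\Lambda^3 V_\lambda,S^3 V_\lambda)=\{0\}$ --- a fact the paper does not make explicit even though it simplifies the entire section. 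One caution about deduction order: in the paper's own logical layout the implication $(\mathrm{f})\Rightarrow(\mathrm{c})$ in Theorem~\ref{th: class of Poisson} is ultimately derived \emph{from} Theorem~\ref{th:gd=gencube}, so invoking Theorem~\ref{th: class of Poisson} as a black box in your Step~2 would be circular as literally stated; however, the paper itself remarks at the start of Section~\ref{se:proof of classification} that $Hom_\gg(\Lambda^3 V_\lambda,S^3 V_\lambda)=\{0\}$ for each pair on list~(f) can be read off directly from the literature (Howe, Kostant, Stembridge), so the equivalence $(\mathrm{c})\Leftrightarrow(\mathrm{f})$ is available independently of the rigidity theorem, and your argument closes without circularity. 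What your approach buys is the elimination of all case-by-case tensor computations in Propositions~\ref{prop:An-rigid}, \ref{pr:so-rigid}, and \ref{pr:sp-rigid}; what it gives up is the explicit determination of the $d_\lambda^\mu$, which the paper reuses elsewhere (e.g.\ in the proof of flatness in Theorem~\ref{the:class. flat}).
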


\begin{proof}
We will first prove the "only if" assertion. Note the following fact connecting rigid and Poisson modules.

\begin{proposition}
\label{pr:rigid-Poisson}
If a simple $\gg$-module $V_\lambda$ is rigid, then $Hom_\gg(\Lambda^3 V_\lambda, S^3 V_\lambda)=\{0\}$ and $V_\lambda$ is Poisson.
\end{proposition}

\begin{proof}
It is easy to see that 

$$ \dim(Hom_\gg(\Lambda^3 V_\lambda, S^3 V_\lambda))=\sum_{\mu\in P^+}  \max(d_\lambda^\mu,0)\cdot  \max(-d_\lambda^\mu,0)=0\ .$$

We immediately obtain $Hom_\gg(\Lambda^3 V_\lambda, S^3 V_\lambda)=\{0\}$, and hence, that $V_\lambda$ is Poisson.
\end{proof}

We obtain from Proposition \ref{pr:rigid-Poisson} and the arguments in Section \ref{se:nec.cond} that if $V$ is rigid, then $(\gg,V)$ must be one of the pairs of Theorem \ref{th: class of Poisson} (f).

  The proof  of  the converse will consist of the following  steps for each of the listed simple modules $V_\lambda$.  
  
 \noindent(1) Determine the decomposition  $S^2 V_\lambda$ and $\Lambda^2 V_\lambda$, as well as $S^3 V_\lambda$ and $\Lambda^3 V_\lambda$. recall from above that the splittings of the symmetric and exterior powers of the modules in question are well known (see e.g. \cite[chapter 4]{Ho}).
 
 \noindent(2) Suppose $V_\mu$ appears with multiplicity one in $S^3 V_\mu$. Show that $d_\lambda^\mu=1$. Similarly, if  $V_\mu$ appears with multiplicity one in $\Lambda^3 V_\mu$. Show that $d_\lambda^\mu=-1$.

 In order to further simplify our computations note the following fact. Recall that if $\gg$ is a semisimple Lie algebra and $\tau$ a graph automorphism of the Dynkin diagram, then $\tau$ induces automorphisms $\tau_\gg:\gg\to \gg$, $\tau_P:P(\gg)\to P(\gg)$.  
 
 \begin{lemma}
 \label{le:Dynkinauto}
 Let $\gg$ be a semisimple Lie algebra and let $\tau$ be a graph automorphism of the Dynkin diagram of $\gg$.
 
 \noindent(a)   We have for the tensor product multiplicities $c_{\mu}^\lambda=c_{\tau(\mu)}^{\tau(\lambda)}$. Similarly, if  $S^n V_{\lambda}=\bigoplus_{i} V_{\nu_i}$ then  $S^n V_{\tau(\lambda)}=\bigoplus_{i} V_{\tau(\nu_i)}$ and if  $\Lambda^n V_{\lambda}=\bigoplus_{i} V_{\nu_i}$, then $\Lambda^n V_{\tau(\lambda)}=\bigoplus_{i} V_{\tau(\nu_i)}$.

 \noindent(b) If the simple $\gg$-module $V_\lambda$ is rigid, then $V_{\tau(\lambda)}$ is rigid.
 
 \end{lemma} 
 
\begin{proof}
Part(a) is well known, and (b) follows immediately from (a).
\end{proof}

In order to accomplish Step (2) we will need a generalization of the {\it tensor product stabilization} of the  Littlewood-Richardson rule to other types of simple Lie algebras by Kleber and Vishwanath (\cite{KV} and \cite{Vish}).  
Let $\gg$ be a complex  simple Lie algebra of type $X_n$, $X\in\{A,B,C,D\}$.
We denote for a triple $\lambda,\mu,\nu\in P^+(\gg)=P^+(X_n)$ of dominant weights the tensor multiplicity $c_{\lambda,\mu}^\nu(X_n)$.

 We first recall the Littlewood-Richardson rule for  $\gg=sl_{n+1}(\CC)$; i.e. the Dynkin diagram of $\gg$ is of type $A_n$:  For dominant weights $\lambda=\sum_{i=1}^{n} \l_i\omega_i$, $\mu=\sum_{i=1}^{n} m_i\omega_i$ and $\nu=\sum_{i=1}^{n} n_i\omega_i$ one has $c_{\lambda,\mu}^\nu(A_n)=c_{\lambda,\mu}^\nu(A_{m})$ for all $m\ge n$  if
 
 $$\sum_{i=1}^{n} i( \l_i+m_i)=\sum_{i=1}^{n} i n_i\ .$$
 
 This phenomenon is commonly referred to as {\it tensor product stabilization} (see e.g. \cite{KV}).
 
   Next,  let $\gg$ be a complex  simple Lie algebra of type $X_n$, $X\in\{B,C,D\}$. Following  ideas of \cite[Ch. 6, Corollary 3]{Vish} we call  a weight $\gamma\in P(\gg)=P(X_n)$ $A$-supported, if $\gamma=\sum_{i=1}^{\omega_{n-2}} c_i\omega_i$. This means that $\gamma$ is supported entirely in the $A_k$-part of the Dynkin diagram $X_n$.  Note that if a weight $\gamma=\sum_{i=1}^{\omega_{n}} c_i\omega_i$  is $A$-supported in $P(X_n)$, then it is $A$-supported in $P(X_m)$ for all $m\ge n$.  One obtains the following fact.   

 \begin{proposition}\cite[Ch. 6, Corollary 3, Remark 10.1]{Vish}
\label{pr:series-stab}
Let  $\gg$ be a complex  simple Lie algebras of type $X_n$ where $X\in\{,B,C,D\}$. If $\lambda,\mu,\nu\in P^+(X_n)$ are $A$-supported, then   

$$ c_{\lambda,\mu}^\nu(X_n)=c_{\lambda,\mu}^\nu(X_m)\ ,~m\ge n\ .$$
\end{proposition}

 We will now show, case by case, that the modules in question are indeed rigid. Since the computations are rather long but straightforward, we include the complete proof in only one nontrivial case ($(\gg,V)=(sl_n,V_{2\omega_1})$). Complete calculations can be found in \cite{ZW}.
  
\subsubsection{The case $\gg=sl_n$}

 \begin{proposition}
 \label{prop:An-rigid}
 If $\gg=sl_n$, then the simple module $V=V_\lambda$ is rigid, if $\lambda\in\{\omega_1,2\omega_1\omega_2, \omega_{n-2},\omega_{n-1},2\omega_{n-1}\}$.  
 \end{proposition}
 \begin{proof}
 By Lemma \ref{le:Dynkinauto} it suffices to consider the case of $\lambda\in\{\omega_1,2\omega_1\omega_2\}$.
 
 \begin{lemma}
 Let $\gg=sl_n$, $n\ge 7$. We have the following:
  
 \noindent(a) Let $V=V_{\omega_1}$. We have $S^2 V=V_{2\omega_1}$ , $S^3V=V_{3\omega_1}$, $\Lambda^2 V=V_{\omega_2}$ and $\Lambda^3 V=V_{\omega_3}$.

 \noindent(b)\cite[Theorem 3.1, Theorem 4.4.2]{Ho}  Let $V=V_{2\omega_1}$.  We have $S^2V \cong V_{4\omega_1} \oplus V_{2\omega_2}$,  $\Lambda^2 V \cong V_{2\omega_1+\omega_2}$, $S^3 V\cong V_{6\omega_1}\oplus V_{2\omega_1+2\omega_2}\oplus V_{2\omega-3}$ and $\Lambda^3 V\cong V_{3\omega_1+\omega_3}\oplus V_{3\omega_2}$.
 
 \noindent(c)\cite[Theorem 3.8.1, Theorem 4.4.4]{Ho} Let $V=V_{\omega_2}$. We have $S^2 V \cong V_{2\omega_2}\oplus V_{\omega_4}$, $\Lambda^2 V \cong V_{\omega_1+\omega_3}$, $S^3V\cong V_{3\omega_2}\oplus V_{\omega_2+\omega_4}\oplus V_{2\omega_3}$.
  
 \end{lemma}

 It remains to show the following.
 
 \begin{lemma}
 \label{le:tensor-stab-mult}
 Let $\gg=sl_n$, $n\ge 7$.
 
 \noindent(a) If $\lambda=\omega_1$, then $d_{\omega_1}^\lambda=1$ if $\lambda=3\omega_1$, and $d_{\omega_1}^\lambda=-1$ if $\lambda=\omega_3$.

\noindent(b) If $\lambda=2\omega_1$, then $d_{2\omega_1}^\lambda=1$ if $\lambda\in \{6\omega_1,2\omega_1+2\omega_2, 2\omega_3\}$ and  $d_{2\omega_1}^\lambda=-1$ if $\lambda\in \{ 3\omega_1+\omega_3, 3\omega_2\}$.

\noindent(c) If $\lambda=\omega_2$, then $d_{2\omega_1}^\lambda=1$, if $\lambda\in \{3\omega_2,\omega_2+\omega_4, \omega_6\}$ and $d_{2\omega_1}^\lambda=-1$,  if $\lambda\in \{ 2\omega_1+\omega_4, 2\omega_3\}$.

\end{lemma} 

\begin{proof}

 Computing the decompositions manually (or using the computer algebra system LIE \cite{VL}) for $n=7$, and applying the Littlewood Richardson rule for all $n\ge 7$ we obtain

 We have for $S^2V_{\omega_1}\otimes V_{\omega_1}$: 
 $$c_{2\omega_1,\omega_1}^{3\omega_1}=1\ ,c_{2\omega_1,\omega_1}^{\lambda}=0\ ,$$ if $\lambda\ne \omega_1$. For $\Lambda^2 V_{\omega_1}\otimes V_{\omega_1}$ we obtain:
$$ c_{\omega_2,\omega_1}^{\omega_3}=1\ , c_{\omega_2,\omega_1}^{\lambda}=0\ ,$$ if $\lambda\ne \omega_1$. Part (a) is proved.

 We have for the factors of $S^2 V_{2\omega_1}$:
 
  $$ V_{4\omega_1}\otimes V_{2\omega_1}:\ c_{4\omega_1,2\omega_1}^{6\omega_1}=1\ , c_{4\omega_1,2\omega_1}^{2\omega_1+2\omega_2}=1\ ,c_{4\omega_1,2\omega_1}^{4\omega_1+\omega_2}=1\ ,c_{4\omega_1,2\omega_1}^{\lambda}=0\ ,$$
   for  $\lambda\in P^+(\gg)$, unless $(\lambda,\omega_i)>0$ for some $i\ge 7$. 
 $$ V_{2\omega_2}\otimes V_{2\omega_1}: \ c_{2\omega_2,2\omega_1}^{2\omega_1+2\omega_2}=1\ ,  c_{2\omega_2,2\omega_1}^{\omega_1+\omega_2+\omega_3}=1\ , c_{2\omega_2,2\omega_1}^{2\omega_3}=1\ ,c_{2\omega_2,2\omega_1}^{\lambda}=0\ ,$$
 
   for  $\lambda\in P^+(\gg)$, unless $(\lambda,\omega_i)>0$ for some $i\ge 7$. 
   
  For  $\Lambda^2  V_{2\omega_1}$:
  
  $$V_{2\omega_1+\omega_2}\otimes V_{2\omega_1}: c_{2\omega_1+\omega_2,2\omega_1}^{4\omega_1+\omega_2}=1\ ,c_{2\omega_1+\omega_2,2\omega_1}^{2\omega_1+2\omega_2}=1\ , c_{2\omega_1+\omega_2,2\omega_1}^{3\omega_1+\omega_3}=1\ , c_{2\omega_1+\omega_2,2\omega_1}^{3\omega_2}=1\ ,$$
  $$ c_{2\omega_1+\omega_2,2\omega_1}^{\omega_1+\omega_2+\omega_3}=1\ , c_{2\omega_1+\omega_2,2\omega_1}^{\lambda}=0\ ,    $$
    for  $\lambda\in P^+(\gg)$, unless $(\lambda,\omega_i)>0$ for some $i\ge 7$. Part (b) is proved.

Proceed similarly to prove part(c). 
We can now read off the $d_\lambda^\mu$ mentioned in the Lemma. The lemma is proved.

\end{proof}

We obtain that    the simple $sl_n$-modules $V_{\omega_1}$, $V_{2\omega_1}$ and $V_{\omega_2}$ are rigid for $n\ge 7$. In the case of $n<7$ one can prove the assertion of the proposition by direct computation (e.g. using LIE).  Proposition \ref{prop:An-rigid} is proved.

\end{proof}

 \subsubsection{The  case $\gg=so(k)$}   
 
Next, we will proceed  with the case when, $\gg=so(k)$.
\begin{proposition}
\label{pr:so-rigid} 
Let $\gg=so(k)$ and $V=V_{\omega_1}$. Then $V$ is rigid. Moreover, $V=V_\lambda$ is rigid, if  $\gg=so(8)$ and $\lambda=\{\omega_3,\omega_4\}$ or if $\gg=so(10)$ and $\lambda=\{\omega_4,\omega_5\}$. 
\end{proposition} 
\begin{proof}
\begin{lemma}
\label{le:so-powers}
\cite{Sch}\cite{Stem}
Let $\gg=so(k)$, $k\ge 9$. Then $S^2 V_{\omega_1}\cong   V_{2\omega_1}\oplus V_0$, $\Lambda^2 V_{\omega_1}\cong V_{\omega_2}$, $S^3 V_{\omega_1}\cong  V_{3\omega_1}\oplus V_{\omega_1}$ and $\Lambda^3 V_{\omega_1}\cong V_{\omega_3}$.
 \end{lemma}

 \begin{lemma}
 \label{le:low-so}
Let $\gg=so(k)$, $k\ge 9$. If $\mu\in\{3\omega_1, \omega_1\}$, then $d_{\omega_1}^{\mu}=1$. If $\mu=\omega_3$, then $d_{\omega_1}^{\mu}=-1$.
 \end{lemma}  
  
  \begin{proof}
 Analogous to the proof of Lemma \ref{le:tensor-stab-mult}.
 \end{proof}
 
We obtain the assertion of Proposition \ref{pr:so-rigid} for $k\ge 9$ from Lemma \ref{le:so-powers} and Lemma \ref{le:low-so}.  The assertion for the remaining cases of $k\le 8$ and the case $k=10$, $\lambda\in\{\omega_4,\omega_5\}$ can be verified directly (e.g. using LIE).
  Proposition \ref{pr:so-rigid} is proved.
 \end{proof}
 
 \subsubsection{The case $\gg=sp(2n)$}  
   
\begin{proposition}
\label{pr:sp-rigid} 
Let $\gg=so(2n)$ and $V=V_{\omega_1}$. Then $V$ is rigid.  
\end{proposition}    
\begin{proof}   
   
 \begin{lemma}\cite{Ho},\cite{Stem}
 Let $\gg=sp(2n)$ with $n\ge 4$ and let $V=V_{\omega_1}$. We have  $S^2 V\cong V_{2\omega_1}$ and $\Lambda^2 V\cong V_{\omega_2}\oplus V_0$. Moreover, $S^3 V\cong V_{3\omega_1}$ and $\Lambda^3 V\cong V_{\omega_1}\oplus V_{\omega_3}$. 
 \end{lemma}
 
Moreover we have the following fact.

\begin{lemma}
Let $\gg=sp(2n)$, $n\ge 4$. We have that $d_{\omega_1}^\mu=1$, if $\mu=3\omega_1$ and $d_{\omega_1}^\mu=-1$, if $\mu\in\{\omega_3, \omega_1\}$.
\end{lemma}

 \begin{proof}
 Analogous to the proof of Lemma \ref{le:tensor-stab-mult}.
 \end{proof}

This proves the proposition in the case $n\ge 4$. The assertion for the remaining cases $n\le 3$ can be verified directly using LIE.
Proposition \ref{pr:sp-rigid}  is proved.  
\end{proof}
  
 \subsubsection{Proof of Theorem \ref{th:gd=gencube} and the proof of Theorem \ref{th: class of Poisson}} 
  
  The assertion of Theorem \ref{th:gd=gencube} in the cases of  $\gg=E_6$ and $\lambda=\{\omega_1,\omega_6\}$ can also be verified through direct computation (e.g. using LIE). Theorem \ref{th:gd=gencube} then follows from Propositions  \ref{prop:An-rigid}, \ref{pr:so-rigid}  and \ref{pr:sp-rigid}.  
 \end{proof}
   
Theorem \ref{th:gd=gencube}  and Proposition  \ref{pr:rigid-Poisson} imply that (f) yields (c) and (a).
Thus, we have so far proved the equivalence of parts (a--c), (e) and (f) of Theorem \ref{th: class of Poisson}.  Additionally, we obtain that (f) implies (d) from the explicit computation of the exterior squares in the proof of Theorem \ref{th:gd=gencube}. We complete the proof of  Theorem \ref{th: class of Poisson} by  showing that (d) implies (a).

\begin{proposition}
\label{pr:simple square}
Let $\gg$ be a semisimple Lie algebra and let $V_\lambda$ be a simple $\gg$-module. 
If $\Lambda^2 V_\lambda$   is simple, then $V_\lambda$ is Poisson.  
\end{proposition}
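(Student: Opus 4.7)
The plan is to prove the stronger statement that the operator $[c_{12},c_{23}]$ already vanishes on $\Lambda^3 V_\lambda\subset V_\lambda^{\otimes 3}$; combined with the formula ${\bf c}=-[c_{12},c_{23}]$ from Lemma \ref{le:c-inv.and skew}(b), this will immediately yield ${\bf c}(\Lambda^3 V_\lambda)=\{0\}$ in $V_\lambda^{\otimes 3}$, hence a fortiori in $S^3 V_\lambda$, which is exactly the condition of Definition \ref{def: Poisson-modules}.

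The key input I would use is that $c\in S^2\gg$ acts on $V_\lambda\otimes V_\lambda$ as a $\gg$-equivariant endomorphism that commutes with the flip $\tau$, and therefore preserves each of the subspaces $S^2 V_\lambda$ and $\Lambda^2 V_\lambda$. Under the standing assumption that $\Lambda^2 V_\lambda=V_\mu$ is simple, Schur's lemma forces the restriction $c|_{\Lambda^2 V_\lambda}$ to be a single scalar operator $\chi_\mu\cdot\mathrm{id}$.

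The main step is then the observation that for any $v\in\Lambda^3 V_\lambda$, total antisymmetry lets us view $v$ as an element of $\Lambda^2 V_\lambda\otimes V_\lambda = V_\mu\otimes V_\lambda$ (reading off the first two tensor slots), so $c_{12}(v)=\chi_\mu v$; by the symmetric argument applied to the last two slots, $v\in V_\lambda\otimes\Lambda^2 V_\lambda$ and $c_{23}(v)=\chi_\mu v$. Consequently
\[ [c_{12},c_{23}](v)=c_{12}(\chi_\mu v)-c_{23}(\chi_\mu v)=\chi_\mu^2\, v-\chi_\mu^2\, v=0, \]
so ${\bf c}$ annihilates $\Lambda^3 V_\lambda$ inside $V_\lambda^{\otimes 3}$, whence $V_\lambda$ is Poisson.

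I do not foresee any genuine obstacle: the whole argument reduces to Schur's lemma together with the explicit formula ${\bf c}=-[c_{12},c_{23}]$, and it bypasses the case-by-case classification of Theorem \ref{th: class of Poisson} entirely, using no properties of $V_\lambda$ beyond the simplicity of $\Lambda^2 V_\lambda$.
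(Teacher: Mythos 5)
Your proof is correct and coincides with the paper's: both invoke Schur's lemma to conclude that $c$ acts by a scalar on the simple module $\Lambda^2 V_\lambda$, then use $\Lambda^3 V_\lambda = (\Lambda^2 V_\lambda\otimes V_\lambda)\cap(V_\lambda\otimes\Lambda^2 V_\lambda)$ to see that both $c_{12}$ and $c_{23}$ act by that same scalar on $\Lambda^3 V_\lambda$, so ${\bf c}=\pm[c_{12},c_{23}]$ vanishes there. (Incidentally, the paper's first sentence mistakenly says ``$V_\lambda$ is simple'' where it means ``$\Lambda^2 V_\lambda$ is simple''; your statement of the hypothesis is the correct one.)
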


\begin{proof}
Suppose that $V_\lambda$ is simple. Then $c$ acts as multiplication by a constant $\mu$ on $\Lambda^2 V_\lambda$. Therefore, $c_{12}(\overline v)=\mu \overline v$ for all $v\in\Lambda^2 V_\lambda\otimes V_\lambda$ and $c_{23}(\overline v')=\mu \overline v'$ for all $v'\in V_\lambda\otimes \Lambda^2 V_\lambda$. This implies that

$$ [c_{12}, c_{23}](\overline v)=(\mu^2-\mu^2)(\overline v)=0$$
for all $\overline v\in \Lambda^2 V_\lambda\otimes V_\lambda\cap V_\lambda\otimes \Lambda^2 V_\lambda=\Lambda^3 V_\lambda$ and hence, $V_\lambda$ is Poisson.  Proposition \ref{pr:simple square} is proved.

\end{proof}

Proposition \ref{pr:simple square} completes the proof of Theorem  \ref{th: class of Poisson}. 

\endproof

 Theorem  \ref{th: class of Poisson} and Theorem \ref{th:gd=gencube} yield the following corollary, which will play an important role in the classification of flat modules in Section \ref{se:prlime}.
 
 \begin{corollary}
 \label{co: poisson=rigidity}
 Let $\gg$ be a simple Lie algebra and $V$ a simple $\gg$-module. $V$ is Poisson, if and only if $V$ is rigid. 
 \end{corollary}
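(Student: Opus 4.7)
The proof plan is essentially to invoke the two preceding classification results in tandem, since both Theorem \ref{th: class of Poisson} and Theorem \ref{th:gd=gencube} pin down the same explicit list of pairs $(\gg, V)$ enumerated in Theorem \ref{th: class of Poisson}(f). So the corollary should follow formally, without new computations.

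First, I would dispatch the easy direction ``rigid $\Rightarrow$ Poisson'' by appealing directly to Proposition \ref{pr:rigid-Poisson}, which already establishes that $\dim Hom_\gg(\Lambda^3 V_\lambda, S^3 V_\lambda)=0$ whenever $V_\lambda$ is rigid, and hence $V_\lambda$ is Poisson by the implication (c) $\Rightarrow$ (a) in Theorem \ref{th: class of Poisson} (equivalently by Proposition \ref{pr:0-hom=poi}).

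For the converse ``Poisson $\Rightarrow$ rigid'', the plan is: suppose $V = V_\lambda$ is a simple Poisson $\gg$-module. By the equivalence (a) $\Leftrightarrow$ (f) of Theorem \ref{th: class of Poisson}, the pair $(\gg, V)$ must appear in the explicit list of Theorem \ref{th: class of Poisson}(f). But Theorem \ref{th:gd=gencube} asserts that a simple $\gg$-module is rigid precisely when $(\gg, V)$ belongs to this same list. Therefore $V$ is rigid, completing the equivalence.

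There is no real obstacle, since all the substantive work — the necessary-conditions analysis in Section \ref{se:nec.cond}, the case-by-case decomposition computations, and the tensor-product-stabilization arguments used for rigidity — has already been carried out in the proofs of Theorems \ref{th: class of Poisson} and \ref{th:gd=gencube}. The corollary is simply the observation that these two classifications produce identical lists, so ``Poisson'' and ``rigid'' coincide for simple modules over simple Lie algebras.
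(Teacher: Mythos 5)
Your proposal matches the paper's (implicit) argument exactly: the corollary is stated as an immediate consequence of Theorem \ref{th: class of Poisson} and Theorem \ref{th:gd=gencube}, and since both classifications single out the same list of pairs $(\gg,V)$ from Theorem \ref{th: class of Poisson}(f), the equivalence is formal. The easy direction you also cover via Proposition \ref{pr:rigid-Poisson}, which is likewise how the paper sets things up.
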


\section{Open Questions and Conjectures}
\label{se:conj}  
  
In this final chapter we will present a number of conjectures and questions which  will be interesting for future research in the area of braided symmetric algebras. The  classification of flat modules lets us expect that the following conjecture holds.

\begin{conjecture}
\label{conj:reducedsymm to braided}
Let $\gg$ be a reductive Lie algebra and $V^q$ a finite dimensional $U_q(\gg)$-module. The braided symmetric algebra $S_\sigma (V^q)$ is a flat deformation of the reduced symmetric algebra  $\overline{S(V, r^-)}$ of the classical limit $V$ of $V^q$.  
\end{conjecture}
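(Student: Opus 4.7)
The plan is to establish the conjecture in two main steps: first, construct a canonical surjection from the Poisson closure $\overline{S(V,r^-)}$ onto the classical limit $\overline{S_\sigma(V^q)}$; second, show this surjection is an isomorphism, which is equivalent to flatness of the deformation. For the first step, Theorem \ref{th:flat poisson closure} asserts that $\overline{S_\sigma(V^q)}$ is a quotient of $S(V)$ admitting a Poisson structure given by $\{u,v\}=r^-(u\wedge v)$. The canonical quotient map $S(V,r^-)\twoheadrightarrow \overline{S_\sigma(V^q)}$ is therefore a homomorphism of bracketed algebras into a Poisson algebra, and by the universal property of the Poisson closure (Proposition \ref{pr:univ poisson closure}(b)) it factors through $\overline{S(V,r^-)}$, yielding a surjective Poisson homomorphism
$$\pi_V: \overline{S(V,r^-)}\twoheadrightarrow \overline{S_\sigma(V^q)}.$$
Flatness is equivalent to injectivity of $\pi_V$ in each graded degree.

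For the second step, I would first reduce to the case where $V$ is simple. The reduction uses Proposition \ref{pr:univ poisson closure}(c), which supplies a surjection $\overline{S(V',r'^-)}\otimes \overline{S(V'',r''^-)}\twoheadrightarrow \overline{S(V'\oplus V'',r^-)}$ on the classical side, together with the parallel almost-decomposition $S_\sigma(V'^q\oplus V''^q)\to S_\sigma(V'^q)\otimes_\sigma S_\sigma(V''^q)$ that can be extracted from the braided exponential property of $S_\sigma$ in $\OO_{gr,f}$. The nontrivial point is that $S_\sigma$ need not decompose strictly over direct sums; however, comparing the natural filtrations on both sides should reduce the problem to the simple case at the level of associated graded algebras. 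When $V_\lambda$ is Poisson, Theorem \ref{the:class. flat} combined with Corollary \ref{co: poisson=rigidity} already gives flatness and $\overline{S(V_\lambda,r^-)}=S(V_\lambda)$, so $\pi_{V_\lambda}$ is an isomorphism by dimension count.

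The remaining, and principal, difficulty is to handle simple non-Poisson modules $V_\lambda$, where the Jacobian ideal $\langle J_{r^-}\rangle$ is non-trivial and controls the Poisson closure. My proposed approach is to realize $\overline{S(V_\lambda,r^-)}$ as the coordinate ring of an affine Poisson subvariety of $V_\lambda^*$, cut out by the $\gg$-isotypic components of the image of $J_{r^-}$, and then to realize $S_\sigma(V_\lambda^q)$ as a quantum analogue of this coordinate ring via the Kontsevich-style quantization implicit in Theorem \ref{th:flat poisson closure}. Concretely, one could try to generalize the geometric picture behind Theorem \ref{th: Abelian ->flat quadratic} and Theorem \ref{th:Heisenberg}: for Poisson $V_\lambda$ one finds a parabolic $\pp\subset \gg'$ with nilradical $V_\lambda$ and identifies $S_\sigma(V^q_\lambda)$ with (an associated graded of) $U(w_\Delta)$; for non-Poisson $V_\lambda$ one should expect a similar, but more subtle, identification of $S_\sigma(V^q_\lambda)$ with a quantum subquotient whose classical limit is precisely the Poisson closure of $S(V_\lambda)$.

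The hard part will be this last step: establishing the dimension equality $\dim_{\CC(q)} S_\sigma(V^q_\lambda)_n=\dim_\CC \overline{S(V_\lambda,r^-)}_n$ in all higher degrees. In the Poisson case one has the sharp control provided by rigidity (Theorem \ref{th:gd=gencube}), which allowed the Drinfeld quadratic-cubic criterion used in the proof of Theorem \ref{the:class. flat}; without rigidity, the multiplicities $d_\lambda^\mu$ no longer control $\dim S^3_\sigma V_\lambda^q$, and both the extra cubic relations in $S_\sigma(V_\lambda^q)$ and the Jacobian ideal in $\overline{S(V_\lambda,r^-)}$ must be matched degree by degree. A feasible intermediate result in this direction would be to prove injectivity of $\pi_V$ in degree three, by showing that the image of the Jacobian map $J_{r^-}$ quantizes, under the dequantization functor of Lemma \ref{le:dequantization equivalence}, to the kernel of the natural map $V^{q,\otimes 3}\to S^3_\sigma V^q$. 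If this cubic comparison can be lifted to all degrees, for instance via a PBW-type basis adapted to the Jacobian ideal analogous to Theorem \ref{th:PBW-Uw}, then the conjecture would follow.
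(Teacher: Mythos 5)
The statement you were given is \textbf{not proved in the paper}: it appears as Conjecture~\ref{conj:reducedsymm to braided} in the closing section (Section~\ref{se:conj}, ``Open Questions and Conjectures''), where the author explicitly presents it as a yet-unsolved problem suggested by the classification of flat modules. There is thus no proof in the paper to compare against, and your submission should not be read as a candidate proof but at best as a research plan.

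Your first step is genuinely sound and worth keeping. From Theorem~\ref{th:flat poisson closure} the classical limit $\overline{S_\sigma(V^q)}$ is a quotient of $S(V)$, and the same theorem says its Poisson bracket is $\{u,v\}=r^-(u\wedge v)$, so the quotient map $S(V,r^-)\twoheadrightarrow \overline{S_\sigma(V^q)}$ is a morphism of bracketed algebras into a Poisson algebra; by the universal property in Proposition~\ref{pr:univ poisson closure}(b) it factors through a surjection $\pi_V:\overline{S(V,r^-)}\twoheadrightarrow\overline{S_\sigma(V^q)}$, and flatness of the deformation is exactly the statement that $\pi_V$ is an isomorphism in each degree. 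This direction is solid and, incidentally, is the content the paper does establish (Theorem~\ref{th:flat poisson closure}). What remains is the lower bound $\dim_{\CC(q)}S_\sigma(V^q)_n\ge\dim_\CC\overline{S(V,r^-)}_n$, and here your proposal does not contain an argument. You yourself flag both gaps: the reduction to simple $V$ via Proposition~\ref{pr:univ poisson closure}(c) requires a compatible decomposition on the quantum side, which $S_\sigma$ does not admit over direct sums; and for simple non-Poisson $V_\lambda$ the rigidity input (Corollary~\ref{co: poisson=rigidity}) driving Theorem~\ref{the:class. flat} is unavailable, so the Drinfeld quadratic-algebra criterion that gives dimension control in degree~$3$ no longer applies. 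Your proposed remedy---a geometric realization of $\overline{S(V_\lambda,r^-)}$ as the function ring of a Poisson subvariety and a matching quantum subquotient generalizing Theorems~\ref{th: Abelian ->flat quadratic} and \ref{th:Heisenberg}---is precisely the kind of program the conjecture invites, but it is stated here as a hope rather than carried out: no construction of the conjectural subquotient, no identification of its classical limit with the Poisson closure, and no PBW or dimension argument are supplied. As written, the proposal correctly reconstructs the easy inclusion and a plausible outline of what a proof might look like, but it does not close any of the gaps that make this a conjecture.
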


This conjecture is of particular interest because it opens the possibility to address the following not yet investigated question.

\begin{problem} 
\label{pr:flat deformation of algebras} 
Quantize a commutative Poisson algebra ${\mathcal A}$; e.g. a reduced symmetric algebra.  \end{problem}

While this conjecture gives rise to the question of quantization of manifolds with a bracketed structure, 
the classification of flat modules in Theorem \ref{the:class. flat} and  our computation of the braided symmetric and exterior cubes of simple $U_q(sl_2)$-modules in  \cite[Theorem 2.40]{BZ} suggest that  braided symmetric and exterior cubes are rigid in the following sense.

\begin{conjecture}
\label{conj:rigidity}
Let $\gg$ be a simple Lie algebra and $V^q$ a simple $U_q(\gg)$-module and $V$ its classical limit. The classical limit of $S^3_\sigma V^q$ (resp. $\Lambda^3_\sigma V^q$) is isomorphic to $S^3_{low} V$ (resp. $\Lambda^3_{low} V$) as a $U(\gg)$-module. 
\end{conjecture}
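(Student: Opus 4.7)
The plan is to match isotypic multiplicities: for each $\mu \in P^+(\gg)$, I aim to show that the multiplicity of $V_\mu$ in the $U(\gg)$-module $\overline{S^3_\sigma V^q}$ equals $\max(d^\mu_\lambda,0)$, and the multiplicity in $\overline{\Lambda^3_\sigma V^q}$ equals $\max(-d^\mu_\lambda,0)$. Since $\sigma$ specializes to the ordinary permutation at $q = 1$, both classical limits are already contained in their classical counterparts $S^3 V$ and $\Lambda^3 V$ by almost equivalence (Lemma \ref{le:dequantization equivalence}). The conjecture thus amounts to pinning down \emph{precisely} which part of $S^3 V$ (resp.\ $\Lambda^3 V$) survives dequantization.

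First I would establish the lower bounds $\overline{S^3_\sigma V^q} \supseteq S^3_{low} V$ and $\overline{\Lambda^3_\sigma V^q} \supseteq \Lambda^3_{low} V$ by direct quantum analogues of Lemma \ref{le:gen symmetric cube}. Using $S^3_\sigma V^q = (S^2_\sigma V^q \otimes V^q) \cap (V^q \otimes S^2_\sigma V^q)$ together with the standard dimension inequality for intersections
$$\dim (S^3_\sigma V^q)^\mu \;\ge\; \dim(S^2_\sigma V^q \otimes V^q)^\mu + \dim(V^q \otimes S^2_\sigma V^q)^\mu - \dim(V^{q,\otimes 3})^\mu,$$
and the fact that $S^2_\sigma V^q$, $\Lambda^2_\sigma V^q$ share their isotypic decomposition with $S^2 V$, $\Lambda^2 V$ (Lemma \ref{le:dequantization equivalence}), one obtains $\dim (S^3_\sigma V^q)^\mu \ge d^\mu_\lambda$; the analogous argument gives $\dim (\Lambda^3_\sigma V^q)^\mu \ge -d^\mu_\lambda$.

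The harder task is the matching upper bound. The key structural observation is that $\sigma$ is an involution (Lemma \ref{le:symm comm constraint}) satisfying the braid relation, so $\langle \sigma_{12},\sigma_{23}\rangle$ acts on $V^{q,\otimes 3}$ through $\CC(q)[S_3]$, yielding a $U_q(\gg)$-equivariant decomposition
$$V^{q,\otimes 3} \;=\; S^3_\sigma V^q \oplus \Lambda^3_\sigma V^q \oplus M_\sigma V^q$$
into the three $S_3$-isotypic components. Combining this with the classical identity $\dim(S^3 V)^\mu - \dim(\Lambda^3 V)^\mu = d^\mu_\lambda$ (which follows from the Pieri-type identities $V \otimes S^2 V \cong S^3 V \oplus V^{(2,1)}$ and $V \otimes \Lambda^2 V \cong \Lambda^3 V \oplus V^{(2,1)}$), the conjecture reduces to exhibiting enough integral vectors in the mixed component $M_\sigma V^q$: for each $\mu$, one needs $\overline{M_\sigma V^q}$ to absorb exactly $2\min(\dim(S^3 V)^\mu,\dim(\Lambda^3 V)^\mu)$ additional copies of $V_\mu$ beyond the classical mixed-symmetry module, which accounts precisely for the "excess" multiplicities paired up by $d^\mu_\lambda$.

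The main obstacle will be producing these mixed-symmetry vectors. My proposed approach is to use the quantum Schubert cell machinery of Section \ref{se: qschcells}: when $V$ appears as a direct summand of some nilradical $rad_\Delta$ inside a larger simple Lie algebra $\gg'$, the PBW theorem (Theorem \ref{th:PBW-Uw}) combined with Theorem \ref{th: Abelian ->flat quadratic} forces the dimensions of $S^n_\sigma V^q$ to match those of $S^n V$ (the flat case), making the upper bound immediate. For general $V^q$ I would embed $V$ into a tensor product of flat modules and apply the quantum analogue of Theorem \ref{th: Tensor products of Poissons} together with Proposition \ref{pr:Levisub}. The genuinely new difficulty lies in the non-flat case (e.g.\ $\gg=sl_2$ with $V = V_{3\omega}$, where the classical limit is strictly smaller than $S^3 V$), where I expect one needs a Drinfeld-type quadratic-algebra criterion—extending \cite[Proposition 2.33]{BZ} beyond the flat regime—to determine precisely how the excess Schur-functor multiplicities redistribute between the cubes and the mixed component $M_\sigma V^q$.
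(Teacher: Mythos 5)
This statement is Conjecture~\ref{conj:rigidity} of the paper: it appears in the final section ``Open Questions and Conjectures'' and is explicitly presented as \emph{unproved}. There is therefore no proof in the paper to compare your proposal against; the authors are asserting they do not know how to prove it.

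Your proposal also does not constitute a proof, and it contains a structural error at its center. You claim that because $\sigma$ is an involution ``satisfying the braid relation,'' the operators $\sigma_{12},\sigma_{23}$ generate an action of $\CC(q)[\SS_3]$ on $V^{q,\otimes 3}$, yielding a clean decomposition $V^{q,\otimes 3}=S^3_\sigma V^q \oplus \Lambda^3_\sigma V^q\oplus M_\sigma V^q$ into $\SS_3$-isotypic pieces. But $\sigma$ is the \emph{normalized} braiding of a coboundary Hopf algebra (Drinfeld's cactus structure, cf.\ the diagram in the lemma following Lemma~\ref{le:braiding casimir}); it satisfies the hexagon/cactus relations, \emph{not} the braid relation $\sigma_{12}\sigma_{23}\sigma_{12}=\sigma_{23}\sigma_{12}\sigma_{23}$. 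The failure of $\sigma_{12},\sigma_{23}$ to generate an $\SS_3$-action on $V^{q,\otimes 3}$ is precisely the reason generic $U_q(\gg)$-modules are not flat and why $S^3_\sigma V^q$ can have strictly smaller dimension than $S^3V$. If your claimed decomposition held, every simple $V^q$ would be flat, contradicting Theorem~\ref{the:class. flat}. Your lower-bound step (the quantum version of Lemma~\ref{le:gen symmetric cube}) is fine, and the Schubert-cell route does handle the flat cases (that is already contained in Theorems~\ref{th: Abelian ->flat quadratic} and~\ref{th:Heisenberg}), but the upper bound for non-flat modules --- which is the entire content of the conjecture --- is left as ``I expect one needs \dots,'' which is an honest statement that the step is missing, not a proof of it. The quantum analogue of Theorem~\ref{th: Tensor products of Poissons} that you invoke also goes in the wrong direction for your purposes: it would bound factors from a tensor product, whereas you need to control a submodule's braided cube from above by embedding it into something larger, and braided symmetric powers are not monotone in that way without further argument.
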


It is easy to see that the assertion does not hold for nonsimple $V^q$ or $\gg$ semisimple, as the example of the natural $U_q(sl_m\times sl_n)$-module shows.   
 \section{Appendix}
\label{se:appendix}

We develop in this appendix notation for products and complements of weight-spaces in symmetric and exterior powers of vectorspaces.
Let $V$ be a finite-dimensional vector space, and let $V_{1}, V_{2},\ldots, V_{n}$ be subspaces of $V$. We define
 $V_{1}\cdot V_{2}\cdot\ldots\cdot V_{n}\subset S^{n}V$ and   $V_{1}\wedge V_{2}\wedge \ldots\wedge V_{n}\subset \Lambda^{n}V$ the subspaces generated by elements $v_{1}\cdot v_{2}\cdot\ldots\cdot v_{n}$, resp. $v_{1}\wedge v_{2}\wedge \ldots\wedge v_{n}$, where  $v_{i}\in V_{i}$ for $i\in[1,n]$.

 \begin{lemma}
 \label{le:powerdecomp}
 Let $V$ be a finite-dimensional vector space and let $V_{1},\ldots V_{n}$ be subspaces such that $V\cong\bigoplus_{i=1}^{n} V_{i}$. Denote by $\mathcal{P}(m)$ the set of all increasing $m$-element sequences $\underline p=  ( p_{1},\ldots, p_{m})$, $1\le p_{1}\le \ldots \le p_{m}\le n$ in $[1,n]$. The $m$-th symmetric and exterior powers now admit the following decomposition:
 
 $$S^{m}V\cong\bigoplus_{p\in \mathcal{P}} V_{p_{1}}\cdot\ldots\cdot V_{p_{m}}\ , \Lambda^{m}V\cong\bigoplus_{p\in \mathcal{P}} V_{p_{1}}\wedge\ldots\wedge V_{p_{m}}  \ .$$   
 
 \end{lemma}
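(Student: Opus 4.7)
The plan is to reduce this to the standard $S_m$-equivariant decomposition of $V^{\otimes m}$ and then apply the symmetrizer and antisymmetrizer. First, using the given decomposition $V \cong \bigoplus_{i=1}^n V_i$, tensor functoriality yields
\[
V^{\otimes m} \;\cong\; \bigoplus_{\mathbf{p} \in [1,n]^m} V_{p_1}\otimes V_{p_2}\otimes \cdots \otimes V_{p_m}.
\]
This decomposition is $\mathfrak{S}_m$-equivariant, where $\mathfrak{S}_m$ acts on the left by permutation of tensor factors (via the $\sigma_\tau$ of Section~\ref{se:decorated spaces}) and on the index set $[1,n]^m$ by permutation of coordinates.

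Next I would invoke Proposition~\ref{pr:symmetric spaces } to identify $S^m V$ with the image of the symmetrizer $[m]!_{\sigma}$ and $\Lambda^m V$ with the image of the antisymmetrizer $[m]!_{-\sigma}$. Partitioning $[1,n]^m$ into $\mathfrak{S}_m$-orbits, and using the bijection between orbits and weakly increasing sequences $\underline p \in \mathcal P(m)$, the decomposition of $V^{\otimes m}$ regroups as $\bigoplus_{\underline p \in \mathcal P(m)} U_{\underline p}$, where $U_{\underline p}$ is the $\mathfrak{S}_m$-submodule spanned by the orbit of $V_{p_1}\otimes\cdots\otimes V_{p_m}$. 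Applying $\frac{1}{m!}[m]!_{\sigma}$ componentwise produces the subspace $V_{p_1}\cdot V_{p_2}\cdot \ldots \cdot V_{p_m}\subset S^m V$, and the images over distinct $\underline p$ sit in distinct summands of $V^{\otimes m}$, so the sum remains direct. The same argument with $[m]!_{-\sigma}$ gives the decomposition of $\Lambda^m V$.

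Honestly, there is no real obstacle here: the statement is a routine bookkeeping consequence of the $\mathfrak{S}_m$-orbit structure on $[1,n]^m$. The only point that warrants a line of justification is that the projection $V^{\otimes m}\twoheadrightarrow S^m V$ sends the symmetrization of different orbits to linearly independent subspaces; this follows because each $V_{p_1}\cdot \ldots\cdot V_{p_m}$ is determined by its multi-index datum (the multiset $\{p_1,\ldots,p_m\}$), and any linear relation among elements coming from distinct multisets can be pulled back to a relation in $V^{\otimes m}$ that contradicts the direct sum decomposition above. The same reasoning works verbatim in the alternating case.
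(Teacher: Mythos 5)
Your proof is correct, but it takes a different route from the paper's. The paper argues by induction on $n$, repeatedly invoking the two-factor exponential identity $S^m(V_1\oplus V_2)\cong\bigoplus_{i=0}^m S^iV_1\boxtimes S^{m-i}V_2$ (and its exterior analogue) to land on the decomposition $S^m V\cong\bigoplus_{\ell\in\mathcal L_m}\bigotimes_i S^{\ell_i}V_i$, which it then rewrites in the $V_{p_1}\cdot\ldots\cdot V_{p_m}$ notation. You instead decompose $V^{\otimes m}$ itself into $\bigoplus_{\mathbf p\in[1,n]^m}V_{p_1}\otimes\cdots\otimes V_{p_m}$, observe that this is an $\mathfrak S_m$-stable decomposition with orbits indexed by $\mathcal P(m)$, and apply the symmetrizer and antisymmetrizer $[m]!_{\pm\sigma}$ from Proposition~\ref{pr:symmetric spaces } orbit by orbit. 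Because each orbit sum $U_{\underline p}$ is $\mathfrak S_m$-stable, the projections preserve the direct sum, so your justification of directness is sound. What your approach buys is that it avoids any induction and makes the linear-independence-across-multisets assertion manifestly a consequence of equivariance; what the paper's binomial recursion buys is that it packages the answer further into iterated symmetric powers $\bigotimes_i S^{\ell_i}V_i$, a slightly finer normal form than the span $V_{p_1}\cdot\ldots\cdot V_{p_m}$. Either argument proves the lemma.
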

 \begin{proof}
 
 Recall that if  $V\cong V_{1}\oplus V_{2}$, then $S^{m}V\cong \bigoplus_{ i=0}^{m} S^{i}V_{1}\boxtimes S^{m-i}V_{2}$, resp. that  $\Lambda^{m}V\cong \bigoplus_{ i=0}^{m} \Lambda^{i}V_{1}\boxtimes \Lambda^{n-i}V_{2}$.  We can now use induction in $n$ to  prove that for $V\cong\bigoplus_{i=1}^{n}V_{i}$ we have 
 $$ S^{m}V\cong \bigoplus_{\l\in \mathcal{L}_{m}}\bigotimes_{i=1}^{n}S^{\l_{i}}V_{i}\ , \Lambda^{m}V\cong \bigoplus_{\l\in \mathcal{L}_m}\bigotimes_{i=1}^{n}\Lambda^{\l_{i}}V_{i}\ ,$$ 
 where $\mathcal{L}_m$ denotes the set of of $n$-element sequences $(\l_{1},\ldots, \l_{n})$ such that $\l_1+\ldots+\l_{n}=m$.
  
 Using the notation $V\cdot\ldots \cdot V=S^{m}V$ and $V\wedge V\wedge\ldots \wedge V=\Lambda^{m}V$ and the fact that $S^{i}V_{1}\boxtimes S^{j}V_{2}\cong S^{i}V_{1}\cdot S^{j}V_{2}$, resp. $\Lambda^{i}V_{1}\boxtimes \Lambda^{j}V_{2}\cong \Lambda^{i}V_{1}\wedge \Lambda^{j}V_{2}$, we obtain the desired result. The lemma is proved.
 \end{proof}

We will use the notation $(V_{p_{1}}\cdot\ldots V_{p_{m}})^{c}\subset S^{m}V$ to denote 
\begin{equation}
\label{eq: complement}
(V_{p_{1}}\cdot\ldots V_{p_{m}})^{c}= \bigoplus_{ \underline p'\ne \underline p\in \mathcal{P}} V(\lambda_{p'_{1}})\wedge\ldots\wedge V(\lambda_{p'_{m}})\ .
\end{equation}

\end{document}